\pdfoutput=1
\documentclass[11pt]{amsart}

\usepackage{graphicx}
\usepackage[english]{babel}
\usepackage[T1]{fontenc}
\usepackage[latin1]{inputenc}
\usepackage{amsfonts}
\usepackage{amssymb}
\usepackage{amsthm}
\usepackage{amsmath}
\usepackage{tikz}
\usepackage{float}
\usepackage{enumerate}
\usepackage{accents}
\usepackage{mathtools}
\usepackage{mathrsfs}
\usepackage{comment}
\usepackage{afterpage}
\usepackage{bbm}
\usepackage{dsfont}
\usepackage{stmaryrd}
\usepackage{hyperref}
\usepackage{mleftright}
\usepackage{subfig}
\usepackage{soul}
\usepackage{tikz-cd} 
\usepackage{fullpage}
\usepackage{cleveref}

\numberwithin{equation}{section}

\theoremstyle{plain}
\newtheorem{thm}{Theorem}[section]
\newtheorem*{thm*}{Theorem}
\newtheorem{prop}[thm]{Proposition}
\newtheorem*{prop*}{Proposition}
\newtheorem{cor}[thm]{Corollary}
\newtheorem*{cor*}{Corollary}
\newtheorem{lem}[thm]{Lemma}

\newtheorem{thmintro}{Theorem}

\newtheorem{corintro}[thmintro]{Corollary}

\theoremstyle{definition}
\newtheorem{defn}[thm]{Definition}
\newtheorem*{defn*}{Definition}
\newtheorem{ex}[thm]{Example}
\newtheorem{rmk}[thm]{Remark}
\newtheorem*{rmk*}{Remarks}

\newtheorem*{conj*}{Conjecture}
\newtheorem*{twist}{Twist Conjecture}
\newtheorem*{prob*}{Problem}
\newtheorem*{quest*}{Question}

\newtheorem{setup}[thm]{Setup}

\newtheoremstyle{blue-environment}{}{}{}{}{\color{blue}\bfseries}{.}{ }{}
\theoremstyle{blue-environment}

\newcommand{\acts}{\curvearrowright}
\newcommand{\ra}{\rightarrow}
\newcommand{\Ra}{\Rightarrow}

\newcommand{\sq}{\subseteq}

\newcommand{\wh}{\widehat}
\newcommand{\x}{\times}

\newcommand{\mc}{\mathcal}
\newcommand{\mf}{\mathfrak}
\newcommand{\mscr}{\mathscr}

\newcommand{\Z}{\mathbb{Z}}

\newcommand{\A}{\mathbb{A}}

\newcommand{\s}{\sigma}
\newcommand{\eps}{\epsilon}
\newcommand{\Om}{\Omega}
\newcommand{\om}{\omega}
\newcommand{\g}{\gamma}

\newcommand{\Aut}{{\rm Aut}}

\DeclareMathOperator{\hull}{Hull}

\DeclareMathOperator{\lk}{lk}

\DeclareMathOperator{\supp}{supp}

\begin{document}

\title{The Twist Conjecture \\ and the Isomorphism Problem for Coxeter groups} 

\author[E.\,Fioravanti]{Elia Fioravanti}\address{Institute of Algebra and Geometry, Karlsruhe Institute of Technology}\email{elia.fioravanti@kit.edu} 
\thanks{The author is supported by Emmy Noether grant 515507199 and grant 541703614 of the DFG}

\begin{abstract}
    We prove M\"uhlherr's Twist Conjecture: any two angle-compatible Coxeter generating sets of a Coxeter group differ by a finite sequence of elementary twists and a conjugation. Combined with earlier work of Howlett--M\"uhlherr and Marquis--M\"uhlherr, this completes the resolution of the Isomorphism Problem for Coxeter groups. 
    
    A further consequence is that $\Aut(W)$ is finitely generated for every Coxeter group $W$, and there is an algorithm producing a finite set of generators for $\Aut(W)$ starting from any Coxeter matrix.

    Of the vast literature on the Twist Conjecture, we utilise only two results in an essential way: strong rigidity of $2$--spherical Coxeter systems, due to Caprace and M\"uhlherr, and the framework of markings and hierarchies developed by Caprace and Przytycki for the twist-rigid case. We also exploit in a fundamental way some soft ideas from JSJ theory and an observation of Mihalik--Tschantz on splittings of Coxeter groups. 

    No form of AI was used in the writing of this manuscript, nor in the research that it presents.
\end{abstract}

\maketitle

\section{Introduction}\label{sect:intro}

Coxeter groups were introduced by H.\,S.\,M.\ Coxeter in 1934 as an abstraction of reflection groups \cite{Coxeter}, and they have since taken on a prominent role throughout algebra, geometry and combinatorics. The term ``Coxeter group'' was coined by J.\ Tits in a 1961 preprint \cite{Tits}. 

Despite their importance, and almost a century of history, one of the most fundamental questions about Coxeter groups has remained open: how to tell them apart.

\begin{prob*}
    Given two Coxeter systems $(W,S)$ and $(W',S')$, are the groups $W$ and $W'$ isomorphic?
\end{prob*}

A Coxeter system $(W,S)$ is completely described by a Coxeter matrix, or equivalently by a Dynkin diagram or a presentation graph. However, several distinct phenomena can cause non-isomorphic Coxeter systems to give rise to isomorphic Coxeter groups, to the point that it has even remained unknown whether the above problem is algorithmically decidable. 

According to the survey \cite{Muehlherr-survey}, the first explicit mention of the isomorphism problem for Coxeter groups as a question worthy of attention seems to be in \cite[Problem~6.5]{Cohen}. It also appears as Question~2.13 in Bestvina's problem list \cite{BesQ}, and it is the subject of Bahls' book \cite{Bahls-book} and of \cite[Problems~1--2]{Muehlherr-survey}.

There is a very clear conjectural picture as to what a solution to the isomorphism problem should look like for Coxeter groups, which has been largely devised and championed by M\"uhlherr over the last 25 years. In particular, work of Howlett--M\"uhlherr \cite{Howlett-Muehlherr} and Marquis--M\"uhlherr \cite{Marquis-Muehlherr} has reduced the solution of the isomorphism problem to the proof of a clean statement commonly known as the Twist Conjecture. The goal of this article is to prove this conjecture.

Before stating it, we need to introduce the concept of an \emph{elementary twist}. Let $(W,S)$ be a Coxeter system and suppose that $S$ can be partitioned as $S=X\sqcup K\sqcup K^{\perp}\sqcup Y$, where $K$ is a nonempty irreducible spherical set, $K^{\perp}$ is the set of elements of $S$ commuting with $K$, and $X,Y$ are nonempty sets such that $xy$ has infinite order for all $x\in X$ and $y\in Y$. Denoting by $w_K$ the longest element of the spherical subgroup $\langle K\rangle$, 
we can consider the set $S':=X\sqcup K\sqcup K^{\perp}\sqcup w_KYw_K$. The pair $(W,S')$ is again a Coxeter system, and its Coxeter matrix may differ from that of $(W,S)$. This procedure --- called an \emph{elementary twist} --- was introduced in \cite{BMMN} generalising an example discovered in \cite{Muehlherr00} (reproduced here in \Cref{fig:1}).

\begin{figure}
    \begin{tikzpicture}
        \draw[fill] (0,1) -- (0,0);
        \draw[fill] (0,1) -- (1,1);
        \draw[fill] (0,1) -- (1,0);
        \draw[fill] (0,0) circle [radius=0.05cm];
        \draw[fill] (0,1) circle [radius=0.05cm];
        \draw[fill] (1,0) circle [radius=0.05cm];
        \draw[fill] (1,1) circle [radius=0.05cm];
        \node[left] at (0,1) {$a$};
        \node[right] at (1,1) {$b$};
        \node[left] at (0,0) {$c$};
        \node[right] at (1,0) {$d$};
    \end{tikzpicture}
    \hspace{1cm}
    \begin{tikzpicture}
        \draw[fill] (0,1) -- (0,0);
        \draw[fill] (0,1) -- (1,1);
        \draw[fill] (1,1) -- (1,0);
        \draw[fill] (0,0) circle [radius=0.05cm];
        \draw[fill] (0,1) circle [radius=0.05cm];
        \draw[fill] (1,0) circle [radius=0.05cm];
        \draw[fill] (1,1) circle [radius=0.05cm];
        \node[left] at (0,1) {$a$};
        \node[right] at (1,1) {$b$};
        \node[left] at (0,0) {$c$};
        \node[right] at (1,0) {$d'$};
    \end{tikzpicture}
    \caption{The presentation graphs of the two Coxeter systems yielding isomorphic Coxeter groups discovered in \cite{Muehlherr00}. Non-edges correspond to pairs of elements $s,t$ with $st$ of infinite order, while all edges correspond to pairs with $(st)^3=1$. The two systems differ by an elementary twist over the dihedral set $K=\{a,b\}$.}
    \label{fig:1} 
\end{figure}

While elementary twists\footnote{Although the concept of an elementary twist is reminiscent of that of a \emph{Dehn twist} from \cite{RS94,Bass-Jiang,Levitt-GD}, it is a more general procedure: elementary twists do \emph{not} always extend to automorphisms of the Coxeter group.} are not the only procedure yielding non-isomorphic Coxeter systems for isomorphic Coxeter groups (see \cite{Muehlherr-survey} for an overview), they become the only phenomenon of concern if one restricts to \emph{angle-compatible} pairs of Coxeter generating sets (also known as \emph{sharp-angled} in the literature). Two Coxeter generating sets $S,R\sq W$ are said to be \emph{angle-compatible} if every spherical subset of $S$ of cardinality $\leq 2$ admits a $W$--conjugate contained in $R$. Although not obvious from the definition, angle-compatibility is an equivalence relation, see \cite[Appendix~A]{CP10}. Moreover, there are general algorithmic procedures that, starting with an arbitrary pair of Coxeter generating sets of $W$, construct an angle-compatible pair of generating sets related to the original ones; see \cite{Howlett-Muehlherr,Marquis-Muehlherr} and the overview in \cite{Muehlherr-survey}. 

With these definitions out of the way, we can state the:

\begin{twist}
    Let $W$ be a Coxeter group with two angle-compatible Coxeter generating sets $S,R\sq W$. Then $S$ and $R$ differ by a finite sequence of elementary twists, followed by a conjugation.
\end{twist}

This appears for instance as Conjecture~2 in \cite[Section~5]{Muehlherr-survey} and as Conjecture~5.13 in \cite{SRS}. An older formulation in \cite[Conjecture~8.1]{BMMN} was shown to be false in \cite[Section~10]{Ratcliffe-Tschantz}, as it did not take angle-deformations into account; this issue was later addressed in \cite{Marquis-Muehlherr}, leading to the formulation above.

There is a large body of work on the Twist Conjecture, so we only mention a brief selection of earlier results. Several classes of Coxeter groups are \emph{strongly rigid}, meaning that all Coxeter generating sets are conjugate to each other, without requiring any twists or angle-compatibility: this is the case for Coxeter groups admitting geometric actions on contractible manifolds \cite{CD00}, for $2$--spherical Coxeter groups \cite{FHM,CM07}, and more generally whenever all wall neighbourhoods coarsely disconnect the Coxeter group into exactly two components \cite{CP-bipolar}. The Twist Conjecture is also known to hold when the Coxeter system $(W,S)$ admits no elementary twists \cite{CP10}. Some situations where the conjecture has been proven and twists are necessary include: when no two elements of $S$ commute \cite{Muehlherr-Weidmann}, when the presentation graph is chordal \cite{Ratcliffe-Tschantz}, when certain triangle subgroups are forbidden \cite{Weigel}, and finally for Coxeter groups of FC-type admitting only twists over sets $K$ with cardinality $\leq 2$ \cite{Huang-Przytycki,Przytycki}. 

Despite the number of results, progress on the conjecture has been remarkably limited: for instance, with the exception of \cite{Ratcliffe-Tschantz}, it seems that it has not been proven in any situation where twists over irreducible spherical subsets $K$ of cardinality $\geq 3$ are allowed. 

We prove the Twist Conjecture in full generality and in the following stronger form: any subset of $S$ that already admits a conjugate contained in $R$ can be left untouched by all twists. All the terminology used in the theorem statement below is defined precisely in the next section.

\begin{thmintro}\label{thmintro:main}
    Let $W$ be a Coxeter group and let $S,R\sq W$ be two angle-compatible Coxeter generating sets. Let $\mscr{C}$ be the family of $\{S,R\}$--compatible subsets of $W$. Then $S$ and $R$ are twist-equivalent relative to $\mscr{C}$.
\end{thmintro}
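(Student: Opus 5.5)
The plan is an induction on complexity, organised around a JSJ-type decomposition of $W$ over spherical subgroups. A family $\mscr{C}$ of $\{S,R\}$--compatible subsets is closed under the operations we need. We induct first on $|S|$ and then on a measure of how far $R$ is from $S$ relative to $\mscr{C}$. The base of the induction is the twist-rigid situation. If $(W,S)$ admits no elementary twist relative to $\mscr{C}$, the markings-and-hierarchies framework of Caprace--Przytycki \cite{CP10} applies, with the Caprace--M\"uhlherr strong rigidity of $2$--spherical systems \cite{CM07} as input for the $2$--spherical pieces. We then check that this framework runs relative to $\mscr{C}$, and conclude that $R$ is conjugate to $S$.

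In the general case I would use the Mihalik--Tschantz observation that every splitting of $W$ over a spherical (finite) subgroup is visible in any Coxeter generating set. Any such splitting can be conjugated so that it is induced by a partition $S=X\sqcup K\sqcup K^\perp\sqcup Y$ of the kind appearing in the definition of elementary twists, and likewise for $R$. Using soft JSJ ideas, I would fix a canonical ($\Aut$--invariant, or at least independent of the chosen generating set) tree $T$ encoding all splittings of $W$ over spherical subgroups relative to $\mscr{C}$. Angle-compatibility should ensure that $S$ and $R$ determine the same vertex stabilisers of $T$ up to conjugacy. Each vertex group is then a special subgroup $\langle S_v\rangle$, and it is also conjugate to some $\langle R_v\rangle$ with $R_v\subseteq R$.

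The inductive step has three stages.
\begin{enumerate}[(1)]
\item For each vertex $v$ with $|S_v|<|S|$, apply induction to the pair $S_v$ and (a conjugate of) $R_v$ inside $\langle S_v\rangle$. The relative family consists of the edge subgroups' spherical sets together with the restrictions of $\mscr{C}$; angle-compatibility restricts correctly to the vertex groups. This gives a sequence of elementary twists of $S_v$ that turns it into a conjugate of $R_v$ and fixes the adjacent edge sets.
\item Twists of $S_v$ relative to the incident edge sets extend to twists of $S$, because the edge sets can be left untouched. So we may assume that $S_v$ and $R$ agree up to conjugacy at each vertex.
\item The remaining discrepancy lies entirely in the gluing data along the edges of the finite graph of groups $T/W$. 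This data consists of elements of the normalisers of the edge groups, which differ by choices of longest elements $w_K$ and of centralisers. I would realise this discrepancy by elementary twists over the edge sets $K$, followed by one global conjugation, moving along a spanning tree of $T/W$.
\end{enumerate}
Vertices that cannot be decomposed further (rigid ones) are handled by the base case.

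I expect the main obstacle to be the canonical-tree step. One must show that the JSJ tree built from $S$ and the one built from $R$ coincide, or at least have the same elliptic subgroups. One must also handle vertices where $|S_v|=|S|$ cannot be reduced, for instance because of flexible behaviour around edge groups with large $K^\perp$. My first attempt would use the fact that spherical subgroups are elliptic in every tree. Angle-compatibility then makes the sets of $\mscr{C}$ and the rank-$\le 2$ spherical subgroups elliptic in both trees, and a deformation-space argument would identify the two trees up to moves that are themselves elementary twists.
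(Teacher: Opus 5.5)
\textbf{There is a genuine gap.} Your induction closes only if every vertex of your tree is either proper, so that induction applies, or twist-rigid relative to $\mscr{C}$, so that the base case applies. This dichotomy fails. An elementary twist over $K$ comes from a splitting over $\langle K\cup K^{\perp}\rangle$, which is infinite as soon as $K^{\perp}$ is non-spherical, so it is invisible to a decomposition over finite subgroups.

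Concretely, enlarge the system of \Cref{fig:1} by two generators $z_1,z_2$ with $o(z_1z_2)=\infty$. Let both commute with $a,b,d$, let $z_2$ commute with $c$, and set $o(z_1c)=3$. Every non-adjacent pair in the presentation graph has a non-spherical set of common neighbours, so no spherical subset separates it. Hence $W$ is irreducible and one-ended, and your tree is a single vertex with $S_v=S$. Yet the twist over $K=\{a,b\}$, with $K^{\perp}=\{z_1,z_2\}$, $X=\{c\}$ and $Y=\{d\}$, produces an $R$ with a different Coxeter matrix: in $S$ the element $a$ has three label-$3$ neighbours, while in $R$ no element does. This twist is relative to $\mscr{C}$. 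Indeed, otherwise $\{c,d\}\in\mscr{C}$, and \Cref{cor:2-compatible->compatible} would force $S$ and $R$ to be conjugate. So neither your inductive step nor your base case applies. This is the typical situation, not a corner case to be deferred to ``flexible vertices''. Your secondary induction on ``how far $R$ is from $S$'' is also never specified. In the paper that role is played by the maximality assumption $(*)$ together with the reference-system distances in \Cref{thm:shortening}.

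Even over finite groups, Stage~(3) is the whole theorem, and your description of it is inaccurate. In \Cref{fig:1} all vertex sets are spherical, so Stages~(1)--(2) are vacuous. Take the $S$--visual tree with fundamental domain $\{a,c\}-\{a,b\}-\{a,d\}$. It is also $R$--visual, via the fundamental domain obtained by translating the last edge by $w_K$. So the discrepancy is an element of a \emph{vertex} group carrying the edge group $\langle a\rangle$ to $\langle b\rangle$, not an element of an edge-group normaliser. It is realised by a twist over the vertex set $\{a,b\}$, whereas twists over the edge set $\{a\}$ never change the Coxeter matrix.

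In general, the correcting twists are over irreducible spherical $K$ lying inside vertex sets, and several are needed per step. The real work is proving that each such $K\cup K^{\perp}$ separates $\wh S_{\mscr{C}}$, so that the twist exists in $W$; twists of a shrub $\langle\Om\rangle$ need not extend. The paper achieves this in four steps:
\begin{enumerate}
\item It chooses a minimal $\mscr{T}_S\cup\{R\}$--parabolic $\Om\notin\mscr{C}$, which is a $\mscr{C}$--shrub, decomposed over possibly infinite groups generated by sets in $\mscr{C}$.
\item It proves that $\mscr{T}_S$--parabolic sets are inflexible; this is where the inductive hypothesis enters, via \Cref{lem:hard}.
\item It extends the Caprace--Przytycki markings to $\wh S_{\mscr{C}}$ under inseparability hypotheses, using tuckedness.
\item It runs the atom-based shortening of \Cref{thm:shortening} against a reference system in $\A_R$.
\end{enumerate}
Your Stage~(3) and your closing ``deformation-space argument'' provide no substitute for any of these steps; as written, they presuppose the conclusion.
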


We describe our proof strategy in \Cref{sect:strategy} below. There, we give a complete proof of \Cref{thmintro:main} assuming two results, Theorems~\ref{thm:step_one} and~\ref{thm:step_two}, which are shown in Sections~\ref{sect:shrubs} and~\ref{sect:geometrisation}, respectively.

Combining \Cref{thmintro:main} with the work of Howlett--M\"uhlherr \cite{Howlett-Muehlherr} and Marquis--M\"uhlherr \cite{Marquis-Muehlherr} as summarised in \cite{Muehlherr-survey}, we obtain the following consequence.

\begin{corintro}\label{corintro:iso_problem}
    The Isomorphism Problem is algorithmically decidable for Coxeter groups.
\end{corintro}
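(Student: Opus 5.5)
The statement to prove is Corollary~\ref{corintro:iso_problem}. Given Theorem~\ref{thmintro:main}, this is essentially the reduction of Howlett--M\"uhlherr and Marquis--M\"uhlherr, as summarised in \cite{Muehlherr-survey}. The plan is to reduce an arbitrary instance of the problem to a finite search over twist-equivalence classes. The input consists of two Coxeter matrices, defining $(W,S)$ and $(W',S')$.

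\textbf{Step 1: normalise to an angle-compatible pair.} First use the algorithmic results of \cite{Howlett-Muehlherr,Marquis-Muehlherr}. Starting from $(W,S)$, these produce a finite list $S_1,\dots,S_n$ of Coxeter generating sets of $W$, each recorded by its Coxeter matrix, with the following property. For any Coxeter generating set $R$ of $W$, some $S_i$ is angle-compatible with the image of $R$ under a suitable automorphism; angle-deformations and the passage to ``reflection-preserving'' generating sets are exactly what these papers make effective. In particular, if $W\cong W'$, then transporting $S'$ to $W$ yields a generating set $R$ such that, after replacing $R$ by an automorphic image (which does not change its Coxeter matrix), $R$ is angle-compatible with some $S_i$.

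\textbf{Step 2: apply the main theorem.} By Theorem~\ref{thmintro:main}, with $\mscr{C}$ the family of $\{S_i,R\}$-compatible subsets, $S_i$ and $R$ differ by finitely many elementary twists followed by a conjugation. Conjugation does not change the Coxeter matrix. Hence the Coxeter matrix of $S'$ lies in the \emph{twist class} of the Coxeter matrix of some $S_i$, meaning the set of matrices reachable by sequences of elementary twists.

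\textbf{Step 3: compute the twist classes.} Whether an elementary twist is available, and what its outcome is, depends only on the Coxeter matrix. The partition $S=X\sqcup K\sqcup K^\perp\sqcup Y$ is read off the matrix. The new matrix entries between $K$ and $w_KYw_K$ come from the action of the diagram automorphism of $K$ induced by conjugation by $w_K$, which is explicit. All other entries are unchanged. Since there are finitely many Coxeter matrices on $|S|$ labels with entries drawn from the finite set of values occurring in the original matrix, the twist class of each $S_i$ is finite and can be computed by breadth-first search. The algorithm answers ``isomorphic'' exactly when the matrix of $S'$, up to relabelling, appears in one of these classes. Conversely, any matrix in such a class defines a group isomorphic to $W$, so the test is sound.

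\textbf{Main obstacle.} The real obstacle is Theorem~\ref{thmintro:main} itself, which is assumed here. Within this corollary, the only delicate point is the precise form of the Step~1 reduction. One must check that \cite{Howlett-Muehlherr,Marquis-Muehlherr} really produce a finite, computable family of candidates covering every generating set up to automorphism and angle-compatibility, rather than merely proving that one exists. I would rely on the formulation in \cite[Section~5]{Muehlherr-survey}, which states that the isomorphism problem is decidable provided the Twist Conjecture holds.
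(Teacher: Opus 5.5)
\textbf{Gap: the saturation step is missing, and without it Step~1 is false.}

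Your route is the paper's: normalise with Howlett--M\"uhlherr and Marquis--M\"uhlherr, apply Theorem~\ref{thmintro:main}, then observe that twist classes of Coxeter matrices are finite and computable. Your Step~3 is correct; it is the same count the paper uses for the quotient graph $\overline{\mc{G}}$ in the proof of Corollary~\ref{cor:fg_Aut_AC}.

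But Step~1 omits the first reduction in the paper's chain: saturation by blow-ups of pseudo-transpositions (Proposition~\ref{prop:saturated}). Without it, the property you claim fails and the algorithm gives wrong answers.

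Theorem~\ref{appthm:HM} only compares two \emph{saturated} generating sets, so if $S$ is not saturated it is not available at all. Every operation in your chain preserves cardinality: automorphisms, angle-deformations, elementary twists and conjugations. Angle-compatible generating sets are reflection-compatible and hence have equal cardinality. Saturated generating sets, however, are exactly those of maximal cardinality. So once $S$ is saturated, a non-saturated $R$ is never angle-compatible with an automorphic image of any of your $S_i$.

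Concretely, the dihedral group of order $12$ has the saturated Coxeter generating set $\{t,\tau t\tau,(\tau t)^3\}$ of type $A_2\times A_1$. It also has $\{\tau,t\}$ of type $I_2(6)$, and the former is the blow-up of the latter at the pseudo-transposition $\tau$. Take $S$ of type $A_2\times A_1$ and $S'$ of type $I_2(6)$. Every matrix in your candidate twist classes is $3\times 3$, so the $2\times 2$ matrix of $S'$ is never found, and your test answers ``non-isomorphic'' on an isomorphic pair.

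\textbf{Repair.} The fix is routine. First replace \emph{both} input matrices by saturated ones. This is computable from the matrix alone, since any sequence of blow-ups terminates in fewer than $|S|$ steps. After that, Theorem~\ref{appthm:HM} covers every saturated $R$. Moreover $\sigma(S)$ has the same matrix as $S$ for $\sigma\in\Sigma_S$, so that step adds no new candidate matrices. Your Steps~1--3 then go through for the saturated matrices.

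Your remaining worry is that \cite{Marquis-Muehlherr} must supply an $R$-independent, computable list of candidate matrices, rather than a deformation sequence that depends on $R$. The paper does not resolve this either; it delegates that same point to \cite{Muehlherr-survey}.
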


We briefly discuss in \Cref{app:automorphisms} the results needed to deduce \Cref{corintro:iso_problem} from \Cref{thmintro:main}. There, we also obtain the following straightforward consequence:

\begin{corintro}\label{corintro:automorphisms}
    For every Coxeter group $W$, the automorphism group $\Aut(W)$ is finitely generated.
\end{corintro}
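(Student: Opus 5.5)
The plan is to deduce finite generation from \Cref{thmintro:main} with a Macbeath-type argument. We let $\Aut(W)$ act on a connected, locally finite graph with finitely many vertex orbits and finitely generated vertex stabilisers. Fix a Coxeter generating set $S$ of $W$, and let $\mc{G}$ be the graph defined as follows. Its vertices are the Coxeter generating sets of $W$ modulo conjugation by $W$. Two classes $[R],[R']$ are joined by an edge if some representatives differ by a single elementary twist, or by one of the finitely many elementary modifications of Howlett--M\"uhlherr and Marquis--M\"uhlherr that turn an arbitrary Coxeter generating set into one angle-compatible with a given one (see \cite{Muehlherr-survey} and \Cref{app:automorphisms}). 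We then restrict to the connected component $\mc{G}_0$ of $[S]$. The group $\Aut(W)$ acts on $\mc{G}$ via $\varphi\cdot[R]=[\varphi(R)]$, and this action preserves adjacency since both kinds of moves are defined intrinsically.

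\textbf{Step 1: $\varphi(S)$ lies in $\mc{G}_0$ for every $\varphi\in\Aut(W)$.} By the Howlett--M\"uhlherr/Marquis--M\"uhlherr reduction, $\varphi(S)$ can be joined by finitely many moves to a generating set $R$ angle-compatible with $S$. By \Cref{thmintro:main}, $R$ and $S$ then differ by finitely many elementary twists and a conjugation. Hence $\mc{G}_0$ is $\Aut(W)$--invariant and contains the orbit of $[S]$.

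\textbf{Step 2: the action on $\mc{G}_0$ has finitely many vertex orbits.} All Coxeter generating sets of $W$ have the same cardinality $|S|$, since this equals the rank of the abelianisation of the finite-index torsion-free-by-$(\Z/2)$ structure, or it can be read off from the moves. Each label $m_{st}<\infty$ is the order of an element of a finite subgroup, and $W$ has finitely many conjugacy classes of finite subgroups. Hence only finitely many Coxeter matrices occur. If two generating sets $R,R'$ have the same Coxeter matrix, then a bijection $R\to R'$ respecting the matrix extends to an automorphism of $W$, so $[R]$ and $[R']$ lie in the same orbit. Each vertex has finitely many neighbours, because there are finitely many partitions $X\sqcup K\sqcup K^\perp\sqcup Y$ of a finite set and finitely many deformation moves. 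So we may choose a finite set $U$ of vertices meeting every orbit, and the set $F=\{g : gU \text{ is adjacent to or meets } U\}$ is a finite union of cosets of stabilisers.

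\textbf{Step 3: vertex stabilisers are finitely generated.} Let $\varphi$ fix $[R]$, so that $\varphi(R)=wRw^{-1}$. Composing with the inner automorphism by $w^{-1}$ gives an automorphism preserving $R$. Such an automorphism is determined by the permutation it induces on $R$, which must be a diagram automorphism. Hence $\mathrm{Stab}([R])$ is an extension of $\mathrm{Inn}(W)\cong W/Z(W)$ by a finite group, and it is finitely generated. Macbeath's lemma (a group acting on a connected graph is generated by the stabilisers of the vertices of $U$ together with the finitely many elements carrying $U$ to an adjacent vertex) now shows that $\Aut(W)$ is finitely generated.

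I expect the main obstacle to be Step 2, namely controlling vertex orbits and local finiteness once the deformation moves are included. This is where the precise form of the Marquis--M\"uhlherr reduction matters. As a fallback, I would run the argument using only twist edges on the set of generating sets angle-compatible with $S$. That set is preserved by a finite-index subgroup of $\Aut(W)$, namely the subgroup preserving the finitely many angle-compatibility classes, and we would conclude by passing to this finite-index subgroup.
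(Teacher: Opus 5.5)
Your main route, Steps 1--3 on the graph of \emph{all} Coxeter generating sets, has a genuine gap in Step~2.

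First, Coxeter generating sets of $W$ need not all have cardinality $|S|$. The dihedral group of order $12$ is the Coxeter group of type $I_2(6)$ and also of type $A_1\times A_2$. More generally, the blow-ups of pseudo-transpositions that Howlett--M\"uhlherr use raise the rank by one, and your abelianisation justification is not meaningful. A bound on ranks could repair this.

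The second problem is fatal as stated: with angle-deformations as edges, the graph is not locally finite. A $J$--deformation only constrains $\delta(J)$, and the paper notes there is no control on the conjugators of the other generators. For example, take $W=\langle x\rangle*\langle y\rangle*\langle u,v\rangle$ with $o(uv)=5$. The sets $\{(yu)^kx(yu)^{-k},\,y,\,u,\,vuv\}$ for $k\in\Z$ are pairwise non-conjugate $\{u,v\}$--deformations of $\{x,y,u,v\}$; they are images of $S$ under the automorphism that fixes $u$ and $y$, sends $v\mapsto vuv$, and partially conjugates $x$ by $(yu)^k$. The stabiliser of $[S]$ acts on the neighbours of $[S]$ through a finite group, since inner automorphisms act trivially on conjugacy classes. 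So there are infinitely many edge orbits at $[S]$, and the Macbeath argument breaks down.

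Your fallback, however, is correct and is exactly the paper's proof. The paper observes that $\Aut_{\rm AC}(W;S)$ has finite index in $\Aut(W)$. In \Cref{cor:fg_Aut_AC} it shows this subgroup is finitely generated by acting on the graph of conjugacy classes of generating sets angle-compatible with $S$, with twist edges. That graph is connected by \Cref{thmintro:main}, locally finite, has finite quotient, and has stabilisers that are finite extensions of ${\rm Inn}(W)$. Howlett--M\"uhlherr and Marquis--M\"uhlherr enter only to make the extra generators of $\Aut(W)$ algorithmically computable, which bare finite generation does not need.

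So make the fallback the proof, and supply the two points it relies on.
\begin{enumerate}
\item \emph{Finite index.} Angle-compatibility is an equivalence relation \cite[Appendix~A]{CP10}. The class of $R$ is determined by the set of $W$--conjugacy classes of spherical subsets of $R$ of size at most $2$. This set ranges over a finite collection, because $W$ has finitely many conjugacy classes of finite subgroups. Hence $\Aut(W)$ permutes finitely many classes, and the stabiliser of the class of $S$ has finite index.
\item \emph{Finitely many orbits.} Within that class every generating set is twist-equivalent to $S$, so it has cardinality $|S|$; this replaces your false general claim. If an automorphism carries one such set to another with the same Coxeter matrix, it automatically lies in $\Aut_{\rm AC}(W;S)$ by transitivity of angle-compatibility.
\end{enumerate}
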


In fact, we exhibit an algorithm producing a finite generating set of $\Aut(W)$ in finite time, taking as input the Coxeter matrix of any Coxeter generating set of $W$.

We emphasise that \Cref{corintro:automorphisms} is \emph{not} a direct consequence of our earlier finite generation result for automorphism groups of compact special groups \cite{Fio12}, and there are two important reasons for this. First, although Coxeter groups are known to be virtual fundamental groups of special cube complexes by \cite{HW10}, it remains wide open whether such cube complexes can be chosen to be \emph{compact}. Secondly, all results of \cite{Fio12} are about genuine compact special groups, and there are significant obstructions to extending them to groups that are only \emph{virtually} compact special. Overall, there is no overlap between the techniques of this article and those we developed for \cite{Fio11,Fio12}.

\smallskip
{\bf Acknowledgments.} I am grateful to Pierre-Emmanuel Caprace and Piotr Przytycki for encouraging me to work on the conjecture, and for their comments on earlier versions of this preprint.

\tableofcontents

\section{Terminology and proof overview}\label{sect:strategy}

Before sketching the proof of \Cref{thmintro:main}, we need to fix some terminology. We refer to \cite{Davis} for general background on Coxeter groups from a geometric perspective. In particular, the Davis complex $\A_S$ associated to a Coxeter system $(W,S)$ is introduced in \cite[Chapter~7]{Davis}.

Given a Coxeter system $(W,S)$, we denote by $S^W$ the set of \emph{$S$--reflections}, that is, the elements of $W$ that are conjugate to elements of $S$. The set of fixed points in $\A_S$ of a reflection $\rho\in S^W$ is called the \emph{wall} fixed by $\rho$, and we will usually denote it by $\mc{Y}_{\rho}$ or $\mc{W}_{\rho}$. Every wall disconnects $\A_S$ into exactly two connected components called \emph{halfspaces}.

We will consider subsets of $W$ satisfying some of the following classical notions. The terminology ``compatible'' is a little less standard, though it appears for instance in \cite[Definition~14.1]{HMN18}.

\begin{defn}
Let $(W,S)$ be a Coxeter system.
\begin{itemize}
    \setlength\itemsep{.2em}
    \item A subset $U\sq W$ is \emph{universal} if the pair $(\langle U\rangle,U)$ is a Coxeter system. 
    \item A subset $U\sq W$ is \emph{$S$--compatible} if there exists $g\in W$ with $gUg^{-1}\sq S$.
    \item A subgroup $P\leq W$ is \emph{$S$--parabolic} if there exist $U\sq S$ and $g\in W$ with $P=g\langle U\rangle g^{-1}$. By an abuse, we also say that a universal subset $U\sq W$ is \emph{$S$--parabolic} if $\langle U\rangle$ is $S$--parabolic.
    \item A set of reflections $U\sq S^W$ is \emph{$2$--geometric} in the Davis complex $\A_S$ if we can choose, for each element $u\in U$, a side $\mc{H}_u$ of the $u$--fixed wall in $\A_S$ so that, for all $u,u'\in U$, the sector $\mc{H}_u\cap\mc{H}_{u'}$ is a fundamental domain for the action $\langle u,u'\rangle\acts\A_S$. If the sides $\mc{H}_u$ can be chosen so that in addition we have $\bigcap_{u\in U}\mc{H}_u\neq\emptyset$, then we say that $U$ is \emph{$S$--geometric}.
\end{itemize}
If $\mscr{S}$ is a collection of Coxeter generating sets of $W$, we say that $U\sq W$ is \emph{$\mscr{S}$--compatible} if $U$ is $S$--compatible for all $S\in\mscr{S}$. We use the term \emph{$\mscr{S}$--parabolic} in the same way.
\end{defn}

It is convenient to record here two important facts of which we will make repeated use. The first is \cite[Fact~1.6]{CM07}, which was originally established in a different language in \cite{Hee}, and independently also in \cite[Theorem~1.2]{HRT}: 

\begin{lem}\label{lem:1.6}
    If a subset $U\sq S^W$ is universal and $2$--geometric in $\A_S$, then it is $S$--geometric.
\end{lem}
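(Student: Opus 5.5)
The plan is to linearise the problem using the Tits representation of $(W,S)$ and then use the theory of reflection subgroups due to Deodhar and Dyer. This is also the setting in which \cite{Hee} and \cite[Theorem~1.2]{HRT} work. Let $V$ be the geometric representation space, $B$ the Tits bilinear form, and $\Phi$ the root system. Each $S$--reflection $u$ corresponds to a pair of roots $\pm\alpha_u$. Halfspaces of $\A_S$ correspond to the open halfspaces $\{x:\langle x,\alpha\rangle>0\}$ of the dual Tits cone, intersected with its interior. So choosing sides $\mc{H}_u$ amounts to choosing a sign, that is, a root $\alpha_u$ for each $u\in U$.

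\textbf{Step 1: encode $2$--geometricity as conditions on $B$.} Fix $u,u'\in U$ with $uu'$ of order $m$. The condition that $\mc{H}_u\cap\mc{H}_{u'}$ is a fundamental domain for $\langle u,u'\rangle\acts\A_S$ should translate into the following:
\begin{itemize}
\item if $m<\infty$, then $B(\alpha_u,\alpha_{u'})=-\cos(\pi/m)$;
\item if $m=\infty$, then $B(\alpha_u,\alpha_{u'})\le -1$.
\end{itemize}
This is a rank-two computation inside the dihedral reflection subgroup $\langle u,u'\rangle$. Indeed, a sector that is a fundamental domain must be the region between two adjacent walls of the dihedral arrangement, bounded with the correct orientations. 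Any other choice of sides produces a sector containing at least two translates of a chamber of the dihedral group.

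\textbf{Step 2: identify $\{\alpha_u\}$ as a simple system of $W'=\langle U\rangle$.} By Step~1, the roots $\{\alpha_u\}_{u\in U}$ have pairwise Coxeter-type inner products. Universality says that $(W',U)$ is a Coxeter system whose Coxeter matrix is given by the orders of the products $uu'$. Let $\chi(W')$ be Dyer's canonical generating set of $W'$, whose positive roots bound the $W'$--chamber containing the fundamental chamber of $W$. Then $\{\alpha_u\}$ and $\{\alpha_c\}_{c\in\chi(W')}$ are both ``simple systems'' for the same reflection subgroup. I would invoke the Deodhar/Dyer uniqueness statement in the following form: a set of roots with these Coxeter-type inner products, whose reflections form a Coxeter generating set of $W'$ with the matching Coxeter matrix, equals $w\cdot\{\alpha_c\}$ for some $w\in W'$. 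The dual cone $D=\{x:\langle x,\alpha_u\rangle\ge 0\ \forall u\}$ is then the $w$--translate of the canonical $W'$--chamber.

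\textbf{Step 3: conclude nonemptiness.} The canonical $W'$--chamber contains the fundamental chamber of $W$. Hence $D=w\cdot(\text{canonical }W'\text{--chamber})$ contains the chamber $w\cdot C_0$ of the Tits cone. Translating back to $\A_S$, the intersection $\bigcap_u\mc{H}_u$ contains the chamber $w$ and in particular is nonempty, which is exactly $S$--geometricity.

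\textbf{Main obstacle.} I expect the hard part to be Step~2, and specifically ruling out that $D$ is a Vinberg-type cone which misses the interior of the Tits cone of $W$. In that case the walls would pairwise behave correctly but fail to bound a common region, which is the phenomenon universality must exclude. My first attempt would follow Vinberg's theorem. The cone $D$, with dihedral angles $\pi/m$ and $B\le -1$ for the non-adjacent faces, tiles a convex $W'$--invariant cone, and the reflection group acts with $D$ as a strict fundamental domain. This requires checking that $D$ has nonempty interior, which is where universality enters through the absence of extra relations. One then compares this tiled cone with the interior of the Tits cone of $W$. The latter is an open convex $W'$--invariant set meeting the tiling, since it meets the canonical $W'$--chamber. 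Consequently some $W'$--translate of $D$ meets the Tits cone interior, and therefore so does $D$ itself.

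If this comparison proves delicate, the fallback is to argue directly that $\bigcap_u\mc{H}_u$ is nonempty. First treat finite $U$: if $U$ is infinite, then finitely many walls already violate nonemptiness by local finiteness of walls near a chamber, so it suffices to handle finite subsets. For finite $U$, one could try to pick a chamber minimising the number of $u$ with $C\notin\mc{H}_u$ and show that a violated wall can be crossed using the rank-two condition from Step~1. However, I expect this naive minimisation to need the same root-theoretic input.
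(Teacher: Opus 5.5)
\textbf{There is a genuine gap, and it lies in Step~2.} Step~1 is fine as a necessary condition. The problem is the ``uniqueness statement'' you invoke: it is false. Steps~2--3 as written would prove that \emph{every} $2$--geometric choice of sides has nonempty intersection, and that fails.

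Take $W$ of type $\tilde A_2$ and $U=S$, which is trivially universal. Choose the sides $\mc{H}_s^*$ not containing $1_S$, that is, the roots $-\alpha_s$.
\begin{itemize}
\item For $s\neq t$, the longest element $w_{\{s,t\}}$ swaps $s$ and $t$ and maps $\mc{H}_s\cap\mc{H}_t$ onto $\mc{H}_t^*\cap\mc{H}_s^*$. So this choice is $2$--geometric.
\item The set $\{-\alpha_s\}$ has exactly the Gram matrix of $\Pi$, and its reflections form $S$.
\item Yet $-\Pi\neq w\Pi$ for every $w\in W$. Otherwise $|ws|<|w|$ for all $s\in S$, which forces $W$ to be finite.
\item Moreover $\bigcap_{s\in S}\mc{H}_s^*=\emptyset$ by \Cref{lem:ww_2}.
\end{itemize}
The bilinear form cannot see the substitution $\alpha_u\mapsto-\alpha_u$ on a whole irreducible component, and universality does nothing to exclude it. So the lemma is genuinely an existence statement: in general you must \emph{re-choose} the sides. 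The paper itself does exactly this in the Claim inside the proof of \Cref{prop:2-compatible->geometric} (``possibly after flipping all $\mc{H}_u$''). Your fallback minimisation has the same defect, since it also works with the originally given sides.

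The same example defeats your ``main obstacle'' argument. There $D=-\overline{C_0}$ has nonempty interior, and its translates tile $-\mathcal{T}$. Since $W$ is infinite, $-\mathcal{T}$ is disjoint from the interior of the Tits cone $\mathcal{T}$. Your observation that the Tits cone meets the canonical $W'$--chamber concerns the tiling by translates of \emph{that} chamber, not the tiling by translates of $D$, and the two need not coincide.

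What is missing is twofold.
\begin{enumerate}
\item[(a)] You must keep the information discarded in Step~1: for $m=\infty$, each chosen halfspace contains the other wall. The inequality $B\le-1$ is invariant under $(\alpha,\beta)\mapsto(-\alpha,-\beta)$, which turns two facing halfspaces into two disjoint ones. Keeping this information pins down the sides on every irreducible component of $U$ that is not $2$--spherical. On each $2$--spherical irreducible component, by contrast, exactly two opposite choices are $2$--geometric.
\item[(b)] You need an actual argument that the appropriate choice has a chamber on the positive side of all its walls. That is the entire content of the lemma, and the ``uniqueness'' you cite silently assumes it.
\end{enumerate}
The paper does not reprove this. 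It imports the result from \cite[Fact~1.6]{CM07}, originally \cite{Hee} and independently \cite[Theorem~1.2]{HRT}.
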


The second is an immediate consequence of \cite[Proposition~3.5]{CM07}:

\begin{lem}\label{lem:geometric+parabolic}
    If a subset $U\sq S^W$ is both $S$--geometric and $S$--parabolic, then $U$ is $S$--compatible.
\end{lem}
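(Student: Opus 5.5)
The plan is to reduce to a standard parabolic subgroup and then identify $U$ with the canonical Coxeter generating set of that reflection subgroup relative to a suitable chamber, in the sense of Deodhar and Dyer.

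First, a normalisation. Since $U$ is $S$--parabolic, there are $g\in W$ and $V\sq S$ with $\langle U\rangle=g\langle V\rangle g^{-1}$. Replacing $U$ by $g^{-1}Ug$ changes neither $S$--geometricity (because $W$ acts on $\A_S$ by automorphisms and carries walls and halfspaces to walls and halfspaces) nor the conclusion. So we may assume that $P:=\langle U\rangle=\langle V\rangle$ with $V\sq S$. Now fix sides $\mc{H}_u$ witnessing $S$--geometricity, and set $D:=\bigcap_{u\in U}\mc{H}_u$. Since $D\neq\emptyset$ and halfspaces are unions of (closed) chambers, we can choose a chamber $c\sq D$.

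The central step is to show that $D$ is a fundamental domain for the action $P\acts\A_S$, and that the walls of the elements of $U$ are exactly the walls of $P$--reflections that bound $D$. This is the content I would extract from \cite[Proposition~3.5]{CM07}, in the spirit of Hée's theorem. The pairwise fundamental-domain condition guarantees that no wall $\mc{W}_u$ is redundant. The fact that $U$ generates $P$, together with a standard Tits-cone argument, shows that no $P$--wall cuts through the interior of $D$.

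From this we deduce $U=\chi(P,c)$. Here $\chi(P,c)$ denotes the set of reflections $t\in P$ such that the only $P$--wall separating $c$ from $tc$ is $\mc{W}_t$; this is Dyer's canonical generating set. Indeed, for $u\in U$ the chambers $c$ and $uc$ lie in adjacent translates $D$ and $uD$, which are separated only by $\mc{W}_u$. Conversely, any $t\in\chi(P,c)$ has its wall adjacent to the $P$--chamber $D$, hence equal to some $\mc{W}_u$.

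On the other hand, for the standard parabolic $P=\langle V\rangle$ we can compute $\chi(P,c)$ directly. The $P$--chambers are the translates $pD_0$, where $D_0$ is the union of the chambers $w$ that are minimal in their coset $\langle V\rangle w$, and the walls bounding $D_0$ are exactly the walls of $V$. Hence $c\sq pD_0$ for some $p\in P$, and $\chi(P,c)=pVp^{-1}$. Combining the two computations gives $U=pVp^{-1}$, so $p^{-1}Up\sq S$, which is what we want.

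The main obstacle is the identification $U=\chi(P,c)$, namely proving that $D$ contains no $P$--wall in its interior. I would try first to invoke \cite[Proposition~3.5]{CM07} verbatim. Failing that, I would argue by contradiction: a $P$--wall crossing $D$ would yield a reflection $t\in P$ whose expression as a word in $U$ could be shortened using the dihedral fundamental-domain property. This would contradict the fact that $U$ is a Coxeter generating set of $P$ whose reflections are all accounted for by the sectors $\mc{H}_u\cap\mc{H}_{u'}$.
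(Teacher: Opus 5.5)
Your argument is correct and is essentially the paper's route: the paper gives no proof beyond calling the lemma an immediate consequence of \cite[Proposition~3.5]{CM07}, and you have written out that deduction. You conjugate into $\langle V\rangle$, identify $U$ with the canonical generators $\chi(P,c)$ for a chamber $c\sq\bigcap_u\mc{H}_u$, and read off $\chi(P,c)=pVp^{-1}$ from minimal coset representatives. The one substantive input, $U=\chi(P,c)$, is the Deodhar--Dyer characterisation of canonical simple systems of reflection subgroups (pairwise fundamental sectors plus a common chamber) \cite{Deodhar-geometric,Dyer}. Cite that directly rather than your ``shortening'' fallback, which as written is not an argument.
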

 
A nonempty subset $U\sq S$ is said to be \emph{spherical} if the subgroup $\langle U\rangle$ is finite, and \emph{$2$--spherical} if every product $uu'$ with $u,u'\in U$ has finite order. The set $U$ is of \emph{FC-type} if all its $2$--spherical subsets are in fact spherical. The set $U\sq S$ is \emph{irreducible} if it cannot be partitioned into two nonempty, mutually commuting subsets. As customary, we denote by $U^{\perp}$ the set of elements of $S\setminus U$ that commute with $U$, and refer to it as the \emph{orthogonal} of $U$.

Now, let $\mscr{C}$ be a family\footnote{Throughout the article, the only case of interest is when $\mscr{C}$ consists of all $W$--conjugates of certain subsets of $S$, as is the case in the statement of \Cref{thmintro:main}. However, it is convenient to not always have to assume this.} of subsets of $W$. Recall that elementary twists of the Coxeter system $(W,S)$ originate from partitions $S=X\sqcup K\sqcup K^{\perp}\sqcup Y$, where $K$ is a nonempty irreducible spherical set with longest element $w_K\in\langle K\rangle$, and where $X,Y$ are nonempty sets with $xy$ of infinite order for all $x\in X$ and $y\in Y$. We say that such a partition is \emph{relative to $\mscr{C}$} if every set $C\in\mscr{C}$ admits elements $g\in W$ such that $gCg^{-1}$ is contained in the subgroup generated by either $X\cup K\cup K^{\perp}$ or $K\cup K^{\perp}\cup Y$. In this case, we say that the Coxeter generating set $S':=X\cup K\cup K^{\perp}\cup w_KYw_K$ is obtained from $S$ by an elementary twist \emph{relative to $\mscr{C}$}.

Swapping the roles of $X$ and $Y$, that is, defining $S'':=w_KXw_K\cup K\cup K^{\perp}\cup Y$, is also an honest way of performing the twist, but this is of little consequence since the sets $S'$ and $S''$ are conjugate. Indeed $w_K$ is an involution with $w_KKw_K=K$ (see e.g.\ \cite[Lemma~4.6.1(5)]{Davis}), and so we have $S''=w_KS'w_K^{-1}$. Also note that, conversely, $S$ is obtained from $S'$ by a twist relative to $\mscr{C}$. 

\begin{defn}
    Two Coxeter generating sets $S,R\sq W$ are \emph{twist-equivalent} relative to $\mscr{C}$ if they differ by a finite sequence of elementary twists relative to $\mscr{C}$, possibly followed by a conjugation.
\end{defn}  

Conjugations can also be interspersed between any two consecutive twists without affecting the definition. In particular, twist-equivalence (relative to any family $\mscr{C}$) is an equivalence relation. 

\begin{rmk}\label{rmk:relative_preserves}
    If $S,R\sq W$ are twist-equivalent relative to a family $\mscr{C}$, then a set $U\in\mscr{C}$ is $S$--compatible if and only if it is $R$--compatible, and it is $S$--parabolic if and only if it is $R$--parabolic. In particular, twist-equivalent generating sets are always angle-compatible.
\end{rmk}

We can now give an overview of the proof of \Cref{thmintro:main}.

\smallskip
{\bf Proof sketch.} As in the statement, let $W$ be a Coxeter group with two angle-compatible Coxeter generating sets $S,R\sq W$, and let $\mscr{C}$ be the family of $\{S,R\}$--compatible subsets of $W$. Denote by $\mscr{T}_S$ the collection of all Coxeter generating sets of $W$ that are twist-equivalent to $S$ relative to $\mscr{C}$. Our goal is to show that $R\in\mscr{T}_S$.

Suppose that $S$ and $R$ are not conjugate. The basic idea is to find a low-complexity subset $\Om\sq S$ with $\Om\not\in\mscr{C}$, and then modify $S$ by elementary twists relative to $\mscr{C}$, so as to produce\footnote{It might seem more natural to keep $S$ fixed and modify $R$ by twists, but there are good reasons for our choice. One of them is to maintain notational coherence with \cite{CP10} later in the article.} 
some $S'\in\mscr{T}_S$ with a subset $\Om'\sq S'$ such that $\Om'$ is $R$--compatible and $\langle\Om'\rangle=\langle\Om\rangle$. If this is possible, then there are strictly more $\{S',R\}$--compatible subsets of $W$ than there are $\{S,R\}$--compatible ones (up to $W$--conjugacy). Iterating the procedure finitely many times, we eventually end up in the situation where $S$ and $R$ are conjugate. Note that there can be a resonable hope of carrying out such a strategy only if the set $\Om$ is $\mscr{T}_S$--parabolic, so we will need a source of such sets.

In greater detail, the proof of \Cref{thmintro:main} consists of two steps, corresponding to Theorems~\ref{thm:step_one} and~\ref{thm:step_two} below. Here and in the rest of the article, we encourage the reader to keep in mind the case when $(W,S)$ is of FC-type: indeed, \Cref{thmintro:main} is already new in this case and its proof still contains all the key ideas, while avoiding some of the technical complications in Sections~\ref{sub:shrub_defn} and~\ref{sect:markings}. 

\smallskip
{\bf The first step.} This roughly amounts to analysing the structure of the minimal subsets $\Om\sq S$ such that $\Om$ is $\mscr{T}_S\cup\{R\}$--parabolic and $\Om\not\in\mscr{C}$. Note that, if $S$ and $R$ are not conjugate, there indeed exist subsets of $S$ combining the latter two properties: for instance, $S$ itself is $\mscr{T}_S\cup\{R\}$--parabolic and not in $\mscr{C}$. Therefore, there also exist \emph{minimal} sets with these properties.

It turns out that these minimal sets $\Om$ have a particularly simple structure, which we term a \emph{$\mscr{C}$--shrub} (\Cref{defn:shrub}). Roughly, this means that $\Om$ decomposes as a finite tree (or rather a finite forest) of sets in $\mscr{C}$ and that, conversely, every subset of $\Om$ lying in $\mscr{C}$ is contained in a ``vertex set'' of this tree-like decomposition. The simplest example of a $\mscr{C}$--shrub is a subset of $S$ of the form $A\cup B$ with $A,B\in\mscr{C}$ and $\{a,b\}\not\in\mscr{C}$ for all $a\in A\setminus B$ and $b\in B\setminus A$. The fact that minimal $\mscr{T}_S\cup\{R\}$--parabolic subsets $\Om\sq S$ with $\Om\not\in\mscr{C}$ are $\mscr{C}$--shrubs is shown in \Cref{prop:minimal->shrub}, where we make fundamental use of some ideas of Mihalik and Tschantz from \cite{MT09}. The proof of this result also implicitly uses various ideas from JSJ theory (see \cite{GL-JSJ} for a survey).

When our Coxeter groups are \emph{not} of FC-type, it is important that our sets $\Om$ have an additional property that we call being \emph{tucked}\footnote{Because it means that $\Om$ does not have any ``small portions'' of irreducible, non-spherical, $2$--spherical subsets ``sticking out'' from itself.} (\Cref{defn:tucked}), but we prefer not to elaborate on this slightly technical point at this stage. In summary, the first step of the proof amounts to the following:

\begin{thm}\label{thm:step_one}
    If $S$ and $R$ are not conjugate, then there exists a subset $\Om\sq S$ such that:
    \begin{enumerate}
        \item $\Om$ is $\mscr{T}_S\cup\{R\}$--parabolic and $\Om\not\in\mscr{C}$;
        \item $\Om$ is a $\mscr{C}$--shrub (\Cref{defn:shrub});
        \item $\Om$ is $\mscr{T}_S$--tucked (\Cref{defn:tucked}).
    \end{enumerate}
\end{thm}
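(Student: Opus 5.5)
Since $S$ and $R$ are not conjugate, $S\notin\mscr{C}$; otherwise $gSg^{-1}\sq R$ for some $g\in W$, and both being Coxeter generating sets of the same group this forces $gSg^{-1}=R$. Also $S$ is trivially $\mscr{T}_S\cup\{R\}$--parabolic, since $\langle S\rangle=W$ is a parabolic subgroup for every Coxeter generating set. So among all pairs $(S',\Om)$ with $S'\in\mscr{T}_S$, $\Om\sq S'$, $\Om$ $\mscr{T}_S\cup\{R\}$--parabolic and $\Om\notin\mscr{C}$, we may choose one with $|\Om|$ minimal. It is harmless to replace $S$ by $S'$, because $\mscr{T}_{S'}=\mscr{T}_S$ and $\mscr{C}$ is unchanged by \Cref{rmk:relative_preserves}. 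This gives (1), and the work is to prove (2) and (3) for a minimal $\Om$. Minimising over all of $\mscr{T}_S$, and not only over subsets of the fixed $S$, gives extra room: whenever we can twist relative to $\mscr{C}$ to exhibit a smaller bad parabolic set, we contradict minimality.

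\textbf{Shrub structure (this is \Cref{prop:minimal->shrub}).} Let $\mscr{C}_\Om$ be the maximal subsets of $\Om$ lying in $\mscr{C}$. First I will show that $\Om$ is not spherical, and more generally not contained in any $R$--compatible configuration. Here I use angle-compatibility: all spherical subsets of size $\le2$ lie in $\mscr{C}$. I would then like to upgrade this to all spherical subsets of $\Om$ being in $\mscr{C}$, using \Cref{lem:1.6} and \Cref{lem:geometric+parabolic}. The argument is that spherical sets of reflections which are pairwise geometric are universal and $2$--geometric in $\A_R$, hence $R$--geometric; being $R$--parabolic as well, they are $R$--compatible. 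The same argument applies to $2$--spherical irreducible pieces by Caprace--M\"uhlherr strong rigidity.

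Next, following Mihalik--Tschantz, I consider the visual decomposition of $\langle\Om\rangle$. Using that every subset of $\Om$ in $\mscr{C}$ is $R$--parabolic and that each $\Om\setminus\{s\}$ is strictly smaller, I plan to show the following. If some member of $\mscr{C}_\Om$ is not contained in a vertex group of a tree decomposition of $\Om$ into sets of $\mscr{C}$, then one can pass to a proper subset $\Om_0\subsetneq\Om$ with $\Om_0\notin\mscr{C}$ that is still $\mscr{T}_S\cup\{R\}$--parabolic. That subset would be an intersection of parabolics, so it is parabolic by the standard intersection property, after possibly twisting along a separating spherical set $K$. This contradicts minimality. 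The JSJ-flavoured part is to show that the members of $\mscr{C}_\Om$ are elliptic in the visual splittings of $\langle\Om\rangle$ over spherical subgroups. Ellipticity then forces them to organise as a forest, with every $\mscr{C}$--subset contained in a single vertex set.

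\textbf{Tuckedness.} For (3), I would argue by contradiction in the same way. Suppose an irreducible non-spherical $2$--spherical subset $J\sq S$ sticks out of $\Om$, meaning it meets $\Om$ in a small portion without being absorbed into it. Strong rigidity of $2$--spherical systems makes $J$ conjugate into every member of $\mscr{T}_S\cup\{R\}$, so $J\in\mscr{C}$. Intersecting $\Om$ with the parabolic closure of $J$, or replacing $\Om$ by a twisted variant that avoids the protrusion, again produces a smaller bad parabolic set, contradicting minimality.

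\textbf{Main obstacle.} I expect the hardest step to be the shrub property, specifically showing that parabolicity with respect to \emph{every} member of $\mscr{T}_S$ is preserved when passing to the smaller subset. This requires controlling how the intersection $\Om\cap$ (vertex set) behaves under all twists simultaneously, and I would attempt it using the ellipticity of $\mscr{C}$--sets in the Mihalik--Tschantz splittings.
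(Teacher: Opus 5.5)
Item~(3) has a genuine gap. Nothing in your argument makes a minimal set tucked, and the contradiction you propose cannot arise. Suppose $J\in\mscr{C}$ is non-spherical and irreducible, $\Om$ meets two components of $\wh S_{\mscr{C}}\setminus(J\cup J^{\perp})$, and $J\setminus\Om$ is nonempty and spherical. Intersecting with the parabolic closure of $J$ gives $\langle\Om\rangle\cap\langle J\rangle=\langle\Om\cap J\rangle$. Since $\mscr{C}$ is closed under passing to subsets, $\Om\cap J\in\mscr{C}$, so you never obtain a smaller set outside $\mscr{C}$. Nor is there a ``twisted variant avoiding the protrusion'': elementary twists only exist over spherical sets. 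That is exactly why the spherical analogue of tuckedness (inflexibility, \Cref{prop:inflexible}) can be extracted from $\mscr{T}_S$--parabolicity. Over a non-spherical $J$ there is no operation whose outcome minimality could contradict. Moreover, the $J$ in \Cref{defn:tucked} is any non-spherical irreducible member of $\mscr{C}$, not necessarily $2$--spherical. Since $J\in\mscr{C}$ is a hypothesis there, strong rigidity plays no role. Finally, $\mscr{T}_S$--tuckedness must hold for every $S'\in\mscr{T}_S$ and every $\Om'\sq S'$ with $\langle\Om'\rangle=\langle\Om\rangle$, which you do not address.

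The missing idea is to minimise not inside $S$ but inside a carefully prepared ambient set. The paper first applies \Cref{lem:constricted}: either $S$ is constricted, or one passes to $\Om_1:=J^{\perp}$ for a maximal aspherical $J\in\mscr{C}$ with $J^{\perp}\notin\mscr{C}$. As $\langle J^{\perp}\rangle$ is the centraliser of $J$, this set is $\mscr{T}_S\cup\{R\}$--parabolic, constricted and $\mscr{T}_S$--tucked. Next, \Cref{lem:no_splitting_over_clique} repeatedly passes to vertex groups of a maximal $S$--visual splitting whose edge groups are conjugate to $\langle C\rangle$, for a least-cardinality separating clique $C$. These vertex groups stay $\mscr{T}_S\cup\{R\}$--parabolic and $\mscr{T}_S$--tucked. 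The process ends either at a shrub or at a set $\Om_2$ such that no clique disconnects $(\wh\Om_2)_{\mscr{C}}$. Only then is a minimal $\Om_3\sq\Om_2$ chosen. It is a shrub by \Cref{prop:minimal->shrub}. It is tucked because a violating $J$ would satisfy $J\sq\Om_2$ by tuckedness of $\Om_2$, and then constrictedness makes $(J\cup J^{\perp})\cap\Om_2$ a clique disconnecting $(\wh\Om_2)_{\mscr{C}}$.

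Two lesser points. First, citing \Cref{prop:minimal->shrub} does give item~(2) for an inclusion-minimal $\Om\sq S$. Your own sketch of it, however, omits Item~(2) of \Cref{defn:shrub}. It also lacks the tool that makes vertex groups parabolic for \emph{all} of $\mscr{T}_S\cup\{R\}$, namely \Cref{cor:universally_parabolic_vertex_stabilisers} with \Cref{lem:universally_elliptic->parabolic}; intersections of $S$--parabolics only give $S$--parabolicity. Second, minimising over pairs $(S',\Om)$ yields $\Om\sq S'$ instead of $\Om\sq S$. \Cref{prop:minimal->shrub} needs $\Om\sq S$ to deduce from $\Om\notin\mscr{C}$ that $\Om$ is not $R$--compatible, and this fails e.g.\ for $\Om=R$ when $R\in\mscr{T}_S$.
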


\smallskip
{\bf Intermission.} Before continuing with the proof overview of \Cref{thmintro:main}, we would like to highlight an important idea, which should further clarify our strategy: namely, we should consider the subsets of $S$ that lie in $\mscr{C}$ as generalisations of $2$--spherical subsets, and we should expect them to behave identically in all the important ways.

To begin with, it follows from \cite{CM07} that all $2$--spherical subsets of $S$ lie in $\mscr{C}$. Building on \cite{CM07}, we prove an analogue of \Cref{lem:1.6} in this context: a subset $U\sq S$ lies in $\mscr{C}$ if and only if all its subsets of cardinality $\leq 2$ do (\Cref{cor:2-compatible->compatible}). These observations naturally lead us to consider the graph $\wh S_{\mscr{C}}$ obtained as follows (we define this a little more generally for later use).

\begin{defn}\label{defn:hat_graph}
    Let $U\sq S$ be a subset. We denote by $\wh U_{\mscr{C}}$ the graph that has vertex set identified with $U$ and edges corresponding to pairs of distinct elements $u,u'\in U$ with $\{u,u'\}\in\mscr{C}$. 
\end{defn}

Since $S$ and $R$ are angle-compatible, the graph $\wh S_{\mscr{C}}$ is simply obtained from the presentation graph of $(W,S)$ by adding edges connecting the elements $s,s'\in S$ such that $\langle s,s'\rangle$ is infinite and $\{s,s'\}$ is $R$--compatible. By \Cref{cor:2-compatible->compatible}, a subset of $S$ lies in $\mscr{C}$ if and only if it spans a clique in $\wh S_{\mscr{C}}$.

Going back to the statement of \Cref{thm:step_one}, the fact that $\Om$ is a $\mscr{C}$--shrub implies that the subgraph $\wh\Om_{\mscr{C}}\sq\wh S_{\mscr{C}}$ is chordal. Forgetting about the (inconsequential) difference between $\wh\Om_{\mscr{C}}$ and the presentation graph of $\Om$, it follows from the work of Ratcliffe and Tschantz \cite{Ratcliffe-Tschantz} that there are elementary twists of the Coxeter group $\langle\Om\rangle$ taking its Coxeter generating set $\Om\sq S$ to a subset of $R$ (up to conjugacy). 

The real difficulty is that these elementary twists of the Coxeter group $\langle\Om\rangle$ have no reason whatsoever to extend to elementary twists of the \emph{ambient} Coxeter group $W$. Consequently, we will have to work much harder in the second step of the proof of \Cref{thmintro:main}.

\smallskip
{\bf The second step.} The final step of the proof is the following ``geometrisation'' result.

\begin{thm}\label{thm:step_two}
    Suppose that \Cref{thmintro:main} holds for all proper $S$--parabolic subgroups of $W$. Let $\Om\sq S$ be a $\mscr{T}_S$--parabolic, $\mscr{T}_S$--tucked $\mscr{C}$--shrub. Then there exist a generating set $S'\in\mscr{T}_S$ and a subset $\Om'\sq S'$ with $\langle\Om'\rangle=\langle\Om\rangle$ such that $\Om'$ is $R$--geometric.
\end{thm}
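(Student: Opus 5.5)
The plan is to argue by induction on the shrub structure of $\Om$, geometrising one vertex set at a time inside the Davis complex $\A_R$. Since every vertex set of the shrub lies in $\mscr{C}$, each is $R$--compatible, hence $R$--geometric. The difficulty is to choose conjugates of these pieces that \emph{glue coherently}: they should all be realised, after twisting $S$, as subsets of a single $S'\in\mscr{T}_S$ whose union $\Om'$ is $2$--geometric in $\A_R$. \Cref{lem:1.6} then upgrades $2$--geometric to $R$--geometric, since $\Om'$ is universal, being a subset of a Coxeter generating set.

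\textbf{Setup.} First I will write $\Om$ as a finite forest of sets $\Om_v\in\mscr{C}$, with adjacent vertex sets overlapping in sets $\Om_e$. By the definition of a shrub, every pair $\{a,b\}\sq\Om$ lying in $\mscr{C}$ is contained in a single vertex set. Every other pair generates an infinite dihedral group and is not $R$--compatible. For such a pair, $2$--geometricity in $\A_R$ only requires the two walls to be disjoint, or at least not to cross, with the chosen halfspaces nested appropriately. So the conditions to arrange are local: compatibility inside each vertex set, which is automatic once it is conjugated into $R$, and a nesting condition for pairs in different vertex sets.

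\textbf{Induction over the forest.} I will pick a leaf vertex $v$ with neighbour set $\Om_e=\Om_v\cap\Om_u$, and apply induction to the smaller shrub $\Om\setminus(\Om_v\setminus\Om_e)$. This uses the inductive hypothesis that \Cref{thmintro:main} holds on proper $S$--parabolic subgroups, and it is the place where $\mscr{T}_S$--parabolicity of sub-shrubs is needed. The output is some $S''\in\mscr{T}_S$ in which the rest of the shrub has been made $R$--geometric, with a chamber $c$ in the common intersection of its halfspaces. Now $\Om_v$ is conjugate into $R$ and contains $\Om_e$. Using \Cref{lem:geometric+parabolic} on $\Om_e$, together with the fact that $\Om_e$ is spherical or at least $\mscr{C}$--compatible, I will conjugate the image of $\Om_v$ by an element of the normaliser-type coset of $\langle\Om_e\rangle$ so that it fixes the residue of $c$ corresponding to $\Om_e$. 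The remaining issue is that the walls of $\Om_v\setminus\Om_e$ might cross walls of the rest, or point the wrong way. To fix this I will perform elementary twists over the spherical irreducible components $K$ of $\Om_e$ (relative to $\mscr{C}$). Replacing $\Om_v\setminus\Om_e$ by $w_K(\Om_v\setminus\Om_e)w_K$ flips the relevant halfspaces. The separation $\{a,b\}\notin\mscr{C}$, together with the Mihalik--Tschantz-type splitting of $W$ over $\langle\Om_e\cup\Om_e^\perp\rangle$, should guarantee that these twists exist in the ambient group, with $X,Y$ separated by $K\cup K^\perp$ in $S''$.

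\textbf{Main obstacle.} The hard step is the last one: turning a splitting of $\langle\Om\rangle$ over $\Om_e$ into a genuine elementary twist of $W$ relative to $\mscr{C}$. Elements of $S''\setminus\Om$ may connect both sides of $\Om_e$, so that $\Om_e\cup\Om_e^{\perp}$ does not separate. I would first try to use the $\mscr{T}_S$--tucked hypothesis and the minimality built into \Cref{thm:step_one} to show the following: any element adjacent in $\wh S_{\mscr{C}}$ to both sides would either make a larger clique, contradicting the shrub property, or produce a $2$--spherical non-spherical piece sticking out, contradicting tuckedness. When $\Om_e$ is not spherical, twists are not available. In that case I would use Caprace--M\"uhlherr strong rigidity on the $2$--spherical part, together with the Caprace--Przytycki marking framework, to show that the gluing is already forced up to conjugation inside $\langle\Om_e\rangle$.
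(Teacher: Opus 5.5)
\textbf{Gap 1: the induction over the forest is not available.} The sub-shrub $\Om\setminus(\Om_v\setminus\Om_e)$ need not be $\mscr{T}_S$--parabolic or $\mscr{T}_S$--tucked. \Cref{ex:converse} shows exactly how parabolicity is lost once twists become available, so the statement cannot be applied recursively.

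Nor does the hypothesis that \Cref{thmintro:main} holds for proper parabolic subgroups fill this in. Applied to a subgroup $P$, it only produces twists \emph{of $P$}. These need not extend to elementary twists of $W$, which is precisely the difficulty stressed in \Cref{sect:strategy}. So it never yields an $S''\in\mscr{T}_S$. Moreover, $\Om$ is not assumed $R$--parabolic in this statement, so there is not even a second Coxeter generating set of $\langle\Om\rangle$ to which the conjecture could be applied.

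In the paper, the inductive hypothesis enters only through \Cref{prop:inflexible} (via \Cref{lem:hard}). Its role is to guarantee that every $\Om'\sq S'\in\mscr{T}_S$ with $\langle\Om'\rangle=\langle\Om\rangle$ is $\mscr{C}$--inflexible. The whole shrub is then treated at once, using a fixed splitting $\langle\Om\rangle\acts\mc{T}$ that remains semivisual under the twists (\Cref{lem:semivisual_stays}).

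\textbf{Gap 2: the gluing step uses a freedom you do not have, and this is the more serious gap.} You cannot conjugate $\Om_v$ by an element of your choosing; the only moves are elementary twists of $W$ relative to $\mscr{C}$. Twists over irreducible factors $K$ of $\Om_e$ cannot repair the main misalignment.

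To see this, fix a $\langle\Om_u\rangle$--standard subcomplex $\mc{U}\sq\A_R$ with geometric point $p_u$, and a $\langle\Om_v\rangle$--standard subcomplex $\mc{V}$. The basic obstruction is that the $\langle\Om_e\rangle$--standard subcomplex $\pi_{\mc U}(\mc V)$ does not contain $p_u$. Twisting the leaf by $w_K$ replaces $\pi_{\mc U}(\mc V)$ by $w_K\pi_{\mc U}(\mc V)=\pi_{\mc U}(\mc V)$, because $\langle\Om_e\rangle$ stabilises every $\langle\Om_e\rangle$--standard subcomplex. Such twists therefore only fix the finer ``excellence'' defect, which is Item~(4) of \Cref{thm:shortening}.

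To fix the ``goodness'' defect, you must twist over irreducible spherical subsets of the \emph{vertex} set $\Om_u$ that meet $\Om_u\setminus\Om_v$. You must then prove three things: that suitable such sets exist, that they separate $\Om_u$ from $\Om_v$ in $\wh S_{\mscr{C}}$ (so that the twist is defined in $W$), and that the resulting product strictly decreases the distance.

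This is the content of the paper's atoms (\Cref{defn:atom}, \Cref{lem:atoms_1}, \Cref{lem:atoms_5}, \Cref{prop:atoms_2}). An atom $T$ is shown to be spherical, the irreducible factors of $\wh L$ are shown to be separating, and $w_Tw_{\wh L}$ is shown to shorten $d(p_V,\pi_V(\mf{R}(W)))$. All of this rests on the generalised marking machinery (\Cref{prop:markings_new}), applied to strongly inseparable pairs that are built from inflexibility and tuckedness (\Cref{lem:separation}, \Cref{lem:C-admissible_equivalence}). Usually several twists are needed per step, and one must also check that they do not spoil neighbours that are already good or excellent (\Cref{lem:sep_good}, \Cref{lem:sep_excellent}).

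Your suggested route to separation does not work. You propose to show that $\Om_e\cup\Om_e^{\perp}$ separates, using tuckedness and the minimality from \Cref{thm:step_one}. But minimality is not a hypothesis here, and $\Om_e\cup\Om_e^{\perp}$ typically does not separate $\wh S_{\mscr{C}}$; the separating sets live in the vertex set, not the edge set.

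Finally, non-spherical factors of $\Om_e$ need no separate rigidity argument. An irreducible non-spherical $K$ has a unique $K$--geometric point in each standard subcomplex, so such factors never contribute to the defect.
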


An important ingredient in the proof of \Cref{thm:step_two} is the framework of hierarchies and markings developed by Caprace and Przytycki in their work on twist-rigid Coxeter groups \cite{CP10} (by analogy with the Masur--Minsky machinery of the same name \cite{Masur-Minsky}). Roughly, a marking is a gadget consisting of a reflection $\rho\in S^W=R^W$ and a marker $m\in S$, and it encodes the choice of one of the two sides of the $\rho$--fixed wall in the Davis complex $\A_R$. Classically, one aims to show that this halfspace choice does not change if we modify the marking according to certain moves taking place within the presentation graph of $S$. It turns out that the graph $\wh S_{\mscr{C}}$ works just as well for this purpose and, strikingly, the original proofs from \cite{CP10} require almost no modifications in order to be carried out in this context (see \Cref{sect:markings}).

With this generalised framework of hierarchies and markings in hand, the core of the proof of \Cref{thm:step_two} lies in a ``shortening theorem'' (\Cref{thm:shortening}). Roughly, the idea is to identify a measure of complexity for the set $\Om\sq S$, which describes how far $\Om$ is from being an $R$--geometric set of reflections. We then proceed to identify a finite sequence of elementary twists that reduces the complexity of $\Om$, thus bringing $\Om$ a little closer to being $R$--geometric. 

As hinted at above, the difficulty lies in ensuring that all necessary twists extend to the ambient Coxeter group $W$. We also point out that a \emph{single} twist rarely suffices to reduce complexity, so one really needs a \emph{finite sequence} of twists at each shortening step.

\smallskip
{\bf Concluding the proof.} We obtain \Cref{thm:step_one} in \Cref{sect:shrubs}, and \Cref{thm:step_two} in \Cref{sect:geometrisation}. Assuming these two results, we can immediately give a proof of \Cref{thmintro:main}.

\begin{proof}[Proof of \Cref{thmintro:main}]
    We prove the theorem by induction on the cardinality of $S$. The base step is trivial. In the inductive step, we aim to prove the theorem for a Coxeter system $(W,S)$ under the assumption that it holds for all proper $S$--parabolic subgroups of $W$.
    
    Let $R\sq W$ be a Coxeter generating set that is angle-compatible with $S$. Define the families $\mscr{C}$ and $\mscr{T}_S$ as above. For every generating set $S'\in\mscr{T}_S$, the sets in the family $\mscr{C}$ are all $\{S',R\}$--compatible (see \Cref{rmk:relative_preserves}). Thus, up to replacing $S$ with some $S'\in\mscr{T}_S$ that maximises the family of $\{S',R\}$--compatible sets, we can assume that:
    \begin{itemize}
        \item[$(*)$] for every $S'\in\mscr{T}_S$, the family of $\{S',R\}$--compatible sets coincides with $\mscr{C}$.
    \end{itemize}
    Under Assumption~$(*)$, our goal becomes showing that $S$ and $R$ are conjugate.

    Suppose for the sake of contradiction that $S$ and $R$ are not conjugate. \Cref{thm:step_one} yields a $\mscr{T}_S$--tucked, $\mscr{T}_S\cup\{R\}$--parabolic $\mscr{C}$--shrub $\Om\sq S$ not lying in $\mscr{C}$. The inductive hypothesis then allows us to apply \Cref{thm:step_two}, so that we obtain a generating set $S'\in\mscr{T}_S$ and an $R$--geometric subset $\Om'\sq S'$ such that $\langle\Om\rangle=\langle\Om'\rangle$. The latter subgroup is $R$--parabolic because so was the set $\Om$, and therefore the fact that $\Om'$ is $R$--geometric implies that $\Om'$ is $R$--compatible (\Cref{lem:geometric+parabolic}). Assumption~$(*)$ then implies that $\Om'\in\mscr{C}$, so that $\Om'$ is actually conjugate to a subset of $S$. This contradicts the fact $\langle\Om'\rangle=\langle\Om\rangle$ and $\Om\not\in\mscr{C}$, concluding the proof of the theorem.
\end{proof}

Here is the plan for the rest of the article. In \Cref{sect:splittings}, we discuss actions of Coxeter groups on simplicial trees, and we develop some ideas from \cite{MT09} into a result producing many $\mscr{T}_S$--parabolic subsets of $S$ (\Cref{cor:universally_parabolic_vertex_stabilisers}). Then, in \Cref{sect:shrubs}, we show that the subsets of $S$ lying in $\mscr{C}$ are precisely those that span cliques in the graph $\wh S_{\mscr{C}}$ (\Cref{cor:2-compatible->compatible}), we define $\mscr{C}$--shrubs (\Cref{defn:shrub}), and we prove \Cref{thm:step_one} (the first step in the proof of \Cref{thmintro:main}).

The subsequent sections are entirely devoted to the proof of \Cref{thm:step_two} (the second step in the proof of \Cref{thmintro:main}). In \Cref{sect:inflexibility}, we identify a simple combinatorial property of $\mscr{T}_S$--parabolic subsets of $S$, which we term \emph{inflexibility} (\Cref{prop:inflexible}). In \Cref{sect:markings}, we generalise in various directions the results of \cite{CP10} on markings and hierarchies; here the main result is \Cref{prop:markings_new}. Finally, \Cref{sect:geometrisation} constitutes the true core of the article, and its most involved part: it contains the proof of \Cref{thm:step_two} and of the ``shortening theorem'' (\Cref{thm:shortening}).

In \Cref{app:automorphisms}, we prove finite generation of the automorphism group $\Aut(W)$ (\Cref{corintro:automorphisms}), and we briefly recall the additional results from \cite{Howlett-Muehlherr,Marquis-Muehlherr} that combined with \Cref{thmintro:main} yield the solution to the Isomorphism Problem for Coxeter groups (\Cref{corintro:iso_problem}).

\section{Splittings}\label{sect:splittings}

This section is devoted to actions of Coxeter groups on simplicial trees, and to developing some of the ideas in \cite{MT09} for our needs. The main result is \Cref{cor:universally_parabolic_vertex_stabilisers}.

\subsection{Generalities}

Let $W$ be a Coxeter group. A \emph{splitting} is a non-elliptic, minimal action $W\acts \mc{T}$ on a simplicial tree without edge-inversions. Given a family $\mscr{C}$ of subsets\footnote{One typically works with families of \emph{subgroups} rather than \emph{subsets} in JSJ theory, but the former are more convenient here. The reader can safely replace each subset with the subgroup that it generates if they are so inclined.} of $W$, the splitting is \emph{relative to} $\mscr{C}$ if each set in $\mscr{C}$ fixes a vertex of $\mc{T}$. The splitting is \emph{one-edge} if $W$ acts edge-transitively. Since $W$ is generated by torsion elements, the quotient $W/\mc{T}$ is a (finite) tree for all splittings $\mc{T}$. In particular, there exist \emph{fundamental subtrees}: finite subtrees $\mc{F}\sq \mc{T}$ on which the quotient projection restricts to a bijection $\mc{F}\ra \mc{T}/W$.

A splitting $W\acts \mc{T}_1$ \emph{dominates} another splitting $W\acts \mc{T}_2$ if the subgroups that are elliptic in $\mc{T}_1$ are also elliptic in $\mc{T}_2$; equivalently, there is a continuous $W$--equivariant map $\mc{T}_1\ra \mc{T}_2$. We say that $\mc{T}_1$ \emph{strongly dominates} $\mc{T}_2$ if $\mc{T}_1$ dominates $\mc{T}_2$ and, in addition, each edge-stabiliser of $\mc{T}_1$ is contained in an edge-stabiliser of $\mc{T}_2$.

A splitting is \emph{strongly reduced} if no vertex-stabiliser fixes an edge. Note that, in a strongly reduced splitting, vertex-stabilisers are precisely maximal elliptic subgroups.

\begin{lem}\label{lem:strongly_reduced}
    Let $W\acts \mc{T}$ be a splitting such that no edge-stabiliser properly contains a conjugate of itself. Then there exists a splitting $W\acts \mc{T}_*$, obtained from $\mc{T}$ by collapsing edges, such that $\mc{T}_*$ is strongly reduced and has the same elliptic subgroups as $\mc{T}$.
\end{lem}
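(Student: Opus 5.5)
We argue by induction on the number of edge-orbits of $\mc{T}$, which is finite because $\mc{T}/W$ is a finite tree. If $\mc{T}$ is already strongly reduced we take $\mc{T}_*=\mc{T}$. Otherwise some vertex $v$ has stabiliser $W_v$ fixing an edge $e$; since $W_v$ fixes both $v$ and $e$, it fixes the geodesic from $v$ to $e$. So we may assume $e$ is incident to $v$, say $e=[v,w]$. Then $W_v\leq W_e\leq W_v$, hence $W_e=W_v$, and in particular $W_v\leq W_w$.

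We now collapse the orbit $W\cdot e$. First we check that this is a legitimate collapse. Suppose $v$ and $w$ lie in the same $W$--orbit, say $w=gv$. Then $W_v\le W_w=gW_vg^{-1}$. If the inclusion were strict, the edge-stabiliser $W_e=W_v$ would properly contain its conjugate $g^{-1}W_eg$ (after rewriting), contradicting the hypothesis. If instead $W_v=W_w$, then $g$ normalises $W_v$ and $g$ acts hyperbolically with axis through $v,w$. In that case $W_v$ fixes the whole axis, so $W_v$ fixes at least two edges in distinct orbits-type positions. Here the finite-tree quotient $\mc{T}/W$ is what we use: since $W$ is generated by torsion, the quotient graph has no loops, so $v$ and $w$ cannot be in the same orbit at all. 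This case therefore does not occur, and the two endpoints of $e$ lie in distinct orbits. (We should double-check that the quotient being a tree genuinely rules out an edge joining two vertices of the same orbit; it does, since such an edge would give a loop in $\mc{T}/W$.)

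Collapsing each translate of $e$ to a point yields a tree $\mc{T}'$ with a $W$--equivariant collapse map $\mc{T}\ra\mc{T}'$, and $W$ acts on $\mc{T}'$ without inversions. The new vertex obtained from $v$ and $w$ has stabiliser $\langle W_v,W_w\rangle=W_w$, because the component of $W\cdot e$ containing $e$ is a star around $w$ (the translates $ge$ with $g\in W_w$), and its setwise stabiliser is $W_w$. Edge-stabilisers of $\mc{T}'$ are edge-stabilisers of $\mc{T}$, so the hypothesis is inherited. Minimality is inherited because the image of a minimal invariant subtree is minimal. Non-ellipticity is inherited because $\mc{T}'$ still has edges: the edge-orbits other than $W\cdot e$ survive, and if there were none, $\mc{T}/W$ would be a single edge and $W$ would fix the vertex $w$ of $\mc{T}$, which is impossible.

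The elliptic subgroups are unchanged. Every subgroup elliptic in $\mc{T}$ is elliptic in $\mc{T}'$. Conversely, a vertex-stabiliser of $\mc{T}'$ is either an old one or equal to $W_w$, both of which are elliptic in $\mc{T}$.

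Since the number of edge-orbits drops by one, induction produces the desired strongly reduced $\mc{T}_*$. The main obstacle is the step showing that the endpoints of the collapsed edge are not $W$--equivalent, equivalently that the collapse does not create a vertex with strictly larger stabiliser. This is exactly where the hypothesis on edge-stabilisers, together with the fact that $\mc{T}/W$ is a tree, is needed.
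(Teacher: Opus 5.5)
Your proof is correct and takes essentially the same route as the paper, which defers to the iterated edge-collapsing argument of \cite[Lemma~4.13]{Fio11} and adds exactly your key observation: because $\mc{T}/W$ is a tree, no collapsed edge joins two vertices of the same $W$--orbit, so the new vertex-stabilisers are conjugates of $W_w$ and the elliptic subgroups are unchanged. Note that once you invoke the absence of loops in $\mc{T}/W$ the same-orbit case is ruled out outright, so your preliminary discussion of it, including the muddled sentence about the axis of $g$, can simply be deleted, and, contrary to your closing remark, your argument never actually uses the hypothesis on edge-stabilisers.
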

\begin{proof}
    See the proof of \cite[Lemma~4.13]{Fio11}. The fact that $\mc{T}/W$ is simply connected ensures that $\mc{T}$ and $\mc{T}_*$ have the same elliptic subgroups.
\end{proof}

For any Coxeter generating set $S\sq W$ and every $S$--parabolic subgroup $P\leq W$, we have that $P$ does not properly contain any conjugates of itself. This allows us to apply \Cref{lem:strongly_reduced} to splittings with parabolic edge groups, and this is the only situation in which we will make use of the lemma.

\subsection{Visual splittings}\label{sub:visual}

Let $(W,S)$ be a Coxeter system. The following notion originated in \cite{MT09}.

\begin{defn}\label{defn:S-visual}
    A splitting $W\acts \mc{T}$ is \emph{$S$--visual} if there exists a fundamental subtree $\mc{F}\sq \mc{T}$ with the property that, for each vertex $v\in \mc{F}$, there exists a subset $S_v\sq S$ such that the $W$--stabiliser of $v$ is precisely the standard parabolic subgroup $\langle S_v\rangle$.
\end{defn}

Note that the $W$--stabiliser of each edge $e\sq \mc{F}$ is then also of the form $\langle S_e\rangle$ for a subset $S_e\sq S$. We speak of \emph{$S$--amalgams} when referring to $S$--visual splittings that are edge-transitive. 

\begin{rmk}\label{rmk:finitely_many_visual}
    For each Coxeter generating set $S\sq W$, there exist only finitely many strongly reduced, $S$--visual splittings up to $W$--equivariant isomorphisms of trees.
\end{rmk}

There is also the following related notion, which we formulate for general subsets for later use.

\begin{defn}\label{defn:visual_decomposition}
    Let $\Om\sq S$ be a subset. A \emph{visual decomposition} of $\Om$ is the data of a finite tree $\mc{F}$ together with a subset $\Om_v\sq\Om$ for each vertex $v\in \mc{F}$, so that the following conditions hold:
    \begin{enumerate}
        \item for each $\om\in\Om$, the set $\{v\in \mc{F}\mid \om\in\Om_v\}$ is the vertex set of a nonempty subtree of $\mc{F}$;
        \item if $x,y\in\Om$ and $xy$ has finite order, then there exists a vertex $v\in \mc{F}$ with $\{x,y\}\sq\Om_v$;
        \item $\mc{F}$ has at least one edge and, for every leaf $v\in \mc{F}$ with adjacent vertex $w$, we have $\Om_v\not\sq\Om_w$.
    \end{enumerate}
    Given a family $\mscr{C}$ of subsets of $W$, a visual decomposition of $\Om$ is \emph{relative to $\mscr{C}$} if, for every set $C\in\mscr{C}$ with $C\sq\langle\Om\rangle$, there exist an element $g\in W$ and a vertex $v\in\mc{F}$ with $gCg^{-1}\sq\langle\Om_v\rangle$.
\end{defn}

Given a visual decomposition $\mf{F}=(\mc{F},\{\Om_v\})$ and an edge $e\sq\mc{F}$ with vertices $v,w$, we will often use the notation $\Om_e:=\Om_v\cap\Om_w$. As for splittings, we say that $\mf{F}$ is \emph{strongly reduced} if we do not have any inclusions $\Om_v\sq\Om_w$ for distinct vertices $v,w\in\mc{F}$.

When the finite tree $\mc{F}$ is a single edge, a visual decomposition of $S$ simply corresponds to a decomposition $S=S_1\cup S_2$ such that the differences $S_1\setminus S_2$ and $S_2\setminus S_1$ are nonempty, and such that there are no elements $s_1\in S_1\setminus S_2$ and $s_2\in S_2\setminus S_1$ with $s_1s_2$ of finite order. Such a decomposition determines a splitting of $W$ as the amalgamated product of the subgroups $\langle S_1\rangle$ and $\langle S_2\rangle$ over $\langle S_1\cap S_2\rangle$. With an abuse, we refer also to the writing $S=S_1\cup S_2$ as an \emph{$S$--amalgam}. Note that the decompositions considered in the definition of elementary twists (in Sections~\ref{sect:intro} and~\ref{sect:strategy}) are a particular case of this construction, setting $S_1:=X\cup K\cup K^{\perp}$ and $S_2:=K\cup K^{\perp}\cup Y$.

A family $\mscr{C}$ of subsets of $W$ is \emph{conjugacy-closed} if we have $C'\in\mscr{C}$ whenever $C'=gCg^{-1}$ for some $C\in\mscr{C}$ and $g\in W$. We record here the following observation.

\begin{rmk}\label{rmk:amalgams_rel_C}
    Consider a decomposition $S=S_1\cup S_2$ with $S_1\not\sq S_2$ and $S_2\not\sq S_1$. 
    \begin{enumerate}
        \item The decomposition is an $S$--amalgam if and only if no element of $S_1\setminus S_2$ is adjacent to an element of $S_2\setminus S_1$ in the presentation graph of $(W,S)$ (the graph with $S$ as vertex set and edges joining pairs of elements involved in a relation within the Coxeter presentation).
        \item Let $\mscr{C}$ be a conjugacy-closed family of subsets of $W$ containing all spherical subsets of $S$. Then the decomposition is an $S$--amalgam relative to $\mscr{C}$ if and only if no element of $S_1\setminus S_2$ is adjacent to an element of $S_2\setminus S_1$ within the graph $\wh S_{\mscr{C}}$ (\Cref{defn:hat_graph}).
    \end{enumerate}
\end{rmk}

The equivalence between Definitions~\ref{defn:S-visual} and~\ref{defn:visual_decomposition} is straightforward and left to the reader:

\begin{lem}\label{lem:visual_equivalence}
    Let $\mscr{C}$ be a conjugacy-closed family of subsets of $W$.
    \begin{enumerate}
        \item Each visual decomposition $\mf{F}=(\mc{F},\{S_v\}_v)$ of $S$ relative to $\mscr{C}$ gives rise to a unique $S$--visual splitting $W\acts\mc{T}$ relative to $\mscr{C}$ with a fundamental subtree identified with $\mc{F}$, whose vertex-stabilisers are the subgroups $\langle S_v\rangle$. 
        \item Conversely, if $W\acts \mc{T}$ is an $S$--visual splitting relative to $\mscr{C}$ and $\mc{F}\sq \mc{T}$ is a fundamental subtree as in \Cref{defn:S-visual}, then the data of the subsets $S_v\sq S$ generating the $W$--stabilisers of the vertices of $\mc{F}$ forms a visual decomposition of $S$ relative to $\mscr{C}$.
    \end{enumerate}
\end{lem}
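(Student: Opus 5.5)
The plan is to build $\mc{T}$ as the Bass--Serre tree of a tree of groups, and conversely to read the decomposition off a fundamental subtree.

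For part (1), start from a visual decomposition $\mf{F}=(\mc{F},\{S_v\})$ and form the tree of groups over $\mc{F}$. Assign the vertex group $\langle S_v\rangle$ to each vertex $v$ and the edge group $\langle S_e\rangle$ with $S_e=S_v\cap S_w$ to each edge $e=vw$, with inclusions as edge maps. The key claim is that the fundamental group of this tree of groups maps isomorphically onto $W$ via the natural map.

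To prove the claim, I would argue by induction on the number of edges of $\mc{F}$, cutting along one edge $e$. This splits $\mc{F}$ into subtrees $\mc{F}_1,\mc{F}_2$, and we set $S_i=\bigcup_{v\in\mc{F}_i}S_v$.
\begin{itemize}
\item By condition (1), $S_1\cap S_2=S_e$.
\item By condition (2), no $s_1\in S_1\setminus S_2$ and $s_2\in S_2\setminus S_1$ have finite-order product, since any such pair would have to lie in a common $S_v$.
\item So $W=\langle S_1\rangle*_{\langle S_e\rangle}\langle S_2\rangle$, comparing Coxeter presentations as in \Cref{rmk:amalgams_rel_C}.
\end{itemize}
Each $\langle S_i\rangle$ is again a Coxeter group with standard generators $S_i$, and $\mc{F}_i$ with the sets $S_v$ still satisfies conditions (1)--(2). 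Induction (with trivial base case a single vertex) then identifies $\langle S_i\rangle$ with the fundamental group of its sub-tree of groups. Combining the two levels gives the isomorphism.

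The Bass--Serre tree $\mc{T}$ of this tree of groups contains $\mc{F}$ as a fundamental subtree, and $\mathrm{Stab}(v)=\langle S_v\rangle$ for $v\in\mc{F}$. The remaining properties are checked as follows.
\begin{itemize}
\item No inversions: this holds by construction.
\item Non-ellipticity and minimality: use condition (3). A leaf $v$ with neighbour $w$ has $S_v\not\subseteq S_w$, so its edge group is proper in its vertex group. Then no end of $\mc{F}$ can be pruned, and $W$ fixes no vertex, since a standard parabolic fixing a vertex would be all of $W$.
\item Relative to $\mscr{C}$: if $C\in\mscr{C}$, then some $gCg^{-1}\subseteq\langle S_v\rangle$, so $C$ fixes $g^{-1}v$.
\end{itemize}
Uniqueness holds because the Bass--Serre tree is determined, up to equivariant isomorphism, by the graph of groups.

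For part (2), take $\mc{F}\subseteq\mc{T}$ with stabilisers $\langle S_v\rangle$. The edge stabilisers are $\langle S_v\rangle\cap\langle S_w\rangle=\langle S_v\cap S_w\rangle$ by the standard parabolic intersection property. The three conditions are verified as follows.
\begin{itemize}
\item Condition (1): $W$ is generated by the vertex stabilisers of $\mc{F}$, so every $s$ lies in some $\langle S_v\rangle$, hence $s\in S_v$. The vertices containing $s$ form a subtree because $\mathrm{Fix}(s)$ is convex and $\mc{F}$ maps injectively to the quotient.
\item Condition (2): if $xy$ has finite order, then $\langle x,y\rangle$ is finite and so fixes a vertex of $\mc{T}$. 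This gives $\langle x,y\rangle\subseteq g\langle S_v\rangle g^{-1}$. A spherical standard parabolic conjugate into $\langle S_v\rangle$ forces $\{x,y\}\subseteq S_v$; this step uses that $\mc{F}$ is fundamental and that the fixed subtrees of $x$ and $y$ meet.
\item Condition (3): a leaf $v$ with $S_v\subseteq S_w$ would contradict minimality.
\item Relativity to $\mscr{C}$: this is immediate, since $C$ fixes some $hv$ with $v\in\mc{F}$.
\end{itemize}

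The main obstacle is condition (2) in part (2): upgrading "$\{x,y\}$ is conjugate into some $\langle S_v\rangle$" to "$\{x,y\}$ actually lies in a common $S_v$ for $v\in\mc{F}$". I would handle it via the Helly property for the fixed subtrees of $x$, $y$, and $\langle x,y\rangle$, combined with the fact that $x$ and $y$ each fix vertices of $\mc{F}$.
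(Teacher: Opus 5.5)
The paper gives no proof of this lemma (it calls the equivalence straightforward and leaves it to the reader). Your Bass--Serre argument is the natural way to supply one, and the outline is right. Part (1) is fine. For non-ellipticity there, minimality together with the existence of an edge already suffices, so your remark about a standard parabolic fixing a vertex is unnecessary. Two justifications in part (2) need repair, and one small point is missing.

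\textbf{Condition (1).} Knowing that $W$ is generated by the subgroups $\langle S_v\rangle$, $v\in\mc{F}$, does not by itself place each $s\in S$ inside one of them. What you need is that $W=\langle\bigcup_v S_v\rangle$ and that a proper subset of $S$ generates a proper standard parabolic subgroup, so $\bigcup_v S_v=S$. The set $\{v\in\mc{F} : s\in S_v\}$ is then the vertex set of $\mathrm{Fix}(s)\cap\mc{F}$, which is an intersection of subtrees. Injectivity of $\mc{F}\to\mc{T}/W$ plays no role in this step.

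\textbf{Condition (2).} The claim ``a spherical standard parabolic conjugate into $\langle S_v\rangle$ forces $\{x,y\}\sq S_v$'' is false, even inside a splitting. Take the amalgam of $\langle s_1,s_2,s_3,s_4\rangle$ (type $A_4$) and $\langle s_3,s_4,t\rangle$ over $\langle s_3,s_4\rangle$, with $s_1t$ and $s_2t$ of infinite order. The pairs $\{s_1,s_2\}$ and $\{s_3,s_4\}$ are conjugate, so $\langle s_1,s_2\rangle$ fixes a translate of the second vertex. Yet $\{s_1,s_2\}\not\sq\{s_3,s_4,t\}$.

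Your proposed fix, Helly applied to the fixed subtrees of $x$, $y$ and $\langle x,y\rangle$, is vacuous, because the last of these is just the intersection of the first two. The correct triple is $\mathrm{Fix}(x)$, $\mathrm{Fix}(y)$ and $\mc{F}$. These pairwise intersect: the first two meet $\mc{F}$ by condition (1), and they meet each other because the finite group $\langle x,y\rangle$ fixes a vertex. Helly then gives a common vertex $v\in\mc{F}$, so $x,y\in\langle S_v\rangle$ and hence $\{x,y\}\sq S_v$. This is the same Helly step the paper uses at the end of the proof of \Cref{lem:MT}. Fundamentality of $\mc{F}$ is not what makes it work.

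\textbf{Condition (3).} You should also record that $\mc{F}$ has at least one edge. This follows because the action is non-elliptic, so the quotient $\mc{T}/W$ cannot be a single vertex.
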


It is also easy to see that correspondence between visual splittings and visual decompositions in \Cref{lem:visual_equivalence} preserves the property of being strongly reduced.

\begin{rmk}
    We work with conjugacy-closed families $\mscr{C}$ of subsets of $W$ throughout the article. However, we would like to point out that, in most cases, the only thing that matters is the finite collection of subsets of $S$ that lie in $\mscr{C}$. We prefer to work with infinite, conjugacy-closed families for two reasons: we will not need to modify $\mscr{C}$ every time we change $S$ by an elementary twist (relative to $\mscr{C}$), and we will not need to worry about distinct subsets of $S$ being conjugate to each other.
\end{rmk}

\subsection{Domination by visual splittings}

The following is a rephrasing of the proof of \cite[Theorem~1]{MT09}; we recall the brief argument for the reader's convenience.

\begin{lem}\label{lem:MT}
    Let $W$ be a Coxeter group with a splitting $W\acts \mc{T}$. For each Coxeter generating set $S\sq W$, there exists a strongly reduced $S$--visual splitting $W\acts \mc{T}_S$ strongly dominating $\mc{T}$. Moreover, $\mc{T}$ and $\mc{T}_S$ have the same $S$--parabolic elliptic subgroups.
\end{lem}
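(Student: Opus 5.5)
The plan follows Mihalik--Tschantz. We work in the Davis complex $\A_S$, or more simply in the Cayley graph of $(W,S)$ viewed as the $1$--skeleton of $\A_S$.

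\textbf{Step 1: an equivariant map.} Since each $s\in S$ is an involution and $W$ acts on $\mc{T}$ without inversions, each $s$ fixes a vertex of $\mc{T}$. Moreover, each spherical subgroup $\langle U\rangle$ with $U\sq S$ fixes a vertex, since it is finite. We build a $W$--equivariant map $f\colon\A_S\ra\mc{T}$ as follows. Send the base chamber $1$ to a vertex $x_0$. Send each panel or vertex of the Davis complex with spherical stabiliser $g\langle U\rangle g^{-1}$ into its fixed set, choosing the images consistently with the spherical face poset. Then extend linearly, choosing everything equivariantly over $W$--orbits of cells.

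Make $f$ transverse to midpoints of edges of $\mc{T}$. The preimage $f^{-1}(m_e)$ of the orbit of edge midpoints is then a $W$--invariant "pattern" $\Lambda$ in $\A_S$. Since $\A_S$ is contractible, and simply connected suffices, the dual tree to $\Lambda$ maps equivariantly onto $\mc{T}$.

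\textbf{Step 2: making the pattern visual.} The key observation of MT09 is that each component of $\Lambda$ can be pushed, by the standard minimal-complexity argument, to consist of walls. Equivalently, we choose $f$ to minimise the number of $W$--orbits of (cell, track) intersections. A minimal pattern is a union of walls and their translates: each track crossing a chamber must leave through mirrors.

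Alternatively, and more robustly, I would argue combinatorially. Let $S_0\sq S$ be the set of generators fixing $x_0$. Replace $f$ so that all of $S$ fixes points near $x_0$, and consider the subtree $\mc{F}_0$ spanned by fixed points $\mathrm{Fix}(s)$ for $s\in S$, together with paths. For adjacent $s,t$ with $st$ of finite order, $\langle s,t\rangle$ fixes a point, so $\mathrm{Fix}(s)\cap\mathrm{Fix}(t)\neq\emptyset$. Choose for each $s$ a vertex $v_s\in\mathrm{Fix}(s)$ inside the convex hull $H$ of $\{v_s\}$. For each vertex $v$ of $H$, set $S_v=\{s\mid v\in\mathrm{Fix}(s)\cap H\}$. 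The conditions of Definition \ref{defn:visual_decomposition} then hold:
\begin{enumerate}
\item each $\{v: s\in S_v\}$ is the subtree $\mathrm{Fix}(s)\cap H$;
\item adjacent generators share a vertex, by the Helly property for subtrees, applied to the spherical pair;
\item leaves can be pruned.
\end{enumerate}
By Lemma \ref{lem:visual_equivalence}, this gives an $S$--visual splitting $\mc{T}'$. Non-ellipticity holds since $W$ does not fix a point of $\mc{T}$, so no single vertex contains all of $S$ in a way that fixes everything; if it did, $W=\langle S_v\rangle$ would be elliptic.

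\textbf{Step 3: domination.} The natural map $\mc{T}'\ra\mc{T}$ sends the vertex $[g,v]$ to $gv$. This map is equivariant, so $\mc{T}'$ dominates $\mc{T}$. Edge stabilisers $\langle S_v\cap S_w\rangle$ fix the edge path $[v,w]\sq H$ pointwise, so they lie in an edge stabiliser of $\mc{T}$, provided $v\neq w$ map to distinct points. This holds because we only take vertices of $H$ and collapse degenerate edges. This gives strong domination.

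Finally, apply Lemma \ref{lem:strongly_reduced}, using the fact that parabolic edge groups do not properly contain conjugates of themselves, to obtain a strongly reduced $\mc{T}_S$. Collapsing preserves both elliptic subgroups and strong domination.

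\textbf{Step 4: parabolic elliptic subgroups.} Domination gives one inclusion. For the converse, let $P=g\langle U\rangle g^{-1}$ be elliptic in $\mc{T}$; we may take $g=1$. The subtrees $\mathrm{Fix}(u)\cap H$ for $u\in U$ need to intersect. Since $\langle U\rangle$ fixes some point $p$, each $\mathrm{Fix}(u)$ contains $p$. Projecting $p$ to $H$, the projection lies in every $\mathrm{Fix}(u)$, because $\mathrm{Fix}(u)$ is convex and meets $H$. So there is a common vertex in $H$, and $\langle U\rangle\le\langle S_v\rangle$ is elliptic in $\mc{T}'$ and hence in $\mc{T}_S$.

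The main obstacle is the rigorous handling of Step 2. One must check that the vertex groups of the visual tree are exactly $\langle S_v\rangle$, which is a Bass--Serre tree construction from the tree of standard subgroups and needs the visual decomposition axioms. One must also ensure non-degeneracy, namely that the resulting splitting is non-elliptic and that the leaf condition (3) holds after pruning.
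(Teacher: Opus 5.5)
Your combinatorial argument is essentially the paper's proof: take a subtree $H$ meeting every $\mathrm{Fix}(s)$, read off the sets $S_v$, use Helly for the edge condition and for $S$--parabolic elliptic subgroups, pass through \Cref{lem:visual_equivalence}, and finish with \Cref{lem:strongly_reduced}. The paper takes $H$ to be the smallest subtree meeting all the $\mathrm{Fix}(s)$, so no pruning is needed. The Davis-complex/pattern material in Step~1 and the start of Step~2 is unnecessary, and its claim that a minimal pattern consists of walls is unjustified, so you should drop it.
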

\begin{proof}
    Let $\overline{\mc{F}}\sq \mc{T}$ be the smallest subtree that intersects the fixed sets of all elements of $S$. For each vertex $\overline v\in\overline{\mc{F}}$ and edge $\overline e\sq\overline{\mc{F}}$, let $S_{\overline v}$ and $S_{\overline e}$ be the subsets of $S$ fixing $\overline v$ and $\overline e$, respectively. (The $W$--stabiliser of $\overline v$ contains the subgroup $\langle S_{\overline v}\rangle$, but it can be strictly larger in general.) Observe that the data of the tree $\overline{\mc{F}}$ and subsets $S_{\overline v}\sq S$ forms a visual decomposition of $S$. Using \Cref{lem:visual_equivalence}, we obtain an $S$--visual splitting $W\acts \mc{T}_S$ with a fundamental subtree $\mc{F}\sq \mc{T}_S$ identified with $\overline{\mc{F}}$ via a map that we denote by $v\mapsto\overline v$ and $e\mapsto\overline e$. The $W$--stabilisers of vertices $v\in \mc{F}$ and edges $e\sq\mc{F}$ are \emph{precisely} the subgroups $\langle S_{\overline v}\rangle$ and $\langle S_{\overline e}\rangle$. Thus, it is clear that $\mc{T}_S$ strongly dominates $\mc{T}$.
    
    Since the edge-stabilisers of $\mc{T}_S$ are parabolic, none of them properly contains a conjugate of itself, and so we can use \Cref{lem:strongly_reduced} to make $\mc{T}_S$ strongly reduced. This does not affect the elliptic subgroups of $\mc{T}_S$ and does not increase its family of edge-stabilisers, so it does not affect strong domination of $\mc{T}$. Finally, if $P\leq W$ is $S$--parabolic and elliptic in $\mc{T}$, then $P$ is conjugate to a subgroup of $\langle S_{\overline v}\rangle$ for some $\overline v\in\overline{\mc{F}}$ (by Helly's lemma for trees). This subgroup fixes the vertex $v\in \mc{F}\sq \mc{T}_S$, and so $P$ is elliptic in $\mc{T}_S$. Together with domination, this shows that $\mc{T}$ and $\mc{T}_S$ have the same $S$--parabolic elliptic subgroups, proving the lemma.
\end{proof}

The previous lemma yields the following simple parabolicity criterion.

\begin{lem}\label{lem:universally_elliptic->parabolic}
    Let $W\acts \mc{T}$ be a strongly reduced splitting relative to a family $\mscr{C}$. Let $v\in \mc{T}$ be a vertex whose stabiliser $W_v$ is elliptic in all splittings of $W$ relative to $\mscr{C}$ strongly dominating $\mc{T}$. Then $W_v$ is $S$--parabolic for every Coxeter generating set $S\sq W$ such that all sets in $\mscr{C}$ are $S$--parabolic.
\end{lem}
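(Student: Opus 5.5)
Fix a Coxeter generating set $S\sq W$ such that every set in $\mscr{C}$ is $S$--parabolic. The plan is to apply \Cref{lem:MT} to $\mc{T}$ and this $S$. This produces a strongly reduced $S$--visual splitting $W\acts\mc{T}_S$ that strongly dominates $\mc{T}$ and has the same $S$--parabolic elliptic subgroups as $\mc{T}$.

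First, we check that $\mc{T}_S$ is relative to $\mscr{C}$. Each $C\in\mscr{C}$ fixes a vertex of $\mc{T}$, so $\langle C\rangle$ is elliptic in $\mc{T}$. By hypothesis $\langle C\rangle$ is $S$--parabolic. Since $\mc{T}$ and $\mc{T}_S$ share their $S$--parabolic elliptic subgroups, $\langle C\rangle$ is elliptic in $\mc{T}_S$, and hence $C$ fixes a vertex there. So $\mc{T}_S$ is a splitting relative to $\mscr{C}$ that strongly dominates $\mc{T}$. By the hypothesis on $v$, the stabiliser $W_v$ fixes some vertex $u\in\mc{T}_S$, that is, $W_v\leq W_u$.

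Next, we compare $W_u$ with $W_v$. Since $\mc{T}_S$ is $S$--visual, $W_u$ is conjugate to some $\langle S_{u'}\rangle$ and is therefore $S$--parabolic. Since $\mc{T}_S$ dominates $\mc{T}$, the subgroup $W_u$ is elliptic in $\mc{T}$, so it fixes some vertex $v'\in\mc{T}$. This gives $W_v\leq W_u\leq W_{v'}$. Because $\mc{T}$ is strongly reduced, vertex-stabilisers are exactly the maximal elliptic subgroups. Hence $W_v$ is maximal elliptic, which forces $W_v=W_u=W_{v'}$, and so $W_v$ is $S$--parabolic.

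The main point needing care is this last maximality step. If $v'\neq v$, then $W_v$ fixes both $v$ and $v'$, hence fixes the geodesic between them, including an edge. This contradicts the assumption that $\mc{T}$ is strongly reduced. So $v'=v$ and the chain of inclusions collapses to equalities. I expect no further obstacles: the only input beyond \Cref{lem:MT} is the standard observation that, in a strongly reduced splitting, a vertex-stabiliser cannot be contained in a different vertex-stabiliser.
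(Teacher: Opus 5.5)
Your proof is correct and follows essentially the same route as the paper: apply \Cref{lem:MT} to get an $S$--visual splitting strongly dominating $\mc{T}$, use the hypothesis to get $W_v\leq W_u\leq W_{v'}$, and conclude $W_v=W_u$ from strong reducedness. Your explicit check that $\mc{T}_S$ is relative to $\mscr{C}$, via the shared $S$--parabolic elliptic subgroups, spells out a step the paper asserts without comment.
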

\begin{proof}
    Let $S\sq W$ be a Coxeter generating set such that all sets in $\mscr{C}$ are $S$--parabolic. By \Cref{lem:MT}, there exists an $S$--visual splitting $W\acts \mc{T}'$ that strongly dominates $\mc{T}$ and in which all sets in $\mscr{C}$ are still elliptic. By hypothesis, it follows that the subgroup $W_v$ fixes some vertex $w\in \mc{T}'$. Letting $f\colon \mc{T}'\ra \mc{T}$ be a continuous $W$--equivariant map sending vertices to vertices, we have $W_v\leq W_w\leq W_{f(w)}$. Since $\mc{T}$ is strongly reduced, we have $f(w)=v$ and hence $W_v=W_w$. Since $\mc{T}'$ is $S$--visual, this shows that $W_v$ is $S$--parabolic, as desired. 
\end{proof}

The following consequence of \Cref{lem:MT} is particularly useful because it can be used to produce subgroups $P\leq W$ that are simultaneously parabolic with respect to \emph{all} Coxeter generating sets of $W$ (or with respect to those in a given subclass). The starting point is an arbitrary splitting of $W$:

\begin{cor}\label{cor:universally_parabolic_vertex_stabilisers}
    Let $W$ be a Coxeter group with a splitting $W\acts \mc{T}$. Let $\mscr{S}$ be a family of Coxeter generating sets of $W$, and consider some element $S\in\mscr{S}$. Then $\mc{T}$ is strongly dominated by a splitting $W\acts \mc{T}_{\mscr{S},S}$ such that:
    \begin{enumerate}
        \item $\mc{T}_{\mscr{S},S}$ is strongly reduced and $S$--visual;
        \item the vertex-stabilisers of $\mc{T}_{\mscr{S},S}$ are $\mscr{S}$--parabolic;
        \item $\mc{T}$ and $\mc{T}_{\mscr{S},S}$ have the same $\mscr{S}$--parabolic elliptic subgroups.
    \end{enumerate}
\end{cor}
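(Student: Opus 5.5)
We iterate \Cref{lem:MT} over the generating sets in $\mscr{S}$, and we use the finiteness in \Cref{rmk:finitely_many_visual} to make the iteration stop. Start with $\mc{T}_0:=\mc{T}_S$ as given by \Cref{lem:MT} for the fixed $S\in\mscr{S}$. This is a strongly reduced $S$--visual splitting that strongly dominates $\mc{T}$ and has the same $S$--parabolic elliptic subgroups as $\mc{T}$.

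Suppose that some vertex-stabiliser of $\mc{T}_0$ fails to be $S'$--parabolic for some $S'\in\mscr{S}$. Apply \Cref{lem:MT} to $\mc{T}_0$ and $S'$ to get a strongly reduced $S'$--visual splitting $\mc{T}_0'$ strongly dominating $\mc{T}_0$. Then apply \Cref{lem:MT} again to $\mc{T}_0'$ and $S$ to get a strongly reduced $S$--visual splitting $\mc{T}_1$ strongly dominating $\mc{T}_0'$. Strong domination is transitive, since the composition of equivariant maps is equivariant and edge-stabiliser containments compose. So $\mc{T}_1$ strongly dominates $\mc{T}$.

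\textbf{Strict progress.} The key claim is that $\mc{T}_1$ is strictly finer than $\mc{T}_0$: its elliptic subgroups form a proper subfamily. If $\mc{T}_0$ and $\mc{T}_1$ had the same elliptic subgroups, then the vertex-stabilisers, being the maximal elliptic subgroups in strongly reduced splittings, would coincide. These stabilisers would be $S'$--parabolic, because the vertex-stabilisers of $\mc{T}_0$ are elliptic in $\mc{T}_0'$ and must then be exactly stabilisers there. The argument is that of \Cref{lem:universally_elliptic->parabolic}: a vertex-stabiliser $W_v$ of $\mc{T}_0$ fixing a vertex $w$ of $\mc{T}_0'$ gives $W_v\le W_w\le W_{f(w)}$, and strong reducedness forces equality. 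This contradicts the choice of $S'$.

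\textbf{Termination.} Each $\mc{T}_i$ is a strongly reduced $S$--visual splitting, and \Cref{rmk:finitely_many_visual} says there are only finitely many of these. Since the families of elliptic subgroups strictly decrease, the process stops. It ends at some $\mc{T}_{\mscr{S},S}$ all of whose vertex-stabilisers are $S'$--parabolic for every $S'\in\mscr{S}$, which gives (1) and (2).

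\textbf{Property (3).} Domination gives one inclusion: elliptic subgroups in $\mc{T}_{\mscr{S},S}$ are elliptic in $\mc{T}$. For the other, let $P$ be $\mscr{S}$--parabolic and elliptic in $\mc{T}$. At each application of \Cref{lem:MT} with some $S''\in\mscr{S}$, $P$ is $S''$--parabolic, so by the ``moreover'' clause it remains elliptic. Inductively, $P$ is elliptic in every intermediate tree and hence in $\mc{T}_{\mscr{S},S}$.

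The main obstacle is the strict-progress step. One has to check carefully that equality of elliptic families in consecutive strongly reduced trees really forces the vertex-stabilisers of $\mc{T}_0$ to be vertex-stabilisers of the $S'$--visual tree $\mc{T}_0'$, since $\mc{T}_0'$ is sandwiched between $\mc{T}_1$ and $\mc{T}_0$. The route is to note that domination in both directions gives equal elliptic families for $\mc{T}_0$ and $\mc{T}_0'$, so their maximal elliptic subgroups agree.
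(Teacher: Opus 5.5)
Your argument is correct and uses the same ingredients as the paper: the round trip $S\to S'\to S$ through \Cref{lem:MT}, the strong-reducedness argument from \Cref{lem:universally_elliptic->parabolic}, and the finiteness of strongly reduced $S$--visual splittings (\Cref{rmk:finitely_many_visual}). The only difference is packaging: the paper picks a maximal element, under strong domination, of the finite family of strongly reduced $S$--visual splittings that strongly dominate $\mc{T}$ and have the same $\mscr{S}$--parabolic elliptic subgroups, whereas you reach such a splitting by an explicit strictly-refining iteration, which also spares you the quotient by mutual domination.
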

\begin{proof}
    Let $\mf{T}$ be the family of strongly reduced, $S$--visual splittings that strongly dominate $\mc{T}$ and have the same $\mscr{S}$--parabolic elliptic subgroups as $\mc{T}$. Observe that $\mf{T}$ is nonempty by \Cref{lem:MT}.

    For $\mc{T}_1,\mc{T}_2\in\mf{T}$, we write $\mc{T}_1\leq \mc{T}_2$ if $\mc{T}_2$ strongly dominates $\mc{T}_1$, and we write $\mc{T}_1\sim \mc{T}_2$ when $\mc{T}_1\leq \mc{T}_2\leq \mc{T}_1$. An equivalence $\mc{T}_1\sim\mc{T}_2$ implies that $\mc{T}_1$ and $\mc{T}_2$ have the same elliptic subgroups. The relation $\leq$ is transitive and so it descends to a partial order on the quotient $\mf{T}/\sim$. 
    
    Now, since $\mf{T}$ is finite by \Cref{rmk:finitely_many_visual}, there exists a splitting $\mc{T}_{\max}\in\mf{T}$ such that $[\mc{T}_{\max}]$ is a maximal element of $\mf{T}/\sim$. We set $\mc{T}_{\mscr{S},S}:=\mc{T}_{\max}$ and we are only left to verify Item~(2).

    Thus, consider a vertex $v\in \mc{T}_{\max}$ and any generating set $R\in\mscr{S}$; we will show that the stabiliser $W_v$ is $R$--parabolic. In view of \Cref{lem:universally_elliptic->parabolic}, it suffices to show that $W_v$ is elliptic in every splitting $W\acts\mc{T}'$ that strongly dominates $\mc{T}_{\max}$ and has the same $\mscr{S}$--parabolic elliptic subgroups as $\mc{T}_{\max}$ (and hence as $\mc{T}$). If $\mc{T}'$ is such a splitting, then \Cref{lem:MT} yields another splitting $\mc{T}''$ that strongly dominates $\mc{T}'$, has the same $\mscr{S}$--parabolic elliptic subgroups as $\mc{T}_{\max}$, and is in addition $S$--visual. In other words $\mc{T}''\in\mf{T}$ and $\mc{T}_{\max}\leq\mc{T}'\leq\mc{T}''$, which implies that $\mc{T}''\sim\mc{T}_{\max}$. The latter implies that $W_v$ is elliptic in $\mc{T}''$, and hence $W_v$ is also elliptic in $\mc{T}'$, as desired.
\end{proof}

Note that, if we enlarge the family $\mscr{S}$ in \Cref{cor:universally_parabolic_vertex_stabilisers}, then the information provided by Item~(2) becomes stronger, but the information provided by Item~(3) becomes weaker.

In \Cref{cor:universally_parabolic_vertex_stabilisers}, the best we can do is finding a splitting that has $\mscr{S}$--parabolic vertex-stabilisers and is $S$--visual for \emph{one} generating set $S\in\mscr{S}$. In general, we cannot find a splitting that is $S$--visual for all $S\in\mscr{S}$, as the next example shows.

\begin{ex}
    Let $W\cong\Z/2\Z\ast\Z/2\Z\ast\Z/2\Z$. Consider a Coxeter generating set $S=\{x,y,z\}$ and set $R:=\{x,y,xyzyx\}$. Let $W\acts\mc{T}$ be any splitting with vertex-stabilisers conjugate to $\langle x\rangle$, $\langle y\rangle$ and $\langle z\rangle$. Then $\mc{T}$ is not dominated by any splitting that is both $S$-- and $R$--visual.
\end{ex}

\section{Shrubs}\label{sect:shrubs}

This section is concerned with the proof of \Cref{thm:step_one}, which constitutes the first step of the proof of \Cref{thmintro:main} sketched in \Cref{sect:strategy}. More precisely, \Cref{sub:2-compatibility} is devoted to showing that, if $S,R\sq W$ are angle-compatible Coxeter generating sets and $\mscr{C}$ is the family of $\{S,R\}$--compatible subsets of $W$, then all cliques of the graph $\wh S_{\mscr{C}}$ (\Cref{defn:hat_graph}) originate from sets in $\mscr{C}$ (\Cref{cor:2-compatible->compatible}). This is actually a fairly straightforward result once we assume the deeper work in \cite{CM07}. Then, in \Cref{sub:shrub_defn}, we define shrubs (\Cref{defn:shrub}), tucked sets (\Cref{defn:tucked}), and use the material in Sections~\ref{sect:splittings} and~\ref{sub:2-compatibility} to finally prove \Cref{thm:step_one} (see \Cref{thm:step_one_repeated}).

\subsection{$2$--compatibility vs compatibility}\label{sub:2-compatibility}

Let $(W,S)$ be a Coxeter system. We say that a subset $U\sq W$ is \emph{$2$--compatible} (with respect to $S$) if $U$ consists of $S$--reflections and every cardinality--$2$ subset of $U$ is conjugate to a subset of $S$. We will see that, in our cases of interest, $2$--compatible subsets are in fact $S$--compatible (\Cref{cor:2-compatible->compatible}). We begin with some preliminary results.

\begin{lem}\label{lem:3_reflections}
    Let $U=\{r_1,r_2,r_3\}$ be a $2$--compatible set of cardinality $3$. If $U$ is $2$--spherical and non-spherical, assume in addition that $U$ is universal. Then $U$ is $2$--geometric in $\A_S$.
\end{lem}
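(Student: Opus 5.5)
The plan is to turn $2$--geometricity into a sign-choice problem on the triangle with vertex set $U$, and then check that the local constraints are consistent. For each $r_i$, fix a reference side $\mc{H}_i^+$ of its wall $\mc{Y}_i$ and write $\mc{H}_i^-$ for the other side. For each pair $\{r_i,r_j\}$ I would list the admissible pairs of sides, that is, those whose sector is a fundamental domain for $\langle r_i,r_j\rangle\acts\A_S$. Since $\{r_i,r_j\}$ is conjugate to some $\{s,t\}\sq S$, it suffices to understand standard pairs and transport by conjugation. The answer depends on $m_{ij}$, the order of $r_ir_j$.
\begin{itemize}
    \item If $m_{ij}=2$, all four choices are admissible.
    \item If $3\le m_{ij}<\infty$, exactly two choices are admissible: the two ``acute'' sectors of angle $\pi/m_{ij}$, which are swapped by flipping both sides. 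This is a parity constraint $\eps_i\eps_j=\delta_{ij}$ on the signs.
    \item If $m_{ij}=\infty$, the walls are disjoint. The only admissible choice is that $\mc{H}_i$ contains $\mc{Y}_j$ and $\mc{H}_j$ contains $\mc{Y}_i$, so both signs are forced.
\end{itemize}
I would also record one geometric fact, again obtained by transport from a standard pair: some chamber of $\A_S$ is adjacent to both $\mc{Y}_i$ and $\mc{Y}_j$. Consequently no third wall separates $\mc{Y}_i$ from $\mc{Y}_j$.

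Next I would go through the cases by the number of infinite edges in the triangle.

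If some $m_{ij}=\infty$, then $\eps_i$ and $\eps_j$ are forced, and the only possible conflict concerns $r_k$. Suppose $m_{ik}=m_{jk}=\infty$. Then $\mc{H}_k$ must contain both $\mc{Y}_i$ and $\mc{Y}_j$. This is possible because $\mc{Y}_k$ cannot separate $\mc{Y}_i$ from $\mc{Y}_j$, by the observation above. The parallel requirement for $r_i$, that $\mc{H}_i$ contain both $\mc{Y}_j$ and $\mc{Y}_k$, is handled the same way. When some of the remaining edges are finite, the constraints form a path or a tree with at most one forced parity edge, so a solution always exists. The case where all three edges are infinite is settled by the same non-separation fact applied to each vertex.

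It remains to treat the case where $U$ is $2$--spherical. If all three $m_{ij}$ equal $2$, there is nothing to prove. Otherwise the parity constraints may fail to close up around the triangle, and ruling this out is the real content.

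When $U$ is non-spherical, the hypothesis says $U$ is universal, so $(\langle U\rangle,U)$ is an irreducible-or-not $2$--spherical Coxeter system of rank $3$. The reflection subgroup $\langle U\rangle$ has a canonical Coxeter generating set $\chi$ consisting of the reflections in the walls of a chamber of the induced chamber system; this set is geometric in $\A_S$, and its reflections are reflections of $\langle U\rangle$. By strong rigidity of $2$--spherical Coxeter systems \cite{CM07}, $U$ is conjugate in $\langle U\rangle$ to $\chi$. Geometricity is preserved by conjugation, so $U$ is geometric and in particular $2$--geometric.

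When $U$ is spherical, $\langle U\rangle$ is finite and fixes a point of $\A_S$. I would therefore pass to the finite parabolic subgroup fixing that point and work in its canonical linear representation, with roots $\pm\alpha_i$. Here the sign constraints read $\langle\eps_i\alpha_i,\eps_j\alpha_j\rangle=-\cos(\pi/m_{ij})$. An inconsistent cycle would make the Gram matrix of the chosen roots have an odd number of positive off-diagonal entries. I would then argue that the three roots together with a negated one either span a configuration forcing an extra reflection of $\langle U\rangle$ to cross the would-be sector, or that $U$ would generate a group of a different type. This should be checked against the list of rank--$3$ finite Coxeter groups, or deduced again from rigidity of spherical systems.

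I expect the main obstacle to be this last case, the consistency of parities around an all-finite triangle. The remaining cases reduce to the elementary non-separation fact.
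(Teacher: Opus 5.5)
\textbf{The gap is in the mixed case.} You claim that once some $m_{ij}=\infty$, ``the constraints form a path or a tree with at most one forced parity edge, so a solution always exists''. This overlooks that an infinite edge pins the sides at \emph{both} of its endpoints, so a finite edge with label $\geq 3$ becomes a genuine condition rather than a free choice. There are two ways this bites.
\begin{itemize}
\item If $m_{12}=\infty$ and $m_{13},m_{23}\geq 3$, then $\mc{H}_1$ and $\mc{H}_2$ are forced, and $\mc{H}_3$ is prescribed twice, once via $r_1$ and once via $r_2$. Nothing you say prevents $\mc{Y}_3$ from crossing the strip $\mc{H}_1\cap\mc{H}_2$ so that, on a given side of $\mc{Y}_3$, it makes an acute angle with $\mc{Y}_1$ and an obtuse one with $\mc{Y}_2$.
\item If $m_{12}=m_{23}=\infty$ and $m_{13}\geq 3$, all three sides are forced. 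You still need $\mc{Y}_2$ to lie in an acute, not an obtuse, sector of $\mc{Y}_1\cup\mc{Y}_3$. Non-separation cannot give this, because the walls of $\langle r_1,r_3\rangle$ inside the obtuse sector all meet $\mc{Y}_1$.
\end{itemize}
These are exactly Configurations~(1) and~(2) in the paper's proof. Excluding them is the real content of the lemma, and it is precisely what gets reused in \Cref{prop:2-compatible->geometric}. The paper rules them out via Deodhar--Dyer, in the form \cite[Theorem~A.1]{CP10}, which reduces to the case $|S|=3$ with an infinite label; there the configurations are checked directly. You need an input of this kind.

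\textbf{The all-finite case is not the main obstacle, and your plan for it cannot work as stated.} If some pair commutes, the constraints form a path and are consistent. If no pair commutes and $U$ is universal, then $U$ is non-spherical, because every finite rank--$3$ Coxeter system ($A_1^3$, $A_1\times I_2(m)$, $A_3$, $B_3$, $H_3$) has a commuting pair. In that case \cite{CM07} applies as you say, which matches the paper.

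Without universality the conclusion is actually false. Take $W=\mathrm{Sym}(4)$ with $S=\{(12),(23),(34)\}$, and realise $\A_S$ as the permutohedron. The set $U=\{(12),(23),(24)\}$ is $2$--compatible and spherical. For two transpositions $(2i),(2j)$, the sectors that are fundamental domains are exactly $\{x_i<x_2<x_j\}$ and $\{x_j<x_2<x_i\}$. So the signs of $x_2-x_1$, $x_2-x_3$ and $x_2-x_4$ on the chosen sides would have to be pairwise distinct, which is impossible. Hence no root-system argument can settle this case.

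The paper's proof has the same blind spot: it asserts that a triple with no commuting pair is non-spherical. This is harmless, since the lemma is only applied to universal $U$. You should assume $U$ universal in the spherical case too, which makes that case trivial.
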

\begin{proof}
    Consider the Davis complex $\A_S$ and let $\mc{W}_1,\mc{W}_2,\mc{W}_3\sq\A_S$ be the walls fixed by the reflections $r_i$. Our goal is to choose halfspaces $\mc{H}_i\sq\A_S$ bounded by the $\mc{W}_i$ so that, for all $j\neq k$, the pair $\{\mc{H}_j,\mc{H}_k\}$ is geometric, that is, $\mc{H}_j\cap\mc{H}_k$ is a fundamental domain for the action $\langle r_j,r_k\rangle\acts\A_S$. 

    Suppose first that the three walls pairwise intersect, that is, the set $U$ is $2$--spherical. If two of the reflections commute, say $r_1$ and $r_3$, then we can simply choose $\mc{H}_1$ arbitrarily, then choose $\mc{H}_2$ so that $\{\mc{H}_1,\mc{H}_2\}$ is geometric, and finally choose $\mc{H}_3$ so that $\{\mc{H}_2,\mc{H}_3\}$ is geometric. If no two of the reflections commute, then $U$ is non-spherical and hence universal by hypothesis; we can then invoke \cite[Lemma~11.2]{CM07} to obtain that $U$ is $2$--geometric.

    Suppose now that the three walls are pairwise disjoint. Since any two of the $r_i$ can be simultaneously conjugated into $S$, no two of the three walls are separated by another wall of $\A_S$. Thus, it suffices to define each $\mc{H}_i$ as the side of $\mc{W}_i$ containing the other two walls. 

    In the rest of the proof, we can thus assume that $\mc{W}_1\cap\mc{W}_2=\emptyset$ and $\mc{W}_1\cap\mc{W}_3\neq\emptyset$. We define $\mc{H}_1$ and $\mc{H}_2$, respectively, as the side of $\mc{W}_1$ containing $\mc{W}_2$, and as the side of $\mc{W}_2$ containing $\mc{W}_1$. In order to be able to pick $\mc{H}_3$, we need to check that the following two configurations cannot arise:
    \begin{enumerate}
        \item The wall $\mc{W}_2$ is disjoint from $\mc{W}_3$ and contained in an obtuse-angled sector of $\A_S\setminus(\mc{W}_1\cup\mc{W}_3)$.
        \item The wall $\mc{W}_2$ crosses $\mc{W}_3$ and, for any choice of $\mc{H}_3$, one of the sectors $\mc{H}_1\cap\mc{H}_3$, $\mc{H}_2\cap\mc{H}_3$ is acute-angled and the other is obtuse-angled.
    \end{enumerate}
    By a result of Deodhar and Dyer \cite{Deodhar-geometric,Dyer}, used for instance in the formulation from \cite[Theorem~A.1]{CP10}, it suffices to rule out the above two configurations under the additional assumption that $|S|=3$ and at least one of the labels of $S$ is infinite. 
    This is readily verified.
\end{proof}

\begin{prop}\label{prop:2-compatible->geometric}
    Let $U\sq W$ be finite, universal and $2$--compatible. Then $U$ is $S$--geometric.
\end{prop}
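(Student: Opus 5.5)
By \Cref{lem:1.6}, since $U$ is universal by hypothesis, it suffices to show that $U$ is $2$--geometric in $\A_S$. That is, we must choose for each $u\in U$ a side $\mc{H}_u$ of the wall $\mc{W}_u$ such that every pair $\{\mc{H}_u,\mc{H}_{u'}\}$ is geometric. The plan is to reduce this global choice problem to the triples handled by \Cref{lem:3_reflections}, and then to argue that local consistency on triples forces global consistency.

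First note that every subset of $U$ is again universal and $2$--compatible. In particular, every $3$--element subset $\{r_1,r_2,r_3\}\sq U$ is $2$--geometric by \Cref{lem:3_reflections}; the extra universality hypothesis needed there when the triple is $2$--spherical and non-spherical is inherited from $U$. Next, for each pair $u\neq u'$, I would record which choices of sides are geometric for the pair. Since $\{u,u'\}$ is conjugate into $S$, geometric choices always exist, and they are governed by a simple rule.
\begin{itemize}
\item If $\mc{W}_u\cap\mc{W}_{u'}=\emptyset$, there is a unique geometric choice: each halfspace must contain the other wall.
\item If the walls cross and $uu'$ has order $m\geq3$, then the geometric choices are exactly the acute sectors, and there are two of them. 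Choosing $\mc{H}_u$ determines $\mc{H}_{u'}$, and flipping both sides together gives the other.
\item If $u,u'$ commute, every choice is geometric.
\end{itemize}

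Encode this as a constraint graph on $U$. Pairs with disjoint walls pin each side absolutely. Pairs with $m\geq3$ impose a parity relation "$\mc{H}_{u'}$ is determined by $\mc{H}_u$". Commuting pairs impose nothing. A global choice exists if and only if this system of constraints is consistent. Consistency can fail in only two ways.
\begin{enumerate}
\item Two absolute pins on the same $u$ disagree, or an absolute pin disagrees with a relation. Each such conflict involves only three reflections, so it is ruled out by \Cref{lem:3_reflections}.
\item There is a cycle $u_1,\dots,u_n$ of relations with $m\geq3$ whose composition flips sides.
\end{enumerate}

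The main obstacle is the second case: cycles of crossing walls with non-trivial holonomy, of length greater than $3$. My first approach is to reduce cycle length using the geometry of the Davis complex.
\begin{itemize}
\item If some chord $u_iu_j$ exists with walls crossing, split the cycle and induct, using consistency on each shorter piece. The triangles are handled by \Cref{lem:3_reflections}.
\item If a chord has disjoint walls, the two pins it imposes, again controlled by triples, fix the orientation on both sub-cycles consistently.
\item If all chords are commuting pairs, the relations give no control. Here I would instead appeal to a result on $2$--spherical subsets directly: the pairwise crossing walls of a $2$--spherical $2$--compatible universal set are handled by \cite[Lemma~11.2]{CM07} or its proof, which gives $2$--geometricity for the $2$--spherical sub-configuration.
\end{itemize}

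Concretely, I would take the connected components of the "$m\geq3$ crossing" graph. Each component is $2$--spherical only if all its pairs cross, which need not hold, so this step needs care. As a fallback, I would pick a base point: a chamber $c$ in the intersection of suitable halfspaces, chosen via the disjoint-wall pins. I would then orient each wall towards $c$ and verify pairwise geometricity case by case, invoking the Deodhar--Dyer reduction \cite[Theorem~A.1]{CP10} to check the finitely many rank-$3$ configurations that arise.

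Once a consistent choice is made, $U$ is $2$--geometric. \Cref{lem:1.6} then shows that $U$ is $S$--geometric, completing the proof.
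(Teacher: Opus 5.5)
\textbf{There is a genuine gap in your consistency analysis.} Your reduction to $2$--geometricity via \Cref{lem:1.6} is correct, and so is your description of the pairwise constraints: disjoint walls force a choice, crossing non-commuting pairs give a two-valued relation, and commuting pairs give no constraint.

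The problem is your claim that every conflict not coming from a cycle ``involves only three reflections''. A pin never conflicts with a single relation by itself. A conflict arises when two pins, placed on different reflections by different partners, are joined by a chain of crossing relations and propagate to opposite sides.

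The simplest irreducible instance uses four reflections $x,a,w,b$ with $\mc{W}_x\cap\mc{W}_w=\emptyset=\mc{W}_a\cap\mc{W}_b$, where each of $\mc{W}_x,\mc{W}_w$ crosses each of $\mc{W}_a,\mc{W}_b$. The pin on $a$ (towards $\mc{W}_b$), transported along the relation between $a$ and $w$, must agree with the pin on $w$ (towards $\mc{W}_x$). All four triples are consistent by \Cref{lem:3_reflections}. Yet as an abstract constraint system this does not force the quadruple to be consistent. For example, if every relation ``flips'' with respect to the pinned sides, each triple is satisfied while the $4$--cycle has trivial holonomy, so neither of your two failure modes detects it.

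Your ``disjoint chord'' step for cycles, which you say is ``controlled by triples'', is exactly this problem in disguise. Your fallback does not resolve it either:
\begin{itemize}
\item a chamber lying in all pinned halfspaces is not available a priori, since that is essentially what you are trying to prove;
\item orienting unpinned walls towards such a chamber does not make crossing sectors acute;
\item the Deodhar--Dyer reduction only handles rank-$3$ configurations, so it cannot see a rank-$4$ obstruction.
\end{itemize}
Also, for $2$--spherical sets the relevant result is \cite[Proposition~11.7]{CM07}; Lemma~11.2 there concerns triples.

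\textbf{The missing ingredient is a genuinely higher-rank geometric argument.} The paper argues by induction on $|U|$ as follows.
\begin{itemize}
\item If $U$ is not $2$--spherical, it deletes an $x$ with a nonempty set $N$ of non-neighbours and takes a $2$--geometric choice on $U\setminus\{x\}$.
\item It orients $\mc{W}_x$ towards the walls $\mc{W}_n$ ($n\in N$). These all lie on one side by $2$--compatibility.
\item It then shows that $\mc{W}_x\sq\mc{H}_n$ for all $n\in N$, after a global flip if $U\setminus\{x\}$ is $2$--spherical.
\end{itemize}
Suppose this fails. Excluding Configuration~(1) of \Cref{lem:3_reflections} gives $\mc{W}_x\sq\mc{H}_n^*$ for all $n\in N$, which forces each $n\in N$ to be adjacent to all of $U\setminus\{x\}$. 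A non-adjacent pair $a,b$ in $U\setminus\{x\}$ then produces exactly the four-wall configuration above. Excluding Configuration~(2) shows that all four angles of a geodesic quadrilateral with vertices on $\mc{W}_x\cap\mc{W}_a$, $\mc{W}_a\cap\mc{W}_w$, $\mc{W}_w\cap\mc{W}_b$ and $\mc{W}_b\cap\mc{W}_x$ are obtuse. This contradicts the CAT(0) angle-sum bound.

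The induction absorbs all conflicts supported on proper subsets of $U$ at once, so the whole difficulty sits in this quadrilateral argument, which your plan does not supply.
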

\begin{proof}
    By \Cref{lem:1.6}, it suffices to find a $2$--geometric choice of halfspaces $\mc{H}_u\sq\A_S$ for $u\in U$, in the sense that any two of these halfspaces form a geometric pair. We argue by induction on $|U|$, the base step being immediate. Throughout, we say that two elements of $U$ are \emph{adjacent} if they are adjacent in the presentation graph of $(W,S)$, that is, if their product has finite order.
    
    If $U$ is $2$--spherical, then $U$ is geometric by \cite[Proposition~11.7]{CM07}. Thus, we can assume that there is an element $x\in U$ such that the set $N\sq U\setminus\{x\}$ of elements non-adjacent to $x$ is nonempty. By the inductive hypothesis, the set $U\setminus\{x\}$ is geometric and thus admits a $2$--geometric choice of halfspaces $\{\mc{H}_u\}_{u\in U\setminus\{x\}}$. By $2$--compatibility of $U$, the walls $\mc{W}_n$ with $n\in N$ are all on the same side of $\mc{W}_x$, and so we can define $\mc{H}_x$ as the side of $\mc{W}_x$ containing all these walls. 
    
    We are left to check (or rather ensure) that the pairs $\{\mc{H}_x,\mc{H}_u\}$ are geometric for $u\in U\setminus\{x\}$. The following is the main observation needed for this.

    \smallskip
    {\bf Claim.} \emph{If $U\setminus\{x\}$ is not $2$--spherical, then $\mc{W}_x\sq\mc{H}_n$ for all $n\in N$. If $U\setminus\{x\}$ is $2$--spherical, then the same conclusion holds, possibly after flipping all $\mc{H}_u$ with $u\in U\setminus\{x\}$.}

    \smallskip\noindent
    \emph{Proof of claim.}
    Suppose that there exists $w\in N$ such that $\mc{W}_x\sq\mc{H}_w^*$. This implies that we actually have $\mc{W}_x\sq\mc{H}_n^*$ for all $n\in N$, due to having ruled out Configuration~(1) in the proof of \Cref{lem:3_reflections}. It follows that each element of $N$ is adjacent to all elements of $U\setminus\{x\}$. 
    
    If $U\setminus\{x\}$ is $2$--spherical, the claim is proven. Therefore, we suppose that there is a non-adjacent pair $a,b\in U\setminus\{x\}$ and aim for a contradiction. Consider the four walls $\mc{W}_x,\mc{W}_w,\mc{W}_a,\mc{W}_b$, for some $w\in N$. By the previous paragraph, $a$ and $b$ cannot lie in $N$, and so they are adjacent to both $x$ and $w$. The pairs $(a,b)$ and $(x,w)$ are instead non-adjacent. Note that $\mc{H}_a$ and $\mc{H}_b$ contain $\mc{W}_b$ and $\mc{W}_a$, respectively, and the sectors $\mc{H}_a\cap\mc{H}_w$ and $\mc{H}_b\cap\mc{H}_w$ are both acute-angled, by construction. Having ruled out Configuration~(2) in the proof of \Cref{lem:3_reflections}, it follows that the sectors $\mc{H}_a\cap\mc{H}_x^*$ and $\mc{H}_b\cap\mc{H}_x^*$ are also acute-angled. 
    
    Now, working with respect to the CAT(0) metric on $\A_S$ (see \cite[Chapter~12]{Davis}), let $Q$ be a minimal-perimeter geodesic quadrilateral with vertices in the four intersections $\mc{W}_x\cap\mc{W}_a$, $\mc{W}_a\cap\mc{W}_w$, $\mc{W}_w\cap\mc{W}_b$ and $\mc{W}_b\cap\mc{W}_x$. Since all walls are convex subspaces with respect to the CAT(0) metric, the angles of $Q$ then equal the dihedral angles between these four wall-pairs, and so all four of these angles are obtuse. This violates the CAT(0) property (see e.g.\ \cite[Theorem~II.2.11]{BH}).
    \hfill$\blacksquare$

    \smallskip
    By the claim, we can assume that $\mc{W}_x\sq\mc{H}_n$ for all $n\in N$, which means that the pairs $\{\mc{H}_x,\mc{H}_n\}$ are all geometric. For $n\in N$ and $r\in U\setminus(N\cup\{x\})$, the sector $\mc{H}_n\cap\mc{H}_r$ is acute-angled, and so the sector $\mc{H}_x\cap\mc{H}_r$ must also be acute-angled (again, because we ruled out Configuration~(2) in \Cref{lem:3_reflections}). In conclusion, the halfspace family $\{\mc{H}_u\}_{u\in U}$ is $2$--geometric, proving the lemma.
\end{proof}

We now come to the main result of this subsection. Note that, in terms of the graph $\wh S_{\mscr{C}}$ from \Cref{defn:hat_graph}, Item~(1) of the corollary is simply saying that subsets of $S$ span cliques in $\wh S_{\mscr{C}}$ if and only if they lie in $\mscr{C}$ (the forward implication being the only nontrivial one).

\begin{cor}\label{cor:2-compatible->compatible}
    Let $S,R\sq W$ be angle-compatible and $\mscr{C}$ the family of $\{S,R\}$--compatible sets.
    \begin{enumerate}
        \item If a subset $U\sq S$ is $2$--compatible with respect to $R$, then $U$ is $R$--compatible.
        \item If a subset $U\sq S$ is not $R$--compatible, then there exists a splitting of $W$ relative to $\mscr{C}$ in which the subgroup $\langle U\rangle$ is non-elliptic.
    \end{enumerate}
\end{cor}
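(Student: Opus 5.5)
The plan is to prove Item~(1) first, using \Cref{prop:2-compatible->geometric} together with \Cref{lem:geometric+parabolic}, and then deduce Item~(2) from Item~(1) by an explicit $S$--amalgam. Item~(2) is the easy half; the real issue in Item~(1) is $R$--parabolicity.

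\emph{Setup for Item~(1).} Since $S$ and $R$ are angle-compatible, every $s\in S$ is conjugate into $R$, so $S\sq R^W$ and $S^W=R^W$. Let $U\sq S$ be $2$--compatible with respect to $R$. As a subset of $S$, $U$ is finite and universal, since $(\langle U\rangle,U)$ is a standard parabolic Coxeter system. \Cref{prop:2-compatible->geometric}, applied in the Davis complex $\A_R$, then shows that $U$ is $R$--geometric. By \Cref{lem:geometric+parabolic}, it remains only to show that $\langle U\rangle$ is $R$--parabolic.

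\emph{Parabolicity, the expected main obstacle.} The first thing to try is the splitting machinery of \Cref{sect:splittings}. The key observation is that $\langle U\rangle$ is elliptic in every splitting $W\acts\mc{T}$ relative to $\mscr{C}$, for the following reason.
\begin{itemize}
    \item Each pair $\{u,u'\}\sq U$ lies in $\mscr{C}$, by $2$--compatibility and because $\{u,u'\}\sq S$, so it fixes a vertex of $\mc{T}$.
    \item Hence the fixed subtrees of the involutions $u\in U$ pairwise intersect.
    \item By Helly's lemma for trees they have a common point, so $\langle U\rangle$ is elliptic.
\end{itemize}
I would then try to realise $\langle U\rangle$ as a vertex group of a suitable splitting, as follows.
\begin{itemize}
    \item Start from an $S$--visual splitting relative to $\mscr{C}$ in which $\langle U\rangle$ sits inside a vertex group.
    \item Apply \Cref{cor:universally_parabolic_vertex_stabilisers} with $\mscr{S}=\{S,R\}$, or \Cref{lem:universally_elliptic->parabolic} directly, to get vertex-stabilisers that are $R$--parabolic and contain $\langle U\rangle$.
    \item Induct on $|U|$ (or on the rank of the ambient standard parabolic containing $U$). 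Pass to the $R$--parabolic vertex group $P\supseteq\langle U\rangle$ and repeat inside $P$, so that the parabolic closure of $\langle U\rangle$ with respect to $R$ shrinks until it equals $\langle U\rangle$.
\end{itemize}
If $U$ is $2$--spherical, parabolicity and compatibility are instead supplied directly by \cite{CM07}, which can serve as the base case. This step is where I expect the difficulty: the induction must descend through the whole parabolic closure of $\langle U\rangle$, which is not obviously controlled by splittings of $W$ alone.

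\emph{Item~(2).} Suppose $U\sq S$ is not $R$--compatible. By Item~(1), $U$ is not $2$--compatible. Since $U\sq R^W$, there are $a,b\in U$ with $\{a,b\}\notin\mscr{C}$. As every spherical pair lies in $\mscr{C}$ by angle-compatibility, $ab$ has infinite order. Consider the decomposition
\[ S=(S\setminus\{a\})\cup(S\setminus\{b\}). \]
This is an $S$--amalgam relative to $\mscr{C}$. Its only non-shared elements are $a$ and $b$, which are not adjacent in the presentation graph, so it is an amalgam by Remark~\ref{rmk:amalgams_rel_C}(1). It is relative to $\mscr{C}$ by a direct check rather than via Remark~\ref{rmk:amalgams_rel_C}(2), which would presuppose Item~(1):
\begin{itemize}
    \item any $C\in\mscr{C}$ has a conjugate $gCg^{-1}\sq S$;
    \item $gCg^{-1}$ lies in $\mscr{C}$ because $\mscr{C}$ is conjugacy-closed, so it cannot contain both $a$ and $b$;
    \item hence $gCg^{-1}$ lies in one of the two factors.
\end{itemize}
In the associated Bass--Serre tree, $a$ and $b$ fix distinct vertices and no common edge. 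Therefore $ab$ is hyperbolic, and $\langle U\rangle\supseteq\langle a,b\rangle$ is non-elliptic.
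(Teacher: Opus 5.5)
\textbf{Verdict: genuine gap in Item~(1).} Your Item~(2) argument is sound. The amalgam $S=(S\setminus\{a\})\cup(S\setminus\{b\})$ works just as well as the paper's ${\rm st}_{\mscr{C}}(u)\cup(S\setminus\{u\})$. Your reduction of Item~(1) to showing that $\langle U\rangle$ is $R$--parabolic, via \Cref{prop:2-compatible->geometric} and \Cref{lem:geometric+parabolic}, is also correct. The gap is that this parabolicity step is never carried out, and the descent you sketch aims at the wrong endpoint.

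\emph{No starting splitting in general.} A splitting of $W$ relative to $\mscr{C}$ exists only if $S$ itself is not $2$--compatible with $R$. Otherwise your own Helly argument makes $W=\langle S\rangle$ elliptic. So if $S$ is $2$--compatible and $U\subsetneq S$, there is nothing to descend through.

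\emph{Wrong endpoint.} Descending through $\{S,R\}$--parabolic vertex groups only brings you to a minimal $\{S,R\}$--parabolic subgroup $P\supseteq\langle U\rangle$. There is no reason for $P$ to equal $\langle U\rangle$, so ``shrinking the parabolic closure until it equals $\langle U\rangle$'' need not happen.

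The missing idea is to apply \Cref{prop:2-compatible->geometric} not to $U$ but to the whole Coxeter generating set of $P$, which is trivially parabolic in $P$. Concretely, as in the paper:
\begin{enumerate}
\item Take $P$ minimal among $\{S,R\}$--parabolic subgroups containing $U$. Replace $W,S,R$ by $P$ and its angle-compatible generating sets $S',R'$ (conjugates of subsets of $S$ and $R$), with a $P$--conjugate of $U$ inside $S'$.
\item If $S'$ is $2$--compatible with $R'$, then it is $R'$--geometric. Since $\langle S'\rangle=P=\langle R'\rangle$ is $R'$--parabolic, \Cref{lem:geometric+parabolic} makes $S'$ $R'$--compatible, hence so is $U$.
\item If not, apply your amalgam construction from Item~(2) to a pair $\{a,b\}\sq S'$ with $\{a,b\}\notin\mscr{C}$. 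This yields a splitting of $P$ relative to $\mscr{C}$.
\item \Cref{cor:universally_parabolic_vertex_stabilisers} upgrades this to a splitting with $\{S',R'\}$--parabolic vertex groups. Serre's lemma then places $U$ in a proper such vertex group, contradicting minimality of $P$.
\end{enumerate}
So the amalgam construction must be established first, independently of Item~(1), and then used inside its proof; this reverses your ordering. No induction on $|U|$ is needed.
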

\begin{proof}
    We begin with the following observation, which is an (a priori) weaker version of Item~(2).

    \smallskip
    {\bf Claim.} \emph{If a subset $U\sq S$ is not $2$--compatible with respect to $R$, then $\langle U\rangle$ is non-elliptic in a splitting of $W$ relative to $\mscr{C}$.}

    \smallskip\noindent
    \emph{Proof of claim.}
    Since $U$ is not $2$--compatible with $R$, there exist a pair of distinct elements $u,u'\in U$ that are not adjacent within the graph $\wh S_{\mscr{C}}$. Let ${\rm st}_{\mscr{C}}(u)$ be the star of $u$ in $\wh S_{\mscr{C}}$. Since $S$ and $R$ are angle-compatible, the decomposition $S={\rm st}_{\mscr{C}}(u)\cup(S\setminus\{u\})$ is an $S$--amalgam relative to $\mscr{C}$ (see \Cref{rmk:amalgams_rel_C}). Let $W\acts \mc{T}$ be the splitting determined by this visual decomposition through \Cref{lem:visual_equivalence}, and note that $\mc{T}$ is relative to $\mscr{C}$. Finally, the element $uu'$ is loxodromic in $\mc{T}$ because $u'\not\in{\rm st}_{\mscr{C}}(u)$ and $u\not\in S\setminus\{u\}$. Thus, $\langle U\rangle$ is indeed non-elliptic in $\mc{T}$.
    \hfill$\blacksquare$

    \smallskip
    In view of the claim, it suffices to prove Item~(1) of the corollary. Thus, consider a subset $U\sq S$ that is $2$--compatible with $R$. Suppose for a moment that $U$ is contained in a proper $\{S,R\}$--parabolic subgroup of $W$, and let $P$ be a minimal such subgroup. Note that $P$ has two angle-compatible Coxeter generating sets $S'$ and $R'$ that are conjugate to subsets of $S$ and $R$, respectively. Moreover, a $P$--conjugate of $U$ is contained in $S'$. Thus, up to replacing $S$ and $R$ with $S'$ and $R'$, we can safely assume that $U$ is not contained in any proper $\{S,R\}$--parabolic subgroups of $W$.

    If $S$ is itself $2$--compatible with respect to $R$, then $S$ is $R$--geometric by \Cref{prop:2-compatible->geometric}, and it follows from \Cref{lem:geometric+parabolic} that $S$ is $R$--compatible. In this case, the subset $U$ is certainly $R$--compatible. Thus, we can assume in the rest of the proof that $S$ is not $2$--compatible with $R$, and thus the claim yields the existence of a splitting $W\acts \mc{T}$ relative to $\mscr{C}$. By \Cref{cor:universally_parabolic_vertex_stabilisers}, there exists also a splitting $W\acts \mc{T}'$ relative to $\mscr{C}$ with $\{S,R\}$--parabolic vertex-stabilisers.

    Now, all cardinality--$2$ subsets of $U$ lie in $\mscr{C}$, and so they are elliptic in $\mc{T}'$. It follows from Serre's lemma \cite[p.\,64]{Serre} that $U$ is elliptic in $\mc{T}'$. Therefore, $U$ is contained in a proper $\{S,R\}$--parabolic subgroup of $\mc{T}$, contradicting our assumptions and thus proving the corollary.
\end{proof}

\subsection{Finding shrubs}\label{sub:shrub_defn}

We are now ready to start working towards the proof of \Cref{thm:step_one}. Thus, let $S,R\sq W$ be angle-compatible Coxeter generating sets, and let $\mscr{C}$ be the family of $\{S,R\}$--compatible subsets of $W$. We begin by defining shrubs more precisely in terms of \Cref{defn:visual_decomposition}.

\begin{defn}\label{defn:shrub}
    A subset $\Om\sq S$ is a \emph{$\mscr{C}$--shrub} if it admits a strongly reduced visual decomposition $\mf{F}=(\mc{F},\{\Om_v\}_v)$ with the following two properties:
    \begin{enumerate}
        \item $\mf{F}$ is relative\footnote{Recall that this simply means that all subsets of $\Om$ that lie in $\mscr{C}$ also lie in some $\Om_v$. In general, there are many sets in $\mscr{C}$ with no conjugates contained in $\Om$.} to $\mscr{C}$, and we have $\Om_v\in\mscr{C}$ for all $v\in\mc{F}$;
        \item for every vertex $v\in\mc{F}$ and any two edges $e,f\sq\mc{F}$ incident to $v$, the sets $\Om_e$ and $\Om_f$ are conjugate within the subgroup $\langle\Om_v\rangle$.
    \end{enumerate}
    Note that we do \emph{not} require $\mf{F}$ to extend to a visual decomposition of the generating set $S$.
\end{defn}

Regarding Item~(2) of \Cref{defn:shrub}, note that there exists an element $g\in\langle\Om_v\rangle$ with $g\Om_eg^{-1}=\Om_f$ if and only if there exists an element $w\in W$ with $w\langle\Om_e\rangle w^{-1}=\langle\Om_f\rangle$ (see for instance \cite[Sec\-tion~4.10]{Davis}). We will also need the following observation.

\begin{rmk}\label{rmk:disconnecting_shrub}
    Let $\Om\sq S$ be a set with a visual decomposition $(\mc{F},\{\Om_v\}_v)$ satisfying Item~(1) of \Cref{defn:shrub}. If $\Delta\sq\Om$ is a subset that does not contain any set $\Om_e$ with $e\sq\mc{F}$ an edge, then $\Delta$ does not disconnect the graph $\wh\Om_{\mscr{C}}$. Indeed, given two points $\om,\om'\in\Om$, we can construct a path connecting them in $\wh\Om_{\mscr{C}}\setminus\Delta$ as follows. Choose vertices $w,w'\in\mc{F}$ such that $\om\in\Om_w$ and $\om'\in\Om_{w'}$. Let $\g\sq\mc{F}$ be the geodesic from $w$ to $w'$. Choosing a point $\om_e\in\Om_e\setminus\Delta$ for each edge $e\sq\g$, we obtain the required path from $\om$ to $\om'$ (note that each set $\Om_v$ spans a clique in $\wh\Om_{\mscr{C}}$).
\end{rmk}

The first step in the proof of \Cref{thm:step_one} is the following result, which we already hinted at in \Cref{sect:strategy}. In fact, for Coxeter groups of FC-type, this constitutes the entire proof. As mentioned, $\mscr{T}_S\cup\{R\}$--parabolic subsets of $S$ not lying in $\mscr{C}$ indeed exist, as soon as $S$ and $R$ are not conjugate.

\begin{prop}\label{prop:minimal->shrub}
     Let $\mscr{T}_S$ be the collection of generating sets twist-equivalent to $S$ relative to $\mscr{C}$. If $\Om\sq S$ is minimal among $\mscr{T}_S\cup\{R\}$--parabolic subsets of $S$ not lying in $\mscr{C}$, then $\Om$ is a $\mscr{C}$--shrub. 
\end{prop}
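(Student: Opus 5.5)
Let $\Om\sq S$ be minimal as in the statement, and write $P:=\langle\Om\rangle$. Since $\Om$ is $\mscr{T}_S\cup\{R\}$--parabolic, for every $S'\in\mscr{T}_S\cup\{R\}$ the subgroup $P$ has a Coxeter generating set that is $W$--conjugate to a subset of $S'$. Let $\mscr{S}_P$ be the resulting family of generating sets of $P$, with $\Om\in\mscr{S}_P$. Let $\mscr{C}_P$ be the family of sets in $\mscr{C}$ contained in $P$. The plan is to run the splitting machinery of \Cref{sect:splittings} inside $P$ rather than in $W$, and to extract the shrub structure from a maximal splitting.

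\textbf{Step 1: find a splitting of $P$.} Since $\Om\notin\mscr{C}$, \Cref{cor:2-compatible->compatible}(1) shows that $\Om$ is not $2$--compatible with $R$. Hence there are $u,u'\in\Om$ that are non-adjacent in $\wh\Om_{\mscr{C}}$. As in the claim in the proof of \Cref{cor:2-compatible->compatible}, the decomposition $\Om={\rm st}_{\mscr{C}}(u)\cup(\Om\setminus\{u\})$ is an $\Om$--amalgam of $P$ relative to $\mscr{C}_P$. Here I use angle-compatibility, so that finite-order products give $\wh S_{\mscr C}$--edges, together with \Cref{rmk:amalgams_rel_C}.

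Now apply \Cref{cor:universally_parabolic_vertex_stabilisers} to $P$, with family $\mscr{S}_P$ and base set $\Om$. This yields a strongly reduced, $\Om$--visual splitting $P\acts\mc{T}$ relative to $\mscr{C}_P$ whose vertex-stabilisers are $\mscr{S}_P$--parabolic. To keep the maximal element relative to $\mscr{C}_P$, I would also take the maximal element among splittings relative to $\mscr{C}_P$. By \Cref{lem:visual_equivalence}, $\mc{T}$ gives a strongly reduced visual decomposition $(\mc{F},\{\Om_v\})$ of $\Om$ relative to $\mscr{C}$, with $P_v=\langle\Om_v\rangle$.

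\textbf{Step 2: vertex sets lie in $\mscr{C}$.} Each $\Om_v$ is a proper subset of $\Om$, since the splitting is non-elliptic. It is $\mscr{S}_P$--parabolic in $P$. Because $\mscr{T}_S$--parabolic subgroups of $P$ are $\mscr{T}_S$--parabolic in $W$, and likewise for $R$, the set $\Om_v$ is $\mscr{T}_S\cup\{R\}$--parabolic in $W$. Minimality of $\Om$ then forces $\Om_v\in\mscr{C}$. This establishes Item~(1) of \Cref{defn:shrub}.

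\textbf{Step 3: edge sets are conjugate at each vertex (main obstacle).} Item~(2) of \Cref{defn:shrub} is where I expect the real work. Suppose that at a vertex $v$ two incident edges $e,f$ have $\Om_e,\Om_f$ not conjugate in $\langle\Om_v\rangle$. The idea is to refine or fold the splitting, in the spirit of JSJ arguments, to produce either a strictly larger splitting or a proper $\mscr{T}_S\cup\{R\}$--parabolic subset of $\Om$ outside $\mscr{C}$.

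Concretely, I would use the edge structure of $\mc{F}$ at $v$ to cut $\Om$ into the unions $\Om^{(e)}$ and $\Om^{(f)}$ of vertex sets on the sides of $v$ containing $e$ and $f$. Then I would try the $\Om$--amalgam $\Om=(\Om\setminus A)\cup(\Om\setminus B)$ built so that $\Om_e$ and $\Om_f$ become separated. This should yield a splitting that strongly dominates $\mc{T}$ while being non-equivalent to it, contradicting maximality.

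Alternatively, I would show that the subset obtained by contracting one branch, namely the union of $\Om_v$ with the branch through $e$, is itself a vertex group of a splitting in the maximal class. It would then be $\mscr{T}_S\cup\{R\}$--parabolic by \Cref{cor:universally_parabolic_vertex_stabilisers}(2), and not in $\mscr{C}$ since it contains non-adjacent pairs. This contradicts minimality.

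The non-conjugacy hypothesis is exactly what should prevent these new splittings from being equivalent to $\mc{T}$: edge groups that are non-conjugate in $P_v$ give distinct $P$--orbits of edges, which one can collapse independently. I would first try this orbit-collapsing argument, checking via Helly's lemma that elliptic $\mscr{S}_P$--parabolic subgroups are preserved.
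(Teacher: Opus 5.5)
Your Steps~1 and~2 are fine and match the first half of the paper's argument. The paper builds the splitting in $W$ via \Cref{cor:2-compatible->compatible}(2) and \Cref{cor:universally_parabolic_vertex_stabilisers} and then restricts the visual decomposition to $\Om$; working inside $P$ as you do is equally good. The gap is Step~3, which is the real content of the proposition, and neither of your routes closes it.

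Your first route, contradicting maximality of $\mc{T}$, cannot work. Once all vertex sets lie in $\mscr{C}$, every splitting relative to $\mscr{C}$ that dominates $\mc{T}$ has exactly the same elliptic subgroups. Non-conjugate edge sets at a vertex are also not in tension with maximality: think of a path decomposition with vertex sets $X,Y,Z\in\mscr{C}$ in this order and incomparable $X\cap Y$, $Y\cap Z$. So the contradiction has to come from minimality of $\Om$.

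Your second route aims at that, but it has two problems. First, it invokes \Cref{cor:universally_parabolic_vertex_stabilisers}(2) for a set that is \emph{not} elliptic in $\mc{T}$. Such a set is not a vertex group of any splitting equivalent to $\mc{T}$, since those have the same elliptic subgroups. Second, with a local choice of branch the needed universal ellipticity is simply false. If $\Om_e$ is conjugate into $\Om_f$ and $e$ leads to a leaf, then $\mc{T}$ itself strongly dominates the collapsed tree, and your union is non-elliptic in it.

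The missing idea is a \emph{global} choice.
\begin{enumerate}
\item Pick an edge set $E$ whose $\langle\Om\rangle$--conjugacy class is minimal for \emph{conjugate into} among all edge sets. Collapse every edge $e$ with $\Om_e$ not conjugate to $E$.
\item If the edge sets are not all conjugate, some vertex $x$ of the collapsed splitting $H\acts\mc{T}'$ has a set $\Psi_x\subsetneq\Om$ with $\Psi_x\notin\mscr{C}$.
\item Show that $H_x=\langle\Psi_x\rangle$ is elliptic in every splitting relative to $\mscr{C}$ strongly dominating $\mc{T}'$. Otherwise, \Cref{lem:MT} gives an $\Om$--visual such splitting with an edge set $\Phi_f$ such that $\Psi_x$ meets two components of $\wh\Om_{\mscr{C}}\setminus\Phi_f$. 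Then \Cref{rmk:disconnecting_shrub} forces $\Phi_f\supseteq\Om_e$ for some collapsed edge $e$. But strong domination makes $\Phi_f$ conjugate into $E$, so $\Om_e$ is conjugate into $E$, contradicting the choice of $E$.
\item \Cref{lem:universally_elliptic->parabolic} then makes $H_x$ $\mscr{T}_S\cup\{R\}$--parabolic, contradicting minimality of $\Om$.
\end{enumerate}
This shows that all edge sets are $\langle\Om\rangle$--conjugate, which gives Item~(2) of \Cref{defn:shrub} via the remark following that definition.
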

\begin{proof}
    Since $\Om$ is contained in $S$ and does not lie in $\mscr{C}$, it is not $R$--compatible. By \Cref{cor:2-compatible->compatible}(2), it follows that $\langle\Om\rangle$ is non-elliptic in a splitting of $W$ relative to $\mscr{C}$. By \Cref{cor:universally_parabolic_vertex_stabilisers}, $\langle\Om\rangle$ is also non-elliptic in an $S$--visual splitting $W\acts \mc{T}$ relative to $\mscr{C}$ that has $\mscr{T}_S\cup\{R\}$--parabolic vertex-stabilisers. The $\langle\Om\rangle$--stabiliser of each vertex of $\mc{T}$ is a proper, $\mscr{T}_S\cup\{R\}$--parabolic subset of $\Om$ and so, by minimality of $\Om$, each of these vertex-stabilisers is generated by a set in $\mscr{C}$. Using \Cref{lem:visual_equivalence}, the splitting $\mc{T}$ corresponds to a visual decomposition of $S$ relative to $\mscr{C}$. Restricting the latter to $\Om$, we obtain a visual decomposition $\mf{F}=(\mc{F},\{\Om_v\}_v)$ of $\Om$ relative to $\mscr{C}$ with $\Om_v\in\mscr{C}$ for all vertices $v\in\mc{F}$. Up to collapsing some edges of $\mc{F}$, we can assume that $\mf{F}$ is strongly reduced.
  
    We are only left to check that $\mf{F}$ satisfies Item~(2) of \Cref{defn:shrub}, which will be the subject of the rest of the proof. Let $\mc{E}$ be the collection of sets $\Om_e$ with $e$ varying through the edges of $\mc{F}$. Given two sets $E_1,E_2\in\mc{E}$, we write $E_1\sim E_2$ (resp.\ $E_1\leq E_2$) if there exists an element $g\in\langle\Om\rangle$ such that $gE_1g^{-1}=E_2$ (resp.\ $gE_1g^{-1}\sq E_2$). The relation $\leq$ descends to a partial order on $\mc{E}/\sim$. Our goal is to check that the quotient $\mc{E}/\sim$ is a singleton.

    Let $E\in\mc{E}$ be a set projecting to a $\leq$--minimal element of $\mc{E}/\sim$. Let $\mf{F}'=(\mc{F}',\{\Psi_w\}_w)$ be the visual decomposition of $\Om$ obtained by collapsing all edges $e\sq\mc{F}$ with $\Om_e\not\sim E$ (and consequently merging the subsets of $\Om$ associated to vertices of $\mc{F}$ getting collapsed to the same point of $\mc{F}'$). Supposing for the sake of contradiction that $\mc{E}/\sim$ is not a singleton, there exists a vertex $x\in\mc{F}'$ whose preimage $\mc{F}_x\sq\mc{F}$ is a subtree with at least two vertices. In particular, since $\mf{F}$ was strongly reduced and relative to $\mscr{C}$, we have $\Psi_x\not\in\mscr{C}$.
    
    Set $H:=\langle\Om\rangle$ and let $H\acts\mc{T}'$ be the splitting arising from the visual decomposition $\mf{F}'$ of $\Om$. This is a splitting relative to the family $\mscr{C}|_H$ of sets in $\mscr{C}$ contained in $H$. The tree $\mc{F}'$ is naturally identified with a fundamental subtree of $\mc{T}'$, so we can think of the vertex $x\in\mc{F}'$ as a vertex of $\mc{T}'$. The stabiliser $H_x$ is generated by the set $\Psi_x$, which is the union of the sets $\Om_v$ with $v\in\mc{F}_x$. This gives rise to a visual decomposition $\mf{F}_x=(\mc{F}_x,\{\Om_v\}_v)$ of the set $\Psi_x$. Note that, for each edge $e\sq\mc{F}_x$, the set $\Om_e$ does not admit any $W$--conjugates contained in the set $E$. 
    
    Now, we claim that the subgroup $H_x$ is elliptic in all splittings of $H$ relative to $\mscr{C}|_H$ that strongly dominate $\mc{T}'$. Otherwise, \Cref{lem:MT} would guarantee that $H_x$ is non-elliptic in an $\Om$--visual such splitting $H\acts\mc{T}''$. By \Cref{lem:visual_equivalence}, the latter splitting corresponds to one last visual decomposition $\mf{F}''=(\mc{F}'',\{\Phi_u\}_u)$ of the set $\Om$ relative to $\mscr{C}$. Considering any edge $f\sq\mc{F}''$, the fact that $\mc{T}''$ strongly dominates $\mc{T}'$ implies that we have $g\Phi_fg^{-1}\sq \langle E\rangle$ for some $g\in H$, and hence also $\overline g\Phi_f\overline g^{-1}\sq E$ for some other element $\overline g\in H$. At the same time, the fact that $H_x=\langle\Psi_x\rangle$ is non-elliptic in $\mc{T}''$ implies that there is an edge $f\sq\mc{F}''$ such that the set $\Psi_x\sq\Om$ intersects two distinct connected components of the graph $\wh \Om_{\mscr{C}}\setminus\Phi_f$ (recall \Cref{rmk:amalgams_rel_C}(2)). In particular, the graph $(\wh\Psi_x)_{\mscr{C}}\setminus (\Psi_x\cap\Phi_f)$ must be disconnected and so, applying \Cref{rmk:disconnecting_shrub} to the visual decomposition $\mf{F}_x$ and the set $\Delta:=\Psi_x\cap\Phi_f$, we see that $\Phi_f$ must contain $\Om_e$ for some edge $e\sq\mc{F}_x$. In conclusion, we have $E\supseteq\overline g\Phi_f\overline g^{-1}\supseteq\overline g\Om_e\overline g^{-1}$ for an edge $e\sq\mc{F}_x\sq\mc{F}$, violating our choice of $E$ and $x$.

    Finally, the claim in the previous paragraph combined with \Cref{lem:universally_elliptic->parabolic} implies that the subgroup $H_x$ is $\mscr{T}_S\cup\{R\}$--parabolic. Since $\Psi_x\not\in\mscr{C}$, this violates minimality of $\Om$, yielding one last contradiction and proving the proposition.
\end{proof}

Back to the statement of \Cref{thmintro:main}, it involves one last property that we have not yet introduced: ``tuckedness''. As mentioned, this is only needed to handle Coxeter groups not of FC-type, so the reader looking for a simplified proof of \Cref{thmintro:main} can skip the rest of this section.

\begin{defn}\label{defn:tucked}
    A subset $\Om\sq S$ is:
    \begin{itemize}
        \setlength\itemsep{.2em}
        \item \emph{constricted} if every non-spherical irreducible subset $J\sq\Om$ with $J\in\mscr{C}$ satisfies $J^{\perp}\cap\Om\in\mscr{C}$;
        \item \emph{$S$--tucked} if, for every non-spherical irreducible subset $J\sq S$ with $J\in\mscr{C}$ and such that $\Om$ intersects at least two connected components of the graph $\wh S_{\mscr{C}}\setminus (J\cup J^{\perp})$, we have that either the difference $J\setminus\Om$ is non-spherical, or $J\sq\Om$;
        \item \emph{$\mscr{T}_S$--tucked} if $\Om$ is $\mscr{T}_S$--parabolic and, for all generating sets $S'\in\mscr{T}_S$ and all subsets $\Om'\sq S'$ with $\langle\Om'\rangle=\langle\Om\rangle$, the set $\Om'$ is $S'$--tucked.
    \end{itemize}
\end{defn}

There is no direct connection between the properties of being constricted and tucked, but both are important in the following discussion. There is no need to define ``$\mscr{T}_S$--constricted'' sets, due to \Cref{rmk:constricted_independent} below.

Given two subsets $V\sq U\sq S$, we say that $V$ is a \emph{factor} of $U$ if $U\sq V\cup V^{\perp}$. An \emph{irreducible factor} is a factor that is both irreducible and nonempty. We say that $U$ is \emph{aspherical} if it has no spherical factors. When $U$ is aspherical, the centraliser of $U$ in $W$ coincides with the subgroup $\langle U^{\perp}\rangle$; see for instance \cite[Section~4.10]{Davis}. We emphasise that the latter fact badly fails when $U$ has spherical factors.

\begin{rmk}\label{rmk:constricted_independent}
    If $\Om\sq S$ is constricted and if we have $\Om'\sq S'\in\mscr{T}_S$ with $\langle\Om'\rangle=\langle\Om\rangle$, then $\Om'$ is automatically constricted with respect to $S'$. Indeed, for every non-spherical irreducible subset $J\sq\Om$ with $J\in\mscr{C}$, there exists a subset $J'\sq\Om'$ conjugate to $J$ (since $S$ and $S'$ differ by twists relative to $\mscr{C}$). Moreover, the orthogonal of $J$ within $\Om$ is conjugate to the orthogonal of $J'$ within $\Om'$: this is because $J$ is aspherical and so the subgroup $\langle J^{\perp}\rangle$ is simply the centraliser of $J$ within $W$. (Note that, instead, orthogonals of spherical sets can change much more drastically with a twist.)
\end{rmk}

\begin{rmk}\label{rmk:tucked_in_tucked}
    Given two subsets $V\sq U\sq S$, the following are straightforward observations.
    \begin{enumerate}
        \item If $U$ is constricted, then $V$ is constricted.
        \item If $V$ is $U$--tucked and $U$ is $S$--tucked, then $V$ is $S$--tucked.
    \end{enumerate}
\end{rmk}

We need two more lemmas, after which we will quickly be able to prove \Cref{thm:step_one}. 

\begin{lem}\label{lem:constricted}
    If $S$ and $R$ are not conjugate, there exists a $\mscr{T}_S\cup\{R\}$--parabolic subset $\Om\sq S$ such that $\Om\not\in\mscr{C}$ and $\Om$ is both constricted and $\mscr{T}_S$--tucked.
\end{lem}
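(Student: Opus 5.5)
A natural plan is a minimality argument. We consider the collection $\mathfrak{P}$ of all $\mscr{T}_S\cup\{R\}$--parabolic subsets $\Om\sq S$ with $\Om\not\in\mscr{C}$. It is nonempty since $S\in\mathfrak{P}$ when $S$ and $R$ are not conjugate. Among its elements we choose $\Om$ minimising a suitable complexity. The first candidate is $|\Om|$, refined lexicographically by the number of non-spherical irreducible $\mscr{C}$--subsets if needed. We then show that a failure of either property produces a strictly smaller element of $\mathfrak{P}$. Note that $\mscr{T}_S$--parabolicity is part of $\mathfrak{P}$. Throughout we may replace $S$ by another $S'\in\mscr{T}_S$ and $\Om$ by $\Om'\sq S'$ with $\langle\Om'\rangle=\langle\Om\rangle$, since $\mscr{T}_{S'}=\mscr{T}_S$ and membership in $\mscr{C}$ is conjugacy invariant.

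\emph{Constricted.} Suppose $J\sq\Om$ is non-spherical, irreducible, lies in $\mscr{C}$, and $J^\perp\cap\Om\not\in\mscr{C}$. Because $J$ is aspherical, $\langle J^\perp\rangle$ is the centraliser of $J$ in $W$. This characterisation is independent of the generating set, exactly as in \Cref{rmk:constricted_independent}. We claim $(J^\perp\cap\Om)\cup J$, or $J^\perp\cap\Om$ itself, is $\mscr{T}_S\cup\{R\}$--parabolic. For each $S'\in\mscr{T}_S\cup\{R\}$, the set $J$ is conjugate into $S'$, since $J\in\mscr{C}$ and twists are relative to $\mscr{C}$. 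Hence $Z_{\langle\Om\rangle}(\langle J\rangle)=\langle\Om\rangle\cap\langle J^\perp\rangle$ is an intersection of two $S'$--parabolics, and is therefore $S'$--parabolic. So $J^\perp\cap\Om$ lies in $\mathfrak{P}$. It is proper in $\Om$ because it misses $J$, contradicting minimality.

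\emph{Tucked.} Suppose some $S'\in\mscr{T}_S$ and $\Om'\sq S'$ with $\langle\Om'\rangle=\langle\Om\rangle$ fail $S'$--tuckedness. Then there is a non-spherical irreducible $J\sq S'$ with $J\in\mscr{C}$ such that $\Om'$ meets two components of $\wh{S'}_{\mscr{C}}\setminus(J\cup J^\perp)$, while $J\setminus\Om'$ is nonempty and spherical. The splitting idea from \Cref{cor:2-compatible->compatible} applies here. The decomposition of $S'$ along $J\cup J^\perp$, a component, and the rest is an $S'$--amalgam relative to $\mscr{C}$ by \Cref{rmk:amalgams_rel_C}(2). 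In it, $\langle\Om'\rangle$ is non-elliptic. \Cref{cor:universally_parabolic_vertex_stabilisers} then gives a strongly reduced splitting with $\mscr{T}_S\cup\{R\}$--parabolic vertex stabilisers in which $\langle\Om\rangle$ is still non-elliptic. The $\langle\Om\rangle$--stabilisers of vertices are proper $\mscr{T}_S\cup\{R\}$--parabolic subsets of $\Om$. By minimality they all lie in $\mscr{C}$, and we have not yet reached a contradiction. The sphericity of $J\setminus\Om'$ should now be used. The intended step is to enlarge $\Om'$ to $\Om'\cup J$, or to pass to $(\Om'\cap J)\cup(\Om'\cap\text{component})$, and argue that some vertex group meets $\Om'$ in a set not in $\mscr{C}$. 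For example, $J\cap\Om'$ with $J$ irreducible non-spherical, together with a piece of $\Om'$ beyond $J^\perp$, yields a set that is not a clique in $\wh{S'}_{\mscr{C}}$ by \Cref{cor:2-compatible->compatible}(1).

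I expect this tuckedness step to be the main obstacle. The difficulty is that the condition quantifies over \emph{all} $S'\in\mscr{T}_S$, and the natural candidates for smaller sets are only visibly parabolic with respect to $S'$ and those $\mscr{T}_S$ members reachable compatibly. If plain minimality of $|\Om|$ fails, I would instead choose $\Om$ minimal and additionally, among such, maximise $|J\cap\Om|$ over offending $J$. The offending configuration would then be repaired by replacing $\Om$ with the parabolic closure of $\Om\cup J$ inside a vertex group of the $J$--amalgam. Since $J$ is aspherical and $2$--spherical-like, this closure is again $\mscr{T}_S\cup\{R\}$--parabolic by the rigidity of $\mscr{C}$--sets. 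Constrictedness is preserved under passing to subsets by \Cref{rmk:tucked_in_tucked}(1), so it can be arranged first, and tuckedness handled afterwards by iteration. Termination of that iteration then follows from finiteness of $S$.
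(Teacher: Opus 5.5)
Your constrictedness step is sound: for a minimal $\Om\in\mathfrak{P}$ and an offending $J$, the group $\langle\Om\cap J^{\perp}\rangle=\langle\Om\rangle\cap Z_W(\langle J\rangle)$ is an intersection of two $S'$--parabolic subgroups for every $S'\in\mscr{T}_S\cup\{R\}$, so minimality is contradicted.

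\textbf{The gap is tuckedness, which the proposal never establishes.} The amalgam along $J\cup J^{\perp}$ gives no contradiction, as you note yourself. The suggested repair fails for three reasons. First, nothing shows that a parabolic closure of $\Om\cup J$ is $\mscr{T}_S\cup\{R\}$--parabolic; no ``rigidity of $\mscr{C}$--sets'' result of that kind is available. Second, enlarging $\Om$ discards exactly the minimality that your vertex-group argument relies on. Third, constrictedness passes only to \emph{subsets} (\Cref{rmk:tucked_in_tucked}(1)), so ``first arrange constrictedness, then fix tuckedness by enlarging'' is not a coherent iteration, and finiteness of $S$ gives no termination for a process that both shrinks and enlarges. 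There is also no reason to expect minimality alone to force tuckedness. In \Cref{thm:step_one_repeated}, the paper gets tuckedness of a minimal set only by choosing it inside a set $\Om_2$ that is already constricted, $\mscr{T}_S$--tucked and not disconnected by cliques. The present lemma is the first input to that chain.

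\textbf{The missing idea is that no minimality is needed: take $\Om$ to be an orthogonal.} If $S$ is constricted, $\Om:=S$ works, and tuckedness is automatic because the only $\Om'\sq S'\in\mscr{T}_S$ with $\langle\Om'\rangle=W$ is $S'$ itself. Otherwise, choose $J\sq S$ maximal among nonempty aspherical sets in $\mscr{C}$ with $J^{\perp}\notin\mscr{C}$, and set $\Om:=J^{\perp}$. The three required properties then follow:
\begin{enumerate}
\item \emph{Parabolicity.} $\langle\Om\rangle=Z_W(\langle J\rangle)$, which is parabolic for every generating set containing a conjugate of $J$.
\item \emph{Constrictedness.} This follows from maximality of $J$. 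An offending $J'\sq\Om$ would make $J\cup J'$ a larger aspherical clique in $\wh S_{\mscr{C}}$, hence in $\mscr{C}$ by \Cref{cor:2-compatible->compatible}, with orthogonal $\Om\cap J'^{\perp}\notin\mscr{C}$.
\item \emph{Tuckedness.} Every point of $J$ is adjacent in $\wh S_{\mscr{C}}$ to every point of $\Om$. So if $\Om$ meets two components of $\wh S_{\mscr{C}}\setminus(I\cup I^{\perp})$, then $J\sq I\cup I^{\perp}$. Each irreducible factor of $J$ is non-spherical and lies in $I$ or in $I^{\perp}$. If some factor lies in $I$, then $I\setminus\Om$ is non-spherical. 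Otherwise $J\sq I^{\perp}$, whence $I\sq J^{\perp}=\Om$.
\end{enumerate}
Since $J^{\perp}$ is a centraliser, the same argument runs verbatim in every $S'\in\mscr{T}_S$.
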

\begin{proof}
    If $S$ is constricted, we can simply set $\Om:=S$. Thus, we can suppose that $S$ has nonempty aspherical subsets $J\in\mscr{C}$ with $J^{\perp}\not\in\mscr{C}$. Let $J$ be a maximal such set and define $\Om:= J^{\perp}$. Since $J$ is aspherical, we have the subgroup $\langle\Om\rangle$ coincides with the centraliser of $J$ in $W$, and this implies that $\Om$ is $T$--parabolic for every Coxeter generating set $T\sq W$ that contains a conjugate of $J$. In particular, $\Om$ is $\mscr{T}_S\cup\{R\}$--parabolic. Maximality of $J$ also clearly implies that $\Om$ is constricted.

    We are left to show that $\Om$ is $\mscr{T}_S$--tucked, and we begin by showing that it is $S$--tucked. Consider an irreducible subset $I\sq S$ such that $\Om$ intersects at least two connected component of $\wh S_{\mscr{C}}\setminus (I\cup I^{\perp})$. (Here it will not matter whether $I$ is spherical or not, or whether it lies in $\mscr{C}$.) Since $\Om=J^{\perp}$, every point of $\Om$ is adjacent to every point of $J$ within the graph $\wh S_{\mscr{C}}$. It follows that $J\sq I\cup I^{\perp}$ and hence every irreducible factor $J_i$ of $J$ is contained either in $I$ or in $I^{\perp}$. If we have $J_i\sq I$ for some $i$, then $I\setminus\Om$ is non-spherical. Otherwise, we have $J\sq I^{\perp}$ and hence $I\sq J^{\perp}=\Om$ as desired. 

    This shows that $\Om$ is $S$--tucked. In fact, the same exact proof can be carried out with respect to any other generating set $S'\in\mscr{T}_S$, and so we obtain that $\Om$ is $\mscr{T}_S$--tucked. (The important point here is again that orthogonals of aspherical elements of $\mscr{C}$ are always the same, regardless of which generating set in $\mscr{T}_S$ is used to compute them, since they correspond to centralisers.)
\end{proof}

Before continuing, we record the following observation. We say that $W$ \emph{splits} over a subgroup $H$ relative to a family $\mscr{C}$ if there exists a splitting of $W$ relative to $\mscr{C}$ that has $H$ as an edge-stabiliser.

\begin{rmk}\label{rmk:splittings_over_cliques}
    The following two conditions are equivalent for the Coxeter system $(W,S)$:
    \begin{enumerate}
        \item the graph $\wh S_{\mscr{C}}$ is not disconnected by any (possibly empty) clique;
        \item $W$ does not split relative to $\mscr{C}$ over any subgroup
        contained in $\langle C\rangle$ for some $C\in\mscr{C}$.
    \end{enumerate}
    The implication $(2)\Ra (1)$ is clear in view of \Cref{rmk:amalgams_rel_C}, while $(1)\Ra (2)$ is an immediate consequence of \Cref{lem:MT}. 
\end{rmk}

\begin{lem}\label{lem:no_splitting_over_clique}
    If $S$ and $R$ are not conjugate, there exists a $\mscr{T}_S$--tucked, $\mscr{T}_S\cup\{R\}$--parabolic subset $\Om\sq S$ such that $\Om\not\in\mscr{C}$ and one of following holds:
    \begin{enumerate}
        \item either $\Om$ is a $\mscr{C}$--shrub; 
        \item or the graph $\wh\Om_{\mscr{C}}$ is not disconnected by any (possibly empty) clique.
    \end{enumerate}
\end{lem}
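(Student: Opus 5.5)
Start from the set provided by \Cref{lem:constricted}: a constricted, $\mscr{T}_S$--tucked, $\mscr{T}_S\cup\{R\}$--parabolic subset $\Om_0\sq S$ with $\Om_0\not\in\mscr{C}$. Among all subsets $\Om\sq\Om_0$ that are $\mscr{T}_S$--tucked, $\mscr{T}_S\cup\{R\}$--parabolic and not in $\mscr{C}$, choose one that is minimal for inclusion. By \Cref{rmk:tucked_in_tucked}(1), every such $\Om$ is still constricted. If $\wh\Om_{\mscr{C}}$ is not disconnected by any clique, we are in case (2) and the proof is finished. So assume that some clique $\Delta\sq\Om$ disconnects $\wh\Om_{\mscr{C}}$. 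The goal is then to show that $\Om$ is a $\mscr{C}$--shrub, by adapting the proof of \Cref{prop:minimal->shrub} to the subgroup $H:=\langle\Om\rangle$ in place of $W$.

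Because $\Delta$ is a clique, it lies in $\mscr{C}$ by \Cref{cor:2-compatible->compatible}(1), so $\Om=\Om_1\cup\Om_2$ with $\Om_1\cap\Om_2=\Delta$ is an $\Om$--amalgam relative to $\mscr{C}|_H$ (\Cref{rmk:amalgams_rel_C}(2)). The set $\Om$ is $\mscr{T}_S\cup\{R\}$--parabolic, so $H$ carries angle-compatible generating sets coming from each element of $\mscr{T}_S\cup\{R\}$. Apply \Cref{cor:universally_parabolic_vertex_stabilisers} inside $H$, with the family of these generating sets. This gives an $\Om$--visual splitting of $H$, relative to $\mscr{C}|_H$, whose vertex groups are parabolic for all of them. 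Each vertex group is then $\mscr{T}_S\cup\{R\}$--parabolic in $W$, since parabolic subgroups of parabolic subgroups are parabolic. By \Cref{cor:universally_parabolic_vertex_stabilisers}(3), the vertex groups are proper in $H$. The vertex sets $\Om_v$ are therefore proper subsets of $\Om$, and minimality will force $\Om_v\in\mscr{C}$ once we know each $\Om_v$ is $\mscr{T}_S$--tucked. Given that, we get a visual decomposition of $\Om$ with vertex sets in $\mscr{C}$. Its conjugacy condition, Item~(2) of \Cref{defn:shrub}, follows verbatim from the second half of the proof of \Cref{prop:minimal->shrub}. That argument only produces further vertex groups of strongly dominating splittings, which are again $\mscr{T}_S\cup\{R\}$--parabolic by \Cref{lem:universally_elliptic->parabolic}, together with a proper subset $\Psi_x\not\in\mscr{C}$. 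One then checks that $\Psi_x$ is also tucked, which contradicts minimality.

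\textbf{Main obstacle.} The hard step is proving that the vertex sets $\Om_v$ (and $\Psi_x$) are $\mscr{T}_S$--tucked, so that minimality applies. By \Cref{rmk:tucked_in_tucked}(2) it suffices to show that $\Om_v$ is $\Om$--tucked, and then transport this to every $\Om'\sq S'$ with $\langle\Om'\rangle=\langle\Om\rangle$.

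Take a non-spherical irreducible $J\sq\Om$ with $J\in\mscr{C}$ such that $\Om_v$ meets two components of $\wh\Om_{\mscr{C}}\setminus(J\cup J^{\perp})$. Since $\Om_v$ is a vertex set of a decomposition relative to $\mscr{C}$, the set $J$ lies in some $\Om_w$. The plan is to use constrictedness: $J^{\perp}\cap\Om\in\mscr{C}$, so $J\cup(J^\perp\cap\Om)$ is a clique. If $J\not\sq\Om_v$ while $J\setminus\Om_v$ is spherical, one should be able to show that the separation of $\Om_v$ by $J\cup J^{\perp}$ also separates $\Om$ itself. Tuckedness of $\Om$ in $S$ then gives either $J\sq\Om$, the benign case, or $J\setminus\Om$ non-spherical, which is impossible here. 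In the benign case, $J$ sits inside a clique that the decomposition must place in one vertex, and this clique meets $\Om_v$ in a way that forces $J\sq\Om_v$ or $J\setminus\Om_v$ non-spherical.

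I expect this combinatorial bookkeeping, and making it independent of the choice of $S'\in\mscr{T}_S$ via the centraliser description of $J^\perp$ (as in \Cref{rmk:constricted_independent}), to be the delicate part. If it fails, the fallback is to weaken the minimality to be taken only among constricted tucked sets with a clique separation, and to iterate.
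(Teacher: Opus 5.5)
Your proposal has a genuine gap, and it sits exactly at the step you flag as the main obstacle: proving that the vertex sets $\Om_v$ are tucked. Your sketch does not close it. When you test $\Om$--tuckedness of $\Om_v$, the sets $J$ under consideration already satisfy $J\sq\Om$. So the dichotomy supplied by tuckedness of $\Om$ is automatically in its ``benign'' branch and carries no information. The remaining claim, that the clique meets $\Om_v$ ``in a way that forces'' $J\sq\Om_v$ or $J\setminus\Om_v$ non-spherical, is the whole content.

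What you can actually extract is the following. If $J\not\sq\Om_v$, pick $w$ closest to $v$ with $J\sq\Om_w$, and let $e$ be the edge at $v$ pointing towards $w$. Then $\Om_v\cap(J\cup J^{\perp})\sq\Om_e$, and this set disconnects $(\wh\Om_v)_{\mscr{C}}$. That is not a contradiction; it only says the decomposition can be refined at $v$. The splitting given by \Cref{cor:universally_parabolic_vertex_stabilisers} is maximal only among splittings with the same $\mscr{S}$--parabolic elliptic subgroups. The refinement makes the parabolic subgroup $\langle\Om_v\rangle$ non-elliptic, so nothing in that construction rules it out. Hence you cannot apply minimality of $\Om$ to the $\Om_v$. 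The same problem recurs for $\Psi_x$ when you re-run the second half of \Cref{prop:minimal->shrub}. Constrictedness, and starting from \Cref{lem:constricted}, do not help here.

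The missing idea is to choose the splitting extremal, so that such refinements are impossible. Take a minimum-cardinality (possibly empty) clique $C$ disconnecting the graph. Let $\mc{T}_{\max}$ be maximal under strong domination in the finite family (\Cref{rmk:finitely_many_visual}) of strongly reduced visual splittings relative to $\mscr{C}$ all of whose edge groups are conjugate to $\langle C\rangle$.

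\begin{itemize}
\item \emph{Tuckedness.} The refinement above is now over $\Om_v\cap(J\cup J^{\perp})\sq\Om_e$, where $\Om_e$ is conjugate to $C$. A proper inclusion contradicts minimality of $|C|$. Equality yields a member of the family strictly dominating $\mc{T}_{\max}$.
\item \emph{Universal parabolicity.} This is a separate argument via \Cref{lem:universally_elliptic->parabolic}. If a vertex group were non-elliptic in a splitting strongly dominating $\mc{T}_{\max}$, blow up that vertex and make the result visual using \Cref{lem:MT}. Minimality of $|C|$ keeps the outcome in the family, contradicting maximality.
\item \emph{Shrub condition.} All edge groups are conjugate by construction, so Item~(2) of \Cref{defn:shrub} is automatic. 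You never need the second half of \Cref{prop:minimal->shrub}, nor tuckedness of $\Psi_x$.
\item \emph{Passing to $S'\in\mscr{T}_S$.} By \Cref{rmk:splittings_over_cliques}(2) and \Cref{lem:MT}, the maximal splittings built from $S$ and from $S'$ have the same vertex groups, so tuckedness transfers.
\end{itemize}

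Finally, iterate on a vertex set not in $\mscr{C}$, using \Cref{rmk:tucked_in_tucked}(2). With this splitting in place, your minimality-among-tucked-sets framing would also go through. Constrictedness plays no role in this lemma. In the paper it enters only afterwards, in \Cref{thm:step_one_repeated}, in the branch where no clique disconnects the graph.
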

\begin{proof}
    If the graph $\wh S_{\mscr{C}}$ is not disconnected by any clique, we can simply take $\Om:=S$. Otherwise, let $C\sq S$ be a (possibly empty) least-cardinality
    subset such that $C\in\mscr{C}$ and $\wh S_{\mscr{C}}\setminus C$ is disconnected. Let $\mf{T}$ be the family of strongly reduced, $S$--visual splittings relative to $\mscr{C}$ with all edge-stabilisers conjugate to $\langle C\rangle$. As usual, we order $\mf{T}$ by strong domination: we write $\mc{T}_1\leq\mc{T}_2$ if $\mc{T}_2$ strongly dominates $\mc{T}_1$. Since $\mf{T}$ is finite by \Cref{rmk:finitely_many_visual}, it has a maximal element $\mc{T}_{\max}$.

    We claim that the vertex-stabilisers of $W\acts\mc{T}_{\max}$ are all $\mscr{T}_S\cup\{R\}$--parabolic. For this, consider a vertex $x\in\mc{T}_{\max}$ and its stabiliser $W_x$. In view of \Cref{lem:universally_elliptic->parabolic}, it suffices to show that $W_x$ is elliptic in all splittings $W\acts\mc{T}$ relative to $\mscr{C}$ that strongly dominate $\mc{T}_{\max}$ (possibly with $\mc{T}\not\in\mf{T}$). Thus, suppose for the sake of contradiction that $W_x$ is non-elliptic in such a splitting $\mc{T}$. We can then use this to construct a (possibly non-visual) splitting $W\acts\mc{T}'$ relative to $\mscr{C}$ such that $\mc{T}'\geq\mc{T}_{\max}$ and $W_x$ is non-elliptic in $\mc{T}'$: more precisely, $\mc{T}'$ is obtained from $\mc{T}_{\max}$ by blowing up each point in the $W$--orbit of $x$ to a copy of the minimal subtree of $W_x\acts\mc{T}$; see \cite[Proposition~2.2]{GL-JSJ} for details. 
    Now, \Cref{lem:MT} yields an $S$--visual splitting $\mc{T}''\geq\mc{T}'\geq\mc{T}_{\max}$ relative to $\mscr{C}$, and minimality of $C$ implies that all edge-stabilisers of $\mc{T}''$ are conjugate to $\langle C\rangle$. Therefore, we have $\mc{T}''\in\mf{T}$. At the same time, $W_x$ is non-elliptic in $\mc{T}'$ and hence non-elliptic in $\mc{T}''$, which shows that $\mc{T}''\not\leq\mc{T}_{\max}$. This violates maximality of $\mc{T}_{\max}$ within $\mf{T}$, proving our claim.
    
    Now, let $\mf{F}=(\mc{F},\{S_v\}_v)$ be a visual decomposition that corresponds to the $S$--visual splitting $\mc{T}_{\max}$ via \Cref{lem:visual_equivalence}. We claim that the sets $S_v$ are all $S$--tucked. For this, consider a vertex $x\in\mc{F}$, and consider a subset $J\sq S$ such that $J\in\mscr{C}$ and such that $S_x$ intersects at least two connected components of $\wh S_{\mscr{C}}\setminus (J\cup J^{\perp})$ (for this argument it is irrelevant whether $J$ is irreducible, or whether $J$ and $J\setminus S_x$ are spherical). The set $J$ fixes a vertex of $\mc{T}_{\max}$, and so there exists a vertex $y\in\mc{F}$ such that $J\sq S_y$; we pick $y$ so that it is closest to $x$ within the finite tree $\mc{F}$. If $y=x$, then $J\sq S_x$ and we are done. Otherwise, let $e\sq\mc{F}$ be the edge incident to $x$ in the direction of $y$, and observe that we have $S_x\cap (J\cup J^{\perp})\sq S_e$. Moreover, the set $S_e\sq S$ is conjugate to $C$ because $\mc{T}_{\max}\in\mf{T}$. Since $S_x$ intersects two components of $\wh S_{\mscr{C}}\setminus (J\cup J^{\perp})$, it follows that the set $S_x\cap (J\cup J^{\perp})$ disconnects the graph $(\wh S_x)_{\mscr{C}}$. This can be used to refine the visual decomposition $\mf{F}$ into a strongly reduced visual decomposition $\tilde{\mf{F}}=(\tilde{\mc{F}},\{\tilde S_v\}_v)$ relative to $\mscr{C}$, where the vertex $x\in\mc{F}$ gets blown up to an edge $f\sq\tilde{\mc{F}}$ with $\tilde S_f=S_x\cap (J\cup J^{\perp})$.
    If the set $S_x\cap (J\cup J^{\perp})$ is properly contained in $S_e$, this violates minimality of the set $C$. If instead $S_x\cap (J\cup J^{\perp})$ is equal to $S_e$, then this violates maximality of the tree $\mc{T}_{\max}$. Either way, this proves our second claim.

    In fact, the argument in the previous paragraph also shows that the sets $S_v$ are $\mscr{T}_S$--tucked. Indeed, consider a generating set $S'\in\mscr{T}_S$ and let $\mc{T}_{\max}'$ be maximal under strong domination among strongly reduced, $S'$--visual splittings relative to $\mscr{C}$ with all edge-stabilisers conjugate to $\langle C\rangle$. A double application of \Cref{lem:MT} shows that $\mc{T}_{\max}$ and $\mc{T}_{\max}'$ dominate each other and thus have the same vertex-stabilisers (here it is convenient to keep in mind the characterisation in \Cref{rmk:splittings_over_cliques}(2), which is independent of the choice of a generating set). We can then repeat the argument in the previous paragraph to obtain that the sets $S_v'\sq S'$ with $\langle S_v'\rangle=\langle S_v\rangle$ are $S'$--tucked.

    Summing up, we have shown that all vertex-stabilisers of the splitting $W\acts\mc{T}_{\max}$ are $\mscr{T}_S\cup\{R\}$--parabolic and (up to conjugacy) generated by subsets of $S$ that are $\mscr{T}_S$--tucked. Moreover, all edge-stabilisers of $\mc{T}_{\max}$ are conjugate to each other.

    Now, if all vertex-stabilisers of $\mc{T}_{\max}$ are generated by sets in $\mscr{C}$, then $S$ is a $\mscr{C}$--shrub (by \Cref{lem:visual_equivalence}) and we can take $\Om:=S$. Otherwise, we pick a subset $U\sq S$ such that $U$ generates a vertex-stabiliser of $\mc{T}_{\max}$ and $U\not\in\mscr{C}$. We replace $S$ with $U$, and repeat the whole argument for $U$. After finitely many iterations of this, keeping \Cref{rmk:tucked_in_tucked}(2) in mind, the lemma is proven.
\end{proof}

We are finally ready to prove \Cref{thm:step_one}, of which we repeat the statement for convenience.

\begin{thm}\label{thm:step_one_repeated}
    If $S$ and $R$ are not conjugate, then there exists a subset $\Om\sq S$ such that:
    \begin{enumerate}
        \item $\Om$ is $\mscr{T}_S\cup\{R\}$--parabolic and $\Om\not\in\mscr{C}$;
        \item $\Om$ is a $\mscr{C}$--shrub;
        \item $\Om$ is $\mscr{T}_S$--tucked.
    \end{enumerate}
\end{thm}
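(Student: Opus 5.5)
The plan is to combine Lemma~\ref{lem:no_splitting_over_clique} with Proposition~\ref{prop:minimal->shrub}, using Lemma~\ref{lem:constricted} as the starting point and Remark~\ref{rmk:tucked_in_tucked} to carry tuckedness and constrictedness along the way.

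First, apply Lemma~\ref{lem:no_splitting_over_clique}. If its output $\Om$ is a $\mscr{C}$--shrub, we are done, since it is already $\mscr{T}_S$--tucked, $\mscr{T}_S\cup\{R\}$--parabolic and not in $\mscr{C}$. Otherwise we have $\Om_0\sq S$ with these three properties such that $\wh{(\Om_0)}_{\mscr{C}}$ is not disconnected by any clique. To prepare for the minimality argument, I would instead run the whole construction starting from the constricted set given by Lemma~\ref{lem:constricted}. That is, apply Lemma~\ref{lem:no_splitting_over_clique} to the Coxeter system $(\langle\Om\rangle,\Om)$ with the restricted families, so that the output is also constricted by Remark~\ref{rmk:tucked_in_tucked}(1).

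Next, inside $\Om_0$, choose $\Om\sq\Om_0$ minimal among $\mscr{T}_S\cup\{R\}$--parabolic subsets not lying in $\mscr{C}$. Applying Proposition~\ref{prop:minimal->shrub} within the parabolic subgroup $\langle\Om_0\rangle$ (minimality in $\Om_0$ suffices for its proof) shows that $\Om$ is a $\mscr{C}$--shrub. The remaining issue is $\mscr{T}_S$--tuckedness of $\Om$.

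By Remark~\ref{rmk:tucked_in_tucked}(2), it suffices to show that $\Om$ is $\Om_0'$--tucked for every twist-image $\Om'_0$, and that the same holds for every representative $\Om'$. So suppose $J\sq\Om_0$ is non-spherical, irreducible, lies in $\mscr{C}$, and $\Om$ meets two components of $\wh{(\Om_0)}_{\mscr{C}}\setminus(J\cup J^\perp)$. The goal is to show that $J\sq\Om$ or that $J\setminus\Om$ is non-spherical.

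The idea is to exploit the fact that $\Om_0$ is not disconnected by cliques, together with constrictedness, so that $J^\perp\cap\Om_0\in\mscr{C}$. Then $J\cup J^\perp$ generates a subgroup of the form $\langle J\rangle\times\langle J^\perp\cap\Om_0\rangle$ which lies in $\mscr{C}$, hence is a clique. This contradicts the absence of separating cliques unless the separation is impossible. So in the non-shrub case, $J\cup J^\perp$ cannot separate $\Om_0$ at all, and tuckedness is vacuous. Since constrictedness and clique-separation are independent of the representative in $\mscr{T}_S$ (Remarks~\ref{rmk:constricted_independent} and~\ref{rmk:splittings_over_cliques}), the same argument works for every $\Om'\sq S'$.

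The main obstacle I expect is this tuckedness transfer. It requires checking that $J\cup(J^\perp\cap\Om_0)$ really lies in $\mscr{C}$. This should follow from Corollary~\ref{cor:2-compatible->compatible}, since pairs across the product commute and so lie in $\mscr{C}$. One must also verify that each iteration of Lemma~\ref{lem:no_splitting_over_clique} preserves constrictedness, and handle the case where the minimal $\Om$ needs tuckedness relative to $S$ rather than $\Om_0$, through the chain $\Om\sq\Om_0\sq S$.
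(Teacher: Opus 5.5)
Your proposal is correct and follows the paper's proof. You start from the constricted, $\mscr{T}_S$--tucked set of \Cref{lem:constricted}, apply \Cref{lem:no_splitting_over_clique} inside it, and in the non-shrub case take a minimal $\mscr{T}_S\cup\{R\}$--parabolic subset outside $\mscr{C}$, which is a shrub by \Cref{prop:minimal->shrub} and is tucked because constrictedness makes $(J\cup J^{\perp})\cap\Om_0$ a clique that cannot disconnect $(\wh\Om_0)_{\mscr{C}}$. The only difference is cosmetic: you route the final step through \Cref{rmk:tucked_in_tucked}(2) (vacuous $\Om_0$--tuckedness together with $S$--tuckedness of $\Om_0$), whereas the paper inlines exactly this reasoning.
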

\begin{proof}
    To begin with, let $\Om_1\sq S$ be a subset provided by \Cref{lem:constricted}. That is, $\Om_1$ is $\mscr{T}_S\cup\{R\}$--parabolic with $\Om_1\not\in\mscr{C}$, and $\Om_1$ is both constricted and $\mscr{T}_S$--tucked. We then apply \Cref{lem:no_splitting_over_clique} to the Coxeter group $\langle\Om_1\rangle$ to produce another constricted, $\mscr{T}_S$--tucked, $\mscr{T}_S\cup\{R\}$--parabolic subset $\Om_2\sq\Om_1$ with $\Om_2\not\in\mscr{C}$ (recall \Cref{rmk:tucked_in_tucked}). If $\Om_2$ is a $\mscr{C}$--shrub, then the proposition is proven.

    Otherwise, \Cref{lem:no_splitting_over_clique} guarantees that the graph $(\wh\Om_2)_{\mscr{C}}$ is not disconnected by any clique. Let $\Om_3\sq\Om_2$ be a minimal $\mscr{T}_S\cup\{R\}$--parabolic subset with $\Om_3\not\in\mscr{C}$. \Cref{prop:minimal->shrub} shows that $\Om_3$ is a $\mscr{C}$--shrub, so we only need to check that $\Om_3$ is $\mscr{T}_S$--tucked.  We first show that $\Om_3$ is $S$--tucked. 
    
    For this, suppose that $J\sq S$ is a non-spherical irreducible subset with $J\in\mscr{C}$ and such that $\Om_3$ intersects at least two connected components of $\wh S_{\mscr{C}}\setminus (J\cup J^{\perp})$. Then $\Om_2$ intersects these two components as well, and so the fact that $\Om_2$ is tucked implies that either $J\setminus\Om_2\sq J\setminus\Om_3$ is non-spherical (and we are done), or there is an inclusion $J\sq\Om_2$. In fact, the latter cannot happen: if we had $J\sq\Om_2$, then the set $(J\cup J^{\perp})\cap\Om_2$ would span a clique in $(\wh\Om_2)_{\mscr{C}}$ because $\Om_2$ is constricted; this clique would then disconnect the graph $(\wh\Om_2)_{\mscr{C}}$, because $\Om_3\sq\Om_2$ intersects two components of $\wh S_{\mscr{C}}\setminus (J\cup J^{\perp})$, and this is a contradiction. This proves that $\Om_3$ is $S$--tucked.

    As usual, the same argument shows that $\Om_3$ is $\mscr{T}_S$--tucked (using \Cref{rmk:splittings_over_cliques}), so this concludes the proof of the theorem.
\end{proof}

\section{Inflexibility}\label{sect:inflexibility}

This is the first of three sections working towards a proof \Cref{thm:step_two}, which is the second and last step in the proof of \Cref{thmintro:main} sketched in \Cref{sect:strategy}. In this section, our goal is to prove that $\mscr{T}_S$--parabolic subsets of $S$ satisfy a combinatorial property, which we call \emph{inflexibility}.

Let $(W,S)$ be a Coxeter system. Let $\mscr{C}$ be a family of subsets of $W$ that is closed under passing to subsets and taking conjugates, and that contains all spherical subsets of $S$ of cardinality $\leq 2$. We can form the graph $\wh S_{\mscr{C}}$ as in \Cref{defn:hat_graph}. Let $\mscr{T}_S$ be the collection of all Coxeter generating sets of $W$ that are twist-equivalent to $S$ relative to $\mscr{C}$. (In fact, the generating sets differing from $S$ by a \emph{single} elementary twist will suffice for all purposes in this section.)

\begin{defn}
    A subset $\Om\sq S$ is \emph{$\mscr{C}$--inflexible} if, for every irreducible spherical subset $K\sq S$ for which $\Om$ intersects at least two connected components of the graph $\wh S_{\mscr{C}}\setminus (K\cup K^{\perp})$, we have $K\sq\Om$.
\end{defn}

The main result of this section is the following.

\begin{prop}\label{prop:inflexible}
    Suppose that \Cref{thmintro:main} holds for all proper $S$--parabolic subgroups of $W$. Then all $\mscr{T}_S$--parabolic subsets of $S$ are $\mscr{C}$--inflexible.
\end{prop}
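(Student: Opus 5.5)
We argue by contradiction. Let $\Om\sq S$ be $\mscr{T}_S$--parabolic and suppose there is an irreducible spherical $K\sq S$ with $K\not\sq\Om$ such that $\Om$ meets two distinct components of $\wh S_{\mscr{C}}\setminus(K\cup K^{\perp})$. The plan is to build an elementary twist relative to $\mscr{C}$ that breaks parabolicity of $\langle\Om\rangle$.

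\textbf{Step 1: the twist.} Let $Y$ be the union of the components of $\wh S_{\mscr{C}}\setminus(K\cup K^{\perp})$ containing one chosen point $y_0\in\Om$, and let $X$ be the union of the remaining components; both $X$ and $Y$ meet $\Om$. Since no edge of $\wh S_{\mscr{C}}$ joins $X$ to $Y$, \Cref{rmk:amalgams_rel_C}(2) shows that $S=(X\cup K\cup K^{\perp})\cup(K\cup K^{\perp}\cup Y)$ is an $S$--amalgam relative to $\mscr{C}$. In particular $xy$ has infinite order for $x\in X$, $y\in Y$, and every $C\in\mscr{C}$ is conjugate into one side. Hence $S':=X\cup K\cup K^{\perp}\cup w_KYw_K$ is obtained by an elementary twist relative to $\mscr{C}$, so $S'\in\mscr{T}_S$, and $\langle\Om\rangle$ is $S'$--parabolic by hypothesis.

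\textbf{Step 2: localise.} Choose $x_0\in\Om\cap X$ and $y_0\in\Om\cap Y$. I would like to reduce to a small parabolic subgroup containing the key data, so that the inductive hypothesis (that \Cref{thmintro:main} holds for proper $S$--parabolic subgroups) can be applied. The natural candidate is $P:=\langle\Om\cup K\cup K^{\perp}\rangle$, or the minimal $\{S,S'\}$--parabolic subgroup containing $\Om$. If $P\neq W$, induction gives control over generating sets of $P$ angle-compatible with $S\cap P$. If $P=W$, we work directly with $W$.

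\textbf{Step 3: the contradiction.} Since $\Om$ is $S'$--parabolic, $\langle\Om\rangle=g\langle T\rangle g^{-1}$ for some $T\sq S'$. Consider the splitting of $W$ determined by the $S'$--amalgam over $\langle K\cup K^{\perp}\rangle$. In this splitting $\langle\Om\rangle$ is non-elliptic, because $x_0y_0$ is loxodromic, so its minimal subtree gives a splitting of the parabolic subgroup $\langle T\rangle$ whose edge groups are conjugates of subgroups of $\langle K\cup K^{\perp}\rangle$. Using \Cref{lem:MT} within $\langle T\rangle$, together with the fact that the twisted copy $w_K(\Om\cap Y)w_K$ is attached via $K$ whereas $\Om$ does not contain $K$, one should force $\langle\Om\rangle$ to contain a conjugate of $w_K$, or a parabolic closure that includes $K$. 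Concretely, the reflections of $\Om\cap Y$, viewed in $\A_{S'}$, are separated from those of $\Om\cap X$ by the walls of $K$ with a twisted orientation. The parabolic closure of $\Om$ with respect to $S'$ must then contain the $K$--walls, so it contains $\langle K\rangle$, whereas the parabolic closure with respect to $S$ does not. Comparing the two via the equality of subgroups should give $K\sq\Om$ up to conjugacy, a contradiction.

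\textbf{Main obstacle.} The hard step is this last comparison: showing rigorously that $S'$--parabolicity forces $K$ into the parabolic closure. I would first try the case $|\Om\cap X|=|\Om\cap Y|=1$, i.e. $\langle x_0,y_0\rangle$ infinite dihedral, where $x_0$ and $w_Ky_0w_K$ should have a parabolic closure in $S'$ equal to $\langle x_0,K,y_0\rangle$ for suitable $K$. I would then reduce the general case to this using the inductive hypothesis on the proper parabolic subgroup.
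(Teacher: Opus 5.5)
\textbf{Verdict: there is a genuine gap.} Your Step~1 is exactly the paper's starting point. The components of $\wh S_{\mscr{C}}\setminus(K\cup K^{\perp})$ give a partition $S=X\sqcup(K\cup K^{\perp})\sqcup Y$ and a twist $S'\in\mscr{T}_S$ relative to $\mscr{C}$, so $\langle\Om\rangle$ is $S'$--parabolic, and one must show this forces $K\sq\Om$. That implication is the entire content of the proposition, and Step~3 does not supply it.

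\textbf{The separation mechanism in Step~3 is false.} Walls of $\langle K\rangle$ need not separate the walls of $\Om\cap X$ from those of $\Om\cap Y$ in $\A_{S'}$. Take $S=\{x,k,y\}$ with $o(xk)=o(ky)=3$ and $o(xy)=\infty$, and set $K=\{k\}$, $\Om=\{x,y\}$, $S'=\{x,k,kyk\}$.
\begin{itemize}
\item In $\A_{S'}$ the wall of $x$ is dual to the edge $[1,x]$.
\item The wall of $y=k(kyk)k$ is dual to an edge at the vertex $k$.
\item Any wall separating these two walls must separate $1$ from $k$, so it is the wall of $k$. But that wall crosses both.
\end{itemize}
So no wall separates them, and the argument ``the parabolic closure contains the separating walls'' yields nothing. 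Yet $\langle x,y\rangle$ is indeed not $S'$--parabolic, for a different reason. Even when some walls of $\langle K\rangle$ do separate, you only capture the reflections $r\in\langle K\rangle$ with $o(rx)=o(ry)=\infty$, and nothing forces their supports to cover $K$.

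\textbf{The fallback reduction also loses information.} Treating $|\Om\cap X|=|\Om\cap Y|=1$ first discards $K_0:=\Om\cap K$, which cannot be discarded. The reduced configuration is $\langle a,K_0,w_Kbw_K\rangle$ with $K_0\subsetneq K$ arbitrary, and the case $K_0\neq\emptyset$ is where the real difficulty lies: twists inside the subgroup by longest elements of factors of $K_0$ appear there.

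\textbf{What is missing is the content of \Cref{lem:not_parabolic} and \Cref{lem:hard}.}
\begin{itemize}
\item After passing to $S'$, pick $a\in\Om\cap X$ and $b\in\Om\cap Y$, and intersect with the standard parabolic subgroup $\langle\{a,b\}\cup K\rangle$. The intersection is again parabolic, and a ping-pong argument on the one-edge splitting over $\langle K\cup K^{\perp}\rangle$ identifies it as $\langle a,K_0,w_Kbw_K\rangle$.
\item Suppose this subgroup equals $g\langle\{a,b\}\cup L_0\rangle g^{-1}$ with $L_0\sq K$; non-ellipticity forces $a$ and $b$ to be among the standard generators.
\item Apply the inductive hypothesis, namely \Cref{thmintro:main} for this proper parabolic subgroup, relative to the compatible sets $\{a\}\cup K_0$ and $K_0\cup\{w_Kbw_K\}$. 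This shows the two generating sets differ only by twists by a product $w_0$ of longest elements of factors of $K_0$.
\item Use the tree to get $g\in\langle K\rangle$ with $g^{-1}ag=a$ and $g^{-1}w_0w_Kbw_Kw_0g=b$.
\item Conclude via Deodhar's theorem (\Cref{lem:simpler_Deodhar}) and the Bruhat-order fact that $w_K=gg'$ forces $K\sq\supp(g)$ or $K\sq\supp(g')$ (\Cref{lem:w_K=gg'}).
\end{itemize}
Your proposal gestures at ``using the inductive hypothesis'' but never says what it is applied to or how it constrains the conjugating element. That is precisely the step that makes the proof work.
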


\begin{ex}\label{ex:converse}
    Note that the converse of \Cref{prop:inflexible} fails: not all $\mscr{C}$--inflexible subsets of $S$ are $\mscr{T}_S$--parabolic. For instance, suppose that $S=\{a,b,c,d\}$ with $o(ab)=o(bc)=3$, $o(cd)=2$, and with all other products of distinct elements of $S$ having infinite order. Let $\mscr{C}$ be trivial (i.e.\ simply the family of $W$--conjugates of spherical subsets of $S$ of cardinality $\leq 2$). Setting $w:=bcb$, the set $S'=\{a,b,c,wdw\}$ lies in $\mscr{T}_S$. Now, the set $\Om:=\{a,b,c\}$ is $\mscr{C}$--inflexible as a subset of $S$, but it is no longer $\mscr{C}$--inflexible as a subset of $S'$. Moreover, we can twist $S'$ with respect to the spherical subset $\{wdw\}$ and produce $S'':=\{a,b,(wdw)c(wdw),wdw\}\in\mscr{T}_S$, so that $\Om$ is not $S''$--parabolic. Here the whole point is that some twists that cannot be performed on $S$ become available once we move to $S'$.
\end{ex}

\begin{rmk}\label{rmk:parabolic_vs_inflexible_explanation}
    The main step in the proof of \Cref{thm:step_two} in \Cref{sect:geometrisation} is a ``shortening theorem'' (\Cref{thm:shortening}) that replaces a $\mscr{C}$--inflexible $\mscr{C}$--shrub $\Om\sq S$ with a $\mscr{C}$--shrub $\Om'\sq S'$ that is closer to being $R$--geometric, for some $S'\in\mscr{T}_S$. If we did not know that the set $\Om$ is actually $\mscr{T}_S$--parabolic, rather than simply $\mscr{C}$--inflexible, then the set $\Om'$ might no longer be $\mscr{C}$--inflexible (\Cref{ex:converse}), and so we would not be able to continue the shortening process. This explains why $\mscr{T}_S$--parabolicity is so important, even though in practice we will mostly work with $\mscr{C}$--inflexibility in \Cref{sect:geometrisation}.
\end{rmk}

We now start working towards the proof of \Cref{prop:inflexible}. We begin with three general results. Denote by $|g|$ the length of the shortest words in $S$ representing an element $g\in W$. A word representing $g$ is \emph{reduced} if it has length $|g|$. All reduced words representing $g$ contain the same letters, see e.g.\ \cite[Proposition~4.1.1]{Davis}; we call this set of letters the \emph{support} of $g$, and denote it by $\supp(g)\sq S$. The letters in $\supp(g)$ must appear in all words representing $g$.

\begin{lem}\label{lem:Bruhat}
    Let $K\sq S$ be a spherical subset. For every reduced word $s_1\dots s_m$ representing the longest element $w_K\in\langle K\rangle$, and for every element $h\in\langle K\rangle$, there exist indices $1\leq i_1<\dots<i_{\ell}\leq m$ such that $s_{i_1}\dots s_{i_{\ell}}$ is a reduced word representing $h$.
\end{lem}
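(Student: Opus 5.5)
This is the subword property of Bruhat order applied to the longest element, and I will derive it from the characterisation of $w_K$ as the maximum of Bruhat order on the finite Coxeter group $\langle K\rangle$. First I fix conventions. Bruhat order on $\langle K\rangle$ is the order generated by $g<g\rho$ for $\rho$ a reflection of $\langle K\rangle$ with $|g\rho|>|g|$. The subword property (see e.g.\ Bj\"orner--Brenti, Theorem~2.2.2, or Humphreys, Section~5.10) states the following: for any reduced word $s_1\dots s_m$ of $w$, an element $h$ satisfies $h\le w$ if and only if some subword $s_{i_1}\dots s_{i_\ell}$ represents $h$. Moreover, the subword can then be chosen reduced. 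Word lengths in $\langle K\rangle$ with respect to $K$ agree with lengths with respect to $S$ (standard for special subgroups, \cite[Section~4]{Davis}), so ``reduced'' is unambiguous.

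The key step is that $h\le w_K$ for every $h\in\langle K\rangle$. I argue by downward induction on $|h|$.
\begin{itemize}
\item If $h\ne w_K$, then $h$ is not the unique element of maximal length. Hence there exists $s\in K$ with $|hs|>|h|$: an element with $|hs|<|h|$ for all $s\in K$ is the longest element, by \cite[Lemma~4.6.1]{Davis}.
\item Since $h<hs$ in Bruhat order, the induction hypothesis gives $hs\le w_K$, so $h\le w_K$.
\end{itemize}

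Applying the subword property to $w=w_K$ with the given reduced word then yields some subword $s_{i_1}\dots s_{i_\ell}$ representing $h$. If it is not reduced, I pass to a reduced subword. This is possible because the deletion condition for Coxeter groups lets one remove pairs of letters from a non-reduced word until it is reduced, and every word obtained this way is still a subword of $s_1\dots s_m$. This gives the required indices $i_1<\dots<i_\ell$.

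The main obstacle is proving the subword property itself, if I do not simply cite it. To prove it directly, I would induct on $m$ via the lifting property: write $w=w's_m$ with $|w'|=m-1$ and compare $h$ with $hs_m$. If $|hs_m|<|h|$, then $hs_m\le w'$, and I append $s_m$ to a subword for $hs_m$. If $|hs_m|>|h|$, then $h\le w'$ already. Checking the lifting property requires the exchange condition, and this is the only nontrivial input. Given that this is a textbook result, I would cite it.
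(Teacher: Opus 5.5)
Your proposal is correct and takes the same route as the paper, which simply cites the subword property of Bruhat order together with the fact that $w_K$ is the Bruhat maximum of $\langle K\rangle$ (Humphreys, Section~5.10; Bj\"orner--Brenti, Theorem~2.2.2). Your downward induction only spells out the maximality of $w_K$ explicitly. The passage to a reduced subword is unnecessary, since the subword property as stated in Bj\"orner--Brenti already provides a reduced subword.
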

\begin{proof}
    This is due to $w_K$ being the maximum of the Bruhat order on $\langle K\rangle$; see for instance \cite[Section~5.10]{Humphreys} or \cite[Theorem~2.2.2]{Bjoerner-Brenti}.
\end{proof}

The \emph{Dynkin diagram} of the Coxeter system $(W,S)$ is the graph having $S$ as its vertex set and edges corresponding to non-commuting pairs of elements. When we speak of the \emph{label} of an edge of a Dynkin diagram (or of a presentation graph), we refer to the order of the product of the two corresponding elements of $S$.

\begin{lem}\label{lem:w_K=gg'}
    Let $K\sq S$ be irreducible and spherical. 
    If we have $w_K=gg'$ for two elements $g,g'\in W$, then we have either $K\sq\supp(g)$ or $K\sq\supp(g')$.
\end{lem}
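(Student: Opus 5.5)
We argue by contradiction. Suppose $K\not\sq\supp(g)$ and $K\not\sq\supp(g')$, and pick $a\in K\setminus\supp(g)$ and $b\in K\setminus\supp(g')$. The plan is to reduce to the case $g,g'\in\langle K\rangle$ and then exploit the fact that $w_K$ has full support in $K$, together with irreducibility of $K$.

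\textbf{Reduction to $\langle K\rangle$.} Consider the retraction-type map to the standard parabolic $\langle K\rangle$ given by the coset decomposition $g=g_Kg^K$, where $g^K$ is the minimal-length representative of $\langle K\rangle g$ (or rather use the left version for $g$ and the right version for $g'$). I would rather use the following cleaner tool. Write $g=u\,x$ with $u\in\langle K\rangle$ and $x$ minimal in the coset $\langle K\rangle x$, and write $g'=y\,u'$ with $u'\in\langle K\rangle$ and $y$ minimal in $y\langle K\rangle$. Then $xy=u^{-1}w_Ku'^{-1}\in\langle K\rangle$. Moreover, $\supp(u)\sq\supp(g)$, since a reduced word for $g$ can be chosen as a reduced word of $u$ followed by one of $x$, with lengths adding. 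Likewise $\supp(u')\sq\supp(g')$.

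\textbf{Main obstacle.} The hardest step is to show that $xy\in\langle K\rangle$ forces $xy$ to be trivial. The standard fact is that for $x$ being $(K,\emptyset)$-reduced on the left and $y$ being $(\emptyset,K)$-reduced on the right, $\langle K\rangle xy\langle K\rangle$ is a double coset, and $xy\in\langle K\rangle$ implies $x y=1$. I would attempt this via the deletion/exchange condition: take reduced words and cancel. If direct cancellation fails, $xy$ lies in $\langle K\rangle$, but $\langle K\rangle x y\langle K\rangle=\langle K\rangle$ means $x\in\langle K\rangle y^{-1}\langle K\rangle$. I would then invoke the uniqueness of minimal double coset representatives (\cite[Section~4.3]{Davis}) to deduce that the minimal representative of $\langle K\rangle x\langle K\rangle$ is trivial. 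If this route is too delicate, the fallback is a crude support argument, sufficient for the lemma: since $xy\in\langle K\rangle$, we get $w_K=u(xy)u'$. Each of $u,u'$ avoids a letter, but $xy$ may not, so the support approach must be handled via the dihedral reduction below anyway.

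\textbf{Conclusion within $\langle K\rangle$.} Assume now $g,g'\in\langle K\rangle$ (after absorbing $xy$, or using $xy=1$). Then $\ell(w_K)\le\ell(g)+\ell(g')$, and $g$ lies in $\langle K\setminus\{a\}\rangle$ while $g'$ lies in $\langle K\setminus\{b\}\rangle$. If $a=b$, then $w_K\in\langle K\setminus\{a\}\rangle$, contradicting $\supp(w_K)=K$; here \Cref{lem:Bruhat} shows $\supp(w_K)=K$, since every element of $\langle K\rangle$ is a subword of a reduced word for $w_K$. If $a\ne b$, use $w_Ksw_K\in K$ for all $s\in K$. Since $g^{-1}w_K=g'$ avoids $b$, apply the right descent structure: $w_K=g\,g'$ with $g'\in W_{K\setminus\{b\}}$ means $g=w_Kg'^{-1}$, so $g\,w_{K\setminus\{b\}}$ is a longest-type element. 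Hence $g\ge w_Kw_{K\setminus\{b\}}$ in Bruhat order, and the support of $w_Kw_{K\setminus\{b\}}$ is all of $K$ precisely because $K$ is irreducible. Indeed, that element is the minimal representative of the coset $w_K W_{K\setminus\{b\}}$, and the support of such a representative is the connected component of $b$ in the Dynkin diagram of $K$, which is all of $K$. This contradicts $a\notin\supp(g)$.
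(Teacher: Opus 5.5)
\textbf{Gap: the final support claim.} Your argument reduces the lemma to a single claim about $w_Kw_{K\setminus\{b\}}$, and this organisation differs from the paper's. That reduction is correct: every element of $w_K\langle K\setminus\{b\}\rangle$ has $w_Kw_{K\setminus\{b\}}$ as a length-additive prefix. But the claim you reduce to is only asserted. Everything rests on $\supp(w_Kw_{K\setminus\{b\}})=K$ for irreducible $K$, which you justify only by saying that the support of this minimal coset representative is the Dynkin component of $b$. This cannot simply be asserted here. It is exactly the special case $g=w_Kw_{K\setminus\{b\}}$, $g'=w_{K\setminus\{b\}}$ of the lemma you are proving. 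So as written you show ``special case $\Rightarrow$ general case'' and then assume the special case, and all the combinatorial content of the lemma sits in the unproved step.

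The paper supplies precisely this content. For a Dynkin path $t=t_1,\dots,t_k=t'$ in $K$, every reduced word of $h=t_1\cdots t_{k-1}t_kt_{k-1}\cdots t_1$ has some $t_1$ before some $t_k$, and some $t_k$ before some $t_1$. By \Cref{lem:Bruhat}, such a word must appear as a subword of every reduced word of $w_K$.

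You can close your gap the same way. Concatenating reduced words for $w_Kw_{K\setminus\{b\}}$ and for $w_{K\setminus\{b\}}$ gives a reduced word for $w_K$, and the second part contains no $b$. So if $a\notin\supp(w_Kw_{K\setminus\{b\}})$, every $b$ would precede every $a$. This contradicts the subword property for the $h$ attached to a Dynkin path from $a$ to $b$.

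Alternatively, $w_Kw_{K\setminus\{b\}}$ is the Bruhat-maximum among the $z\in\langle K\rangle$ of minimal length in $z\langle K\setminus\{b\}\rangle$. For a Dynkin path $a=c_r,\dots,c_0=b$, the element $c_r\cdots c_1c_0$ is such a $z$, since its only right descent is $b$. Either way, some argument is needed.

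\textbf{The reduction step.} This part also needs repair, though here the problem is presentation rather than substance. Your double-coset sketch for $xy=1$ does not work as stated, because knowing $x\in\langle K\rangle y^{-1}\langle K\rangle$ is too weak. ``Absorbing $xy$'' into $u'$ would destroy the hypothesis $b\notin\supp(u')$. The correct one-line argument is this: $xy\in\langle K\rangle$ gives $\langle K\rangle x=\langle K\rangle y^{-1}$, and $x$ and $y^{-1}$ are both the minimal element of this coset, so $x=y^{-1}$.

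In fact the whole reduction to $\langle K\rangle$ can be dropped. Set $T:=\supp(g')$ with $b\notin T$, so $g=w_Kg'^{-1}\in w_K\langle T\rangle$.
\begin{itemize}
\item The minimal representative $m$ of $w_K\langle T\rangle$ is a length-additive prefix of $w_K$.
\item Hence $m$ lies in $w_K\langle K\cap T\rangle\sq w_K\langle K\setminus\{b\}\rangle$.
\item So $w_Kw_{K\setminus\{b\}}$ is a length-additive prefix of $m$, which in turn is a length-additive prefix of $g$.
\end{itemize}
This gives $\supp(w_Kw_{K\setminus\{b\}})\sq\supp(g)$ directly. It also replaces the phrase ``$g\,w_{K\setminus\{b\}}$ is a longest-type element'', which has no clear meaning. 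The case distinction $a=b$ versus $a\neq b$ then becomes unnecessary.
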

\begin{proof}
    Suppose for the sake of contradiction that there exist two elements $t,t'\in K$ such that $t\not\in\supp(g)$ and $t'\not\in\supp(g')$. Up to replacing $g$ and $g'$ within $\langle\supp(g)\rangle$ and $\langle\supp(g')\rangle$, respectively, \cite[Lemma~4.3.1]{Davis} allows us to assume that $|w_K|=|g|+|g'|$. Thus, we obtain a reduced expression $w_K=s_1\dots s_m$ with the property that there exists an index $1\leq\ell<m$ such that $s_i\neq t$ for all indices $1\leq i\leq\ell$ and $s_i\neq t'$ for all $\ell<i\leq m$. We will show that this is not possible.

    Since $\supp(w_K)=w_K$, we have $t\neq t'$. Let $\Pi\sq K$ be the smallest irreducible subset containing $\{t,t'\}$. The Dynkin diagram of $\Pi$ is a path having $t$ and $t'$ as its endpoints; in particular, $\Pi$ is either dihedral or of one of the types $A_n,B_n,F_4,H_3,H_4$. Call $t_1,\dots,t_k$ the elements of $\Pi$ in the order in which they appear along the path, with $t_1:=t$ and $t_k:=t'$. Now, consider the element $h=t_1t_2\dots t_{k-1}t_kt_{k-1}\dots t_2t_1$. A straightforward check using Tits' solution to the word problem (see \cite[Theorem~3.4.2]{Davis}) shows that all reduced words in $\Pi$ representing $h$ contain both occurrences of $t_1$ preceding occurrences of $t_k$, and occurrences of $t_k$ preceding occurrences of $t_1$ (not necessarily consecutively).
    Invoking \Cref{lem:Bruhat}, we then contradict the existence of the expression $w_K=s_1\dots s_m$ described above, and this concludes the proof.
\end{proof}

\begin{lem}\label{lem:simpler_Deodhar}
    Consider an irreducible subset $T\sq S$. 
    If an element $g\in W$ satisfies $gTg^{-1}\sq S$, then either $T\sq\supp(g)$ or $g\in\langle T^{\perp}\rangle$.
\end{lem}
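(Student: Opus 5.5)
The plan is to reduce to a minimal coset representative and then use Deodhar's description of elements conjugating one standard subset of $S$ onto another. The main obstacle is the analysis of a single elementary conjugation step.

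\emph{Reduction.} Write $g=ab$ with $b\in\langle T\rangle$ and $a$ the unique element of minimal length in the coset $g\langle T\rangle$. Then $|g|=|a|+|b|$, so $\supp(g)=\supp(a)\cup\supp(b)$. For $a$ minimal in $a\langle T\rangle$ we have $|at|>|a|$ for all $t\in T$. By the standard results on conjugation of parabolic subgroups (see \cite[Section~4.10]{Davis}), $a\langle T\rangle a^{-1}=g\langle T\rangle g^{-1}=\langle gTg^{-1}\rangle$ is standard, so $aTa^{-1}=T'$ for some $T'\sq S$.

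\emph{Killing $b$.} Conjugation by $a$ gives an isomorphism of Coxeter systems $(\langle T\rangle,T)\to(\langle T'\rangle,T')$. It sends $bTb^{-1}=a^{-1}(gTg^{-1})a$ to $gTg^{-1}\sq T'$, so $bTb^{-1}\sq T$. In the Davis complex of $(\langle T\rangle,T)$, this says that $b$ maps the base chamber to a chamber bounded by the same set of walls, i.e.\ to itself. By simple transitivity, $b=1$, and hence $g=a$.

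\emph{Analysing $a$.} Assume $T\not\sq\supp(a)$; the goal is then to show $\supp(a)\sq T^{\perp}$. I would use Deodhar's theorem on the set $N(T,T')$ of minimal elements with $aTa^{-1}=T'$: every such $a$ factors as a product
\[
a=a_k\cdots a_1,\qquad |a|=\sum_i|a_i|,
\]
where each $a_i=w_{T_{i-1}\cup\{u_i\}}\,w_{T_{i-1}}$ for some $u_i\in S\setminus T_{i-1}$ with $T_{i-1}\cup\{u_i\}$ spherical. Here $T_0=T$ and $T_i=a_iT_{i-1}a_i^{-1}\sq S$. If $u_i\in T_{i-1}^{\perp}$, then $a_i=u_i$ commutes with $T_{i-1}$ and $T_i=T_{i-1}$.

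The key claim is about the first non-commuting step. Suppose $u_i\notin T_{i-1}^{\perp}$, and take the irreducible component of $T_{i-1}\cup\{u_i\}$ containing $u_i$. Irreducibility of $T_{i-1}\cong T$ forces this component to contain all of $T_{i-1}$. The longest element of an irreducible spherical set has full support, so $w_{T_{i-1}\cup\{u_i\}}$ involves every letter of $T_{i-1}\cup\{u_i\}$. Since $w_{T_{i-1}}$ is supported on $T_{i-1}$ only, reducedness gives that $a_i$ already contains $u_i$. Therefore $T_{i-1}\cup\{u_i\}\sq\supp(a_i)$ must be shown more carefully, for instance via \Cref{lem:Bruhat} or \Cref{lem:w_K=gg'}. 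One would apply \Cref{lem:w_K=gg'} to $w_{T_{i-1}\cup\{u_i\}}=a_i\cdot w_{T_{i-1}}$: since $T_{i-1}\cup\{u_i\}\not\sq T_{i-1}=\supp(w_{T_{i-1}})$, it yields $T_{i-1}\cup\{u_i\}\sq\supp(a_i)$.

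Now let $i$ be the first index with $u_i\notin T^{\perp}$. Then $T_{i-1}=T$, and $a_{i-1}\cdots a_1\in\langle T^{\perp}\rangle$. The previous step gives $T\sq\supp(a_i)\sq\supp(a)$, using reducedness of the product. This contradicts the assumption $T\not\sq\supp(a)$. Hence no such index exists, all $u_i$ lie in $T^{\perp}$, and $g=a\in\langle T^{\perp}\rangle$.

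If invoking Deodhar's factorisation is undesirable, I would instead argue by induction on $|a|$. Take the last letter $u$ of a reduced word for $a$, and compare $a\langle T\cup\{u\}\rangle$ with its minimal coset representative. This reproduces the same dichotomy: either $u\in T^{\perp}$, and one peels $u$ off, or the full longest element of the component containing $T\cup\{u\}$ appears, forcing $T\sq\supp(a)$.
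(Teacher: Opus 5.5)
Your analysis of the minimal coset representative $a$ is the paper's argument. The paper also invokes Deodhar's factorisation, takes the first factor $\vartheta_i\notin\langle T^{\perp}\rangle$, and applies \Cref{lem:w_K=gg'} to $w_{U_i}=\vartheta_i w_{T_{i-1}}$ to get $T\sq U_i\sq\supp(\vartheta_i)\sq\supp(g)$. (One small imprecision: Deodhar only requires the component of $T_{i-1}\cup\{u_i\}$ containing $u_i$ to be spherical, not $T_{i-1}\cup\{u_i\}$ itself. This is harmless, since you treat $u_i\in T_{i-1}^{\perp}$ separately.)

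The genuine error is in the step ``Killing $b$''. From $b\in\langle T\rangle$ and $bTb^{-1}=T$ you cannot conclude $b=1$ when $T$ is spherical. The element $w_T$ also satisfies $w_TTw_T=T$. In $\A_T$ the vertex $w_T$ is adjacent to exactly the same walls $\mc{Y}_t$, $t\in T$, as the base vertex, so ``bounded by the same set of walls'' does not single out the base chamber. Concretely, for $T=\{s\}$ and $g=s$ one has $a=1$ and $b=s$, and your argument would give $g=1$.

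What is true is that $b\in\{1,w_T\}$. For non-commuting $s,t\in T$, a vertex of $\A_T$ adjacent to both $\mc{Y}_s$ and $\mc{Y}_t$ lies either in $\mc{H}_s\cap\mc{H}_t$ or in $\mc{H}_s^*\cap\mc{H}_t^*$. By irreducibility of $T$, the vertex $b$ therefore lies either in $\bigcap_{t\in T}\mc{H}_t$, whose only vertex in $\A_T$ is $1$, or in $\A_T\cap\bigcap_{t\in T}\mc{H}_t^*$. By \Cref{lem:ww_2}, the latter is nonempty only if $T$ is spherical, in which case it equals $\{w_T\}$.

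The case $b=w_T$ is then harmless: since $|g|=|a|+|b|$, you get $T=\supp(w_T)\sq\supp(g)$, which is the first alternative of the lemma. This is exactly the role of the extra factor $\omega\in\{1,w_T\}$ in the form of Deodhar's theorem used in the paper; your $b$ is the paper's $\omega$, and your $a$ is $\vartheta_k\cdots\vartheta_1$. With this correction your proof coincides with the paper's. The induction sketch in your last paragraph is too vague to stand as an alternative argument and can be dropped.
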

\begin{proof}
    Consider an element $g\in W$ with $gTg^{-1}\sq S$. By a result of Deodhar \cite{Deodhar} (see \cite[Theorem~4.10.6]{Davis}), there is a finite sequence $T=:T_0,\dots,T_k:=gTg^{-1}$ of irreducible subsets of $S$ and a writing $g=\vartheta_k\dots\vartheta_1\om$ such that $\vartheta_iT_{i-1}\vartheta_i^{-1}=T_i$ for each $1\leq i\leq k$ and:
    \begin{itemize}
        \item $\om$ is either trivial, or possibly the longest element of $\langle T\rangle$ if $T$ is spherical;
        \item either $\vartheta_i\in T^{\perp}$, or we have $\vartheta_i=w_{U_i}w_{T_{i-1}}$ for an irreducible spherical subset $U_i\sq S$ with $T_{i-1}\sq U_i$ and $|U_i\setminus T_{i-1}|=1$;
        \item plugging reduced words representing $\om$ and the $\vartheta_i$ into the writing $\vartheta_k\dots\vartheta_1\om$, we obtain a reduced word representing $g$.
    \end{itemize}
    Now, we can suppose that $\om=1$, as otherwise we have $T\sq\supp(\om)\sq\supp(g)$. If all $\vartheta_i$ lie in $T^{\perp}$, it is clear that $g\in\langle T^{\perp}\rangle$. Otherwise, let $i$ be the smallest index such that $\vartheta_i\not\in T^{\perp}$. Then \Cref{lem:w_K=gg'} implies that we have $U_i\sq\supp(\vartheta_i)\sq\supp(g)$. Since $T=T_{i-1}\sq U_i$, we conclude that $T\sq\supp(g)$ as desired.
\end{proof}

The next lemma is the main technical step in the proof of \Cref{prop:inflexible}. Recall that, for all spherical subsets $K\sq S$, we denote by $w_K$ the longest element of $\langle K\rangle$.

\begin{lem}\label{lem:hard}
    Suppose that $S=\{a\}\sqcup K\sqcup\{b\}$, where $K$ is irreducible spherical with $K^{\perp}=\emptyset$,
    and where $ab$ has infinite order. Suppose that \Cref{thmintro:main} holds for all proper $S$--parabolic subgroups of $W$. Then, for all proper subsets $K_0\subsetneq K$, the subgroup $\langle a,K_0,w_Kbw_K\rangle$ is not $S$--parabolic.
\end{lem}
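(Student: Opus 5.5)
The plan is to reinterpret the subgroup as a standard parabolic subgroup of a twisted generating set, and then show that it contains an element too ``large'' to lie in any proper $S$--parabolic subgroup. Set $b':=w_Kbw_K$ and $S':=\{a\}\sqcup K\sqcup\{b'\}$. Since $ab$ has infinite order and $K^{\perp}=\emptyset$, the partition $S=\{a\}\sqcup K\sqcup K^{\perp}\sqcup\{b\}$ is exactly of the form used to define elementary twists, with $X=\{a\}$ and $Y=\{b\}$. So $S'$ is a Coxeter generating set of $W$, and $P:=\langle a,K_0,b'\rangle$ is the standard $S'$--parabolic subgroup generated by $\{a\}\cup K_0\cup\{b'\}$. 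As $K_0\subsetneq K$, this is a proper subset of $S'$, hence $P\neq W$. Suppose for contradiction that $P=g\langle U\rangle g^{-1}$ for some $U\sq S$ and $g\in W$, necessarily with $U\neq S$.

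The key element is $z:=ab'=a\,w_K\,b\,w_K\in P$. I aim to show that $z$ lies in no proper $S$--parabolic subgroup of $W$, which contradicts $z\in P\subsetneq W$. To this end I will use Krammer's description of parabolic closures: if an element $z'$ has minimal length in its conjugacy class, then the smallest $S$--parabolic subgroup containing $z'$ is conjugate to $\langle\supp(z')\rangle$ (for infinite-order elements). So it suffices to prove two things:
\begin{enumerate}
    \item the word $a\,w_K\,b\,w_K$, with reduced words for $w_K$ inserted, is reduced;
    \item $z$ has minimal length in its conjugacy class.
\end{enumerate}
Its support is then all of $S$, so the parabolic closure of $z$ is $W$.

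For reducedness I will use the exchange/deletion condition together with \Cref{lem:w_K=gg'} and \Cref{lem:simpler_Deodhar}. The point is that $a,b\notin K$, and neither $a$ nor $b$ commutes with all of $K$ because $K^{\perp}=\emptyset$. So $w_K$ cannot be shortened by absorbing, or commuting across, the letters $a$ or $b$. Concretely, a cancellation would produce a factorisation $w_K=hh'$ in which $h'$ or $h$ is forced to commute with $a$ or $b$ (or to avoid a letter of $K$), contradicting \Cref{lem:w_K=gg'} and irreducibility.

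For minimality in the conjugacy class, I will argue via the cyclic shifts $w_Kaw_Kb$, and so on. Either I invoke the standard criterion that a reduced word all of whose cyclic permutations are reduced, and which has full support in an irreducible-like configuration, is of minimal length in its class; or I argue geometrically in $\A_S$, showing that $z$ acts as a hyperbolic isometry whose axis crosses walls of every type in $S$. Both routes reduce to the same check as reducedness, applied to cyclic shifts, which is symmetric in $a$ and $b$.

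I expect step (2) to be the main obstacle, since Krammer-type statements need care when $W$ is not irreducible or when $z$ might have finite order. The latter cannot happen here, since $a$ and $b'$ generate an infinite dihedral group, being a conjugate of the pair $\{a,b\}$ after the twist. If this route becomes too delicate, the fallback is to use the hypothesis that \Cref{thmintro:main} holds for proper $S$--parabolic subgroups. The two sets $gUg^{-1}$ and $\{a\}\cup K_0\cup\{b'\}$ would be Coxeter generating sets of $P$, and would be twist-equivalent after an angle-compatibility check. From this I would extract an element $u\in P$ conjugating $a$ and $b'$ simultaneously into $S$. Then \Cref{lem:simpler_Deodhar}, applied to the support of the conjugator, would force all of $K$ into the support of an element of $P$. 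That is impossible, because $K_0\subsetneq K$ and supports computed with respect to $S'$ are controlled.
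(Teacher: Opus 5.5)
Your main route fails because its central claim is false: $z=a\,w_Kbw_K$ can lie in a proper $S$--parabolic subgroup. The hypothesis $K^{\perp}=\emptyset$ only says that neither $a$ nor $b$ commutes with \emph{all} of $K$. It does not prevent partial commutation, and partial commutation lets $w_K$ be partly absorbed.

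Here is a counterexample satisfying all hypotheses. Take $S=\{a,k_1,k_2,b\}$ with $K=\{k_1,k_2\}$ of type $A_2$, $o(ak_1)=o(bk_1)=3$, $o(ak_2)=o(bk_2)=2$ and $o(ab)=\infty$. Then $K$ is irreducible spherical and $K^{\perp}=\emptyset$. Writing $w_K=k_2k_1k_2$ gives $w_Kbw_K=k_2k_1(k_2bk_2)k_1k_2=k_2k_1bk_1k_2$. Since $a$ commutes with $k_2$, we get $z=k_2(ak_1bk_1)k_2\in k_2\langle a,k_1,b\rangle k_2$, a proper $S$--parabolic subgroup. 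So the word $a\,w_K\,b\,w_K$ is not reduced (length $8$, while $|z|\le 6$), $z$ is not a minimal-length element with full support, and its parabolic closure is proper. Your step (1) and step (2) both fail, and the contradiction you aim for is simply not available for $z$. Your setup ($S'$ is an elementary twist of $S$, and $P$ is a proper standard $S'$--parabolic subgroup) is fine; the problem is the choice of this single element as a witness.

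Your fallback points in the direction of the paper's proof, but it omits the steps that make it work, and its final contradiction is misdirected.
\begin{enumerate}
\item The paper first uses the one-edge splitting of $W$ with vertex groups $\langle a,K\rangle$ and $\langle K,b\rangle$. In it, $a\cdot w_Kbw_K$ is loxodromic, so $U=\{a,b\}\cup L_0$.
\item The inductive hypothesis applied to $P$ gives twist-equivalence relative to the $\{U,U'\}$--compatible sets $\{a\}\cup K_0$ and $K_0\cup\{w_Kbw_K\}$. Hence the only possible twists are by longest elements of irreducible factors of $\langle K_0\rangle$. After adjusting $g$ within $P$, one gets $g(\{a,b\}\cup L_0)g^{-1}=\{a,w_0w_Kbw_Kw_0\}\cup K_0$, with $w_0$ a product of such longest elements. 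So you do not obtain a conjugator sending $a$ and $w_Kbw_K$ simultaneously into $S$; the factor $w_0$ cannot be removed.
\item The same splitting forces $g$ to fix the edge, so $g\in\langle K\rangle$, not $g\in P$.
\item The contradiction is not that $K$ lies in the support of an element of $P$. That can certainly happen: for $K=\{k\}$, the element $kbk\in P$ has support containing $K$. Instead, one shows $w_K$ admits a factorisation forbidden by \Cref{lem:w_K=gg'}.
\begin{itemize}
\item If each twisted factor $K_i$ commutes with $a$ or with $w_Kbw_K$, then \Cref{lem:simpler_Deodhar} gives $w_K\in\langle K\cap b^{\perp}\rangle\langle K\cap a^{\perp}\rangle$.
\item If some $K_1$ commutes with neither, then \Cref{lem:simpler_Deodhar} applied to $T=\{a\}\cup K_1$ and $T=\{b\}$ writes $w_K$ as a product of an element supported in $K\cap b^{\perp}$ and one supported in $K_1\cup K_1^{\perp}$.
\end{itemize}
This second case arises exactly from the $w_0$ twists, and your sketch has nothing corresponding to it.
\end{enumerate}
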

\begin{proof}  
    Set $U':=\{a,w_Kbw_K\}\cup K_0$ and $P:=\langle U'\rangle$. Suppose for the sake of contradiction that $P$ is $S$--parabolic. Thus, there exist an element $g\in W$ and a subset $U\sq S$ such that $P=g\langle U\rangle g^{-1}$.

    Let $W\acts\mc{T}$ be the $S$--visual one-edge splitting having an edge $e\sq \mc{T}$ with vertex-stabilisers generated by $\{a\}\cup K$ and $K\cup\{b\}$, respectively. Since neither $a$ nor $w_Kbw_K$ lies in $\langle K\rangle$, neither of these elements fixes the edge $e$, and so their product is loxodromic in $\mc{T}$. Hence $P$ is non-elliptic in $\mc{T}$, and the same is then true of $\langle U\rangle$. In particular, $a$ and $b$ must both lie in the set $U$, and so we have $U=\{a,b\}\cup L_0$ for some $L_0\sq K$.
    
    Now, note that the set $S':=\{a,w_Kbw_K\}\cup K$ is a Coxeter generating set of $W$ differing from $S$ by an elementary twist; in particular, $S$ and $S'$ are angle-compatible. Being a subset of $S'$, the set $U'$ is a Coxeter generating set of $P$ and it is angle-compatible with $gUg^{-1}$. Since $P$ is a proper subgroup of $W$, our hypotheses imply that \Cref{thmintro:main} holds for $P$, and so $U$ and $U'$ are twist-equivalent within $P$, relative to the family of $\{U,U'\}$--compatible subsets. Since the sets $\{a\}\cup K_0$ and $K_0\cup\{w_Kbw_K\}$ are $\{S,S'\}$--compatible, 
    they are also $\{U,U'\}$--compatible, and so it suffices to consider elementary twists relative to these two sets. This guarantees that the only twists that we might need to perform on $U'$ are those by the longest elements of the irreducible factors of $\langle K_0\rangle$. 
    In conclusion, up to left-multiplying $g$ by an element of $P$, we obtain that 
    \begin{equation}\label{eq:gUg-1}
        g\big(\{a,b\}\cup L_0\big)g^{-1}=\{a,w_0w_Kbw_Kw_0\}\cup K_0 ,
    \end{equation}
    where $w_0$ is the product of the longest elements of some of the irreducible factors of $\langle K_0\rangle$.

    Considering again the splitting $W\acts\mc{T}$ and its edge $e\sq\mc{T}$, observe that $e$ is the unique edge intersecting the fixed sets of both $a$ and $b$, and also the unique edge intersecting those of $a$ and $w_0w_Kbw_Kw_0$. Thus, \Cref{eq:gUg-1} implies that $ge=e$ 
    and, since there are no edge inversions, it follows that $g$ fixes each of the vertices of $e$. This implies that we have all of the following:
    \begin{align}\label{eq:element_g}
        g&\in\langle K\rangle, & g^{-1}K_0g=L_0&\sq K, & g^{-1}ag&=a, & g^{-1}(w_0w_Kbw_Kw_0)g&=b .
    \end{align}
    The rest of the proof is devoted to ruling out the existence of such an element $g$. 

    Let $\langle K_1\rangle,\dots,\langle K_k\rangle$ be the irreducible factors of $\langle K_0\rangle$ whose longest elements $w_1,\dots,w_k$ appear in the element $w_0$; thus, the $w_i$ pairwise commute and we have $w_0=w_1\dots w_k$. Note that it is possible that $k=0$ and $w_0=1$. We now distinguish two cases.

    \smallskip
    {\bf Case~1.} \emph{For each $1\leq i\leq k$, either $K_i$ commutes with $a$, or $K_i$ commutes with $w_Kbw_K$.} 

    \smallskip\noindent
    Let $h$ be the element obtained by left-multiplying $g$ by the elements $w_i$ for which $K_i$ commutes with $a$. The remaining $w_i$ commute with $w_Kbw_K$ and so \Cref{eq:element_g} yields:
    \begin{align*}
        h&\in\langle K\rangle, & h^{-1}ah&=a, & h^{-1}(w_Kbw_K)h&=b .
    \end{align*}
    Applying \Cref{lem:simpler_Deodhar} with $T=\{a\}$ and $T=\{b\}$, we obtain that $h\in\langle K\cap a^{\perp}\rangle$ and $w_Kh\in\langle K\cap b^{\perp}\rangle$. In particular, we have $w_K\in\langle b^{\perp}\rangle\cdot\langle a^{\perp}\rangle$, and $K$ is not contained in $a^{\perp}$ nor $b^{\perp}$ by the hypothesis that $K^{\perp}=\emptyset$. This contradicts \Cref{lem:w_K=gg'}.

    \smallskip
    {\bf Case~2.} \emph{There exists an index $1\leq i\leq k$ such $K_i$ does not commute with $a$ or $w_Kbw_K$.} 
    
    \smallskip\noindent
    Up to permuting the $K_i$, we can assume that it is $K_1$ that commutes with neither $a$ nor $w_Kbw_K$. In particular, the set $\{a\}\cup K_1$ is irreducible and there exists an element $\beta\in K_1\setminus (w_Kb^{\perp}w_K)$. Since $w_KKw_K=K$, we also have $w_K\beta w_K\in K\setminus b^{\perp}$. Now, applying \Cref{lem:simpler_Deodhar} with $T=\{a\}\cup K_1$ and $T=\{b\}$, the equalities in \Cref{eq:element_g} yield:
    \begin{align*}
        \supp(g)&\sq \big(\{a\}\cup K_1\big)^{\perp}\sq K_1^{\perp}, & \supp(w_Kw_0g)&\sq K\cap b^{\perp}\sq K\setminus\{w_K\beta w_K\}.
    \end{align*}
    In particular, we have
    \[ \supp(gw_0)\sq\supp(g)\cup\supp(w_0)\sq K_1^{\perp}\cup K_0=K_1^{\perp}\cup K_1 ,\]
    and irreducibility of $K$ implies that $K\not\sq K_1^{\perp}\cup K_1$. Finally, writing $w_K=(w_Kw_0g)(gw_0)$, we obtain a contradiction to \Cref{lem:w_K=gg'}. This concludes the proof.
\end{proof}

We can now easily extend the previous lemma to more general decompositions of generating sets. Recall that we way that two distinct elements $s,s'\in S$ are \emph{adjacent} if $ss'$ has finite order (i.e.\ they are adjacent in the presentation graph of $(W,S)$).

\begin{lem}\label{lem:not_parabolic}
    Let $S=X\sqcup(K\cup K^{\perp})\sqcup Y$ be a partition where $K\sq S$ is irreducible spherical and no element of $X$ is adjacent to an element of $Y$. Suppose that \Cref{thmintro:main} holds for all proper $S$--parabolic subgroups of $W$. Let $\Om\sq S$ be a subset that intersects both $X$ and $Y$. If the set $(\Om\setminus Y)\cup w_K(\Om\cap Y)w_K$ generates an $S$--parabolic subgroup of $W$, then $K\sq\Om$.
\end{lem}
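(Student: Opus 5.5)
We argue by contradiction: assume $K\not\sq\Om$ and set $K_0:=K\cap\Om\subsetneq K$. The plan is to cut everything down to a subgroup of the form $\langle a,K,b\rangle$ and then invoke \Cref{lem:hard}.

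\textbf{Step 1: choosing $a$ and $b$.} Pick $a\in\Om\cap X$ and $b\in\Om\cap Y$. Since no element of $X$ is adjacent to an element of $Y$, the product $ab$ has infinite order. Recall that $K^{\perp}$ is the set of \emph{all} elements of $S\setminus K$ commuting with $K$. Since the partition places $a,b$ outside $K\cup K^{\perp}$, neither $a$ nor $b$ commutes with all of $K$. Now set $S_1:=\{a\}\sqcup K\sqcup\{b\}$ and $W_1:=\langle S_1\rangle$. The orthogonal of $K$ inside $S_1$ is then empty, so $(W_1,S_1)$ satisfies the structural hypotheses of \Cref{lem:hard}. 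Moreover, every proper $S_1$--parabolic subgroup of $W_1$ is a proper $S$--parabolic subgroup of $W$. Hence \Cref{thmintro:main} holds for all of them, and this is true whether or not $W_1=W$.

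\textbf{Step 2: an intersection of parabolics.} Consider the elementary twist
\[ S':=X\cup K\cup K^{\perp}\cup w_KYw_K\in\mscr{T}_S . \]
The set $\Om':=(\Om\setminus Y)\cup w_K(\Om\cap Y)w_K$ is contained in $S'$, so $P:=\langle\Om'\rangle$ is a standard $S'$--parabolic subgroup. The set $\{a,w_Kbw_K\}\cup K$ is also contained in $S'$, and it generates exactly $W_1$ because $w_K\in\langle K\rangle$. Since the intersection of two standard parabolic subgroups is the standard parabolic subgroup generated by the intersection of the generating sets, we get
\[ P\cap W_1=\langle a,K_0,w_Kbw_K\rangle . \]

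\textbf{Step 3: transferring parabolicity back to $S$.} By hypothesis, $P$ is $S$--parabolic, and $W_1$ is a standard $S$--parabolic subgroup. The intersection of two $S$--parabolic subgroups is again $S$--parabolic; this is the classical fact, see \cite[Section~4.10]{Davis}. Therefore $\langle a,K_0,w_Kbw_K\rangle$ is $S$--parabolic. It is contained in $W_1$, and a parabolic subgroup of $W$ that is contained in a standard parabolic subgroup $\langle S_1\rangle$ is $S_1$--parabolic in $W_1$. Applying \Cref{lem:hard} to $(W_1,S_1)$ with the proper subset $K_0\subsetneq K$ gives that this subgroup is not $S_1$--parabolic, which is the desired contradiction.

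\textbf{Main obstacle.} The delicate point is Step 3, in particular the parabolicity facts it relies on. First, the intersection of two arbitrary (conjugate) $S$--parabolic subgroups must be $S$--parabolic. Second, parabolicity must descend from $W$ to the standard subgroup $W_1$. I would cite the standard results on intersections of parabolic subgroups for both, and check that the second follows from writing $g\langle U\rangle g^{-1}\le\langle S_1\rangle$ via minimal double-coset representatives. A further point to verify carefully is the identification in Step 2: the subgroup $P$ is computed in $S'$ and $W_1$ is written in $S'$ coordinates, so it must be confirmed that the intersection formula for standard $S'$--parabolics applies to exactly these two generating subsets.
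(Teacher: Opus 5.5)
Your proof is correct, and its skeleton is the paper's. Both pick $a\in\Om\cap X$ and $b\in\Om\cap Y$, intersect $P$ with $\langle U\rangle$ for $U=\{a,b\}\cup K$, note that this intersection is $U$--parabolic, identify it as $\langle\{a,w_Kbw_K\}\cup K_0\rangle$, and contradict \Cref{lem:hard}.

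The real difference is how the intersection is identified. The paper works in the Bass--Serre tree of the one-edge splitting over $\langle K\cup K^{\perp}\rangle$. A ping-pong argument shows that $\langle\Om\setminus Y\rangle$ and $\langle(\Om\cap(K\cup K^{\perp}))\cup w_K(\Om\cap Y)w_K\rangle$ are the $P$--stabilisers of the two endpoints of the edge. That edge is a fundamental domain for $P\cap\langle U\rangle$ on its minimal subtree, so the intersection is generated by its two vertex-stabilisers, which are then computed.

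You instead pass to the twisted system $(W,S')$. There $P=\langle\Om'\rangle$ and $\langle U\rangle=\langle\{a,w_Kbw_K\}\cup K\rangle$ are both standard parabolic, so the classical formula $\langle I\rangle\cap\langle J\rangle=\langle I\cap J\rangle$ gives the answer at once. This uses $\Om'\cap K=K_0$, which holds because $w_K(\Om\cap Y)w_K\cap K=\emptyset$. Your route is shorter and avoids the tree entirely; it is a device the paper itself uses inside the proof of \Cref{lem:hard}.

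You also verify explicitly two hypotheses of \Cref{lem:hard} that the paper leaves implicit. These are that $K$ has empty orthogonal in $S_1$, and that the inductive hypothesis passes to proper $S_1$--parabolic subgroups of $W_1$.

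Two minor remarks. First, the assertion $S'\in\mscr{T}_S$ is not justified here, since the partition is not assumed relative to any family $\mscr{C}$. You never use it, though; you only need $S'$ to be a Coxeter generating set of $W$, which holds because it is an elementary twist of $S$. Second, your Step~3 can be collapsed into the single fact the paper cites: the intersection of an $S$--parabolic subgroup with $\langle S_1\rangle$ is $S_1$--parabolic. That is exactly what your double-coset argument proves.
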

\begin{proof}
    Let $P$ be the subgroup generated by the set $(\Om\setminus Y)\cup w_K(\Om\cap Y)w_K$, and define $K_0:=\Om\cap K$. Suppose that $P$ is $S$--parabolic and $K_0\subsetneq K$. We will reach a contradiction by reducing to the situation in \Cref{lem:hard}.

    By hypothesis, there exist $a\in\Om\cap X$ and $b\in\Om\cap Y$. Considering $U:=\{a,b\}\cup K$, the subgroup $P\cap\langle U\rangle$ is $U$--parabolic (see e.g.\ \cite[Lemma~5.3.6]{Davis}). Thus, in order to obtain a contradiction from \Cref{lem:hard}, it suffices to check that $P\cap\langle U\rangle$ is generated by the set $\{a,w_Kbw_K\}\cup K_0$.

    Let $W\acts\mc{T}$ be the $S$--visual one-edge splitting having an edge $[x,y]\sq\mc{T}$ with vertex-stabilisers generated by $X\cup K\cup K^{\perp}$ and $K\cup K^{\perp}\cup Y$. Let $Q_1$ and $Q_2$ be the subgroups of $W$ generated by $\Om\setminus Y$ and $(\Om\cap K)\cup (\Om\cap K^{\perp})\cup w_K(\Om\cap Y)w_K$, respectively. We have $P=\langle Q_1,Q_2\rangle$. Observe that the $Q_1$--stabiliser of the edge $[x,y]$ coincides with the $Q_2$--stabiliser of $[x,y]$, and so these stabilisers also equal the intersection $Q_1\cap Q_2$. As a consequence, a standard ping-pong argument shows that $Q_1$ is precisely the $P$--stabiliser of $x$, and similarly $Q_2$ is the $P$--stabiliser of $y$.

    Now, consider the intersection $P\cap\langle U\rangle$. This group contains the element $a$, which fixes $x$ and not $y$, and the element $w_Kbw_K$, which fixes $y$ and not $x$. As a consequence, the group $P\cap\langle U\rangle$ is non-elliptic in $\mc{T}$ and the edge $[x,y]$ is a fundamental domain for the action of $P\cap\langle U\rangle$ on its minimal subtree. It follows that $P\cap\langle U\rangle$ is generated by its subgroups fixing $x$ and $y$ and, by the previous paragraph, these are $Q_1\cap\langle U\rangle=\langle\{a\}\cup K_0\rangle$ and $Q_2\cap\langle U\rangle=\langle K_0\cup\{w_Kbw_K\}\rangle$. In conclusion, the intersection $P\cap\langle U\rangle$ is generated by $\{a,w_Kbw_K\}\cup K_0$ as desired, concluding the proof.
\end{proof}

We are finally ready to prove \Cref{prop:inflexible}.

\begin{proof}[Proof of \Cref{prop:inflexible}]
    Let $\Om\sq S$ be a $\mscr{T}_S$--parabolic subset. Consider an irreducible spherical subset $K\sq S$ with $K\not\sq \Om$. Suppose for the sake of contradiction that $\Om$ intersects two distinct components of the graph $\wh S_{\mscr{C}}\setminus (K\cup K^{\perp})$.

    The previous sentence means that there exists a partition $S=X\sqcup (K\cup K^{\perp})\sqcup Y$ such that $\Om$ intersects both $X$ and $Y$, no element of $X$ is adjacent to an element of $Y$, and each subset in $\mscr{C}$ is disjoint from either $X$ or $Y$. Consider $S':=(S\setminus Y)\cup w_KYw_K$ and observe that $S'\in\mscr{T}_S$. In particular, the subgroup $\langle \Om\rangle$ is $S'$--parabolic by our hypotheses. Together with the fact that $K\not\sq \Om$, this contradicts \Cref{lem:not_parabolic} applied to the generating set $S'$, proving the proposition. 
\end{proof}

\section{Markings and hierarchies}\label{sect:markings}

Throughout, let $S,R\sq W$ be angle-compatible Coxeter generating sets and let $\mscr{C}$ be the family of $\{S,R\}$--compatible subsets of $W$. Let $\wh S_{\mscr{C}}$ be the graph from \Cref{defn:hat_graph}.

Bases and markings are certain pairs $(s,w)$ and $\mu=((s,w),m)$ with $s,m\in S$ and $w\in W$, which were introduced by Caprace and Przytycki in \cite{CP10}; we recall their definition and main properties in \Cref{sub:markings} below. Each (admissible) marking $\mu$ defines a halfspace in the Davis complex $\A_R$, and we write $\mu\equiv_R\mu'$ when two markings determine the same halfspace there.

The goal of this section is to prove \Cref{prop:markings_new} below, which yields conditions under which two markings define the same halfspace. In order to state this, we need some more terminology. A subset $U\sq S$ is \emph{tree--$2$--spherical} if it is irreducible\footnote{This is slightly different from \cite[Definition~3.4]{CP10}, where irreducibility is not assumed.} and its Dynkin diagram is a tree. For each subset $U\sq S$, we write:
\begin{align*}
        \mc{I}_U&:=\{ K\sq U\mid \text{$K$ is irreducible, spherical and nonempty} \} \\
        \mc{J}_U&:=\{ K\sq U\mid \text{$K$ is tree--$2$--spherical and nonempty} \} .
\end{align*}  

\begin{defn}\label{defn:inseparable}
    Let $(K,\gamma)$ be a pair where $K\in\mc{J}_S$ and $\gamma$ is a path in $\wh S_{\mscr{C}}\setminus (K\cup K^{\perp})$ with endpoints labelled $a$ and $b$. We say that the pair $(K,\gamma)$ is:
    \begin{itemize}
        \item \emph{$J$--unseparated}, for some $J\sq S$, if $\{a,b\}$ meets at most one component of $\wh S_{\mscr{C}}\setminus (J\cup J^{\perp})$;
        \item \emph{weakly inseparable} if it is $I$--unseparated for every $I\in\mc{I}_S$ with $K\sq I$ and $I\cap\gamma\neq\emptyset$;
        \item \emph{strongly inseparable} if it is $J$--unseparated for every $J\in\mc{J}_S$ with $K\sq J$ and $J\cap\gamma\neq\emptyset$ such that $J\setminus K$ is spherical.
    \end{itemize}
\end{defn}

\begin{prop}\label{prop:markings_new}
    Let $(s,w)$ be a 
    base with support $\Sigma$, and let $K\sq \Sigma$ be an irreducible subset. Let $\gamma\sq\wh S_{\mscr{C}}\setminus (K\cup K^{\perp})$ be a geodesic with endpoints $a$ and $b$ such that $\{a,b\}\not\in\mscr{C}$. Suppose that the pair $(K,\gamma)$ is strongly inseparable and consider one of the following three situations:
    \begin{enumerate}
        \item $\mu:=((s,w),a)$ and $\mu':=((s,w),b)$;
        \item $\mu:=((s,wa),b)$ and $\mu':=((s,w),b)$, where $(s,wa)$ is a base with\footnote{The requirement that $|wa|>|w|$ is needed only in case $a\in \Sigma$. Here we need to allow this eventuality because, in the terminology \cite{CP10}, we will often have to work with \emph{non-simple} bases; see e.g.\ the proof of \Cref{prop:atoms_2}(1).} $|wa|>|w|$;
        \item $\mu:=((s,wa),b)$ and $\mu':=((s,wb),a)$, where $a,b\not\in\Sigma$.
    \end{enumerate}
    Then, supposing in each of three cases that $\mu$ and $\mu'$ are complete markings, we have $\mu\equiv_R\mu'$.
\end{prop}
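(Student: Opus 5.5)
We plan to follow the strategy of \cite{CP10}, where the analogous statements are proved with the presentation graph in place of $\wh S_{\mscr{C}}$ and under a twist-rigidity assumption. The argument proceeds by induction along the geodesic $\gamma$. Write $\gamma=(a=c_0,c_1,\dots,c_n=b)$. Since $\gamma$ is a geodesic and $\{a,b\}\not\in\mscr{C}$, we have $n\geq 2$, and consecutive vertices $c_{i-1},c_i$ satisfy $\{c_{i-1},c_i\}\in\mscr{C}$. These pairs are therefore simultaneously conjugate into $S$ and into $R$.

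The first step is the basic move. For adjacent $c,c'\in\wh S_{\mscr{C}}$ outside $K\cup K^{\perp}$, we show $((s,w),c)\equiv_R((s,w),c')$ and the analogous equalities for the moves of types~(2) and~(3) with a single edge. In \cite{CP10} the corresponding statements come from the geometry of the pair of walls of $\A_R$ associated to $c,c'$, and we only use that $\{c,c'\}$ is $R$--compatible. In our setting this holds precisely because $\{c,c'\}\in\mscr{C}$, so the original arguments transfer verbatim, using \Cref{cor:2-compatible->compatible} whenever a clique of $\wh S_{\mscr{C}}$ must be realised $R$--compatibly. Chaining the basic moves along $\gamma$ gives Case~(1), provided every intermediate marking $((s,w),c_i)$ is complete, or can be replaced by an equivalent complete marking.

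The main obstacle is exactly this completeness issue at intermediate vertices. This is where strong inseparability enters. In \cite{CP10}, the marker $c_i$ can fail to be usable when $c_i$ lies in some $J\supseteq K$ that separates the endpoints. There one passes to a larger spherical or tree--$2$--spherical set $J$ and replaces the base $(s,w)$ by a modified base, for instance $(s,ww_Jw_K)$ in the spherical case, so that the marker becomes admissible. We will show that the sets $J$ that arise are tree--$2$--spherical, contain $K$, meet $\gamma$, and satisfy that $J\setminus K$ is spherical. Strong inseparability then says that $a$ and $b$ lie in one component of $\wh S_{\mscr{C}}\setminus(J\cup J^{\perp})$. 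Using this, we reroute $\gamma$ inside that component, keeping it a path but dropping geodesicity, and we argue by induction on the complexity $|\Sigma|$ together with the length of $\gamma$. In the tree--$2$--spherical but non-spherical case we replace the longest-element argument by the rigidity of $2$--spherical sets from \cite{CM07}.

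Cases~(2) and~(3) are then reduced to Case~(1). For~(2), we compare $((s,wa),b)$ with $((s,wa),c_1)$ by Case~(1) applied to the base $(s,wa)$; here $(K,\gamma)$ is still strongly inseparable, since the support only grows by $a$ when $a\notin\Sigma$. We then use the basic move across the edge $\{a,c_1\}\in\mscr{C}$ to return to the base $(s,w)$, with the hypothesis $|wa|>|w|$ covering the case $a\in\Sigma$. Case~(3) is obtained by applying~(2) twice, once from each endpoint, which yields $((s,wa),b)\equiv_R((s,w),b)$ and $((s,wb),a)\equiv_R((s,w),a)$. The two right-hand sides are identified by Case~(1).
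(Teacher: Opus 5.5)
Your overall framing is right: run the \cite{CP10} machinery with $\wh S_{\mscr{C}}$ in place of the presentation graph, and use inseparability to guarantee connectivity. However, the heart of the argument is missing, and the substitute you sketch does not work.

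\textbf{The basic move and the descent.} Your ``basic move'' $((s,w),c)\equiv_R((s,w),c')$ for $\wh S_{\mscr{C}}$--adjacent $c,c'$ is move (N1)$_{\mscr{C}}$. It preserves $\equiv_R$ only when both markings are complete or both are semicomplete (\Cref{rmk:N1_bad}), and can fail across the two types. Moreover, an intermediate $((s,w),c_i)$ need not be admissible at all, e.g.\ if $\Sigma\cup\{c_i\}$ is spherical, and then there is no halfspace to compare. Rigidity from \cite{CM07} only \emph{defines} the semicomplete halfspace; it does not compare it with a complete one.

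The paper handles this with an $\mf{s}$--hierarchy (\Cref{prop:existence} and \Cref{prop:Sect7_2}). Suppose the next point $x$ makes $J=D(\kappa)\cup\{x\}$ $\mf{s}$--small, i.e.\ tree--$2$--spherical with $J\setminus(K\cup\{a,b\})$ spherical. Then one multiplies the base by the single letter $x$ when $x\notin\Sigma$ (not by $w_Jw_K$). One then traverses a new geodesic of $\wh S_{\mscr{C}}\setminus(J\cup J^{\perp})$ joining the neighbours of $x$, recursively, using (M2)/(M3) moves at the junctions. When $J$ is not small, the complete/semicomplete transition is handled by \Cref{lem:CP_4.6}, which needs exactly that $(D(\kappa)\cup\{m'\})\setminus(K\cup\{a,b\})$ be non-spherical. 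This is why one descends precisely at small sets, and why \emph{strong} inseparability is the hypothesis.

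\textbf{Your induction does not close.} Strong inseparability of $(K,\gamma)$ does not pass to a new pair with domain $J\supsetneq K$. A set $J'\supseteq J$ with $J'\setminus J$ spherical need not have $J'\setminus K=(J'\setminus J)\cup\{x\}$ spherical. Also, the hypothesis only speaks about $\{a,b\}$, not about the new endpoints. The paper instead refers every separation question back to $\{a,b\}$ through safety lines. Their disjointness from $J\cup J^{\perp}$ (\Cref{lem:safety_lines_newnew_2}) relies on every path in the hierarchy being chordless. ``Dropping geodesicity'' when rerouting discards exactly this property, and also takes you outside the hypotheses of the statement you want to induct on.

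\textbf{The reductions of (2) and (3) to (1) fail.} In Case~(3), $(s,wa)$ and $(s,wb)$ are bases with $a,b\notin\Sigma$. Since no wall separates $w1_S$ or $wa1_S$ from $\mc{Y}_s$, the wall $w\mc{Y}_a$ separating these two vertices must cross $\mc{Y}_s$, and likewise for $b$. Hence $((s,w),a)$ and $((s,w),b)$ are never complete. They are not even admissible when $\Sigma\cup\{a\}$ and $\Sigma\cup\{b\}$ are spherical, which is always the case in FC-type. So neither (2), which requires $\mu'$ complete, nor (1) can be applied to them.

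Similarly, in your reduction of (2), the intermediate markings $((s,wa),c_1)$ and $((s,w),c_1)$ need not be complete or admissible. Applying (1) to the subpath from $c_1$ to $b$ also requires $\{c_1,b\}\notin\mscr{C}$, which fails when $\gamma$ has length $2$.

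In the paper, the three cases are not reduced to one another. They are treated simultaneously: the parameters $(\eps_I,\eps_T)$ only change the first and last geodesics of the hierarchy. These are $\gamma$ minus an endpoint, reversed, with domain $K\cup\{a\}$ or $K\cup\{b\}$. All intermediate markings are then admissible by construction.
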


There is also a version of \Cref{prop:markings_new} for the case when the pair $(K,\gamma)$ is just \emph{weakly} inseparable (see \Cref{prop:markings_general} below), and this suffices for instance when dealing with Coxeter groups of FC-type. Despite this, we will only use the above version of the proposition in the proof of \Cref{thm:step_two} in \Cref{sect:geometrisation}. As the reader may already suspect, the concept of tucked sets (\Cref{defn:tucked}) was designed precisely to enable the construction of \emph{strongly} inseparable pairs.

\Cref{prop:markings_new} is more general than the results of \cite[Sections~6--8]{CP10} in three ways: first, inseparability of pairs $(K,\gamma)$ is typically much weaker than twist-rigidity of $(W,S)$; second, we work with the graph $\wh S_{\mscr{C}}$ instead of the presentation graph of $(W,S)$; third, we allow the set $K$ to be \emph{properly} contained in the support $\Sigma$ of the base (cf.\ \Cref{prop:Sect7_2}). Despite these differences, most proofs in this section are essentially identical to those in \cite{CP10}.

\subsection{Bases and markings}\label{sub:markings}

Let $S$, $R$ and $\mscr{C}$ be as above. We first focus only on $S$.

We denote by $1_S$ the base vertex of the Davis complex $\A_S$. For each reflection $\rho\in S^W$, denote by $\mc{Y}_{\rho}$ the $\rho$--fixed wall in the Davis complex $\A_S$. An edge of $\A_S$ with endpoints in different halfspaces associated to $\mc{Y}_{\rho}$ is \emph{dual} to $\mc{Y}_{\rho}$. Given a wall $\mc{Y}\sq\A_S$ and a vertex $p\in\A_S$, the distance $d(p,\mc{Y})$ is defined as the shortest distance between $p$ and an edge dual to $\mc{Y}$, working with the path metric on the $1$--skeleton of $\A_S$.

A \emph{base} is a pair $(s,w)\in S\x W$ such that $d(w1_S,\mc{Y}_s)=|w|$ and no wall separates $w1_S$ from $\mc{Y}_s$ (recall that $|w|=d(1_S,w1_S)$ is the length of the reduced words representing $w$). The \emph{support} $\supp(s,w)$ is the smallest subset $K\sq S$ such that $s\in K$ and $w\in\langle K\rangle$. 

\begin{rmk}\label{rmk:tree-2-spherical_support}
    If $(s,w)$ is a base, then the set $\supp(s,w)$ is tree--$2$--spherical; see \cite[Lemma~3.5]{CP10}. Conversely, if $(s,w)$ is a base and $x\in S\setminus\supp(s,w)$ is an element such that $\supp(s,w)\cup\{x\}$ is tree--$2$--spherical, 
    then $(s,wx)$ is a base; this can be easily shown using \cite[Remark~3.2]{CP10}.
\end{rmk}

\begin{rmk}\label{rmk:PW}
    There exists an integer $N=N(W,S)$ such that $|w|\leq N$ for every base $(s,w)$ with $s\in S$ and $w\in W$. This follows from the Parallel Wall Theorem \cite[Theorem~2.8]{Brink-Howlett-PW}.
\end{rmk}

A \emph{marking} is a triple $\mu=((s,w),m)$ where $(s,w)$ is a base and $m\in S\setminus\supp(s,w)^{\perp}$. The element $s$ is the \emph{core} of $\mu$, and $m$ is the \emph{marker}. The marking $\mu$ is \emph{complete} if $\mc{Y}_s\cap w\mc{Y}_m=\emptyset$; equivalently, the subgroup $\langle s,wmw^{-1}\rangle$ is infinite. The marking is \emph{semicomplete}\footnote{More generally, we could call ``semicomplete'' all markings for which the set $\supp(s,w)\cup\{m\}$ is non-spherical and lies in $\mscr{C}$, but this would not yield any significant benefits.}
if the set $\supp(s,w)\cup\{m\}$ is $2$--spherical and non-spherical. Finally, the marking is \emph{admissible} if $\supp(s,w)\cup\{m\}$ is non-spherical. 

\begin{rmk}\label{rmk:complete_or_base}
    If $\mu=((s,w),m)$ is a marking and $d(wm1_S,\mc{Y}_s)>d(w1_S,\mc{Y}_s)$, then either $\mu$ is complete or $(s,wm)$ is a base (depending on whether the wall $w\mc{Y}_m$ is disjoint from $\mc{Y}_s$ or not).
\end{rmk}

\begin{rmk}\label{rmk:complete_or_semicomplete}
     Remarks~\ref{rmk:tree-2-spherical_support} and~\ref{rmk:complete_or_base} show that, for an arbitrary marking $\mu=((s,w),m)$, either $\mu$ is complete or the union $\supp(s,w)\cup\{m\}$ is tree--$2$--spherical. In particular, a marking is admissible if and only if it is either complete or semicomplete. 
    
    Note that a marking can be both complete and semicomplete at the same time and, despite the name, neither of these two properties implies the other.
\end{rmk}

Now, we also consider the Coxeter generating set $R$. Since $R$ is angle-compatible with $S$, the sets of reflections $S^W$ and $R^W$ coincide. For each reflection $\rho$, we denote by $\mc{W}_{\rho}$ the wall fixed by $\rho$ in the Davis complex $\A_R$ (so as to distinguish it from the wall $\mc{Y}_{\rho}\sq\A_S$). Every admissible marking $\mu=((s,w),m)$ determines a choice of a halfspace $\mc{H}^{\mu}\sq\A_R$ bounded by the wall $\mc{W}_s$:
\begin{itemize}
    \item If $\mu$ is complete, we define $\mc{H}^{\mu}$ as the side of $\mc{W}_s$ that contains the wall $w\mc{W}_m$.
    \item If $\mu$ is semicomplete, then there is a unique $R$--geometric choice of halfspaces for the walls of $\A_R$ fixed by the elements of $\supp(s,w)\cup\{m\}$ (since this set is non-spherical, irreducible and lies in $\mscr{C}$). We thus define $\mc{H}^{\mu}$ as the side of $\mc{W}_s$ picked by this choice.
\end{itemize}
Given two admissible markings $\mu,\mu'$ with the same core, we write $\mu\equiv_R\mu'$ if we have $\mc{H}^{\mu}=\mc{H}^{\mu'}$.

Throughout the article, markings will always have core and marker in the Coxeter generating set denoted by $S$, and they will always define halfspaces $\mc{H}^{\mu}$ in the Davis complex associated to the generating set denoted by $R$ (this is the same convention as in \cite{CP10}).

\subsection{Generalised moves}\label{sub:moves}

As in \cite[Section~4]{CP10}, we can modify a marking by certain ``moves''. We now discuss a version of these moves where adjacency in the presentation graph of $(W,S)$ is replaced by adjacency in the graph $\wh S_{\mscr{C}}$. 

Let $\mu=((s,w),m)$ and $\mu'=((s,w'),m')$ be admissible markings with the same core. We say that $\mu$ and $\mu'$ are related by move:
\begin{enumerate}
    \item[(N1)$_{\mscr{C}}$] if $w=w'$ and $\{m,m'\}\in\mscr{C}$;
    \item[(M1)$_{\mscr{C}}$] if both $\mu$ and $\mu'$ are complete and they differ by move (N1)$_{\mscr{C}}$;
    \item[(M2)$_{\mscr{C}}$] if $m=m'$ and $w'=wx$ for an element $x\in S$ with $\{m,x\}\in\mscr{C}$;
    \item[(M3)$_{\mscr{C}}$] if the union $\supp(s,w)\cup\supp(s,w')\cup\{m,m'\}$ lies in $\mscr{C}$.
\end{enumerate}
We write $\mu\equiv^{\mf{w}}\mu'$ if two admissible markings are connected by a finite sequence of the moves (N1)$_{\mscr{C}}$, (M2)$_{\mscr{C}}$ and (M3)$_{\mscr{C}}$. We instead write $\mu\equiv^{\mf{s}}\mu'$ if moves (M1)$_{\mscr{C}}$, (M2)$_{\mscr{C}}$ and (M3)$_{\mscr{C}}$ suffice. The letters $\mf{w}$ and $\mf{s}$ stand for \emph{weak} and \emph{strong} equivalence, respectively. The reason for this slightly awkward choice of notation will become clear in \Cref{sub:hierarchy} (see the statement of \Cref{prop:markings_general}).

Note that the definition of the equivalence relations $\equiv^{\mf{w}}$ and $\equiv^{\mf{s}}$ only involves the generating set $S$ and the family $\mscr{C}$, while $R$ plays no role. Still, there is a fundamental connection to the equivalence relation $\equiv_R$ from \Cref{sub:markings}, as explained in the next lemma (cf.\ \cite[Lemma~4.2]{CP10}).

\begin{lem}\label{lem:generalised_moves}
    If $\mu,\mu'$ are admissible markings with $\mu\equiv^{\mf{s}}\mu'$, then $\mu\equiv_R\mu'$.
\end{lem}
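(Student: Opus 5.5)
It suffices to treat a single move, since $\equiv_R$ is an equivalence relation on admissible markings with a given core. So we fix admissible markings $\mu=((s,w),m)$ and $\mu'=((s,w'),m')$ related by one of (M1)$_{\mscr{C}}$, (M2)$_{\mscr{C}}$, (M3)$_{\mscr{C}}$, and show $\mc{H}^{\mu}=\mc{H}^{\mu'}$. The model is \cite[Lemma~4.2]{CP10}. The only new input is that pairs $\{x,y\}\in\mscr{C}$ now replace edges of the presentation graph. By \Cref{cor:2-compatible->compatible} and \Cref{prop:2-compatible->geometric}, every universal set in $\mscr{C}$ (or conjugate of one) is $R$--compatible, hence $R$--geometric. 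Throughout we use that conjugating by $w\in W$ carries $\mc{W}_\rho$ to $w\mc{W}_\rho=\mc{W}_{w\rho w^{-1}}$.

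\textbf{Move (M3)$_{\mscr{C}}$.} Set $U:=\supp(s,w)\cup\supp(s,w')\cup\{m,m'\}\in\mscr{C}$. A conjugate of $U$ lies in $R$, so there is a unique-up-to-global-flip $R$--geometric choice of halfspaces $\{\mc{H}_u\}_{u\in U}$, with nonempty intersection $\bigcap_u\mc{H}_u$, namely a translate of a chamber-like region. I claim that $\mc{H}^{\mu}$ is the side of $\mc{W}_s$ picked by this choice, possibly after a global flip, and likewise for $\mu'$.

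In the semicomplete case, $\supp(s,w)\cup\{m\}$ is irreducible and non-spherical, so its geometric choice is unique and must agree with the restriction of the choice for $U$. If $U$ is reducible, we restrict to the irreducible factor containing $s$; the support is irreducible and $m\notin\supp(s,w)^\perp$, so that factor contains everything relevant.

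In the complete case, $w\in\langle U\rangle$ and the base condition imply that the geometric choice for $U$, transported by $w$, places the chamber $w\bigcap_u\mc{H}_u$ on the side of $\mc{W}_s$ determined by the base $(s,w)$ in $\A_S$. This uses that the $S$-- and $R$--geometric pictures of $\langle U\rangle$ correspond, since $U$ is conjugate into both. Consequently $w\mc{W}_m$ lies in the chosen side $\mc{H}_s$. The key point here is that $\mc{Y}_s$ and $w\mc{Y}_m$ are disjoint with no wall of $\langle U\rangle$ separating $w1_S$ from $\mc{Y}_s$. Because the wall combinatorics of the geometric set $U$ is the same in $\A_S$ and $\A_R$, this transfers to $\A_R$.

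The one remaining subtlety is that the two markings might select the global flip differently. We rule this out by noting that both $\mu$ and $\mu'$ are governed by the same geometric configuration of $U$, in which $s$ is a fixed element. The choice of $\mc{H}_s$ is pinned down, in both cases, by the relation between $s$ and the remaining elements of $U$.

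\textbf{Move (M2)$_{\mscr{C}}$.} Here $w'=wx$ with $\{m,x\}\in\mscr{C}$. If $x\in\supp(s,w)$, or more generally if $\supp(s,w)\cup\supp(s,wx)\cup\{m\}$ lies in $\mscr{C}$, this reduces to (M3)$_{\mscr{C}}$. Otherwise, the markings involved are complete or semicomplete and we follow \cite{CP10}.

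Conjugating by $w$ reduces the claim to the statement that the walls $w\mc{W}_m$ and $wx\mc{W}_m=\mc{W}_{wxmxw^{-1}}$ lie on the same side of $\mc{W}_s$. The pair $\{m,x\}$ is $R$--compatible, so the walls $\mc{W}_x$, $\mc{W}_m$ and $x\mc{W}_m$ form the geometric configuration of a dihedral or infinite dihedral group in $\A_R$. We need $w\mc{W}_x$ not to separate these from $\mc{W}_s$, equivalently $\mc{W}_s\cap w\mc{W}_x$ not to create a separation. This follows from the base condition on $(s,wx)$ or $(s,w)$, together with \Cref{lem:3_reflections} applied to the triple $\{s,wxw^{-1},wmw^{-1}\}$. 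That triple is $2$--compatible because each pair is either a $w$--conjugate of a set in $\mscr{C}$ or an angle-compatible pair.

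\textbf{Move (M1)$_{\mscr{C}}$.} Both markings are complete with the same $w$, and $\{m,m'\}\in\mscr{C}$. The walls $w\mc{W}_m$ and $w\mc{W}_{m'}$ are both disjoint from $\mc{W}_s$. Either they intersect, or they are disjoint with no wall between them, since $\{m,m'\}$ is $R$--compatible. In either case they lie on the same side of $\mc{W}_s$, because a wall disjoint from $\mc{W}_s$ meeting another such wall cannot cross $\mc{W}_s$. If instead they are disjoint, we apply the Deodhar--Dyer reduction to the triple $\{s,wmw^{-1},wm'w^{-1}\}$, excluding Configuration~(1) as in \Cref{lem:3_reflections}.

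\textbf{Main obstacle.} I expect the hardest step to be the semicomplete and mixed cases of (M2)$_{\mscr{C}}$ and (M3)$_{\mscr{C}}$. There one must verify carefully that the $R$--geometric choice for a $\mscr{C}$--set, which is defined only up to a global flip, is compatible with the $\A_S$ data encoded by the base. I would handle this by reducing to irreducible non-spherical sets in $\mscr{C}$, where geometric choices are unique, and checking the remaining spherical cases directly.
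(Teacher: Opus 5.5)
The overall plan is the paper's. You reduce to a single move, settle (M1)$_{\mscr{C}}$ by noting that no wall of $\A_R$ separates $w\mc{W}_m$ from $w\mc{W}_{m'}$, and settle (M3)$_{\mscr{C}}$ via the $R$--geometric choice for the union $U$, transported between $\A_S$ and $\A_R$. Your ``global flip'' concern in (M3)$_{\mscr{C}}$ does not arise. $U$ contains $\supp(s,w)\cup\{m\}$, which is non-spherical by admissibility. $U$ is also irreducible, as the union of the irreducible sets $\supp(s,w)\cup\{m\}$ and $\supp(s,w')\cup\{m'\}$, which share $s$. So the $R$--geometric choice for $U$ is unique, and there are no ``remaining spherical cases''.

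\textbf{Non-complete case of (M2)$_{\mscr{C}}$.} This is the genuine gap, and it is exactly where you locate the main obstacle. Assume $|wx|=|w|+1$. If $\mu=((s,w),m)$ is complete, then so is $\mu'$ (\cite[Remark~3.2]{CP10}). Otherwise $\mu$ is semicomplete, and the idea you are missing is that this case is \emph{always} an instance of (M3)$_{\mscr{C}}$:
\begin{itemize}
\item $\supp(s,w)\cup\{m\}$ is $2$--spherical, by semicompleteness;
\item $\supp(s,w)\cup\{x\}\sq\supp(s,wx)$ is $2$--spherical, since $(s,wx)$ is a base (\Cref{rmk:tree-2-spherical_support});
\item $\{m,x\}\in\mscr{C}$ by hypothesis.
\end{itemize}
Hence $\supp(s,w)\cup\{x,m\}$ spans a clique in $\wh S_{\mscr{C}}$ and lies in $\mscr{C}$ by \Cref{cor:2-compatible->compatible}(1). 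Your split ``the union lies in $\mscr{C}$, or otherwise'' never shows that the second case consists of complete markings. The sufficient condition you offer is also wrong: $x\in\supp(s,w)$ does not force $\supp(s,w)\cup\{m\}\in\mscr{C}$ when $\mu$ is complete.

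\textbf{Both-complete case of (M2)$_{\mscr{C}}$.} Your argument here fails as written. \Cref{lem:3_reflections} needs $\{s,wxw^{-1},wmw^{-1}\}$ to be $2$--compatible with respect to $R$. But $\langle s,wmw^{-1}\rangle$ is infinite by completeness, and angle-compatibility says nothing about non-spherical pairs, which need not be conjugate into $R$. For instance, take $W=\Z/2\Z\ast\Z/2\Z\ast\Z/2\Z$, $S=\{a,b,c\}$ and $R=\{a,b,abcba\}$. The complete marking $((a,1),c)$ produces the pair $\{a,c\}$, which is not $R$--compatible.

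No such input is needed. Both $w\mc{W}_m$ and $wx\mc{W}_m$ are disjoint from $\mc{W}_s$. Since $\{m,x\}$ is $R$--compatible, there is a vertex $v\in\A_R$ at which both $\mc{W}_m$ and $\mc{W}_x$ are dual to incident edges. Then $x\mc{W}_m$ is dual to an edge at $xv$, so any wall separating $\mc{W}_m$ from $x\mc{W}_m$ separates $v$ from $xv$, and hence equals $\mc{W}_x$. Translating by $w$, the only wall that can separate $w\mc{W}_m$ from $wx\mc{W}_m$ is $w\mc{W}_x$. This wall is not $\mc{W}_s$: if $wxw^{-1}=s$, then $d(wx1_S,\mc{Y}_s)=d(sw1_S,\mc{Y}_s)=|w|<|wx|$, contradicting that $(s,wx)$ is a base. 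So the two walls lie on the same side of $\mc{W}_s$.

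The Deodhar--Dyer step appended to your (M1)$_{\mscr{C}}$ paragraph has the same $2$--compatibility problem. It is redundant, though, since the sentence before it already settles that case.
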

\begin{proof}
    It suffices to show that, when $\mu$ and $\mu'$ differ by one of the three moves (M1)$_{\mscr{C}}$--(M3)$_{\mscr{C}}$, the halfspaces $\mc{H}^{\mu}$ and $\mc{H}^{\mu'}$ coincide in $\A_R$.

    In the case of move (M1)$_{\mscr{C}}$, we have $\mu=((s,w),m)$ and $\mu'=((s,w),m')$ with $\{m,m'\}\in\mscr{C}$ and with both $\mu$ and $\mu'$ complete. In particular, the walls $w\mc{W}_m$ and $w\mc{W}_{m'}$ are both disjoint from $\mc{W}_s$. Since $\{m,m'\}$ lies in $\mscr{C}$, no wall of $\A_R$ separates $\mc{W}_m$ from $\mc{W}_{m'}$. Thus, $w\mc{W}_m$ and $w\mc{W}_{m'}$ are not separated by any walls, and so they must lie on the same side of $\mc{W}_s$.

    Regarding move (M2)$_{\mscr{C}}$, we have $\mu=((s,w),m)$ and $\mu'=((s,wx),m)$ with $\{m,x\}\in\mscr{C}$ and, without loss of generality, with $|wx|=|w|+1$. If $\mu$ is complete, then so is $\mu'$ (e.g.\ using \cite[Remark~3.2]{CP10}), and it suffices to show that $w\mc{W}_m$ and $wx\mc{W}_m$ are on the same side of $\mc{W}_s$. This again holds because $\mc{W}_m$ and $x\mc{W}_m$ are not separated by any walls of $\A_R$, since $\{m,x\}\in\mscr{C}$. 

    If instead the marking $\mu$ is not complete, then it is semicomplete by \Cref{rmk:complete_or_semicomplete}, and so the set $\supp(s,w)\cup\{m\}$ spans a clique in $\wh S_{\mscr{C}}$. The set $\supp(s,w)\cup\{x\}$ also spans a clique in $\wh S_{\mscr{C}}$, by \Cref{rmk:tree-2-spherical_support}, since $(s,wx)$ is a base and $|wx|=|w|+1$. Together with the hypothesis that $\{m,x\}\in\mscr{C}$, this shows that the set $\supp(s,w)\cup\{x,m\}$ spans a clique in $\wh S_{\mscr{C}}$, and so \Cref{cor:2-compatible->compatible}(1) implies that it lies in $\mscr{C}$. Thus, $\mu$ and $\mu'$ also differ by move (M3)$_{\mscr{C}}$, which we are about to discuss.

    Finally, let $\mu=((s,w),m)$ and $\mu'=((s,w'),m')$ be arbitrary admissible markings differing by move (M3)$_{\mscr{C}}$. Consider the set $\Delta:=\supp(s,w)\cup\supp(s,w')\cup\{m,m'\}$, which is non-spherical and lies in $\mscr{C}$ by hypothesis. Note that $\Delta$ irreducible, since both $\supp(s,w)\cup\{m\}$ and $\supp(s,w')\cup\{m'\}$ are irreducible and these sets intersect (at least) at $s$. Thus, there is a unique $R$--geometric choice of halfspaces for the walls fixed by the elements of $\Delta$ in $\A_R$. Letting $\mc{H}$ be the halfspace assigned to $\mc{W}_s$ by this choice, it suffices to show that $\mc{H}^{\mu}=\mc{H}$ (the same argument yields that $\mc{H}^{\mu'}=\mc{H}$).

    If $\mu$ is semicomplete, the equality $\mc{H}^{\mu}=\mc{H}$ is immediate from definitions. If instead $\mu$ is complete, consider a convex subcomplex $\mc{D}\sq\A_R$ with a $\langle\Delta\rangle$--equivariant isometry $f\colon\A_{\Delta}\ra\mc{D}$ (this exists because $\Delta\in\mscr{C}$). It follows that the wall $w\mc{W}_m$ lies on the side of $\mc{W}_s$ containing the point $f(1_{\Delta})$, and this is precisely $\mc{H}$, concluding the proof of the lemma.
\end{proof}

\begin{rmk}\label{rmk:N1_bad}
    If two admissible markings differ by move (N1)$_{\mscr{C}}$, they do \emph{not} define the same halfspace in $\A_R$ in general. This is not a concern if the markings are both complete (as they then differ by move (M1)$_{\mscr{C}}$) or both semicomplete (as they differ by move (M3)$_{\mscr{C}}$), but it can cause issues when the two markings are of different types. Thus, $\mu\equiv^{\mf{w}}\mu'\not\Ra\mu\equiv_R\mu'$ in general.
\end{rmk}

\begin{rmk}\label{rmk:FC->complete}
    In Coxeter groups of FC-type, all admissible markings are complete and so the difference between moves (M1)$_{\mscr{C}}$ and (N1)$_{\mscr{C}}$ vanishes, as does that between $\equiv^{\mf{w}}$ and $\equiv^{\mf{s}}$.
\end{rmk}

\subsection{Hierarchies}\label{sub:hierarchy}

Let $S\sq W$ and $\mscr{C}$ be as above. The Coxeter generating set $R$ will play no role in the rest of the section, and we can forget about it.

The main tool to find a sequence of moves connecting two markings is the notion of a \emph{hierarchy}. Like most concepts in this section, it was introduced in \cite{CP10}. We will follow many of the core arguments from \cite[Sections~6--8]{CP10}, but we are forced to adapt their form in several ways, as \Cref{prop:markings_new} would not otherwise fit in this framework. 

Before continuing, we state a more general version of \Cref{prop:markings_new}, namely \Cref{prop:markings_general}. This version is not used anywhere in the article, but we believe it helps clarify analogies and differences with \cite{CP10}, and it shows that Coxeter groups of FC-type are a little easier to treat.

In the rest of this subsection, we always implicitly assume that the following data are fixed. Given an oriented path $\kappa$, we denote by $i(\kappa)$ and $t(\kappa)$ its initial and terminal endpoint, respectively.

\begin{setup}\label{setup}
    Consider a set $\overline K\in\mc{J}_S$ and an oriented geodesic $\overline\kappa\sq\wh S_{\mscr{C}}\setminus (\overline K\cup\overline K^{\perp})$. We define $a:=i(\overline\kappa)$ and $b:=t(\overline\kappa)$, and we assume that $\{a,b\}\not\in\mscr{C}$. We also consider a pair $(\eps_I,\eps_T)$ equaling either $(0,0)$, $(1,0)$, or $(1,1)$, depending on whether our goal is to prove Item~(1), Item~(2) or Item~(3) of \Cref{prop:markings_new} (or those of \Cref{prop:markings_general} below).
\end{setup}

Recall that we introduced in \Cref{defn:inseparable} a notion of weakly and strongly inseparable pairs. From now on, we refer to these as \emph{$\mf{w}$--inseparable} and \emph{$\mf{s}$--inseparable} pairs, respectively. Also recall the equivalence relations $\equiv^{\mf{w}}$ and $\equiv^{\mf{s}}$ on admissible markings, which were introduced in \Cref{sub:moves}. 

Now, \Cref{prop:markings_new} corresponds to the case of the following result with $\star=\mf{s}$ (using \Cref{lem:generalised_moves} to deduce $\equiv_R$ from $\equiv^{\mf{s}}$). The case with $\star=\mf{w}$ would instead suffice to treat Coxeter groups of FC-type (in view of \Cref{rmk:FC->complete}), and it is closer in spirit to the results in \cite{CP10}.

\begin{prop}\label{prop:markings_general}
    Let $\star\in\{\mf{w},\mf{s}\}$. Let $(s,w)$ be a base with support $\Sigma\sq S$. Suppose that $\overline K\sq\Sigma$, that the pair $(\overline K,\overline\kappa)$ is $\star$--inseparable, and that we are in one of the following three situations:
    \begin{enumerate}
        \item $\mu:=((s,w),a)$ and $\mu':=((s,w),b)$;
        \item $\mu:=((s,wa),b)$ and $\mu':=((s,w),b)$, where $(s,wa)$ is a base with $|wa|>|w|$;
        \item $\mu:=((s,wa),b)$ and $\mu':=((s,wb),a)$, where $a,b\not\in\Sigma$.
    \end{enumerate}
    If $\star=\mf{s}$, suppose in addition that $\mu$ and $\mu'$ are complete. Then we have $\mu\equiv^{\star}\mu'$.
\end{prop}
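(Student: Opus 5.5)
The plan is to follow the hierarchy strategy of \cite[Sections~6--8]{CP10}, with adjacency in the presentation graph replaced by adjacency in $\wh S_{\mscr{C}}$ and the moves (M1)--(M3) replaced by their $\mscr{C}$--versions. We argue by induction on a complexity. The primary parameter is the length of the base $w$, bounded by \Cref{rmk:PW}. The secondary parameters are the length of the geodesic $\overline\kappa$ and the size of $\overline K$ inside $\Sigma$. The proof will treat Item~(1) first and then derive Items~(2) and~(3) from it, together with the inductive hypothesis applied to shorter or modified bases.

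\textbf{Base case.} Suppose $\overline\kappa$ has length one, so that $\{a,b\}$ would be an edge. Since $\{a,b\}\not\in\mscr{C}$, this cannot happen, and hence $\overline\kappa$ has an interior vertex $c$. We then split $\overline\kappa$ at $c$ and compare both $((s,w),a)$ and $((s,w),b)$ with $((s,w),c)$. When consecutive vertices $x,y$ of $\overline\kappa$ satisfy $\{x,y\}\in\mscr{C}$, the markings $((s,w),x)$ and $((s,w),y)$ differ by move (N1)$_{\mscr{C}}$ if both are admissible. They differ by (M1)$_{\mscr{C}}$ if both are complete.

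\textbf{Main obstacle.} The difficulty is that intermediate markings $((s,w),x)$ may fail to be admissible, which happens when $\Sigma\cup\{x\}$ is spherical. For $\star=\mf{s}$ a further problem is that they may be semicomplete rather than complete, so that (N1) is not an allowed move (\Cref{rmk:N1_bad}).

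\textbf{Hierarchy.} To handle these bad vertices we build the hierarchy. At a vertex $x$ of $\overline\kappa$ where admissibility fails, we enlarge $\overline K$ to the irreducible component $I$ of $\Sigma\cup\{x\}$ containing $\overline K$. By tree--$2$--sphericity (\Cref{rmk:tree-2-spherical_support}), either $(s,wx)$ is a new base or $I$ is spherical. The inseparability hypothesis, weak for $I\in\mc{I}_S$ and strong for $J\in\mc{J}_S$ with $J\setminus\overline K$ spherical, then guarantees the following. The endpoints $a,b$ lie in a single component of $\wh S_{\mscr{C}}\setminus(I\cup I^{\perp})$, so there is a new geodesic $\overline\kappa'$ avoiding $I\cup I^{\perp}$ with the same endpoints. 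The pair $(I,\overline\kappa')$ is again inseparable, since the relevant sets $J\supseteq I$ also contain $\overline K$. We recurse on this pair, either with the same base and larger $K$ or with the longer base $(s,wx)$ via Item~(2). Termination follows because $\overline K$ grows inside a bounded family and base lengths are bounded by \Cref{rmk:PW}.

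\textbf{Items (2) and (3).} For Item~(2) we use moves (M2)$_{\mscr{C}}$ along $\overline\kappa$ from $((s,wa),b)$, stepping through $((s,wa),c)$ and $((s,w),c)$ and invoking Item~(1) for the base $(s,wa)$, whose support contains $\overline K$. Item~(3) follows by combining Item~(2) applied twice, once with the roles of $a$ and $b$ swapped, passing through $((s,w),b)$ and $((s,w),a)$ and then Item~(1).

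\textbf{Completeness for $\star=\mf{s}$.} To ensure all intermediate markings are complete rather than merely semicomplete, we use the strong inseparability clause. Whenever a semicomplete marking appears, its support together with the marker is a tree--$2$--spherical $J\supseteq\overline K$ with $J\setminus\overline K$ spherical and $J\in\mscr{C}$. In that situation we can replace (N1) by (M3)$_{\mscr{C}}$, using \Cref{cor:2-compatible->compatible}(1) to certify that the union of supports lies in $\mscr{C}$.

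I expect the main obstacle to be this bookkeeping. Specifically, one must check that each rerouting step preserves $\star$--inseparability and that (M3) is always available at the transitions between complete and semicomplete markings. This is where I would follow \cite[Sections~7--8]{CP10} line by line, allowing $\overline K\subsetneq\Sigma$.
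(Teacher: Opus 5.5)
There is a genuine gap, and it sits exactly where you expected the difficulty.

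\textbf{The case $\star=\mf{s}$.} Your completeness paragraph does not work. The transitions that (N1)$_{\mscr{C}}$ cannot cover are those between a complete but not semicomplete marking $((s,w),m)$, so that $\Sigma\cup\{m\}$ is not $2$--spherical, and a semicomplete marking $((s,w),m')$. Replacing such a step by (M3)$_{\mscr{C}}$ would require $\Sigma\cup\{m,m'\}\in\mscr{C}$. By \Cref{cor:2-compatible->compatible}, this means $m$ is $\wh S_{\mscr{C}}$--adjacent to every element of $\Sigma$, which fails in general. \Cref{cor:2-compatible->compatible}(1) only rescues the case where both markings are semicomplete (\Cref{rmk:N1_bad}). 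The paper needs a genuinely new argument here, \Cref{lem:CP_4.6}:
\begin{itemize}
\item Set $J=\Sigma\cup\{m'\}$ and let $\mf{C}$ be the component of $\wh S_{\mscr{C}}\setminus(J\cup\lk_{\mscr{C}}(J))$ containing $m$.
\item A backward safety line shows that $J\cap\partial\mf{C}\supseteq(K\cup\{m'\})\setminus(\overline K\cup\{a,b\})$, which is non-spherical.
\item One extends the base maximally to $(s,wm'w_*)$, using the Parallel Wall Theorem.
\item \cite[Lemma~8.2]{CP10} then gives $u\in J\cap\partial\mf{C}$ with $((s,wm'w_*),u)$ complete.
\item One chains (M1)$_{\mscr{C}}$ moves through the component and closes with (M3)$_{\mscr{C}}$.
\end{itemize}
The non-sphericity hypothesis is precisely what strong inseparability buys. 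It lets \Cref{prop:existence} build an $\mf{s}$--hierarchy containing a geodesic for every $\mf{s}$--small domain. Hence in the walk of \Cref{prop:Sect7_2}, a mixed transition along one geodesic only occurs when $D(\kappa_j)\cup\{m_{j+1}\}$ is not $\mf{s}$--small. Nothing in your proposal plays this role, and deferring to \cite{CP10} does not supply it.

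\textbf{The recursion and the reductions of Items~(2)--(3).} These do not close as sketched either, because the intermediate markings you pass through need not be admissible. For $\star=\mf{s}$ they also need not be complete, which your own inductive statement requires.
\begin{itemize}
\item In Item~(3) we have $a,b\notin\Sigma$ and $(s,wa),(s,wb)$ are bases. So $\Sigma\cup\{a\}$ and $\Sigma\cup\{b\}$ are tree--$2$--spherical and may well be spherical, as can happen where Item~(3) is used in \Cref{lem:sep_excellent}. Then $((s,w),a)$ and $((s,w),b)$ are not admissible, and $\equiv^{\star}$ is not even defined on them.
\item The same problem affects $((s,w),c)$ in your Item~(2) chain and $((s,wx),a)$ in your rerouting step.
\end{itemize}
Moreover, rerouting a whole new geodesic from $a$ to $b$ does not help you cross a bad vertex $x$. (M2)$_{\mscr{C}}$ changes the base by $x$ only when the marker is $\wh S_{\mscr{C}}$--adjacent to $x$. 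So the detour has to be local: from the predecessor $p'$ to the successor $f'$ of $x$, with domain $K\cup\{x\}$. One must then know that $p'$ and $f'$ lie in a single component of $\wh S_{\mscr{C}}\setminus(J\cup J^{\perp})$, at every depth of nesting and consistently across detours. This is what the hierarchy axioms, \Cref{lem:from_CP10_2}, and the safety lines of \Cref{lem:safety_lines_newnew_2} provide. They always refer back to the inseparability of the fixed bottom pair $(\overline K,\overline\kappa)$ rather than re-verifying it for sub-pairs.

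The paper does not reduce Items~(2)--(3) to Item~(1). It treats all three items uniformly through the conventions $\eps_I,\eps_T$ and the extra geodesics $\kappa_I,\kappa_T$ in $\mc{H}_0$. The walk of \Cref{prop:Sect7_2} then maintains the invariant ``$\xi_j$ is complete or $D(\kappa_j)\cup\{m_j\}$ is not $\star$--small'', which is what guarantees admissibility throughout.
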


The rest of the section is devoted to the proof of \Cref{prop:markings_general}. We will treat the cases $\star=\mf{w}$ and $\star=\mf{s}$ simultaneously, since the argument is essentially the same.

\subsubsection{Definitions and first properties}

Consider the data described in \Cref{setup}. We do not need to worry about the letter $\star\in\{\mf{w},\mf{s}\}$ just yet.

\begin{defn}\label{defn:geodesic_with_domain}
    A \emph{geodesic-with-domain} is a pair $(K,\kappa)$, where $K\in\mc{J}_S$ and $\kappa$ is an oriented geodesic in $\wh S_{\mscr{C}}\setminus (K\cup K^{\perp})$.
\end{defn}

We will usually simply write $\kappa$ in place of $(K,\kappa)$. The set $K$ is the \emph{domain} of the geodesic $\kappa$, and we also denote it by $D(\kappa)$. 

\begin{rmk}
    Although we work with geodesics $\kappa$ in \Cref{defn:geodesic_with_domain}, the only property we will ever use is that they are \emph{chordless} paths: they are injective, and non-consecutive points of the path are not adjacent in the graph $\wh S_{\mscr{C}}$.
\end{rmk}

\begin{defn}\label{defn:arrows_2}
Consider a set $J\in\mc{J}_S$ and two geodesics-with-domain $(K,\kappa)$ and $(L,\lambda)$.
\begin{enumerate}
    \item We write $\kappa\swarrow J$ if there exists a point $x\in\kappa$ such that $J=K\cup\{x\}$. The equality $x=i(\kappa)$ is allowed only if $(K,\kappa)=(\overline K,\overline\kappa)$ and $\eps_I=1$.
    \item We write $\kappa\swarrow\lambda$ if we have $\kappa\swarrow L$ and one of the following two options occurs:
        \begin{itemize}
            \item either $x\neq i(\kappa)$ and $i(\lambda)$ is the point immediately preceding $x$ along $\kappa$;
            \item or $x=i(\kappa)$ and $i(\lambda)=b$ (recall here that $b$ is the \emph{terminal} endpoint of $\overline\kappa$).
        \end{itemize}
\end{enumerate}
The relations $\searrow$ are defined analogously, using $t(\cdot),\eps_T,a$ in place of $i(\cdot),\eps_I,b$, and replacing the word `preceding' by `following'.
\end{defn}

\begin{rmk}
    \begin{enumerate}
        \item[]
        \item If we have $\kappa\swarrow J$, then we also have $J\searrow\kappa$ unless $J=D(\kappa)\cup\{t(\kappa)\}$ and either $\kappa\neq\overline\kappa$ or $\eps_T=0$. Similarly, if we have $J\searrow\kappa$, then we also have $\kappa\swarrow J$ unless $J=D(\kappa)\cup\{i(\kappa)\}$ and either $\kappa\neq\overline\kappa$ or $\eps_I=0$.
        \item If $\kappa\swarrow\lambda$ or $\lambda\searrow\kappa$, then we have $|D(\lambda)|=|D(\kappa)|+1$.
    \end{enumerate}
\end{rmk}

We say that a subset $U\sq S$ is \emph{$\mf{w}$--small} is it is spherical, and that it is \emph{$\mf{s}$--small} if it is tree--$2$--spherical and the difference $U\setminus(\overline K\cup\{a,b\})$ is spherical.

\begin{defn}\label{defn:partial_hierarchy_2}
    A \emph{partial hierarchy} is a set $\mc{H}$ of geodesics-with-domain satisfying the following.
    \begin{enumerate}
        \item[(i)] We have $\overline\kappa\in\mc{H}$. (We call $\overline\kappa$ the \emph{bottom} of the partial hierarchy.)
        \item[(ii)] If $\beta,\varphi\in\mc{H}$ are elements with $D(\beta)=D(\varphi)$, then we have $\beta=\varphi$.
        \item[(iii)] For every $\kappa\in\mc{H}\setminus\{\overline\kappa\}$, there exist $\beta,\varphi\in\mc{H}$ such that $\beta\swarrow\kappa\searrow\varphi$.
    \end{enumerate}
    A partial hierarchy is a \emph{$\star$--hierarchy}, for some $\star\in\{\mf{w},\mf{s}\}$, if it additionally satisfies:
    \begin{enumerate}
        \item[(iv)$_{\star}$] If we have $\beta\swarrow J\searrow\varphi$ for some $\beta,\varphi\in\mc{H}$ and a $\star$--small set $J\in\mc{J}_S$, then there exists a geodesic $\kappa\in\mc{H}$ with $D(\kappa)=J$ and $\beta\swarrow\kappa\searrow\varphi$.
    \end{enumerate}
\end{defn}

Here, $\mf{w}$--hierarchies are almost exactly the same object as the ``hierarchies'' introduced in \cite[Definition~6.4]{CP10}. Instead, $\mf{s}$--hierarchies can be significantly larger, and we will use them to overcome issues related to the difference between the moves (N1)$_{\mscr{C}}$ and (M1)$_{\mscr{C}}$.

Given a set $J\in\mc{J}_S$ and a partial hierarchy $\mc{H}$, define:
\[ \Sigma^-(J;\mc{H}):=\{\overline\kappa\}\cup \{\kappa\in\mc{H}\mid \text{$D(\kappa)\sq J$ and $i(\kappa)\not\in J$} \} .\]
Note that, if $\kappa\swarrow J$ for some $\kappa\in\mc{H}$, then we have $\kappa\in\Sigma^-(J;\mc{H})$. The set $\Sigma^+(J;\mc{H})$ is defined analogously, replacing the point $i(\kappa)$ with $t(\kappa)$.

\begin{lem}\label{lem:from_CP10_2}
    Consider a partial hierarchy $\mc{H}$ and a set $J\in\mc{J}_S$.
    \begin{enumerate}
        \item We have $\Sigma^-(J;\mc{H})=\{\beta_0\dots,\beta_n\}$ with $\overline\kappa=\beta_n\swarrow\dots\swarrow\beta_0$. The geodesics $\beta_1,\dots,\beta_n$ all intersect the set $J$.
        \item If $\beta\swarrow J$ and $\beta'\swarrow J$ for some $\beta,\beta'\in\mc{H}$, then $\beta=\beta'$.
    \end{enumerate}
    Let now $\star\in\{\mf{w},\mf{s}\}$. If $\mc{H}$ is a $\star$--hierarchy and $J$ is $\star$--small, then we also have the following.
    \begin{enumerate}
        \setcounter{enumi}{2}
        \item If $\beta\swarrow J$ for some $\beta\in\mc{H}$, then there exists $\kappa\in\mc{H}$ with $D(\kappa)=J$ and $\beta\swarrow\kappa$.
    \end{enumerate}
    Analogues of all three statements hold for $\Sigma^+$ and $\searrow$.
\end{lem}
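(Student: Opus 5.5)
I would prove the three items in order, following \cite[Lemma~6.6]{CP10}. The only inputs are the partial hierarchy axioms (i)--(iii) and, for Item~(3), axiom (iv)$_\star$. Throughout I use that $\swarrow$ raises the domain cardinality by exactly one, so the domains of elements of $\Sigma^-(J;\mc{H})$ form a chain of subsets of $J$.

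For Item~(1), I would start from an arbitrary $\kappa\in\Sigma^-(J;\mc{H})\setminus\{\overline\kappa\}$. By axiom (iii) there is $\beta\in\mc{H}$ with $\beta\swarrow\kappa$, so $D(\kappa)=D(\beta)\cup\{x\}$ with $x\in\beta$, and $i(\kappa)$ is the point preceding $x$ on $\beta$ (or $i(\kappa)=b$ in the exceptional case). Then $D(\beta)\sq D(\kappa)\sq J$.

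The step needing care is showing $i(\beta)\not\in J$, so that $\beta\in\Sigma^-(J;\mc{H})$. I would proceed as follows:
\begin{itemize}
    \item $x\in J$, and $\beta$ is a chordless path avoiding $D(\beta)\cup D(\beta)^{\perp}$.
    \item If $i(\beta)\in J$, then the subpath of $\beta$ from $i(\beta)$ to $x$ passes through $i(\kappa)$.
    \item Since $J$ is tree--$2$--spherical, and hence a clique in $\wh S_{\mscr{C}}$, the points $i(\beta)$ and $x$ would be adjacent. This forces them to be consecutive on $\beta$, which puts $i(\kappa)=i(\beta)\in J$ and contradicts $\kappa\in\Sigma^-(J;\mc{H})$.
\end{itemize}
The exceptional case $x=i(\kappa)$ with $\beta=\overline\kappa$ is trivial, since $\overline\kappa$ belongs to $\Sigma^-$ by definition.

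This gives a predecessor map on $\Sigma^-(J;\mc{H})$ which strictly decreases domain size, so iterating it reaches $\overline\kappa$. To get a single chain I need uniqueness: two elements of $\Sigma^-$ with domains of the same size must coincide. I would argue by induction on domain size. The predecessors have equal domains, so by induction and axiom (ii) they coincide. Their domains then differ from $J$-subsets by points $x,x'\in J$ on the same geodesic $\beta$; both are adjacent to each other and lie on the chordless $\beta$, so they are consecutive. The constraint that $i(\kappa)\notin J$ should then force $x=x'$, and axiom (ii) gives equality. Intersection with $J$ is also immediate: each $\beta_k$ with $k\ge1$ contains the point $x$ added to form the next domain, and $x\in J$.

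Item~(2) then follows. Both $\beta$ and $\beta'$ lie in $\Sigma^-(J;\mc{H})$ and have domain of size $|J|-1$, so the chain structure from Item~(1) gives $\beta=\beta'$.

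For Item~(3), assume $\beta\swarrow J$ with $J$ $\star$--small. I would find $\varphi\in\mc{H}$ with $J\searrow\varphi$ using the $\Sigma^+$ analogue of Item~(1), taking $\varphi$ to be the top element of $\Sigma^+(J;\mc{H})$. The danger is that this top element has domain size less than $|J|-1$; I would rule this out by building the $\Sigma^+$ chain in parallel with the $\Sigma^-$ chain using axiom (iii), mirroring \cite{CP10}. Once $\beta\swarrow J\searrow\varphi$ holds, axiom (iv)$_\star$ yields the required $\kappa$.

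I expect the main obstacle to be exactly this last step, namely producing $\varphi$ with $J\searrow\varphi$. The uniqueness argument in Item~(1) is the secondary difficulty.
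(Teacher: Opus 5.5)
\textbf{Items (1)--(2) are fine; Item (3) is not proved.} Your arguments for Items~(1) and~(2) are sound. They are essentially the proofs of \cite[Lemma~6.7, Corollary~6.8]{CP10} to which the paper defers: chordlessness of $\beta$, together with the fact that $J$ spans a clique of $\wh S_{\mscr{C}}$, gives both $i(\beta)\notin J$ for the predecessor and uniqueness of the element of $\Sigma^-(J;\mc{H})$ of each domain size. For Item~(2) you should also record that $\beta\swarrow J$ forces $\beta\in\Sigma^-(J;\mc{H})$, since the added vertex differs from $i(\beta)$ unless $\beta=\overline\kappa$.

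The entire content of Item~(3) is the construction of some $\varphi\in\mc{H}$ with $J\searrow\varphi$. ``Building the $\Sigma^+$ chain in parallel \dots\ mirroring \cite{CP10}'' does not supply it. Taking $\varphi$ to be the top element $\varphi_0$ of $\Sigma^+(J;\mc{H})$ does not help by itself, because nothing in your sketch ties $\varphi_0$ to $\beta$. You would have to show both that $|D(\varphi_0)|=|J|-1$ and that the vertex of $J\setminus D(\varphi_0)$ lies on $\varphi_0$ in an allowed position. In the relevant case, the existence of such an element is exactly what has to be established.

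\textbf{What is missing.} You need an induction on $|J|$ that feeds Item~(3) for smaller $\star$--small sets back into (iv)$_\star$. Write $J=D(\beta)\cup\{x\}$.
\begin{itemize}
\item If $x\neq t(\beta)$ (or $\beta=\overline\kappa$ and $\eps_T=1$), then $J\searrow\beta$ already, as noted right after \Cref{defn:arrows_2}. Then (iv)$_\star$ with $\varphi=\beta$ gives $\kappa$ at once, with no $\Sigma^+$ analysis.
\item The genuine case is $x=t(\beta)$ with $\beta\neq\overline\kappa$. Here $\beta$ itself is useless, and (iii) only gives $\beta\searrow\varphi'$ with $|D(\varphi')|=|J|-2$. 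Outside the exceptional clause, $x$ is the vertex of $\varphi'$ right after the vertex $y$ with $D(\beta)=D(\varphi')\cup\{y\}$. So $\varphi'\swarrow D(\varphi')\cup\{x\}$, and the inductive hypothesis produces $\psi$ with this domain and $i(\psi)=y$. Then $J=D(\psi)\cup\{i(\psi)\}$ is the candidate for $J\searrow\psi$, though you must still handle the possibility that $\psi$ is a one-vertex geodesic.
\end{itemize}

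\textbf{An excluded case.} When $\beta=\overline\kappa$, $x=b$ and $\eps_T=0$, there is nothing to descend to. In fact (3) is false as literally stated there whenever $\overline K\cup\{b\}\in\mc{J}_S$ is $\star$--small. Every domain in $\mc{H}$ contains $\overline K$, so by (ii) an element with domain $\overline K\cup\{b\}$ could satisfy (iii) only through $\kappa\searrow\overline\kappa$ via the vertex $b=t(\overline\kappa)$. That is forbidden when $\eps_T=0$. This case has to be excluded from the statement, and your sketch, which claims (3) in full generality, does not notice it.
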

\begin{proof}
    The three parts of the lemma correspond to, respectively, Lemma~6.7, Corollary~6.8 and Proposition~6.9 in \cite{CP10}. Despite small differences in terminology, identical proofs apply here.
\end{proof}

\subsubsection{Existence of hierarchies}

While the existence of partial hierarchies is clear, that of $\mf{w}$-- and $\mf{s}$--hierarchies is more delicate. In order to prove the latter, we will need the concept of a \emph{safety line}, which is a new feature of our treatment and will also prove useful later in \Cref{lem:CP_4.6}.

Let $\mc{H}$ be a partial hierarchy and consider a geodesic $\kappa\in\mc{H}\setminus\{\overline\kappa\}$. By \Cref{defn:partial_hierarchy_2}(iii), we can write $\kappa:=\varphi_0\searrow\varphi_1\searrow\dots\searrow\varphi_n$ for some geodesics-with-domain $\varphi_i\in\mc{H}$ and a maximal integer $n\geq 0$. Up to discarding $\varphi_n$, we can assume that the point in $D(\varphi_{n-1})\setminus D(\varphi_n)$ lies in $\varphi_n\setminus\{t(\varphi_n)\}$. After discarding, we have either $\varphi_n=\overline\kappa$ or $\varphi_n\searrow\overline\kappa$. Note that $t(\varphi_n)=b$ in the former case and $t(\varphi_n)=a$ in the latter (for this, recall \Cref{defn:arrows_2}(2)). Also note that, by \Cref{lem:from_CP10_2}, the geodesics $\varphi_i$ are uniquely determined by $\kappa$. 

The \emph{forward safety line} of $\kappa$ is the path $\mf{f}(\kappa)$ constructed as follows. For $1\leq i\leq n$, let $x_i\in\varphi_i$ be the point such that $D(\varphi_{i-1})=D(\varphi_i)\cup\{x_i\}$. Note that $x_i\neq t(\varphi_i)$, and $t(\varphi_{i-1})$ is the point of $\varphi_i$ immediately after $x_i$. Let $\theta_i$ be the arc of the path $\varphi_i$ from $t(\varphi_{i-1})$ to $t(\varphi_i)$. Note that $\theta_i$ is reduced to a singleton if $x_i$ is the second-last point of $\varphi_i$. Finally, define $\mf{f}(\kappa)$ as the concatenation of the paths $\theta_1,\dots,\theta_n$. When $n=0$, we simply set $\mf{f}(\kappa):=\{t(\kappa)\}$. 
In general, $\mf{f}(\kappa)$ is an oriented path from $t(\kappa)$ to $t(\varphi_n)\in\{a,b\}$. The \emph{backward safety line} $\mf{b}(\kappa)$ is defined analogously, and it is an oriented path from a point of $\{a,b\}$ to $i(\kappa)$.

For a subset $U\sq S$, we denote by $\lk_{\mscr{C}}(U)$ the set of elements $x\in S\setminus U$ such that $\{x,u\}\in\mscr{C}$ for all $u\in U$. That is, $\lk_{\mscr{C}}(U)$ is the set of elements that are adjacent to all elements of $U$ within $\wh S_{\mscr{C}}$.

\begin{lem}\label{lem:safety_lines_newnew_2}
    Consider a partial hierarchy $\mc{H}$ and an element $(K,\kappa)\in\mc{H}\setminus\{\overline\kappa\}$.
    \begin{enumerate}
        \item The safety lines $\mf{b}(\kappa)$ and $\mf{f}(\kappa)$ are disjoint from $K\cup K^{\perp}$.
        \item The set $\lk_{\mscr{C}}(K)$ can intersect $\mf{b}(\kappa)$ only at its last point, and $\mf{f}(\kappa)$ only at its first point.
    \end{enumerate}
\end{lem}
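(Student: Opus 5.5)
We argue about the forward safety line $\mf{f}(\kappa)$; the backward case is symmetric. Write $\kappa=\varphi_0\searrow\varphi_1\searrow\dots\searrow\varphi_n$ as in the construction, with $D(\varphi_{i-1})=D(\varphi_i)\cup\{x_i\}$ and $x_i\in\varphi_i$. This gives a decreasing chain of domains
\[ K=D(\varphi_0)\supsetneq D(\varphi_1)\supsetneq\dots\supsetneq D(\varphi_n). \]
Each arc $\theta_i$ lies on $\varphi_i$, and $\varphi_i$ lies in $\wh S_{\mscr{C}}\setminus(D(\varphi_i)\cup D(\varphi_i)^{\perp})$. The plan is to prove both items arc by arc, exploiting that $\theta_i$ is the part of $\varphi_i$ strictly after $x_i$, and that $x_i\in K$.

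For Item~(1), we must show that no point $y\in\theta_i$ lies in $K\cup K^{\perp}$. We argue by induction on $i$, and write $K=D(\varphi_i)\cup\{x_1,\dots,x_i\}$.

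First suppose $y\in K$. Since $y\notin D(\varphi_i)$, we would have $y=x_j$ for some $j\leq i$. Such an $x_j$ lies on $\varphi_j$, strictly before $t(\varphi_{j-1})$. The idea is that consecutive arcs only meet at their junction points $t(\varphi_{j})$. We would establish that $\varphi_{j}$ and the later arcs avoid $x_j$ using the geodesic (chordless) property of $\varphi_i$: $x_j$ is adjacent in $\wh S_{\mscr{C}}$ to elements of the irreducible tree domain. This is the delicate bookkeeping step.

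Next suppose $y\in K^{\perp}$. Then $y$ commutes with $D(\varphi_i)$, which contradicts $y\in\varphi_i$ unless $y\in D(\varphi_i)$. It follows that $y$ commutes with every element of $D(\varphi_i)$ and with each $x_j$.

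Here the key observation is that $D(\varphi_{i-1})=D(\varphi_i)\cup\{x_i\}$ lies in $\mc{J}_S$, so it is irreducible. Hence $x_i$ is adjacent in the Dynkin diagram to some element of $D(\varphi_i)$, or $D(\varphi_i)$ is empty. Moreover, $x_i$ and the point $t(\varphi_{i-1})$ following it on $\varphi_i$ are adjacent. We then need the fact that $y\in\varphi_i\setminus(D(\varphi_i)\cup D(\varphi_i)^\perp)$ directly, which settles $y\notin D(\varphi_i)^{\perp}$. For commutation with the $x_j$, we reduce to the smallest $j$ and use that $\varphi_{j}$ avoids $D(\varphi_j)^{\perp}$, where $D(\varphi_j)\ni x_{j+1},\dots$. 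Concretely, we apply induction with $\theta_i\subseteq\varphi_i\subseteq\varphi_{j}$-type containment in the complement of $(D(\varphi_j)\cup D(\varphi_j)^{\perp})$.

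For Item~(2), suppose $y\in\theta_i$ lies in $\lk_{\mscr{C}}(K)$. In particular, $y$ is $\wh S_{\mscr{C}}$-adjacent to $x_i\in\varphi_i$. Since $\varphi_i$ is chordless, $y$ must be a neighbour of $x_i$ along $\varphi_i$. As $y$ lies after $x_i$, it must be exactly $t(\varphi_{i-1})$, the first point of $\theta_i$.

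For $i\geq2$, the point $t(\varphi_{i-1})$ is also the last point of $\theta_{i-1}$, hence it lies on $\varphi_{i-1}$. It is adjacent to $x_{i-1}$, which is also in $K$. Being the terminal point of $\varphi_{i-1}$, it would then be a neighbour of $x_{i-1}$ lying after it, so $x_{i-1}$ is second-last and $\theta_{i-1}$ is a singleton; we iterate backwards. This chain argument forces $y=t(\varphi_0)=t(\kappa)$, the first point of $\mf{f}(\kappa)$.

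The main obstacle is the claim in Item~(1) that the arcs avoid the earlier added points $x_j$ and their orthogonals. This requires carefully tracking, via the chain of $\searrow$ relations and the chordless property, that the points of $\theta_i$ are far along $\varphi_i$ from anything adjacent to $K$.
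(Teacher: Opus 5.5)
Your Item~(2) argument is correct and essentially the paper's: you iterate backwards, $y=t(\varphi_{i-1})=t(\varphi_{i-2})=\dots=t(\kappa)$, where the paper takes the minimal index $i$ with $y\in\theta_i$, which amounts to the same thing.

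The $K^{\perp}$ half of Item~(1) is also already settled by your first sentence. Since $D(\varphi_i)\sq K$, you have $K^{\perp}\sq D(\varphi_i)^{\perp}$, and $\varphi_i$ avoids $D(\varphi_i)^{\perp}$ by definition. The discussion that follows is superfluous: commutation with the $x_j$, the reduction to a smallest $j$, and the containment ``$\theta_i\sq\varphi_i\sq\varphi_j$''. That containment is moreover false, since the geodesics $\varphi_i$ are not nested.

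The genuine gap is the case $y\in K\cap\theta_i$ of Item~(1). You call it the main obstacle and leave it as ``delicate bookkeeping'', without an argument.

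The missing observation is that $K\in\mc{J}_S$ is $2$--spherical. Hence any two distinct elements of $K$ are adjacent in $\wh S_{\mscr{C}}$, because spherical pairs lie in $\mscr{C}$. This uses neither the Dynkin tree structure nor the irreducibility that you invoke; your hint conflates Dynkin adjacency with adjacency in $\wh S_{\mscr{C}}$.

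With this, a point $y\in K\cap\theta_i$ with $i\geq 1$ is at distance $\leq 1$ from $x_i\in\varphi_i$. Also $y\neq x_i$, because $x_i$ immediately precedes the first point of $\theta_i$ on the injective path $\varphi_i$. So your Item~(2) argument applies verbatim to the whole set $K\cup\lk_{\mscr{C}}(K)$. In each backward step, $y=t(\varphi_j)\neq x_j$ because $x_j\neq t(\varphi_j)$.

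The conclusion is $y=t(\kappa)$, which is impossible for $y\in K$ since $\kappa$ is disjoint from $K$. This is how the paper handles Item~(1) and Item~(2) simultaneously, and it makes the case split over $y=x_j$ for $j\leq i$ unnecessary.
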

\begin{proof}
    We only discuss the forward safety line $\mf{f}(\kappa)$, the other case being identical. Define the arcs $\theta_i\sq\varphi_i$ and points $x_i\in\varphi_i$ as above. Since each $\varphi_i$ is disjoint from $D(\varphi_i)^{\perp}\supseteq K^{\perp}$ by definition, it is clear that $\mf{f}(\kappa)$ is disjoint from $K^{\perp}$.

    Now, suppose that there is a point of intersection $y\in\mf{f}(\kappa)\cap(K\cup\lk_{\mscr{C}}(K))$. Since $K$ is $2$--spherical, it follows that $y$ is at distance $\leq 1$ from all points of $K$ in the graph $\wh S_{\mscr{C}}$. Let $i\geq 0$ be the smallest index such that $y\in\theta_i$, where we artificially define $\theta_0:=\{t(\kappa)\}$. If we had $i\geq 1$, then both $y$ and $x_i\in K$ would lie on the geodesic $\varphi_i$. Since $d(y,x_i)\leq 1$, these points would have to be consecutive along $\varphi_i$ (using that $\varphi_i$ is a geodesic), and it would follow that $y=i(\theta_i)=t(\theta_{i-1})$, violating minimality of the index $i$. This shows that $i=0$ and hence $y=t(\kappa)=i(\mf{f}(\kappa))$. The latter cannot happen for $y\in K$ since $\kappa\cap K=\emptyset$, concluding the proof.
\end{proof}

The following (and its proof) should be compared to Lemma~6.5 and Remark~7.7 in \cite{CP10}.

\begin{prop}\label{prop:existence}
    Let $\star\in\{\mf{w},\mf{s}\}$. If the pair $(\overline K,\overline\kappa)$ is $\star$--inseparable, then there exists a $\star$--hierarchy with $(\overline K,\overline\kappa)$ as its bottom.
\end{prop}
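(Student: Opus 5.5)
We will build the $\star$--hierarchy by a saturation procedure, following the proof of \cite[Lemma~6.5]{CP10}. Start from the partial hierarchy $\mc{H}_0:=\{\overline\kappa\}$. Given a partial hierarchy $\mc{H}_k$, if it fails (iv)$_{\star}$ there are $\beta,\varphi\in\mc{H}_k$ and a $\star$--small $J\in\mc{J}_S$ with $\beta\swarrow J\searrow\varphi$ but no $\kappa\in\mc{H}_k$ with domain $J$ and $\beta\swarrow\kappa\searrow\varphi$. By \Cref{lem:from_CP10_2}(2), $\beta$ and $\varphi$ are uniquely determined by $J$. Consequently, adding a new geodesic with domain $J$ is compatible with (ii) provided no element of $\mc{H}_k$ already has domain $J$; an existing element with domain $J$ would itself satisfy $\beta\swarrow\kappa\searrow\varphi$ by construction of the arrows. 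We then set $\mc{H}_{k+1}:=\mc{H}_k\cup\{(J,\kappa)\}$, where $\kappa$ is a geodesic in $\wh S_{\mscr{C}}\setminus(J\cup J^{\perp})$ from the vertex $i(\kappa)$ prescribed by $\beta\swarrow J$ to the vertex $t(\kappa)$ prescribed by $J\searrow\varphi$. These are the point immediately preceding $x$ on $\beta$ (or $b$), and the point immediately following $y$ on $\varphi$ (or $a$). Axiom (iii) then holds by construction.

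Termination is automatic. Domains are distinct elements of the finite set $\mc{J}_S$, so each step strictly increases the set of domains used. The process therefore stops, and the last partial hierarchy is a $\star$--hierarchy.

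The whole difficulty is showing that such a $\kappa$ exists. This means the two prescribed endpoints lie in $\wh S_{\mscr{C}}\setminus(J\cup J^{\perp})$ and in the same connected component of it. Membership is handled by \Cref{lem:safety_lines_newnew_2}(1): the points are endpoints of the safety lines $\mf{b}(\kappa)$ and $\mf{f}(\kappa)$ of the would-be new element. More robustly, they lie on $\beta,\varphi$, which avoid $D(\beta)\cup D(\beta)^{\perp}$. For connectedness, concatenate three pieces: a backward safety line from a point of $\{a,b\}$ to $i(\kappa)$, the geodesic $\overline\kappa$ (if needed), and a forward safety line from $t(\kappa)$ to a point of $\{a,b\}$. \Cref{lem:safety_lines_newnew_2}(1) says the safety lines avoid $J\cup J^{\perp}$, at least after replacing $K$ by $J$ and checking the extra point of $J$ cannot lie on them, which follows from the geodesic argument in \Cref{lem:safety_lines_newnew_2}(2). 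Thus $i(\kappa)$ and $t(\kappa)$ are connected, in the complement of $J\cup J^{\perp}$, to $\{a,b\}$. It then remains that $a$ and $b$ lie in a common component of $\wh S_{\mscr{C}}\setminus(J\cup J^{\perp})$, i.e.\ that $(\overline K,\overline\kappa)$ is $J$--unseparated.

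Inseparability is exactly what supplies this. We check $\overline K\sq J$ from \Cref{lem:from_CP10_2}(1), since domains grow by one point along $\swarrow$ chains down to $\overline\kappa$. We check $J\cap\overline\kappa\neq\emptyset$ from the same lemma, because the $\beta_i$ with $i\geq1$ meet $J$ and $\beta_n=\overline\kappa$. Finally, $\star$--smallness gives the required sphericity: for $\star=\mf{w}$, $J$ is spherical and irreducible, so $J\in\mc{I}_S$; for $\star=\mf{s}$, $J$ is tree--$2$--spherical with $J\setminus\overline K$ spherical, matching \Cref{defn:inseparable}. The delicate case is $\star=\mf{s}$: $\mf{s}$--smallness only controls $J\setminus(\overline K\cup\{a,b\})$, so one must use $a,b\notin J\cup J^{\perp}$ or argue separately when $a$ or $b$ lies in $J$. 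I expect the main obstacle to be this bookkeeping: verifying that the safety lines avoid the newly added point of $J$ and its orthogonal, and handling the endpoint conventions $\eps_I,\eps_T$ where $x=i(\overline\kappa)$.
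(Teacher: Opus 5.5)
Your proposal is correct and follows essentially the same route as the paper. Both saturate a partial hierarchy, which terminates because $\mc{J}_S$ is finite, and use \Cref{lem:from_CP10_2}(2) to see that the new domain $J$ was not already present. Both then find the missing geodesic by joining its prescribed endpoints to $\{a,b\}$ inside $\wh S_{\mscr{C}}\setminus(J\cup J^{\perp})$ along the safety lines of $\psi,\varphi$ (\Cref{lem:safety_lines_newnew_2}), and conclude with $\star$--inseparability.

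The differences are minor. The paper seeds the hierarchy with $\kappa_I,\kappa_T$, so the endpoint case $x=i(\overline\kappa)$ never comes up during saturation; you handle it through your ``(or $b$)'' clause instead. Your $\mf{s}$--smallness worry resolves as in the paper's parenthetical remark. If $a$ or $b$ lies in $J\cup J^{\perp}$, then $\{a,b\}$ meets at most one component trivially. Otherwise $J\setminus\overline K=J\setminus(\overline K\cup\{a,b\})$ is spherical, and $\mf{s}$--inseparability applies directly.
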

\begin{proof}
    To begin with, we define a particular partial hierarchy $\mc{H}_0$ with one, two, or three elements, depending on whether zero, one, or two of the numbers $\eps_I,\eps_T$ equal $1$. We always have $\overline\kappa\in\mc{H}_0$. If $\eps_I=1$, then $\mc{H}_0$ contains an element $\kappa_I$ with $\overline\kappa\swarrow\kappa_I\searrow\overline\kappa$ 
    and $D(\kappa_I)=\overline K\cup\{a\}$. We are meant to have $i(\kappa_I)=b$, and so we simply choose the geodesic $\kappa_I$ to coincide with $\overline\kappa\setminus\{a\}$ with the reverse orientation. Similarly, if $\eps_T=1$, the partial hierarchy $\mc{H}_0$ contains an element $\kappa_T$ coinciding with $\overline\kappa\setminus\{b\}$ with the reverse orientation and having domain $\overline K\cup\{b\}$.

    The construction of $\mc{H}_0$ ensures that all partial hierarchies $\mc{H}\supseteq\mc{H}_0$ have the following property: for every $\kappa\in\mc{H}\setminus\{\overline\kappa\}$ and every $J\in\mc{J}_S$ with $\kappa\swarrow J$ or $J\searrow\kappa$, the set $J$ intersects $\overline\kappa\setminus\{a,b\}$. 
    If $J$ is $\star$--small, the fact that $(\overline K,\overline\kappa)$ is $\star$--inseparable implies that the set $\{a,b\}$ intersects at most one connected component of $\wh S_{\mscr{C}}\setminus (J\cup J^{\perp})$. (Here it is possible that $J$ contains $a$ or $b$, which does not invalidate the previous statement.)

    Now, partial hierarchies have cardinality at most $|\mc{J}_S|<+\infty$ by \Cref{defn:partial_hierarchy_2}(ii), and so there exists a maximal partial hierarchy $\mc{H}\supseteq\mc{H}_0$. We will show that $\mc{H}$ satisfies \Cref{defn:partial_hierarchy_2}(iv)$_{\star}$, so that it is a $\star$--hierarchy. 

    Suppose for the sake of contradiction that there exist a $\star$--small set $J\in\mc{J}_S$ and two elements $\psi,\varphi\in\mc{H}$ such that $\psi\swarrow J\searrow\varphi$, but $\mc{H}$ contains no geodesic $\kappa$ with $D(\kappa)=J$ and $\psi\swarrow\kappa\searrow\varphi$. Let $p\in\psi$ and $f\in\varphi$ be the points such that $J=D(\psi)\cup\{p\}=D(\varphi)\cup\{f\}$. Observe that we have $p\neq i(\psi)$ and $f\neq t(\varphi)$, otherwise we would have $\psi=\overline\kappa=\varphi$ and we would be able to take $\kappa:=\kappa_I$ or $\kappa:=\kappa_T$, respectively. Let $p'$ be the point of $\psi$ immediately preceding $p$, and let $f'$ be the point of $\varphi$ immediately following $f$. Note that $p'$ and $f'$ lie outside $J\cup J^{\perp}$.
    
    We claim that $p'$ and $f'$ lie in the same connected component of $\wh S_{\mscr{C}}\setminus (J\cup J^{\perp})$. For this, let $\mf{b}$ be the path in $\wh S_{\mscr{C}}$ obtained by concatenating the backward safety line $\mf{b}(\psi)$ with the arc of $\psi$ from $i(\psi)$ to $p'$. Similarly, let $\mf{f}$ be the path that is the concatenation of the terminal arc of $\varphi$ starting at $f'$ with the forward safety line $\mf{f}(\varphi)$. By \Cref{lem:safety_lines_newnew_2}, both $\mf{b}$ and $\mf{f}$ are contained in $\wh S_{\mscr{C}}\setminus (J\cup J^{\perp})$. By the construction of safety lines, the points $i(\mf{b})$ and $t(\mf{f})$ lie in the set $\{a,b\}$
    and, as observed above, $\{a,b\}$ meets only one component of $\wh S_{\mscr{C}}\setminus (J\cup J^{\perp})$. In conclusion, the points $p'$ and $f'$ lie in the same component of $\wh S_{\mscr{C}}\setminus (J\cup J^{\perp})$, proving our claim.
    
    By the claim, there exists a geodesic $\kappa$ from $p'$ to $f'$ within $\wh S_{\mscr{C}}\setminus(J\cup J^{\perp})$, and we obtain a geodesic-with-domain $(J,\kappa)$ with $\psi\swarrow\kappa\searrow\varphi$. We claim that the set $\mc{H}'=\mc{H}\cup\{\kappa\}$ is again a partial hierarchy. The only thing to check is that no two geodesics in $\mc{H}'$ have the same domain, or equivalently, that $\mc{H}$ did not already contain a geodesic $\lambda$ with domain $J$. If this had been the case, then we would have had $\psi'\swarrow\lambda\searrow\varphi'$ for some $\psi',\varphi'\in\mc{H}$ by \Cref{defn:partial_hierarchy_2}(iii), and hence $\psi'\swarrow J\searrow\varphi'$. \Cref{lem:from_CP10_2}(2) would have then implied that $\psi=\psi'$ and $\varphi=\varphi'$, and so $\psi\swarrow\lambda\searrow\varphi$ against our assumptions. 

    In conclusion, $\mc{H}'$ is a partial hierarchy, contradicting maximality of $\mc{H}$. This shows that $\mc{H}$ was indeed a $\star$--hierarchy at the start, as required.
\end{proof}

\subsubsection{From hierarchies to moves}

Having proven that hierarchies indeed exist, we move on to showing how they can be exploited to connect markings by sequences of moves. Before we approach the main result in this direction, we need the following lemma in order to deal with the difference between the moves (N1)$_{\mscr{C}}$ and (M1)$_{\mscr{C}}$ when $\star=\mf{s}$.

\begin{lem}\label{lem:CP_4.6}
    Consider a partial hierarchy $\mc{H}$ with bottom $(\overline K,\overline\kappa)$ and an element $(K,\kappa)\in\mc{H}$. Let $(s,w)$ be a base with support $\Sigma\supseteq K$. Let $m,m'\in\kappa$ be consecutive points such that $\Sigma\cup\{m\}$ is not $2$--spherical and $(K\cup\{m'\})\setminus (\overline K\cup\{a,b\})$ is non-spherical. Then $((s,w),m)\equiv^{\mf{s}} ((s,w),m')$.
\end{lem}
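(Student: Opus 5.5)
The plan is to reduce to move (M1)$_{\mscr{C}}$ whenever possible, and to treat the remaining case by a detour using moves (M2)$_{\mscr{C}}$ and (M3)$_{\mscr{C}}$.

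\emph{Step 1: the case where both markings are complete.} Since $m,m'$ are consecutive along $\kappa$, we have $\{m,m'\}\in\mscr{C}$, so the two markings always differ by move (N1)$_{\mscr{C}}$.
\begin{itemize}
\item The marking $((s,w),m)$ is complete. Indeed, otherwise \Cref{rmk:complete_or_semicomplete} would make $\Sigma\cup\{m\}$ tree--$2$--spherical, hence $2$--spherical, contrary to hypothesis.
\item If $((s,w),m')$ is also complete, the two markings differ by (M1)$_{\mscr{C}}$ and we are done.
\end{itemize}
So we may assume that $((s,w),m')$ is not complete. By \Cref{rmk:complete_or_semicomplete} it is then semicomplete: $\Sigma\cup\{m'\}$ is tree--$2$--spherical and non-spherical.

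\emph{Step 2: locating $m'$ in the semicomplete case.} Here the hypothesis on $K\cup\{m'\}$ enters. The domain $K$ is tree--$2$--spherical, and the point $m'$ lies on $\kappa$, hence outside $K\cup K^{\perp}$. First I would show $m'\notin\Sigma$. Then \Cref{rmk:tree-2-spherical_support} makes $(s,wm')$ a base, whose support is $\Sigma\cup\{m'\}$. Next I would compare the two markings through the intermediate marking $((s,wm'),m)$:
\begin{itemize}
\item $((s,w),m)$ and $((s,wm'),m)$ are related by (M2)$_{\mscr{C}}$, since $\{m,m'\}\in\mscr{C}$. Admissibility of $((s,wm'),m)$ holds because its support-plus-marker contains $\Sigma\cup\{m\}$, which is non-spherical.
\item $((s,wm'),m)$ and $((s,w),m')$ should be related by (M3)$_{\mscr{C}}$. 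This requires $\Sigma\cup\{m,m'\}\in\mscr{C}$, i.e.\ by \Cref{cor:2-compatible->compatible}(1) that it spans a clique in $\wh S_{\mscr{C}}$.
\end{itemize}
In general that clique condition is false, because $m$ need not be $\wh S_{\mscr{C}}$--adjacent to all of $\Sigma$. So this direct route only works in special cases, and I expect to need the hierarchy structure.

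\emph{Step 3 (main obstacle): using the safety line.} I would use the structure of $\mc{H}$ through the forward safety line $\mf{f}(\kappa)$, or the backward one $\mf{b}(\kappa)$, depending on the orientation of $m,m'$ on $\kappa$. This line continues from the terminal part of $\kappa$ to a point of $\{a,b\}$ and avoids $K\cup K^{\perp}$ (\Cref{lem:safety_lines_newnew_2}). The idea is to march the marker along $\kappa$ followed by the safety line, keeping each intermediate marking complete, until we reach a point $y$ with two properties: $((s,w),y)$ is complete, and it can be joined to $((s,w),m')$ by moves (M2)$_{\mscr{C}}$ and (M3)$_{\mscr{C}}$ that pass through the semicomplete region $\Sigma\cup\{m'\}\in\mscr{C}$. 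Here $\Sigma\cup\{m'\}\in\mscr{C}$ because it is $2$--spherical, so it is a clique in $\wh S_{\mscr{C}}$, and we apply \Cref{cor:2-compatible->compatible}(1).

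The non-sphericity of $(K\cup\{m'\})\setminus(\overline K\cup\{a,b\})$ is what I would use to rule out the bad configuration. If marching along the line produced only semicomplete markings all the way to $a$ or $b$, then $\Sigma\cup\{m'\}$ would be forced into a spherical piece meeting $\overline K\cup\{a,b\}$, contradicting this hypothesis. Making this contradiction precise, probably by induction on the position of $\kappa$ in the hierarchy via the chain $\varphi_0\searrow\dots\searrow\varphi_n$, is the step I expect to be hardest.
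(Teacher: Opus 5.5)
Your Steps~1 and~2 agree with the paper's opening reductions. The marking $((s,w),m)$ is complete, you may assume $J:=\Sigma\cup\{m'\}$ is tree--$2$--spherical, and $((s,w),m)\equiv^{\mf{s}}((s,wm'),m)$ by (M2)$_{\mscr{C}}$. You also rightly note that a direct (M3)$_{\mscr{C}}$ move from there to $((s,w),m')$ is unavailable in general.

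Step~3, however, is the entire content of the lemma, and it has a genuine gap. You never say where the march stops, which (M2)$_{\mscr{C}}$ moves are used, or why a suitable $y$ exists. The proposed contradiction (``semicomplete all the way to $a$ or $b$'') is not an argument. The crux is the passage from complete markings to the semicomplete marking $((s,w),m')$, and your plan gives no mechanism for it. With the base kept at $(s,w)$, a complete marking $((s,w),y)$ from which an (M3)$_{\mscr{C}}$ move reaches $((s,w),m')$ would need $\Sigma\cup\{y,m'\}\in\mscr{C}$, and there is no reason for such a $y$ to exist. For instance, if $y\notin\Sigma$ and $\Sigma\cup\{y\}$ is tree--$2$--spherical, then $(s,wy)$ is a base by \Cref{rmk:tree-2-spherical_support}. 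Hence the wall $w\mc{Y}_y$ crosses $\mc{Y}_s$, and $((s,w),y)$ is not complete. The missing idea is that you must change the base, pushing it as far from $\mc{Y}_s$ as possible.

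The paper does this by following \cite[Proposition~4.6]{CP10}.
\begin{enumerate}
\item Let $\mf{C}$ be the component of $\wh S_{\mscr{C}}\setminus(J\cup\lk_{\mscr{C}}(J))$ containing $m$. The safety line is not a path to march along. Rather (say $m$ precedes $m'$), $\mf{b}(\kappa)$ followed by the arc of $\kappa$ up to $m$ lies in $\mf{C}$ by \Cref{lem:safety_lines_newnew_2}. Every point of $(K\cup\{m'\})\setminus(\overline K\cup\{a,b\})$ is adjacent to this path, so that set lies in $\partial\mf{C}$, and the hypothesis makes $J\cap\partial\mf{C}$ non-spherical.
\item Take $w_*$ longest (it exists by the Parallel Wall Theorem) such that $\beta:=(s,wm'w_*)$ is a base, $|wm'w_*|=|w|+1+|w_*|$, and $(\beta,m)\equiv^{\mf{s}}((s,w),m)$.
\item By \cite[Lemma~8.2]{CP10}, the elements $u$ with $d(\mc{Y}_s,wm'w_*u\cdot 1_S)\leq d(\mc{Y}_s,wm'w_*\cdot 1_S)$ form a spherical set. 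So some $u\in J\cap\partial\mf{C}$ increases the distance; pick $x\in\mf{C}$ adjacent to $u$.
\item Since $\supp(\beta)\supseteq J$ is $2$--spherical, every $(\beta,y)$ with $y\in\mf{C}$ is complete. Hence (M1)$_{\mscr{C}}$ moves give $(\beta,m)\equiv^{\mf{s}}(\beta,x)$.
\item Maximality of $w_*$ (via (M2)$_{\mscr{C}}$ with marker $x$) rules out $(s,wm'w_*u)$ being a base. So $(\beta,u)$ is complete, and one more (M1)$_{\mscr{C}}$ move reaches it.
\item Finally, (M3)$_{\mscr{C}}$ connects $(\beta,u)$ to $((s,w),m')$, since $\supp(\beta)\cup\{u,m'\}=\supp(\beta)$ is $2$--spherical.
\end{enumerate}
Note that the non-sphericity hypothesis is used precisely to defeat the spherical set from \cite[Lemma~8.2]{CP10}, not to exclude a run of semicomplete markings.
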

\begin{proof}
    The markings $\mu:=((s,w),m)$ and $\mu:=((s,w),m')$ clearly differ by a single application of move (N1)$_{\mscr{C}}$, but we instead need to connect them by a finite sequence of moves (M1)$_{\mscr{C}}$--(M3)$_{\mscr{C}}$. 
    
    Since $\Sigma\cup\{m\}$ is not $2$--spherical, the marking $\mu$ is complete (see \Cref{rmk:complete_or_semicomplete}). We can thus assume that the set $J:=\Sigma\cup\{m'\}$ is tree--$2$--spherical, since otherwise $\mu$ and $\mu'$ would both be complete and they would simply differ by move (M1)$_{\mscr{C}}$. Since $\Sigma\cup\{m\}$ is not $2$--spherical, 
    we also have $m\not\in J\cup\lk_{\mscr{C}}(J)$. Let $\mf{C}$ be the connected component of $\wh S_{\mscr{C}}\setminus (J\cup\lk_{\mscr{C}}(J))$ containing $m$. We denote by $\partial\mf{C}$ the set of elements of $S\setminus\mf{C}$ that are adjacent to at least one element of $\mf{C}$ within the graph $\wh S_{\mscr{C}}$.

    We claim that $\partial\mf{C}$ contains $(K\cup\{m'\})\setminus (\overline K\cup\{a,b\})$. For this, suppose without loss of generality that $m$ precedes $m'$ along $\kappa$, and let $\mf{b}$ be the path in $\wh S_{\mscr{C}}$ obtained by concatenating the backward safety line $\mf{b}(\kappa)$ with the arc of $\kappa$ from $i(\kappa)$ to $m$. \Cref{lem:safety_lines_newnew_2} shows that $\mf{b}$ is disjoint from 
    \[ K\cup\{m'\}\cup\lk_{\mscr{C}}(K\cup\{m'\}). \]
    Since the latter set contains $J\cup\lk_{\mscr{C}}(J)$, it follows that $\mf{b}\sq\mf{C}$. The construction of safety lines shows that every point of $(K\cup\{m'\})\setminus (\overline K\cup\{a,b\})$ is adjacent (in the graph $\wh S_{\mscr{C}}$) to a point of the path $\mf{b}$, and so we obtain that $(K\cup\{m'\})\setminus (\overline K\cup\{a,b\})\sq\partial\mf{C}$ as claimed. Together with our hypotheses, this shows that the intersection $J\cap\partial\mf{C}$ is non-spherical.

    Now, we can argue exactly as in the proof of \cite[Proposition~4.6]{CP10}.     Namely, since $J$ is tree--$2$--spherical and $m'\not\in K\cup K^{\perp}$, the pair $(s,wm')$ is a base (see \Cref{rmk:tree-2-spherical_support}), and an application of move (M2)$_{\mscr{C}}$ shows that $\mu\equiv^{\mf{s}}((s,wm'),m)$. Let $w_*\in W$ be a longest element such that:
    \begin{enumerate}
        \item $(s,wm'w_*)$ is a base and $|wm'w_*|=|w|+1+|w_*|$;
        \item $((s,wm'w_*),m)\equiv^{\mf{s}}\mu$.
    \end{enumerate}
    Note that $w_*=1$ satisfies the above two conditions, and thus the existence of a longest element with these properties follows from the Parallel Wall Theorem (see \Cref{rmk:PW}). Set $\beta:=(s,wm'w_*)$ from now on for simplicity. We will show that $(\beta,m)\equiv^{\mf{s}}\mu'$ thereby proving the lemma.

    As shown above, the set $J\cap\partial\mf{C}$ is non-spherical. Let $\Xi\sq J\cap\partial\mf{C}$ be the set of elements $u$ with 
    \[ d(\mc{Y}_s,wm'w_*u\cdot 1_S)\leq d(\mc{Y}_s,wm'w_*\cdot 1_S) .\] 
    By \cite[Lemma~8.2]{CP10}, the set $\Xi$ is spherical and so there exists an element $u\in J\cap\partial\mf{C}\setminus\Xi$. By \Cref{rmk:complete_or_base}, either the pair $(s,wm'w_*u)$ is a base, or the pair $(\beta,u)$ is a complete marking. Choose an element $x\in\mf{C}$ that is adjacent to $u$ within the graph $\wh S_{\mscr{C}}$.

    Observe that we have $(\beta,m)\equiv^{\mf{s}}(\beta,x)$. Indeed, setting $\Sigma_*:=\supp(\beta)$, we have $\Sigma_*\supseteq\Sigma$ and hence, since $\Sigma_*$ is $2$--spherical by \Cref{rmk:tree-2-spherical_support}, we also have $\Sigma_*\cup\lk_{\mscr{C}}(\Sigma_*)\sq\Sigma\cup\lk_{\mscr{C}}(\Sigma)$. Thus, there exists a connected component $\mf{C}_*\sq\wh S_{\mscr{C}}\setminus(\Sigma_*\cup\lk_{\mscr{C}}(\Sigma_*))$ with $\mf{C}\sq\mf{C}_*$. For each element $y\in\mf{C}_*$, the marking $(\beta,y)$ is complete (see again \Cref{rmk:complete_or_semicomplete}). Since $m,x\in\mf{C}_*$, it follows that $(\beta,m)\equiv^{\mf{s}}(\beta,x)$ by a repeated application of move (M1)$_{\mscr{C}}$.

    Now, we have seen above that either the pair $(s,wm'w_*u)$ is a base, or the pair $(\beta,u)$ is a complete marking. In the former case, move (M2)$_{\mscr{C}}$ would yield $(\beta,x)\equiv^{\mf{s}}((s,wm'w_*u),x)$, violating maximality of the element $w_*$. Therefore, $(\beta,u)$ must be a complete marking, and hence we have $\mu\equiv^{\mf{s}}(\beta,x)\equiv^{\mf{s}}(\beta,u)$, where the last equivalence is by a single application of move (M1)$_{\mscr{C}}$. Finally, since $\supp(\beta)\cup\{u\}=\Sigma_*$ is $2$--spherical and contains $J$, 
    we obtain $(\beta,u)\equiv^{\mf{s}}\mu'$ by move (M3)$_{\mscr{C}}$, concluding the proof.
\end{proof}

Hierarchies allow us to prove equivalences for pairs of markings of the following form.

\begin{defn}
    A pair of \emph{extremal markings} for $(\overline K,\overline\kappa)$ is pair of markings $(\mu_I,\mu_T)$ obtained as follows. Choose any base $(s,w)$ with $\supp(s,w)\supseteq\overline K$ and set:
    \begin{itemize}
        \item $\mu_I:=((s,w),a)$ if $\eps_I=0$, or $\mu_I:=((s,wa),b)$ if $\eps_I=1$;
        \item $\mu_T:=((s,w),b)$ if $\eps_T=0$, or $\mu_T:=((s,wb),a)$ if $\eps_T=1$.
    \end{itemize}
    Here we implicitly assume that the pairs $(s,wa)$ and $(s,wb)$ are indeed bases when they appear, and we additionally require that their supports contain $\supp(s,w)$ (in case $a$ or $b$ lie in $\supp(s,w)$). We refer to $(s,w)$ as the \emph{spawning base} of the pair $(\mu_I,\mu_T)$.
\end{defn}

We can now essentially copy the discussion in \cite[Section~7]{CP10} to obtain the following result. An important difference is that, for us, the spawning base of a pair of extremal markings can have support that \emph{strictly} contains the bottom domain $\overline K$.

\begin{prop}\label{prop:Sect7_2}
    Let $\star\in\{\mf{w},\mf{s}\}$. Let $\mc{H}$ be a $\star$--hierarchy and let $(\mu_I,\mu_T)$ be a pair of extremal markings for $(\overline K,\overline\kappa)$ with spawning base $(s,w)$. Suppose that $\mu_I$ and $\mu_T$ are complete. Then there exists a finite sequence of markings $\xi_j=((s,w_j),m_j)$ for $0\leq j\leq n$ with the following properties.
    \begin{enumerate}
    \setlength\itemsep{.2em}
        \item All $\xi_j$ are admissible and have the same core $s$. We have $\xi_0=\mu_I$ and $\xi_n=\mu_T$.
        \item There are elements $\kappa_j\in\mc{H}$ such that $m_j\in\kappa_j$ and $\supp(s,w_j)=\supp(s,w)\cup D(\kappa_j)$. Moreover, either $\xi_j$ is complete or the set $D(\kappa_j)\cup\{m_j\}$ is not $\star$--small.
        \item For each $0\leq j<n$, the transition from $\xi_j$ to $\xi_{j+1}$ is of one of four kinds:
            \begin{enumerate}
            \setlength\itemsep{.2em}
                \item[(i)] $\kappa_j=\kappa_{j+1}$ and $w_j=w_{j+1}$ and $m_j$ immediately precedes $m_{j+1}$ along $\kappa_j$;
                \item[(ii)] $\kappa_j\swarrow\kappa_{j+1}$ and $w_{j+1}\in\{w_j,w_jx\}$, where $x$ denotes the point immediately following $i(\kappa_{j+1})$ along $\kappa_j$, and we have $m_j=m_{j+1}=i(\kappa_{j+1})$;
                \item[(iii)] $\kappa_j\searrow\kappa_{j+1}$ and $w_{j+1}\in\{w_j,w_jx\}$, where $x$ denotes the point immediately preceding $t(\kappa_j)$ along $\kappa_{j+1}$, and we have $m_j=m_{j+1}=t(\kappa_j)$; 
                \item[(iv)] we have $m_j=t(\kappa_j)$ and $m_{j+1}=i(\kappa_{j+1})$ and there exists a geodesic $\varphi\in\mc{H}$ such that $\kappa_j\searrow\varphi\swarrow\kappa_{j+1}$, with the point $m_{j+1}$ immediately preceding $m_j$ along $\varphi$.
            \end{enumerate}
    \end{enumerate}
    In particular, we have $\mu_I\equiv^{\star}\mu_T$.
\end{prop}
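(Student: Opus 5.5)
The plan follows \cite[Section~7]{CP10}: build the sequence $\xi_j$ by a ``walk'' through the hierarchy $\mc{H}$, organised as an induction on domain size. The central object is, for each $\kappa\in\mc{H}$ and each base $(s,v)$ with $\supp(s,v)=\supp(s,w)\cup D(\kappa)$, a canonical traversal of $\kappa$ from $i(\kappa)$ to $t(\kappa)$. Consecutive markers are joined by a type~(i) transition whenever the resulting marking is admissible and satisfies Item~(2). Otherwise the walk takes a detour: it climbs via a type~(ii) transition into the geodesic $\lambda$ with $\kappa\swarrow\lambda$ and $D(\lambda)=D(\kappa)\cup\{x\}$, traverses $\lambda$ recursively, and comes back down by type~(iii)/(iv) transitions.

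\textbf{Existence of the detour.} I will prove by downward induction on $|D(\kappa)|$ (bounded since $|\mc{J}_S|<\infty$) the following claim. For $\kappa\in\mc{H}$ and a marking $((s,v),m)$ with $m\in\kappa$ satisfying Item~(2), there is a sequence of moves of types (i)--(iv) reaching a marking whose marker is the next point of $\kappa$ (or $t(\kappa)$, with the appropriate continuation). Suppose $m,m'$ are consecutive on $\kappa$ but $((s,v),m')$ violates Item~(2), i.e.\ it is not complete and $D(\kappa)\cup\{m'\}$ is $\star$--small. Set $J:=D(\kappa)\cup\{m'\}$. Then $\kappa\swarrow J$, and by the analogue of \Cref{lem:from_CP10_2}(3) for $\searrow$ there is $\varphi$ with $J\searrow\varphi$; condition (iv)$_\star$ then yields $\lambda\in\mc{H}$ with $D(\lambda)=J$ and $\kappa\swarrow\lambda$. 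Non-completeness together with \Cref{rmk:complete_or_semicomplete} makes $\supp(s,v)\cup\{m'\}$ tree--$2$--spherical, so $(s,vm')$ is a base by \Cref{rmk:tree-2-spherical_support}. The type~(ii) transition (with $x=m'$, $i(\lambda)=m$) is therefore legitimate, and the inductive hypothesis applied to $\lambda$ carries us to $t(\lambda)$. From there, type~(iii) and (iv) transitions through the unique $\varphi$ with $\lambda\searrow\varphi$ (unique by \Cref{lem:from_CP10_2}(2)) bring the marker back to $m'$ or beyond on $\kappa$. Starting at $\overline\kappa$ from $\mu_I$ and ending at $\mu_T$ gives Items~(1)--(3). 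Here the hypothesis that $\mu_I,\mu_T$ are complete handles the endpoints, and the parameters $\eps_I,\eps_T$ match the initial geodesics $\kappa_I,\kappa_T$ built in \Cref{prop:existence}.

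\textbf{From the sequence to $\equiv^{\star}$.} Transitions of type (ii)/(iii) with $w_{j+1}=w_jx$ are moves (M2)$_{\mscr{C}}$, since $x$ and the common marker are consecutive on a geodesic and hence adjacent in $\wh S_{\mscr{C}}$; when $w_{j+1}=w_j$ the two markings coincide. Type (iv) and type (i) are (N1)$_{\mscr{C}}$ moves, since the markers are consecutive along some geodesic. For $\star=\mf{w}$ this already finishes the proof, because $\equiv^{\mf{w}}$ allows (N1)$_{\mscr{C}}$.

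\textbf{Main obstacle.} The real difficulty is $\star=\mf{s}$, where each (N1)$_{\mscr{C}}$ step must be upgraded. If both markings are complete, the step is (M1)$_{\mscr{C}}$. If both are semicomplete, then $\supp\cup\{m_j,m_{j+1}\}$ is a $2$--spherical clique, which lies in $\mscr{C}$ by \Cref{cor:2-compatible->compatible}(1), so the step is (M3)$_{\mscr{C}}$. The mixed case is the crux: one marking is complete and the other is semicomplete but not complete. By Item~(2), the non-complete marking has $D(\kappa_j)\cup\{m\}$ not $\mf{s}$--small; since it is tree--$2$--spherical, this means $(D(\kappa_j)\cup\{m\})\setminus(\overline K\cup\{a,b\})$ is non-spherical. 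The complete one has $\supp\cup\{m\}$ not $2$--spherical, for otherwise it would also be semicomplete and we would be in the previous case. These are exactly the hypotheses of \Cref{lem:CP_4.6}, which supplies the required $\equiv^{\mf{s}}$. The part I expect to need the most care is checking that the type~(iv) transitions also fall under \Cref{lem:CP_4.6}, with $\varphi$ playing the role of $\kappa$ and with the support containing $D(\varphi)$. I would handle this by choosing $w_j$ at type~(iii) so that $\supp(s,w_j)\supseteq D(\varphi)$.
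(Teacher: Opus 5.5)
The genuine gap is in how you justify type~(iv) transitions, because they are not (N1)$_{\mscr{C}}$ moves. Unwinding $\kappa_j\searrow\varphi\swarrow\kappa_{j+1}$ gives $D(\kappa_j)=D(\varphi)\cup\{m_{j+1}\}$ and $D(\kappa_{j+1})=D(\varphi)\cup\{m_j\}$. So by Item~(2) the base changes (in general $\supp(s,w_j)\neq\supp(s,w_{j+1})$), whereas (N1)$_{\mscr{C}}$ requires $w_j=w_{j+1}$. Hence your conclusion for $\star=\mf{w}$ does not follow as written.

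Your plan for $\star=\mf{s}$ fails too. \Cref{lem:CP_4.6} only relates two markings with the \emph{same} base, so it cannot account for this change of base. Your proposed remedy is also vacuous, since Item~(2) already forces $\supp(s,w_j)=\supp(s,w)\cup D(\kappa_j)\supseteq D(\varphi)$.

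The missing observation is that a type~(iv) step is directly an (M3)$_{\mscr{C}}$ move. The set $\supp(s,w_j)\cup\supp(s,w_{j+1})\cup\{m_j,m_{j+1}\}=\supp(s,w)\cup D(\varphi)\cup\{m_j,m_{j+1}\}$ is the union of two $2$--spherical sets (supports of bases). Their only non-shared elements, $m_j$ and $m_{j+1}$, are consecutive on $\varphi$. So the set spans a clique in $\wh S_{\mscr{C}}$ and lies in $\mscr{C}$ by \Cref{cor:2-compatible->compatible}(1). This works for both values of $\star$ and regardless of completeness. Consequently \Cref{lem:CP_4.6} is needed only for the mixed type~(i) steps, which you handle correctly.

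The rest of your construction is the paper's walk organised recursively rather than greedily, but several details need fixing.
\begin{itemize}
\item The $\searrow$--analogue of \Cref{lem:from_CP10_2}(3) presupposes some $\varphi$ with $J\searrow\varphi$, so it cannot produce one. Apply \Cref{lem:from_CP10_2}(3) directly to $\kappa\swarrow J$.
\item If $m'\in\supp(s,w)$ you must keep $w_{j+1}=w_j$, since \Cref{rmk:tree-2-spherical_support} requires $m'\notin\supp(s,v)$.
\item Your claim that a detour returns to $\kappa$ ``at $m'$ or beyond'' holds when $m'\neq t(\kappa)$. There $J\searrow\kappa$, so the $\searrow$--analogue of \Cref{lem:from_CP10_2}(2) forces $\lambda\searrow\kappa$, with $t(\lambda)$ the point after $m'$. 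But when $m'=t(\kappa)$, the geodesic $\varphi$ with $\lambda\searrow\varphi$ is in general different from $\kappa$, and the walk never comes back. Your inductive statement must therefore be reformulated; the paper sidesteps this by building the sequence one step at a time (Cases~(A) and~(B)).
\item Your climbing rule (climb only if the type~(i) marking would violate Item~(2)) differs from the paper's (climb exactly when $D(\kappa_k)\cup\{x\}$ is $\star$--small). Yours has the merit of guaranteeing that $(s,vm')$ is a base. However, you still owe a verification of Item~(2) for the markings produced by (iii) and (iv) at the end of a geodesic. With the paper's rule this is eased by the fact that (iv) leaves the set $D(\kappa_j)\cup\{m_j\}$ unchanged.
\end{itemize}
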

\begin{proof}
    Set $\Sigma:=\supp(s,w)$ and $\xi_0:=\mu_I$. If $\eps_I=0$, we set $\kappa_0:=\overline\kappa$. If instead $\eps_I=1$, we have $\overline\kappa\swarrow D(\overline\kappa)\cup\{i(\overline\kappa)\}$ and so, by \Cref{lem:from_CP10_2}(3) and \Cref{defn:partial_hierarchy_2}(ii), there exists a unique geodesic $\lambda\in\mc{H}$ with $\overline\kappa\swarrow\lambda$ and $D(\lambda)=D(\overline\kappa)\cup\{i(\overline\kappa)\}$; here we set $\kappa_0:=\lambda$. Either way, we have $m_0\in\kappa_0$ and $\supp(s,w_0)=\Sigma\cup D(\kappa_0)$, so that Item~(2) holds for $j=0$. 
    Writing $\mu_T=((s,w_{\rm last}),m_{\rm last})$, an analogous argument yields a (unique) geodesic $\kappa_{\rm last}\in\mc{H}$ with $\supp(s,w_{\rm last})=\Sigma\cup D(\kappa_{\rm last})$ and $m_{\rm last}=t(\kappa_{\rm last})$. Moreover, either $\eps_T=0$ and $\kappa_{\rm last}=\overline\kappa$, or $\eps_T=1$ and $\kappa_{\rm last}\searrow\overline\kappa$.

    Suppose now that, for some index $k\geq 0$, the markings $\xi_0,\dots,\xi_k$ and elements $\kappa_0,\dots,\kappa_k\in\mc{H}$ have been defined so that Items~(1) and~(2) hold for $j\leq k$ and Item~(3) holds for $j<k$. If $\xi_k=\mu_T$, we set $n:=k$ and the proof is complete. Otherwise, we define $\xi_{k+1}$ and $\kappa_{k+1}$ as follows.

    \smallskip
    {\bf Case~(A):} suppose first that $m_k\neq t(\kappa_k)$. Let $x\in\kappa_k$ be the point immediately after $m_k$, and define $J:=D(\kappa_k)\cup\{x\}$. 

    If the set $J$ is not $\star$--small, then we set $w_{k+1}:=w_k$, $m_{k+1}:=x$ and $\kappa_{k+1}:=\kappa_k$. Here we are in Case~(i) of Item~(3).
    
    Suppose instead that $J$ is $\star$--small. Then, the fact that $\kappa_k\swarrow J$ and \Cref{lem:from_CP10_2}(3) yield a unique geodesic $\lambda\in\mc{H}$ with $\kappa_k\swarrow\lambda$ and $D(\lambda)=J$. We then set $w_{k+1}:=w_k$ if $x\in\Sigma$, and $w_{k+1}:=w_kx$ otherwise, noting that $(s,w_{k+1})$ is a base by \Cref{rmk:tree-2-spherical_support}. We also set $m_{k+1}:=m_k$ and $\kappa_{k+1}:=\lambda$, so that we are in Case~(ii). By the inductive hypothesis, either $\xi_k$ was complete or $D(\kappa_k)\cup\{m_k\}$ was not $\star$--small, and this implies that either $\xi_{k+1}$ is complete or $D(\kappa_{k+1})\cup\{m_{k+1}\}$ is not $\star$--small.

    \smallskip
    {\bf Case~(B):} suppose now that $m_k=t(\kappa_k)$. Since $\xi_k\neq\mu_T$, we have $\kappa_k\searrow\varphi$ for some $\varphi\in\mc{H}$. We have $m_k\neq i(\varphi)$ and the point $x\in\varphi$ immediately before $m_k$ satisfies $D(\kappa_k)=D(\varphi)\cup\{x\}$. Consider the set $J:=D(\varphi)\cup\{m_k\}$. 
    
    If $J$ is not $\star$--small, then we set $w_{k+1}:=w_k$ if $x\in\Sigma$, and $w_{k+1}:=w_kx$ otherwise, noting that in the latter case $\supp(s,w_{k+1})=\supp(s,w_k)\setminus\{x\}$. We also set $m_{k+1}:=m_k$ and $\kappa_{k+1}:=\varphi$, landing in Case~(iii). 
    
    If instead $J$ is $\star$--small, the fact that $\varphi\swarrow J$ and \Cref{lem:from_CP10_2}(3) guarantee the existence of a geodesic $\lambda\in\mc{H}$ with $\varphi\swarrow\lambda$ and $D(\lambda)=J$. We then set $w_{k+1}:=w_k\hat x\hat m_k$, where $\hat x:=1$ if $x\in\Sigma$ and $\hat x:=x$ otherwise, and where $\hat m_k$ is defined analogously. We also set $m_{k+1}:=x$ and $\kappa_{k+1}:=\lambda$, finding ourselves in Case~(iv). Again the inductive hypothesis guarantees that either $\xi_{k+1}$ is complete or $D(\kappa_{k+1})\cup\{m_{k+1}\}$ is not $\star$--small. 

    \smallskip
    This proves Items~(1)--(3) of the proposition. (The sequence cannot go on forever, since the hierarchy is finite.) For the ``in particular'' statement, it suffices to check that $\xi_j\equiv^{\star}\xi_{j+1}$ for each index $j$. In Case~(iv), $\xi_j$ and $\xi_{j+1}$ differ by move (M3)$_{\mscr{C}}$, while in Cases~(ii) and~(iii) they are either equal or differ by move (M2)$_{\mscr{C}}$. In Case~(i), the two markings differ by move (N1)$_{\mscr{C}}$, which suffices to obtain $\xi_j\equiv^{\mf{w}}\xi_{j+1}$. If $\star=\mf{s}$, the markings $\xi_j$ and $\xi_{j+1}$ still differ by move (M1)$_{\mscr{C}}$ or (M3)$_{\mscr{C}}$ if they are both complete or both semicomplete (\Cref{rmk:N1_bad}). If one of them is not complete and the other is not semicomplete, then we can use the ``moreover'' statement in Item~(2) together with \Cref{lem:CP_4.6} to conclude that we nevertheless have $\xi_j\equiv^{\mf{s}}\xi_{j+1}$. This finally proves the whole proposition.
\end{proof}

\begin{rmk}
    In the proof of \Cref{prop:Sect7_2}, it is easy to see that the markings $\xi_j$ for which $D(\kappa_j)\cup\{m_j\}$ is $\star$--small can only appear at the very start and very end of the sequence.
\end{rmk}

\begin{rmk}\label{rmk:w_case}
    When $\star=\mf{w}$, \Cref{prop:Sect7_2} and its proof also work when $\mu_I$ and $\mu_T$ are not complete (as long as they are admissible), simply removing the ``moreover'' statement from Item~(2). Indeed, that statement is only needed in order to apply \Cref{lem:CP_4.6} to prove $\xi_j\equiv^{\mf{s}}\xi_{j+1}$ in Case~(i).
\end{rmk}

\subsubsection{Conclusion}

The above discussion proves \Cref{prop:markings_general}. We briefly summarise here how to combine the various pieces of the argument.

\begin{proof}[Proof of \Cref{prop:markings_general}]
    Let $\overline K$ and $\kappa$ be as in the statement of the proposition. Let $\mu_I$ and $\mu_T$ be the markings appearing in one of Items~(1)--(3). The fact that the pair $(\overline K,\overline\kappa)$ is $\star$--inseparable allows us to apply \Cref{prop:existence} and deduce the existence of a $\star$--hierarchy with bottom $(\overline K,\overline\kappa)$. Together with \Cref{prop:Sect7_2}, this shows that $\mu_I\equiv^{\star}\mu_T$. Here the hypothesis that $\mu_I$ and $\mu_T$ be complete is only needed for $\star=\mf{s}$, as explained in \Cref{rmk:w_case}.
\end{proof}

\section{Geometrisation}\label{sect:geometrisation}

This section finally proves \Cref{thm:step_two}, which is the second and last step in the sketch given in \Cref{sect:strategy}. The proof of \Cref{thm:step_two} will be set up in \Cref{sub:statement_shortening} and finally carried out in \Cref{sub:proof_shortening}. Before then, we collect some miscellaneous lemmas in \Cref{sub:general_preliminaries} and then discuss in \Cref{sub:semivisual} a weakening of the notion of a visual splitting that is important for the proof.

\subsection{General preliminaries}\label{sub:general_preliminaries}

Let $(W,S)$ be a Coxeter system with Davis complex $\A_S$. As in Sec\-tion~\ref{sect:markings}, we denote by $1_S\in\A_S$ the base vertex, and by $\mc{Y}_{\rho}\sq\A_S$ the wall fixed by some $\rho\in S^W$. In this subsection, we first collect some properties relating to the subcomplexes of $\A_S$ associated to $S$--parabolic subgroups of $W$, and then obtain a few lemmas needed in the rest of the section.

\subsubsection{Standard subcomplexes}\label{subsub:standard}

A subcomplex $\mc{C}\sq\A_S$ is \emph{convex} if it contains all geodesics in the $1$--skeleton of $\A_S$ with endpoints in $\mc{C}$. (Such a subcomplex might not be convex with respect to the CAT(0) metric on $\A_S$, but we never consider this metric in what follows.)
Whenever we speak of a halfspace $\mc{H}\sq\A_S$, from now on, we will always refer to a subcomplex of $\A_S$, namely the largest subcomplex contained on the $\mc{H}$--side of the corresponding wall. With this convention, halfspaces are convex subcomplexes (e.g.\ by \cite[Lemma~3.2.14]{Davis}).

Given an $S$--parabolic subgroup $P\leq W$, a \emph{$P$--standard subcomplex} is a minimal $P$--invariant convex subcomplex of $\A_S$. Equivalently, a convex subcomplex $\mc{P}\sq\A_S$ is $P$--standard if and only if the walls of $\A_S$ crossed by $\mc{P}$ are precisely those fixed by the reflections in $P$. When $P=\langle T\rangle$ for a subset $T\sq S$, a subcomplex $\mc{P}\sq\A_S$ is $\langle T\rangle$--standard if and only if there exists a $\langle T\rangle$--equivariant isometry $\phi\colon\A_T\ra\mc{P}$. Note that, when $T$ has no spherical factors, the equivariant isometry $\phi$ is unique (for a given $\mc{P}$). When $T$ is spherical and irreducible, there are two possibilities for $\phi$, differing by composition with left multiplication by the longest element of $\langle T\rangle$. A single parabolic subgroup $P$ will in general admit infinitely many distinct $P$--standard subcomplexes; these are pairwise disjoint and the normaliser $N_W(P)$ permutes them, though not always transitively.

\begin{ex}
    For $s\in S$, the $\langle s\rangle$--standard subcomplexes are the edges dual to the wall $\mc{Y}_s\sq\A_S$.
\end{ex}

Given a $P$--standard subcomplex $\mc{P}\sq\A_S$, we denote by $\pi_{\mc{P}}\colon\A_S^{(0)}\ra\mc{P}^{(0)}$ the nearest-point projection with respect to the path metric on the $1$--skeleton of $\A_S$. This projection is well-defined, for instance as a special case of \cite[Lemma~4.3.1]{Davis}. Given two vertices $x,y\in\A_S$, the walls of $\A_S$ separating $\pi_{\mc{P}}(x)$ from $\pi_{\mc{P}}(y)$ are precisely the walls that cross $\mc{P}$ and separate $x$ from $y$. In particular, no wall crossing $\mc{P}$ separates $x$ from $\pi_{\mc{P}}(x)$.

Given a $P$--standard subcomplex $\mc{P}\sq\A_S$ and a $Q$--standard subcomplex $\mc{Q}\sq\A_S$, the projections $\pi_{\mc{P}}(\mc{Q})$ and $\pi_{\mc{Q}}(\mc{P})$ are two $P\cap Q$--standard subcomplexes, and they are $P\cap Q$--equivariantly isometric to each other via the restrictions of $\pi_{\mc{P}}$ and $\pi_{\mc{Q}}$.

Given an $S$--geometric set of reflections $T\sq S^W$, we say that a point $x\in\A_S$ is \emph{$T$--geometric} if it lies in the sector determined by a geometric choice of halfspaces for the walls of $\A_S$ fixed by the elements of $T$. If $T$ is $S$--compatible, then each $\langle T\rangle$--standard subcomplex of $\A_S$ contains $2^n$ vertices that are $T$--geometric, where $n$ is the number of irreducible spherical factors of $T$.

\subsubsection{Miscellaneous lemmas}

For each $s\in S$, we denote by $\mc{H}_s$ and $\mc{H}_s^*$ the two halfspaces of $\A_S$ bounded by the wall $\mc{Y}_s$. We name these two halfspaces so that $1_S\in\mc{H}_s$. For spherical subsets $K\sq S$, we denote by $w_K$ the longest element of $\langle K\rangle$, with the convention that $w_{\emptyset}=1$. 

For a subset $U\sq S$, we denote by $\A_U\sq\A_S$ the only $\langle U\rangle$--standard subcomplex containing $1_S$. To streamline notation, we will simply write ``$g\in\A_S$'' instead of ``$g1_S\in\A_S$'' for elements $g\in W$.

\begin{lem}\label{lem:ww_2}
    Consider a subset $U\sq S$ with a partition $U=F_1\sqcup F_2$ such that $F_1$ is spherical. 
    Then the intersection 
    \[ \A_U\cap\bigcap_{u\in F_1}\mc{H}_u \cap\bigcap_{u\in F_2}w_{F_1}\mc{H}_u^*\]
    is nonempty if and only if $U$ is spherical, in which case it equals the singleton $\{w_{F_1}w_U\}$.
\end{lem}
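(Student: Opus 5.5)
Throughout, "$g\in\A_S$" means the vertex $g1_S$. The plan is to work entirely inside $\A_U$, whose vertices are the elements of $\langle U\rangle$, and to translate each halfspace condition into a descent condition. For $g\in\langle U\rangle$ and $u\in U$, the vertex $g$ lies in $\mc{H}_u$ exactly when $\mc{Y}_u$ does not separate $1_S$ from $g$, i.e.\ when $|ug|>|g|$. Equivalently, $u$ is not a left descent of $g$. It is also convenient to substitute $g=w_{F_1}h$ with $h\in\langle U\rangle$. Note that $w_{F_1}$ is an involution with $w_{F_1}F_1w_{F_1}=F_1$.

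\textbf{Step 1: translate the conditions on $h$.}
\begin{itemize}
    \item For $u\in F_2$: the condition $w_{F_1}h\in w_{F_1}\mc{H}_u^*$ says $h\in\mc{H}_u^*$. So every $u\in F_2$ is a left descent of $h$.
    \item For $u\in F_1$: the condition $w_{F_1}h\in\mc{H}_u$ means $|uw_{F_1}h|>|w_{F_1}h|$. Writing $u=w_{F_1}u'w_{F_1}$ with $u'\in F_1$, this becomes $|u'h|<|h|$. Indeed, the standard length-additivity $|w_{F_1}h|=|w_{F_1}|-|h_1|+|h_2|$ for the parabolic decomposition $h=h_1h_2$, with $h_1\in\langle F_1\rangle$ and $h_2$ minimal in its coset $\langle F_1\rangle h$, converts the inequality into a statement about $h_1$. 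I expect this to come down to $h_1=w_{F_1}$. The cleanest way to see it: $u\mapsto u'$ is a bijection of $F_1$, so all of $F_1$ must be left descents of $w_{F_1}h_1$. Hence $w_{F_1}h_1=1$, giving $h_1=w_{F_1}$, which is the same as saying every $u'\in F_1$ is a left descent of $h$.
\end{itemize}

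Combining the two bullets, the vertex $g=w_{F_1}h$ lies in the intersection if and only if every element of $U$ is a left descent of $h$.

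\textbf{Step 2: apply the classical fact about full descent sets.} An element $h\in\langle U\rangle$ with left descent set all of $U$ exists if and only if $\langle U\rangle$ is finite. When it exists, it is the longest element $w_U$; see e.g.\ \cite[Lemma~4.6.1]{Davis} or \cite[Section~1.8]{Humphreys}. In other words, $\A_U\cap\bigcap_{u\in U}\mc{H}_u^*$ is nonempty only for spherical $U$, in which case it is $\{w_U\}$. Translating back through $g=w_{F_1}h$ gives exactly the claimed singleton $\{w_{F_1}w_U\}$, and nonemptiness is equivalent to $U$ being spherical.

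\textbf{Main obstacle.} The only delicate point is the bookkeeping in Step 1 for $u\in F_1$: converting "$u$ is not a descent of $w_{F_1}h$" into "$u'$ is a descent of $h$" via the coset decomposition. Everything else is standard descent combinatorics.
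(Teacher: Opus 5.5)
Your argument is correct and is essentially the paper's: substituting $g=w_{F_1}h$ amounts to translating the whole intersection by $w_{F_1}$. This turns it into $\A_U\cap\bigcap_{u\in U}\mc{H}_u^*$, and the classical fact \cite[Lemma~4.6.1]{Davis} finishes. Two small remarks: in Step~1 the condition says that \emph{no} element of $F_1$ is a left descent of $w_{F_1}h_1$ (not that all of them are), and that is what forces $w_{F_1}h_1=1$; also, the coset bookkeeping can be skipped, as the paper does, by noting $w_{F_1}\mc{H}_u=\mc{H}^*_{u'}$, since $w_{F_1}$ maps $\mc{Y}_u$ to $\mc{Y}_{u'}$ and sends $1_S$ to the vertex $w_{F_1}$, which lies on the far side of $\mc{Y}_{u'}$ because $u'$ is a left descent of $w_{F_1}$.
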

\begin{proof}
    For every $u\in F_1$, we can set $u^*:=w_{F_1}uw_{F_1}\in F_1$ and we have $w_{F_1}\mc{H}_u=\mc{H}_{u^*}^*$. Thus, up to applying $w_{F_1}$ to the whole intersection, the lemma states that $\A_U\cap\bigcap_{u\in U}\mc{H}_u^*$ is nonempty if and only if $U$ is spherical, in which case it equals $\{w_U\}$. This is classical, see e.g.\ \cite[Lemma~4.6.1]{Davis}.
\end{proof}

\begin{lem}\label{lem:producing_bases_2}
    Let $T=U\sqcup\{t\}$ be a spherical subset of $S$. Then, for every element $g\in\langle U\rangle$, there exists an element $w\in\langle U\rangle$ such that $w^{-1}\mc{H}_t=g^{-1}\mc{H}_t$ and $(t,w)$ is a base.
\end{lem}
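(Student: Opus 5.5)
The plan is to take $w$ to be the shortest element of the coset $g\langle U\cap t^{\perp}\rangle$, or more precisely to minimise length among all $w\in\langle U\rangle$ with $w^{-1}\mc{H}_t=g^{-1}\mc{H}_t$, and then check the base conditions directly. The condition $w^{-1}\mc{H}_t=g^{-1}\mc{H}_t$ depends only on the $\langle T\rangle$--action on the finite set of walls. Since $T$ is spherical, everything happens inside the finite Coxeter complex $\A_T$, which contains $1_S$ and all of $\langle U\rangle\cdot 1_S$.

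Unwinding the definitions, a base $(t,w)$ requires two things: $d(w1_S,\mc{Y}_t)=|w|$, and no wall separates $w1_S$ from $\mc{Y}_t$. Equivalently, after translating by $w^{-1}$, no wall separates $1_S$ from the wall $w^{-1}\mc{Y}_t$, and the distance from $1_S$ to that wall is $|w|$. Since $w\in\langle U\rangle$, the wall $w^{-1}\mc{Y}_t=\mc{Y}_{w^{-1}tw}$ depends on $w$ only through the reflection $w^{-1}tw$. Its side is also prescribed: we need $w^{-1}\mc{H}_t=g^{-1}\mc{H}_t$. So the candidates are the elements $w\in\langle U\rangle$ with $w^{-1}tw=g^{-1}tg$ that map the halfspace correctly. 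This set is nonempty, since $w=g$ works. I would pick such a $w$ of minimal length.

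Next I would show that this minimal $w$ yields a base. First suppose some wall $\mc{Y}_{\rho}$ separates $w1_S$ from $\mc{Y}_t$. Since everything lives in $\A_T$, I may take $\rho\in\langle T\rangle$ and choose $\mc{Y}_\rho$ adjacent to $w1_S$, so that $\rho w=wu$ with $u\in T$ and $|wu|<|w|$.
\begin{itemize}
\item If $u\in U$, then $wu\in\langle U\rangle$. The reflection $u$ fixes the wall $\mc{Y}_{\rho}$ between $w1_S$ and $\mc{Y}_t$, so applying $w u w^{-1}=\rho$ preserves $\mc{Y}_t$ together with its sides. Hence $wu$ is a shorter candidate, a contradiction.
\item If $u=t$, then $\rho=wtw^{-1}$ and $\mc{Y}_\rho=w\mc{Y}_t$, which is the wall adjacent to $w1_S$. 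This contradicts $\mc{Y}_\rho$ separating $w1_S$ from $\mc{Y}_t$ in the strict sense: $\mc{Y}_\rho$ is either $\mc{Y}_t$ itself or a wall on the far side.
\end{itemize}
Hence no wall separates $w1_S$ from $\mc{Y}_t$.

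For the distance equality, every wall separating $1_S$ from $w1_S$ must cross $\mc{Y}_t$, since none separates $w1_S$ from $\mc{Y}_t$ and the halfspace side is controlled. Then the standard gate and projection property, namely projection onto the $\langle t\rangle$--standard edges dual to $\mc{Y}_t$ (cf.\ the projection facts in \Cref{subsub:standard}), gives $d(w1_S,\mc{Y}_t)=|w|$, possibly after one adjustment. I expect this step, reconciling the two conditions in the definition of base and handling the side condition $w^{-1}\mc{H}_t=g^{-1}\mc{H}_t$ versus its opposite, to be the main obstacle.

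If it gets stuck, I would fall back on \Cref{lem:ww_2}, applied with $F_1\subseteq U$ chosen so that $w_{F_1}$ realises the correct side, taking $w$ to be a product of longest elements. Alternatively, I would use the characterisation of bases via \cite[Remark~3.2]{CP10} together with \Cref{rmk:tree-2-spherical_support}, building $w$ letter by letter from $U$ while keeping the support tree--$2$--spherical inside the spherical set $T$.
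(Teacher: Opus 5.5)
Your choice of $w$ is correct and coincides with the paper's. By \Cref{lem:simpler_Deodhar}, the elements $w\in\langle U\rangle$ with $w^{-1}\mc{H}_t=g^{-1}\mc{H}_t$ form the right coset $\langle U\cap t^{\perp}\rangle g$, not the left coset $g\langle U\cap t^{\perp}\rangle$ you first mention. The shortest element of this coset is the paper's $\bar g^{-1}g$.

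The gap is that you never show this $w$ gives a base. Since $T$ is spherical, any two walls crossing $\A_T$ intersect. A wall not crossing $\A_T$ cannot separate $w1_S\in\A_T$ from $\mc{Y}_t$. So ``no wall separates $w1_S$ from $\mc{Y}_t$'' holds for every $w\in\langle T\rangle$, and the whole content of the lemma is the equality $d(w1_S,\mc{Y}_t)=|w|$.

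Your argument for the vacuous half is also flawed: a reflection in a wall disjoint from $\mc{Y}_t$ cannot preserve $\mc{Y}_t$. For the substantive half you only gesture at gate projections, and that cannot work as stated:
\begin{itemize}
\item The hypotheses you invoke (every wall separating $1_S$ from $w1_S$ crosses $\mc{Y}_t$, and $w1_S\in\mc{H}_t$) also hold for $w=u$ whenever $u\in U\cap t^{\perp}$. Yet $d(u1_S,\mc{Y}_t)=0<|u|$.
\item The projection facts for a single $\langle t\rangle$--standard edge say nothing about which edge dual to $\mc{Y}_t$ is nearest.
\item Minimality of $w$, which is exactly what rules out such examples, is not used in this step.
\end{itemize}
The fallbacks do not help either. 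Extending bases via \Cref{rmk:tree-2-spherical_support} only produces products of pairwise distinct letters. But take $T=\{a,b,t\}$ of type $H_3$ with $o(ab)=5$, $o(bt)=3$, $at=ta$, and $g=baba$: the only valid choice is then $w=baba$.

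The missing step is as follows. By \cite[Remark~3.2(i)]{CP10}, it suffices to show that for every wall $\mc{V}$ separating $1_S$ from $w1_S$, the vertex $1_S$ lies in an acute-angled sector spanned by $\mc{V}$ and $\mc{Y}_t$. This is where minimality must enter. The reflection $r$ in $\mc{V}$ lies in $\langle U\rangle$ by convexity of $\A_U$.
\begin{itemize}
\item If the sector were right-angled, $r$ would commute with $t$. It would then lie in $\langle U\cap t^{\perp}\rangle$ by \Cref{lem:simpler_Deodhar}, and $rw$ would be a shorter element of $\langle U\cap t^{\perp}\rangle g$.
\item If the sector were obtuse, the paper's proof finds a third reflection of the finite parabolic closure of $\langle r,t\rangle$ whose wall is crossed by a geodesic inside $\A_U$. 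This forces $t\in\langle U\rangle$, a contradiction.
\end{itemize}
An equivalent argument uses roots. The positive root of $\mc{V}$ is $\beta=\sum_{u\in U}c_u\alpha_u$ with all $c_u\ge 0$. Hence $B(\beta,\alpha_t)\le 0$ for the canonical bilinear form $B$, with equality only if $\beta$ is supported on $U\cap t^{\perp}$. This excludes obtuse sectors outright and reduces right-angled ones to the previous case.
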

\begin{proof}
    Let $\overline g$ be the projection of the element $g\in\langle U\rangle$ to the parabolic subgroup $\langle U\cap t^{\perp}\rangle$. Set $w:=\overline g^{-1} g$. It is clear that $w^{-1}\mc{H}_t=g^{-1}\mc{H}_t$, so we only need to check that $(t,w)$ is a base. Recalling that $T$ is spherical and using \cite[Remark~3.2(i)]{CP10}, this amounts to checking that, for all walls $\mc{V}$ separating $1_T$ and $w$, the vertex $1_T$ lies in an acute-angled sector spanned by $\mc{V}$ and $\mc{Y}_t$. 
    
    Thus, suppose for the sake of contradiction that $1_T$ and $w$ are separated by a wall $\mc{V}$ such that $1_T$ lies in a right-or-obtuse-angled sector $\Sigma$ spanned by $\mc{V}$ and $\mc{Y}_t$. The sector $\Sigma':=\overline g\Sigma$ then contains $\overline g$ and is spanned by the walls $\mc{V}':=\overline g\mc{V}$ and $\mc{Y}_t$, where $\mc{V}'$ separates $\overline g$ from $g$. 

    Let $r'$ be the reflection in the wall $\mc{V}'$. If the sector $\Sigma'$ were right-angled, then $r'$ and $t$ would commute. Since $t\not\in\supp(r')\sq U$, \Cref{lem:simpler_Deodhar} would imply that $r'$ lies in $\langle U\cap t^{\perp}\rangle$, contradicting the fact that $\overline g$ is the projection of $g$ to the latter subgroup. 

    Thus, the sector $\Sigma'$ is obtuse-angled. It follows that the $T$--parabolic closure $D\leq W$ of the dihedral subgroup $\langle r',t\rangle$ contains a third reflection $r''$ whose fixed wall $\mc{V}''$ cuts the sector $\Sigma'$ into two halves. Let $\gamma\sq\langle U\rangle$ be a geodesic from $g$ to $\overline g$, and observe that there exists an edge $e\sq\A_T$ incident to $\overline g$ and crossing the wall $\mc{Y}_t$. Since the path $\gamma\cup e$ crosses both $\mc{V}'$ and $\mc{Y}_t$, it follows that the geodesic $\gamma$ must cross the wall $\mc{V}''$. Since $\gamma\sq\langle U\rangle$, we obtain that $r',r''\in\langle U\rangle$, which implies that $D\sq\langle U\rangle$ and hence $t\in\langle U\rangle$. This contradicts the fact that $t\not\in U$, concluding the proof.
\end{proof}

Let now $R\sq W$ be a Coxeter generating set that is angle-compatible with $S$. As usual, for $s\in S$, we denote by $\mc{W}_s$ the $s$--fixed wall in the Davis complex $\A_R$, to distinguish it from the wall $\mc{Y}_s\sq\A_S$.

\begin{lem}\label{lem:auxiliary}
    Let $A,B\sq S$ be two irreducible sets with two elements $a\in A\setminus B$ and $b\in B\setminus A$. Suppose that $A$ and $B$ are $R$--compatible, and that $\langle a,b\rangle$ is infinite. Let $\mc{A},\mc{B}\sq\A_R$ be, respectively, an $\langle A\rangle$--standard and a $\langle B\rangle$--standard subcomplex. Suppose that the projection of $\mc{B}$ to $\mc{A}$ contains an $A$--geometric point $p_A$. Then, for every base $(s,w)$ with $a\in\supp(s,w)\sq A$, the point $p_A$ and the wall $w\mc{W}_b$ are contained in the same halfspace bounded by the wall $\mc{W}_s$.
\end{lem}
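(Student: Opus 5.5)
The strategy is to transport everything into the Davis complex $\A_A$ of the Coxeter system $(\langle A\rangle,A)$ and then use the defining property of a base there.

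\textbf{Step 1: normalise inside $\mc{A}$.} Since $A$ is $R$--compatible, $\mc{A}$ admits an $\langle A\rangle$--equivariant isometry $\phi\colon\A_A\ra\mc{A}$. Because $A$ is irreducible, there are at most two such maps, differing by the longest element if $A$ is spherical. Since $p_A$ is an $A$--geometric vertex, I will choose $\phi$ with $\phi(1_A)=p_A$. Under $\phi$, for each $u\in A$ the trace of $\mc{W}_u$ on $\mc{A}$ corresponds to the wall $\mc{Y}_u$ of $\A_A$, and $p_A$ corresponds to the vertex lying in $\bigcap_{u\in A}\mc{H}_u$.

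Since $\supp(s,w)\sq A$, we have $w\in\langle A\rangle$ and $s\in A$. Hence $w$ preserves $\mc{A}$ and $\mc{W}_s$ crosses $\mc{A}$. The pair $(s,w)$ is also a base for $(\langle A\rangle,A)$, because distances to walls and separation by walls can be computed inside the convex subcomplex $\A_A\sq\A_S$. The reflection $wbw^{-1}$ does not lie in $\langle A\rangle$, since $b\notin A$ and $\langle A\rangle$ is parabolic. Therefore the wall $w\mc{W}_b$ does not cross $\mc{A}$.

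Because halfspaces are convex and $w\mc{W}_b$ does not cross $\mc{A}$, the side of $\mc{W}_s$ containing $w\mc{W}_b$ is the side containing any vertex $\pi_{\mc{A}}(x)$, where $x$ is an endpoint of an edge dual to $w\mc{W}_b$. Indeed, no wall crossing $\mc{A}$ separates $x$ from $\pi_{\mc{A}}(x)$. So the statement reduces to showing that $w\cdot\pi_{\mc{A}}(x_0)$ lies on the $p_A$--side of $\mc{W}_s$, where $x_0$ is an endpoint of an edge dual to $\mc{W}_b$.

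\textbf{Step 2: locate $\pi_{\mc{A}}(x_0)$.} Choose $x_0$ to be an endpoint of an edge of $\mc{B}$ dual to $\mc{W}_b$; this is possible since $b\in B$. By the projection facts of \Cref{subsub:standard}, $\pi_{\mc{A}}(\mc{B})$ is an $\langle A\rangle\cap\langle B\rangle=\langle A\cap B\rangle$--standard subcomplex, and it contains $p_A$ by hypothesis. Hence
\[ \pi_{\mc{A}}(\mc{B})=\phi(\A_{A\cap B}) , \]
and $z_0:=\phi^{-1}\pi_{\mc{A}}(x_0)$ is an element of $\langle A\cap B\rangle$. Moreover, I may replace $x_0$ by any $\langle A\cap B\cap b^{\perp}\rangle$--translate. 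I expect this freedom to be needed to push $z_0$ towards $1_A$, ideally into the projection of the $b$--edge of $\mc{B}$ closest to $\pi_{\mc{B}}(p_A)$.

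\textbf{Step 3 (the main obstacle): show $wz_0\in\mc{H}_s$ in $\A_A$.} The key input is that $a\in\supp(s,w)$ but $a\notin B$, so every reduced word for $w$ involves a letter outside $A\cap B$, whereas $z_0\in\langle A\cap B\rangle$. The base condition, in the form of \cite[Remark~3.2]{CP10}, says that no wall separates $w1_A$ from $\mc{Y}_s$, and $d(w1_A,\mc{Y}_s)=|w|$. I would combine this with the tree--$2$--spherical structure of $\supp(s,w)$ (\Cref{rmk:tree-2-spherical_support}).

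The first attempt will be to argue by contradiction. Suppose a wall of $\langle A\cap B\rangle$ moved by $w$ onto $\mc{Y}_s$ separated $w1_A$ from $wz_0$. Then $s$ would be $w$--conjugate into $\langle A\cap B\rangle$, and \Cref{lem:simpler_Deodhar}, applied to the conjugation carrying $s$ along the base, would force $\supp(w)\sq A\cap B$ or $\supp(w)\sq s^{\perp}$. Both are incompatible with $a\in\supp(s,w)$, irreducibility of $\supp(s,w)$, and the hypothesis that $\langle a,b\rangle$ is infinite.

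If this fails, the fallback is to reduce to the case $|A|$ small, namely $A=\supp(s,w)$. I would then use \Cref{lem:ww_2} together with the $A$--geometric choice at $p_A$ to compare the halfspaces directly.
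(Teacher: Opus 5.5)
Your skeleton matches the paper's proof. You identify $\mc{A}$ with $\A_A$ so that $p_A$ corresponds to $1_A$, and project an endpoint $x_0$ of a $b$--edge of $\mc{B}$ into $\mc{C}:=\pi_{\mc{A}}(\mc{B})$, the $\langle C\rangle$--standard subcomplex through $p_A$, where $C:=A\cap B$. You then translate by $w$ and compare with $wp_A$ and with $p_A$.

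\textbf{Step~3 is not proved.} It is the whole content of the lemma, and your appeal to \Cref{lem:simpler_Deodhar} does not give the dichotomy ``$\supp(w)\sq A\cap B$ or $\supp(w)\sq s^{\perp}$''. From $w^{-1}sw\in\langle C\rangle$ you only get $(wh)c(wh)^{-1}=s$ for some $c\in C$ and $h\in\langle C\rangle$. The lemma then yields $c\in\supp(wh)$ or $wh\in\langle c^{\perp}\rangle$, which says nothing about $\supp(w)$.

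The missing observation is a length count.
\begin{itemize}
\item Since $(s,w)$ is a base, $d(1,w^{-1}\mc{Y}_s)=|w|$. So the reflection $w^{-1}sw$ has length $2|w|+1$, and the word $w^{-1}\cdot s\cdot w$ is reduced.
\item Hence $\supp(w^{-1}sw)=\supp(s,w)\ni a$. Since $a\notin C$, we get $w^{-1}sw\notin\langle C\rangle$, so the wall $w^{-1}\mc{W}_s$ does not cross $\mc{C}$.
\item $\mc{C}$ is connected and contains both $p_A$ and $\pi_{\mc{A}}(x_0)$, for \emph{any} choice of $x_0$; the freedom you reserve in Step~2 is not needed. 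So these two points lie on the same side of $w^{-1}\mc{W}_s$.
\item Translating by $w$, and using the base property to put $wp_A$ and $p_A$ on the same side of $\mc{W}_s$, finishes the argument.
\end{itemize}
This is exactly the paper's step ``$\supp(s,w)\not\sq C$, so $w^{-1}\mc{W}_s$ does not cross $\mc{C}$''.

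\textbf{Disjointness of the walls is never shown.} Your Step~1 reduction speaks of ``the side of $\mc{W}_s$ containing $w\mc{W}_b$'', which presupposes $w\mc{W}_b\cap\mc{W}_s=\emptyset$; this disjointness is also part of the conclusion. This, rather than Step~3, is where the hypothesis that $\langle a,b\rangle$ is infinite is used. Since $a\in\supp(s,w)$ and $ab$ has infinite order, $b\notin\supp(s,w)^{\perp}$ and $\supp(s,w)\cup\{b\}$ is not $2$--spherical. By \Cref{rmk:complete_or_semicomplete}, the marking $((s,w),b)$ is therefore complete, i.e.\ $\langle s,wbw^{-1}\rangle$ is infinite, and the two walls are disjoint in $\A_R$. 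With that, and the fact that an edge dual to $w\mc{W}_b$ is crossed by no other wall, your reduction to comparing $w\pi_{\mc{A}}(x_0)$ with $p_A$ becomes valid.
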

\begin{proof}
    Set $C:=A\cap B$. The subcomplex $\mc{C}:=\pi_{\mc{A}}(\mc{B})$ is $\langle C\rangle$--standard, and it contains the $A$--geometric point $p_A$ by hypothesis. Since $B$ is $R$--compatible, there are a $B$--geometric point $p_B\in\mc{B}$ and a $\langle C\rangle$--standard subcomplex $\mc{C}'\sq\mc{B}$ containing $p_B$. Note that $\pi_{\mc{A}}(\mc{C}')=\mc{C}$ and that $\mc{W}_b$ crosses an edge of $\mc{B}$ incident to $p_B\in\mc{C}'$.
    
    Since $a\in\supp(s,w)$, we have $\supp(s,w)\not\sq C$, and so the wall $w^{-1}\mc{W}_s$ does not cross $\mc{C}$. Since $\supp(s,w)\sq A$ and $\langle a,b\rangle$ is infinite, we also have $w^{-1}\mc{W}_s\cap\mc{W}_b=\emptyset$ (e.g.\ using Remarks~\ref{rmk:complete_or_base} and~\ref{rmk:tree-2-spherical_support}). Now, $p_A$ and $\mc{C}$ are on the same side of $w^{-1}\mc{W}_s$, and this side also contains $\mc{C}'$ and $\mc{W}_b$. This shows that $w\mc{W}_b$ and $wp_A$ are contained in the same halfspace bounded by the wall $\mc{W}_s$. Finally, since $p_A$ is $A$--geometric and $(s,w)$ is a base, the points $wp_A$ and $p_A$ lie on the same side of $\mc{W}_s$, thus completing the proof.
\end{proof}

\subsection{Semivisual splittings}\label{sub:semivisual}

The concept of a visual splitting (\Cref{defn:S-visual}) is not robust enough for us to carry out the proof of \Cref{thm:step_two} (see \Cref{rmk:semivisual_motivation} below). For this reason, we will need to work with a slightly more general class of splittings, which we term \emph{semivisual}.

Let $(W,S)$ be a Coxeter system, let $\Om\sq S$ be a subset and set $H:=\langle\Om\rangle$. The \emph{$\Om$--core} of a splitting $H\acts\mc{T}$ is the (unique) smallest subtree $\mc{F}\sq\mc{T}$ that intersects the fixed set of every element of $\Om$. Note that $\mc{F}$ is always a finite subtree, since $\Om$ is finite.
As in \Cref{sect:splittings}, for each vertex $v\in\mc{F}$ and edge $e\sq\mc{F}$ we denote by $\Om_v$ and $\Om_e$ the subsets of $\Om$ fixing $v$ and $e$, respectively, and by $H_v$ and $H_e$ their $H$--stabilisers. We have $\langle \Om_v\rangle\leq H_v$ and $\langle \Om_e\rangle\leq H_e$, but in general these inclusions are proper.

\begin{defn}
    A splitting $H\acts\mc{T}$ is \emph{$\Om$--semivisual} if the following hold for the $\Om$--core $\mc{F}\sq\mc{T}$.
    \begin{enumerate}
    \item For each edge $e\sq\mc{F}$, we have $H_e=\langle \Om_e\rangle$.
    \item Denoting by $\mc{V}^*\sq\mc{F}$ the set of vertices such that $H_v=\langle \Om_v\rangle$, we have that every $\mc{T}$--elliptic subset of $\Om$ is contained in $\Om_v$ for some $v\in\mc{V}^*$. In particular, we have $\Om=\bigcup_{v\in\mc{V}^*}\Om_v$.
    \end{enumerate}
\end{defn}

Note that an $\Om$--semivisual splitting is $\Om$--visual precisely when we have $\mc{V}^*=\mc{F}^{(0)}$ and in addition the $\Om$--core $\mc{F}\sq\mc{T}$ is a \emph{fundamental} subtree. The latter is equivalent to the requirement that, for each vertex $v\in\mc{F}$, no two edges of $\mc{F}$ incident to $v$ be in the same $H_v$--orbit.

\begin{rmk}\label{rmk:maximal_V*}
    If $H\acts\mc{T}$ is $\Om$--semivisual and strongly reduced, then the sets $\Om_v$ with $v\in\mc{V}^*$ are precisely the maximal subsets of $\Om$ that are elliptic in $\mc{T}$.
\end{rmk}

In the rest of the section, we will consider the following setting. As usual, let $S,R\sq W$ be angle-compatible Coxeter generating sets, let $\mscr{C}$ be the family of $\{S,R\}$--compatible subsets of $W$, and let $\mscr{T}_S$ be the collection of generating sets twist-equivalent to $S$ relative to $\mscr{C}$. 

\begin{setup}\label{setup_two}
    Let $\Om\sq S$ be a $\mscr{C}$--inflexible, $S$--tucked subset. Set $H:=\langle\Om\rangle$ and consider a splitting $H\acts\mc{T}$ such that:
    \begin{enumerate}
        \item $\mc{T}$ is strongly reduced and $\Om$--semivisual;
        \item $\mc{T}$ is relative to $\mscr{C}$, and the $H$--stabiliser of each vertex of $\mc{T}$ is generated by a set in $\mscr{C}$;
        \item all edge-stabilisers of $\mc{T}$ are $H$--conjugate to each other. 
    \end{enumerate}
\end{setup}

We recall the statement of \Cref{thm:step_two}, since it is the main goal of the rest of the section:

\begin{thm}\label{thm:step_two_repeated}
    Given \Cref{setup_two}, suppose that $\Om$ is $\mscr{T}_S$--parabolic and $\mscr{T}_S$--tucked, and that \Cref{thmintro:main} holds for all proper $S$--parabolic subgroups of $W$. Then there exist a Coxeter generating set $S'\in\mscr{T}_S$ and a subset $\Om'\sq S'$ such that $\langle\Om'\rangle=\langle\Om\rangle$ and $\Om'$ is $R$--geometric.
\end{thm}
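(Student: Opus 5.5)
The plan is to run a complexity-reduction argument (the ``shortening theorem'' announced in \Cref{sect:strategy}) on the splitting $H\acts\mc{T}$ of \Cref{setup_two}. First, \Cref{prop:inflexible} applies: by hypothesis \Cref{thmintro:main} holds for proper $S$--parabolic subgroups, so every $\mscr{T}_S$--parabolic subset is $\mscr{C}$--inflexible. Hence $\Om$ is $\mscr{C}$--inflexible, and the same holds for every $\Om''\sq S''\in\mscr{T}_S$ with $\langle\Om''\rangle=H$. Similarly, $\mscr{T}_S$--tuckedness makes each such $\Om''$ $S''$--tucked. The upshot is that \Cref{setup_two} is stable under replacing $(S,\Om)$ by any such $(S'',\Om'')$, once we also re-choose $\mc{T}$. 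For the re-choice, take a strongly reduced $\Om''$--semivisual splitting with the same elliptic subgroups; this exists by a variant of \Cref{lem:MT}, since the vertex groups are generated by sets in $\mscr{C}$ and all edge groups are conjugate.

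Next I would define the complexity of $\Om$. For each vertex $v\in\mc{V}^*$ of the $\Om$--core $\mc{F}$, the set $\Om_v$ lies in $\mscr{C}$, so it is $R$--compatible. We may therefore choose an $\langle\Om_v\rangle$--standard subcomplex $\mc{P}_v\sq\A_R$ together with an $\Om_v$--geometric point $p_v$. The obstruction to $\Om$ being $R$--geometric is that these geometric sectors need not be nested compatibly along edges of $\mc{F}$. Concretely, for adjacent vertices with $a\in\Om_v\setminus\Om_w$ and $b\in\Om_w\setminus\Om_v$ (so $\{a,b\}\notin\mscr{C}$), consider the halfspace of $\mc{W}_a$ containing $\mc{W}_b$. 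It may or may not agree with the halfspace selected by $p_v$, and \Cref{lem:auxiliary} compares the two via projections of standard subcomplexes. I would take the complexity to be the total number of walls of $\A_R$ separating the projections $\pi_{\mc{P}_v}(\mc{P}_w)$ from geometric points, summed over edges of $\mc{F}$, minimised over all choices. By \Cref{prop:2-compatible->geometric}, \Cref{lem:1.6} and \Cref{cor:2-compatible->compatible}, zero complexity means the halfspace choices are $2$--geometric, hence $\Om$ is $R$--geometric.

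The core step is to show that positive complexity can be strictly decreased by a finite sequence of elementary twists relative to $\mscr{C}$ of the ambient $S$. Fix a vertex $v$ and an edge $e$ at $v$ where the mismatch occurs, and let $K$ be the irreducible spherical piece of $\Om_e$ (or of $\Om_v$) across which the wrong halfspace is chosen. The shrub condition (Item~(2) of \Cref{defn:shrub}, i.e.\ all edge sets are conjugate in the vertex groups) and inflexibility should guarantee two things. First, $\Om$ meets two components of $\wh S_{\mscr{C}}\setminus(K\cup K^{\perp})$ only if $K\sq\Om$. Second, the partition $S=X\sqcup(K\cup K^\perp)\sqcup Y$ separating the two sides of $e$ is relative to $\mscr{C}$. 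Conjugating by $w_K$ on one side then gives $S'\in\mscr{T}_S$ and $\Om'$ with $\langle\Om'\rangle=H$. To certify that this twist flips the offending halfspace and changes nothing else, I would use \Cref{prop:markings_new}. Markings with core in $\Om_v$ and markers across the edge define the halfspaces in $\A_R$, and strong inseparability of the relevant pairs $(K,\gamma)$ comes from $S$--tuckedness. This gives that all markings across the edge agree, so the entire side $Y$ is consistently misplaced and a single $w_K$--twist (possibly iterated over a chain of nested $K$'s in $\Om_v$) corrects it.

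I expect the main obstacle to be exactly this step. It requires ensuring that the needed twist over $K$ genuinely extends to $W$, i.e.\ that $K^\perp$ and the separation in $\wh S_{\mscr{C}}$ are global and not merely visible inside $\langle\Om\rangle$. It also requires that one twist does not create new mismatches at other edges. For the first issue I would lean on inflexibility together with Lemmas~\ref{lem:not_parabolic} and~\ref{lem:hard}; for the second, on the semivisual structure and conjugacy of edge groups, reordering twists along the tree $\mc{F}$ from leaves inward. Finiteness of complexity then terminates the iteration, and the final $\Om'\sq S'$ is $R$--geometric.
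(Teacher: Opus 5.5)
\textbf{There is a genuine gap: the twist you need is assumed rather than shown to exist.} Your top-level plan agrees with the paper's. You get inflexibility of every $\Om'$ with $\langle\Om'\rangle=\langle\Om\rangle$ from \Cref{prop:inflexible}, tuckedness feeds strong inseparability, \Cref{prop:markings_new} compares halfspaces, and a defect is reduced by twists over spherical subsets of a vertex set. But you posit ``the irreducible spherical piece $K$ of $\Om_e$ (or $\Om_v$) across which the wrong halfspace is chosen'', together with a partition $S=X\sqcup(K\cup K^{\perp})\sqcup Y$ separating the two sides of $e$. In the first stage (making $p_V\in\pi_V(\mf{R}(W))$) the defect does not sit on a subset of $\Om_e$. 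It sits on walls $\mc{W}_a$ with $a\in V\setminus W$ (the set $\Delta(\mc{C})$). Nothing a priori provides a spherical $K\sq V$ with $K\cup K^{\perp}$ separating $W$ from $V$ in $\wh S_{\mscr{C}}$. Relativity to $\mscr{C}$ is automatic once such a separation exists, so existence is the whole difficulty. Inflexibility and tuckedness only constrain separating sets; they never produce one.

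The missing idea is the \emph{atom}: a minimal irreducible $T\sq A$, with $A$ an irreducible factor of $V$, such that $T\cap\Delta(\mc{C})\neq\emptyset$ and $T\in\mc{S}\cup\{A\}$. The paper works with progenies and the set $\delta_A K$ of points where a geodesic from $W$ avoiding $K\cup K^{\perp}$ meets $A$ \emph{exactly once}. Inflexibility and tuckedness only force a separating $J\supseteq K$ into $A$ (\Cref{lem:separation}(4)); it is this uniqueness that then rules $J$ out and gives strong inseparability. With this, \Cref{lem:atoms_1}, \Cref{lem:atoms_5} and \Cref{prop:atoms_2} show three things: $T$ is spherical, the irreducible factors of the set $\wh L$ of \Cref{prop:atoms_2} are separating, and twisting by $w_Tw_{\wh L}$ strictly decreases $d(p_A,\mc{C})$. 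This twist does not in general remove the defect. When $T=A$ and $A\cup A^{\perp}$ does not disconnect $\wh S_{\mscr{C}}$, there is no twist over $A$ at all, and one moves the geometric point to $w_Ap_A$ instead.

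Your description does fit the second (excellence) stage. There the defect is a spherical irreducible factor $K$ of $V\cap W$ with $\pi_K(p_W)=w_Kp_V$, and \Cref{lem:sep_excellent}(2), a further marking argument, shows that $K$ separates. \Cref{lem:hard} and \Cref{lem:not_parabolic} play no role in either stage; they only serve \Cref{prop:inflexible}.

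\textbf{The complexity bookkeeping also has a gap.} A $(V;W,K)$--twist conjugates by $w_K$ the tails of \emph{all} neighbours of $V$ that meet the same component of $\wh S_{\mscr{C}}\setminus(K\cup K^{\perp})$ as $W$. So a sum of defects over the edges of $\mc{F}$ need not decrease. What is needed is that pairs already repaired are not disturbed. The paper argues lexicographically: first on the number of $\mf{R}$--good pairs ($p_V\in\pi_V(\mf{R}(W))$), then on the number of $\mf{R}$--excellent pairs ($p_V=\pi_V(p_W)$). \Cref{lem:sep_good} guarantees that a good neighbour $X$ is moved, if at all, only by an element of $\langle V\cap X\rangle$. \Cref{lem:sep_excellent} guarantees that an excellent neighbour is not moved at all.

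Relatedly, the vanishing of your projection-distance complexity only yields goodness. A consistent halfspace choice for each $\omega\in V\cap W$ requires excellence. The correct nesting of $\mc{W}_\omega$ and $\mc{W}_{\omega'}$ for $\omega,\omega'$ in different vertex sets is a separate path argument in $\wh S_{\mscr{C}}$ (\Cref{lem:perfect->geometric}). \Cref{prop:2-compatible->geometric} does not apply, since such pairs are not $R$--compatible.

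Finally, re-choosing $\mc{T}$ after each twist is unnecessary and loses the comparison of the defect before and after. The paper keeps $\mc{T}$ fixed and transports the reference system along the neighbour-preserving bijection. This is exactly why it works with semivisual splittings (\Cref{lem:semivisual_stays}, \Cref{rmk:neighbour-preserving}).
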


\begin{rmk}
    We simply ask that $\Om$ be $\mscr{C}$--inflexible and $S$--tucked in \Cref{setup_two}, because these are the hypotheses under which we will be working most of the time. They suffice to produce a finite sequence of elementary twists of $(W,S)$ replacing $\Om$ with a set $\Om'$ that is a little closer to being $R$--geometric. The assumption that $\Om$ be $\mscr{T}_S$--parabolic and $\mscr{T}_S$--tucked is needed only to ensure that $\Om'$ be again inflexible and tucked, so that the geometrisation procedure can continue.

    Similarly, the hypothesis that \Cref{thmintro:main} hold for all proper parabolic subgroups is only needed in order to invoke \Cref{prop:inflexible} and deduce inflexibility from $\mscr{T}_S$--parabolicity.
\end{rmk}

\begin{rmk}\label{rmk:semivisual_motivation}
    The existence of the splitting $H\acts\mc{T}$ in \Cref{setup_two} is entirely equivalent to asking that $\Om$ be a $\mscr{C}$--shrub. In one direction, any $\mscr{C}$--shrub corresponds to a splitting with the above properties via \Cref{lem:visual_equivalence} (in fact, to an $\Om$--\emph{visual} splitting with these properties). Conversely, if $\Om$ is a set with a splitting $\langle\Om\rangle\acts\mc{T}$ as above, then we obtain a visual decomposition $\mf{F}=(\mc{F},\{\Om_v\}_v)$ of $\Om$, where $\mc{F}$ is the $\Om$--core of $\mc{T}$ and $\Om_v$ is as above. We can then collapse some edges $\mc{F}$ to ensure that $\mf{F}$ is strongly reduced, and the other requirements in \Cref{defn:shrub} immediately follow.

    The reason why we stated \Cref{setup_two} as above (instead of simply asking that $\Om$ be a shrub) is again to be found in the next subsections (particularly in the proof of \Cref{lem:semivisual_stays}). We will be modifying $S$ and $\Om$ by elementary twists, while keeping the subgroup $H$ unchanged, and it is convenient to maintain the splitting $H\acts\mc{T}$ unaltered throughout this procedure. The new sets $\Om'$ will all be $\mscr{C}$--shrubs, but $\mc{T}$ will only be $\Om'$--semivisual in general, even if we start with an $\Om$--visual splitting.

    Note that our elementary twists will not come from the splitting $\mc{T}$, since they need to extend to $W$, while $\mc{T}$ does not (a priori). The splitting $\mc{T}$ is only used in order to ``measure'' how far $\Om$ is from being $R$--geometric.
\end{rmk}

With these clarifications out of the way, we move on to the proof of \Cref{thm:step_two_repeated}. In preparation for it, we will have to develop a substantial amount of terminology and notation in the next subsection, which will culminate in the statement of the ``shortening theorem'' (\Cref{thm:shortening}). After this, \Cref{sub:proof_shortening} will contain the proofs of \Cref{thm:shortening} and, at the very end, \Cref{thm:step_two_repeated}.

\subsection{Statement of the shortening theorem}\label{sub:statement_shortening}

Let $\Om$, $H$ and $\mc{T}$ be as in \Cref{setup_two} throughout.

We encourage the reader to keep in mind the simplified setting in which the $\Om$--core of $\mc{T}$ is a single edge, so that we simply have $\Om=X\cup Y$ for two sets $X,Y\in\mscr{C}$ with $\{x,y\}\not\in\mscr{C}$ for all $x\in X\setminus Y$ and $y\in Y\setminus X$. In this case, most of the terminology and notation in this subsection become trivial, while the conceptual core of the argument (in \Cref{sub:proof_shortening}) remains essentially unaltered.

\subsubsection{Blobs}

Let $\mc{F}_{\Om}\sq\mc{T}$ be the $\Om$--core. For each vertex $v\in\mc{F}_{\Om}$ and edge $e\sq\mc{F}_{\Om}$, let $\Om_v$ and $\Om_e$ be the subsets of $\Om$ fixing $v$ and $e$. Let $\mc{V}_{\Om}^*$ be the set of vertices $v\in\mc{F}_{\Om}$ with $H_v=\langle\Om_v\rangle$. Define:
\begin{align*}
    \mc{V}_{\Om}&:=\{\Om_v\mid v\in\mc{V}_{\Om}^*\} , & \mc{E}_{\Om}&:=\{\Om_e\mid e\sq\mc{F}_{\Om}\} .
\end{align*}
Since $\mc{T}$ is $\Om$--semivisual, we have $\Om=\bigcup_{V\in\mc{V}_{\Om}}V$ and $H_e=\langle\Om_e\rangle$ for every edge $e\sq\mc{F}_{\Om}$. The other items in \Cref{setup_two} imply that the elements of $\mc{V}_{\Om}$ are precisely the maximal subsets of $\Om$ lying in $\mscr{C}$ (also see \Cref{rmk:maximal_V*}), and that any two sets in $\mc{E}_{\Om}$ are $H$--conjugate. 

For each $V\in\mc{V}_{\Om}$ and $E\in\mc{E}_{\Om}$, we define:
\begin{align*}
    \mc{E}_{\Om}(V)&:=\{E\in\mc{E}_{\Om}\mid E\sq V\}, &  {\rm blob}_{\Om}(E):=\hull_{\mc{T}}\{v\in\mc{F}_{\Om}\mid \Om_e\sq\Om_v\}.
\end{align*}
Since $\mc{T}$ is strongly reduced, each set $V\in\mc{V}_{\Om}$ fixes a unique point of $\mc{T}$ 
and so we have $V\setminus E\neq\emptyset$ for all $E\in\mc{E}_{\Om}(V)$. Each set ${\rm blob}_{\Om}(E)$ is a subtree of $\mc{F}_{\Om}$ and we refer to it as the \emph{$E$--blob}. The $E$--blob can be equivalently described as the union of all edges $e\sq\mc{F}_{\Om}$ with $\Om_e=E$. We also define the subset ${\rm Blob}_{\Om}(E)\sq\Om$ as the union of the sets $\Om_x$ with $x\in{\rm blob}_{\Om}(E)$.

As we vary the set $E\in\mc{E}_{\Om}$, the various $E$--blobs cover the tree $\mc{F}_{\Om}$. Distinct blobs share at most one point, since there are no proper inclusions between edge-stabilisers. The set $\mc{E}_{\Om}(V)$ is in $1$--to--$1$ correspondence with the set of blobs containing the only $V$--fixed vertex of $\mc{F}_{\Om}$.

\begin{rmk}\label{rmk:blob_intersections}
    All intersection points between blobs lie in $\mc{V}_{\Om}^*$. Indeed, suppose that we have $E\cup E'\sq\Om_v$ for a vertex $v\in\mc{F}_{\Om}$ and two distinct elements $E,E'\in\mc{E}_{\Om}$. The set $E\cup E'$ is elliptic, and so it is contained in a set $V\in\mc{V}_{\Om}$. At the same time, $E\cup E'$ does not fix any edges of $\mc{F}_{\Om}$, because $E\neq E'$ and there are no proper inclusions between edge-stabilisers of $\mc{T}$. Thus, $v$ is the only fixed point of $E\cup E'$ and we must have $V=\Om_v$. This shows that $v\in\mc{V}_{\Om}^*$.
\end{rmk}

\subsubsection{Neighbours and their tails}

Given $V\in\mc{V}_{\Om}$ and $E\in\mc{E}_{\Om}$, we define:
\begin{align*}
    {\rm Nbr}_{\Om}(V)&:=\{V'\in\mc{V}_{\Om}\mid V\cap V'\in\mc{E}_{\Om}\}, & {\rm Nbr}_{\Om}(V;E)&:=\{V'\in\mc{V}_{\Om}\mid E\sq V'\} .
\end{align*}
We refer to the elements of these sets are the \emph{neighbours} of $V$, and the neighbours of $V$ \emph{in the direction of $E$}, respectively. Note that we have a partition ${\rm Nbr}_{\Om}(V)=\bigsqcup_{E\in\mc{E}_{\Om}(V)}{\rm Nbr}_{\Om}(V;E)$. 

\begin{rmk}
    The term `neighbour' can be misleading: in general, $V\in\mc{V}_{\Om}$ and $W\in{\rm Nbr}_{\Om}(V)$ do \emph{not} fix adjacent vertices of $\mc{T}$. They just fix vertices of the same blob.
\end{rmk}

For a set $V\in\mc{V}_{\Om}$ and a neighbour\footnote{In this subsection, we only work with the group $H=\langle\Om\rangle$ and never need to refer to the ambient Coxeter group. Thus, the letter $W$ will typically denote elements of $\mc{V}_{\Om}$.} $W\in{\rm Nbr}_{\Om}(V)$, we define the subtree ${\rm tail}_{\Om}(V;W)\sq\mc{F}_{\Om}$ as the closure of the connected component of $\big(\mc{F}_{\Om}\setminus{\rm blob}_{\Om}(E)\big)\cup\{w\}$ containing $w$, where $w$ denotes the $W$--fixed vertex. As in the case of blobs, we denote by ${\rm Tail}_{\Om}(V;W)\sq\Om$ the union of the sets $\Om_x$ with $x\in{\rm tail}_{\Om}(V;W)$. In particular, we have ${\rm Tail}_{\Om}(V;W)\cap{\rm Blob}_{\Om}(V\cap W)=W$.

Fixing $V$ and varying $W\in{\rm Nbr}_{\Om}(V)$, the subtrees ${\rm tail}_{\Om}(V;W)$ and ${\rm blob}(V\cap W)$ cover $\mc{F}_{\Om}$. The sets ${\rm Tail}_{\Om}(V;W)$ together with $V$ cover $\Om$.

We will need the observations collected in the following lemma. Recall that a \emph{factor} of a subset $A\sq S$ is a subset $A_0\sq A$ such that $A\sq A_0\cup A_0^{\perp}$. An \emph{irreducible factor} is a factor that is irreducible and nonempty. The notation $\mc{I}_{\bullet}$ and $\mc{J}_{\bullet}$ was introduced at the start of \Cref{sect:markings}. (The reader interested only in the FC-type case can simply assume that $J\in\mc{I}_S$ in the following statement.)

\begin{lem}\label{lem:separation}
    Consider a set $V\in\mc{V}_{\Om}$ and a neighbour $W\in{\rm Nbr}_{\Om}(V)$. Then the following hold.
    \begin{enumerate}
        \item The set ${\rm Tail}_{\Om}(V;W)\setminus (V\cap W)$ is connected within the graph $\wh S_{\mscr{C}}$. 
        \item If $E\sq J\cup J^{\perp}$ for some $E\in\mc{E}_{\Om}(V)$ and some $J\in\mscr{C}$ with $J\sq\Om$, then $J\sq{\rm Blob}_{\Om}(E)$.
    \end{enumerate}
    If $V_0\sq V$ is an irreducible factor, we have the following additional facts.
    \begin{enumerate}
    \setcounter{enumi}{2}
        \item If $V_0\cup W$ intersects two distinct connected components of the graph $\wh S_{\mscr{C}}\setminus (J\cup J^{\perp})$ for some $J\in\mc{J}_S$ such that $J\setminus\Om$ is spherical, then we have $J\sq{\rm Blob}_{\Om}(V\cap W)$.
        \item If in the previous item we additionally have that $J\cap (V\setminus W)\neq\emptyset$, then $J\sq V_0$.
        \item If $W\cup U$ intersects two components of $\wh S_{\mscr{C}}\setminus (J\cup J^{\perp})$ for some $U\in{\rm Nbr}_{\Om}(V)$ and some $J\in\mc{J}_S$ with $J\setminus\Om$ spherical, then we have either $J\sq{\rm Blob}_{\Om}(V\cap W)$ or $J\sq{\rm Blob}_{\Om}(V\cap U)$.
    \end{enumerate}
\end{lem}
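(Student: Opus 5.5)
Items~(1) and~(2) follow from tree combinatorics inside $\mc{F}_{\Om}$, and Items~(3)--(5) follow by feeding Setup~\ref{setup_two} (tuckedness, inflexibility, the blob structure) into (2).

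For Item~(1), we will use Remark~\ref{rmk:disconnecting_shrub} applied to the restriction of the visual decomposition to ${\rm tail}_{\Om}(V;W)$, with $\Delta:=V\cap W$. Each vertex set lies in $\mscr{C}$ and so spans a clique in $\wh S_{\mscr{C}}$. It then suffices to check that no edge $e$ of the tail has $\Om_e\sq V\cap W$. Since $V\cap W=E$ and all edge sets are $H$--conjugate, hence of the same cardinality, such an inclusion would force $\Om_e=E$. Then $e$ would lie in ${\rm blob}_{\Om}(E)$, whereas the tail meets the blob only at $w$.

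For Item~(2), note that $J\in\mscr{C}$ is elliptic. By semivisuality and strong reduction, $J$ lies in some $\Om_v$ with $v\in\mc{V}^*_{\Om}$. We will argue that $v$ is in ${\rm blob}_{\Om}(E)$. Suppose not, and let $e'$ be the first edge on the path from $v$ to the blob. Every element of $J$ commutes with, or equals, every element of $E$. So each element of $E$ fixing the blob vertex lies in $J\cup J^{\perp}$, and is adjacent in $\wh S_{\mscr{C}}$ to all of $J\sq\Om_v$. By Serre's lemma and the fact that cliques are elliptic (Corollary~\ref{cor:2-compatible->compatible}), the set $E\cup J$ is elliptic. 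Hence $E$ fixes $v$, and with it the whole path, so $\Om_{e'}\supseteq E$. Equality of cardinalities then gives $\Om_{e'}=E$, so $e'$ lies in the blob, which is a contradiction.

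For Items~(3)--(5), pick points of $V_0\cup W$ (respectively $W\cup U$) in two distinct components of $\wh S_{\mscr{C}}\setminus(J\cup J^{\perp})$. By Item~(1) and the connectedness of cliques, each tail meets only one component once $V\cap W$ is removed. Hence the separation forces $J\cup J^{\perp}$ to contain $E=V\cap W$, or $V\cap U$ in Item~(5). If $J\setminus\Om$ is nonempty, $J$ is spherical there and $\Om$ is tucked (Definition~\ref{defn:tucked}), which forces $J\sq\Om$; in the spherical irreducible case, inflexibility gives $J\sq\Om$ as well. Once $J\sq\Om$ we need $J\in\mscr{C}$. For this, $J$ is tree--$2$--spherical, so $J$ is a clique in $\wh S_{\mscr{C}}$ and Corollary~\ref{cor:2-compatible->compatible} applies. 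Item~(2) then yields $J\sq{\rm Blob}_{\Om}(E)$.

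For Item~(4), suppose $J$ meets $V\setminus W$. Then $J$ and $V$ share a vertex fixed only at $v$, so $J\sq V$ by the argument of Remark~\ref{rmk:blob_intersections}. Irreducibility of $J$, together with $J$ separating $V_0$ from $W$, should force $J$ to lie in the single irreducible factor $V_0$. The main obstacle is the case $J\cap E$ nonempty, where separation alone may not determine which factor contains $J$. There I would first try to show that otherwise $V_0\sq J^{\perp}$, contradicting the separation, since $V_0$ would then lie in $J\cup J^{\perp}$.
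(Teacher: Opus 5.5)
Your overall structure matches the paper. Item~(1) is essentially its path-along-the-core argument, phrased through \Cref{rmk:disconnecting_shrub}; your check that no tail edge has $\Om_e\sq V\cap W$ is exactly what is needed. Items~(3)--(5) follow the paper in getting $J\sq\Om$ from inflexibility or tuckedness and then invoking Item~(2). However, several steps fail as written, although each is repairable in a line.

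\emph{Item~(2).} The inference ``$E\cup J$ is elliptic, hence $E$ fixes $v$'' is invalid for an arbitrary $v\in\mc{V}^*_\Om$ with $J\sq\Om_v$. In fact the intermediate claim that such a $v$ lies in ${\rm blob}_\Om(E)$ is false in general: $J$ may fix vertices far from the blob, e.g.\ $J=\{j\}$ with $j\in E$ also lying in a different edge set.
\begin{itemize}
\item The repair: $E\cup J$ spans a clique of $\wh S_{\mscr{C}}$, so it lies in $\mscr{C}$. By semivisuality, $E\cup J\sq\Om_u$ for some $u\in\mc{V}^*_\Om$. Such $u$ lies in ${\rm blob}_\Om(E)$ by definition, hence $J\sq\Om_u\sq{\rm Blob}_\Om(E)$.
\item Comparison with the paper: the paper takes the vertex of the fixed set of $J$ nearest the blob and the bridge edge $f$ at the blob. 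It shows ${\rm Blob}_\Om(E)\cap(J\cup J^\perp)\sq\Om_f$ and then uses $E\not\sq\Om_f$. Your corrected route is shorter. It also places $E\cup J$ in a single $\Om_u$ with $u$ in the blob.
\item That extra information is what justifies $J\sq V$ in Item~(4), which your appeal to \Cref{rmk:blob_intersections} leaves vague. An element of $J\cap(V\setminus W)$ is not in $E$, so it fixes no blob edge. Hence $v$ is its only fixed vertex in the blob, which forces $u=v$.
\end{itemize}

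\emph{Items~(3) and~(5).} The inclusion $V\cap W\sq J\cup J^\perp$ does not follow from Item~(1), which concerns removing $V\cap W$, not $J\cup J^\perp$.
\begin{itemize}
\item The real reason: a point of $(V\cap W)\setminus(J\cup J^\perp)$ is adjacent to every other point of $V\cup W$, so it would join the two components.
\item In Item~(5): $V$ is a clique containing both $V\cap W$ and $V\cap U$. So at least one of these sets lies in $J\cup J^\perp$, and the conclusion is a disjunction.
\item The case split for $J\sq\Om$ should be on whether $J$ is spherical (use inflexibility) or not (use tuckedness). Tuckedness needs $J\in\mscr{C}$ \emph{before} it is invoked; this holds because $J$ is $2$--spherical.
\end{itemize}

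\emph{Item~(4).} The ``obstacle'' you identify is illusory. Since $J\sq V\sq V_0\cup V_0^\perp$ and $J$ is irreducible, either $J\sq V_0$ or $J\sq V_0^\perp$. The latter gives $V_0\sq J^\perp$, so $(V_0\cup W)\setminus(J\cup J^\perp)\sq W$ is a clique, contradicting the separation. Whether $J$ meets $E$ plays no role, and this dichotomy is exactly the paper's argument.
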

\begin{proof}
    Regarding Item~(1), consider two elements $\om,\om'\in{\rm Tail}_{\Om}(V;W)\setminus (V\cap W)$. Let $v,v'\in\mc{F}_{\Om}$ be the two vertices fixed by $\om$ and $\om'$ that are closest to each other. Let $\gamma\sq\mc{F}_{\Om}$ be the geodesic from $v$ to $v'$ and let $E_1,\dots,E_k$ be the distinct elements of $\mc{E}_{\Om}$ whose blobs share nontrivial arcs with $\gamma$, in order of appearance as we move from $v$ to $v'$. The sets $E_i\cup E_{i+1}$ are all elliptic in $\mc{T}$, as are the sets $\{\om\}\cup E_1$ and $E_k\cup\{\om'\}$. Thus, all of these sets lie in $\mscr{C}$ and span cliques in $\wh S_{\mscr{C}}$. Since the $E_i$ are all different from $V\cap W\in\mc{E}_{\Om}$, they each contain a point $\om_i\in E_i\setminus (V\cap W)$. In conclusion, $\om,\om_1,\dots,\om_k,\om'$ is a path in $\wh S_{\mscr{C}}\setminus (V\cap W)$ connecting $\om$ to $\om'$.

    For Item~(2), suppose that $J\not\sq{\rm Blob}_{\Om}(E)$. Since $J\in\mscr{C}$ and $J\sq\Om$, the set $J$ fixes a vertex $v\in\mc{F}_{\Om}$. Choose $v$ so that it is closest to ${\rm blob}_{\Om}(E)$, and let $w\in{\rm blob}_{\Om}(E)$ be the vertex closest to $v$. Let $f\sq\mc{F}_{\Om}$ be the edge incident to $w$ in the direction of $v$. We have ${\rm Blob}_{\Om}(E)\cap (J\cup J^{\perp})\sq\Om_f$. Since $E\neq\Om_f$, there is a point $\om\in E\setminus\Om_f$ and we have $\om\not\in J\cup J^{\perp}$. Thus $E\not\sq J\cup J^{\perp}$.

    We now discuss Items~(3) and~(4). Consider a set $J\in\mc{J}_S$ such that $J\setminus\Om$ is spherical and the union $V\cup W$ intersects two components of the graph $\wh S_{\mscr{C}}\setminus (J\cup J^{\perp})$. Observe that we must have $J\sq\Om$: this follows from the fact that $\Om$ is $\mscr{C}$--inflexible when $J$ is spherical, and from the fact that $\Om$ is $S$--tucked when $J$ is not. Since each point of $V\cap W$ is adjacent to all other points of $V\cup W$ in the graph $\wh S_{\mscr{C}}$, we must have $V\cap W\sq J\cup J^{\perp}$. Item~(2) then implies that $J\sq{\rm Blob}_{\Om}(V\cap W)$ as desired. If $J$ intersects $V\setminus W$, it is clear that we must have $J\sq V$. In this case, either $J\sq V_0$ or $J\sq V_0^{\perp}$, but in the latter case the set $(V_0\cup W)\setminus (J\cup J^{\perp})$ is actually contained in $W$, and so it spans a clique in $\wh S_{\mscr{C}}$, contradicting the fact that it is disconnected. 

    Finally, regarding Item~(5), we again have $J\sq\Om$ by inflexibility and tuckedness. As above, the union $J\cup J^{\perp}$ must contain either $V\cap W$ or $V\cap U$. Item~(2) then yields that $J$ is contained either in ${\rm Blob}_{\Om}(V\cap W)$, or in ${\rm Blob}_{\Om}(V\cap U)$ as desired.
\end{proof}

\subsubsection{Reference systems}

We now introduce a gadget that allows us to measure how far the set $\Om$ is from being $R$--geometric. A \emph{reference system} $\mf{R}$ for $\Om$ is the data of a $\langle V\rangle$--standard subcomplex $\mf{R}(V)\sq\A_R$ and a $V$--geometric point $p_V\in\mf{R}(V)$ for each set $V\in\mc{V}_{\Om}$ (see \Cref{subsub:standard} for definitions). Given $\mf{R}$, we denote by $\pi_V\colon\A_R\ra\mf{R}(V)$ the nearest-point projection.

\begin{defn}\label{defn:good/excellent}
    A pair $(V;W)$ with $V\in\mc{V}_{\Om}$ and $W\in{\rm Nbr}_{\Om}(V)$ is \emph{$\mf{R}$--good} if $p_V\in\pi_V(\mf{R}(W))$. It is \emph{$\mf{R}$--excellent} if $p_V=\pi_V(p_W)$. In this case, we also say that $W$ is an $\mf{R}$--good/excellent neighbour. 
    
    A set $E\in\mc{E}_{\Om}(V)$ is \emph{$\mf{R}$--good} if all pairs $(V;W)$ with $V,W\in\mc{V}_{\Om}$ and $V\cap W=E$ are $\mf{R}$--good. The reference system $\mf{R}$ is \emph{perfect} if all pairs $(V;W)$ with $V\in\mc{V}_{\Om}$ and $W\in{\rm Nbr}_{\Om}(V)$ are $\mf{R}$--excellent.
\end{defn}

In order to prove \Cref{thm:step_two_repeated}, we will have to modify $S$ and $\Om$ by performing elementary twists until $\Om$ is replaced by an $R$--geometric set $\Om'$. In view of the next lemma, our goal will actually be to ensure that $\Om'$ admits a perfect reference system.

\begin{lem}\label{lem:perfect->geometric}
    If $\Om$ admits a perfect reference system $\mf{R}$, then $\Om$ is $R$--geometric.
\end{lem}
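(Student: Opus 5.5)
By \Cref{lem:1.6}, it suffices to show that $\Om$ is universal and $2$--geometric in $\A_R$. Universality is automatic, since $\Om\sq S$. So the task is to choose, for each $\om\in\Om$, a halfspace $\mc{H}_\om$ of $\mc{W}_\om$ so that every pair $\{\mc{H}_\om,\mc{H}_{\om'}\}$ is geometric.

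The choice will be driven by the perfect reference system. For each $V\in\mc{V}_{\Om}$, the point $p_V$ is $V$--geometric, so it determines a geometric choice of halfspaces for $V$: take $\mc{H}^V_\om$ to be the side of $\mc{W}_\om$ containing $p_V$. The first step is to check that these local choices agree. Suppose $\om\in V\cap W$ with $W\in{\rm Nbr}_{\Om}(V)$. Excellence gives $\pi_V(p_W)=p_V$, and no wall crossing $\mf{R}(V)$ separates $p_W$ from $\pi_V(p_W)$. The wall $\mc{W}_\om$ crosses $\mf{R}(V)$, so $p_V$ and $p_W$ lie on the same side of $\mc{W}_\om$.

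For general $V,V'\ni\om$, I expect $\om$ to lie in every set fixed along the tree path between the corresponding vertices, since the fixed set of $\om$ is a subtree. In that case the chain of blobs joining them consists of consecutive neighbours all containing $\om$, and the argument above applies step by step. This gives a well-defined $\mc{H}_\om$.

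Next, I verify the pairwise conditions. If $\{\om,\om'\}$ lies in some $V$, geometricity is inherited from $p_V$. Otherwise $\{\om,\om'\}\notin\mscr{C}$, and the aim is to show that $\mc{W}_\om$ and $\mc{W}_{\om'}$ are disjoint, with each chosen halfspace containing the other wall. I would pick $V\ni\om$ and $V'\ni\om'$ whose fixed vertices are closest in the tree, and join them by a chain $V=V_0,V_1,\dots,V_k=V'$ of consecutive neighbours. Perfectness then says that $p_{V_i}$ projects to $p_{V_{i+1}}$ and vice versa.

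The intended argument runs by induction along this chain. The claim is that $p_{V'}$, together with the whole standard subcomplex $\mf{R}(V')$ and hence the wall $\mc{W}_{\om'}$, lies in the chosen side of $\mc{W}_\om$. The tool is \Cref{lem:auxiliary} and its proof idea: projections onto standard subcomplexes, combined with the fact that $\om\notin V\cap V_1$ so $\mc{W}_\om$ does not cross $\pi_V(\mf{R}(V_1))$.

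The main obstacle is exactly this last step: controlling walls along a chain of several neighbours. In the single-edge case it follows directly from projections, but for longer chains one must show that $\mc{W}_\om$ separates none of the $\mf{R}(V_i)$ from $p_V$. I would attempt this by inducting on the tree distance, using that every wall meeting $\mf{R}(V_i)$ and $\mc{W}_\om$ would force $\om$ into a common elliptic set with $V_i$, contradicting minimality of the chosen path.
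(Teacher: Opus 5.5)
Your first half is correct and matches the paper. The local choices $\mc{H}^V_\om$ agree across neighbours because $\pi_V(p_W)=p_V$ and $\mc{W}_\om$ crosses $\mf{R}(V)$. The reduction to $2$--geometricity goes through \Cref{lem:1.6}, and the case $\{\om,\om'\}\sq V$ is immediate.

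\textbf{The gap} is the step you flag yourself: proving $\mc{W}_{\om'}\sq\mc{H}_\om$ when $\om'$ lies beyond the first neighbour. Your projection argument only handles the link $V\to V_1$. For $i\geq1$ the wall $\mc{W}_\om$ does not cross $\mf{R}(V_i)$, since $\om\notin V_i$. Nearest-point projection to $\mf{R}(V_i)$ only controls walls crossing $\mf{R}(V_i)$, so the relation $p_{V_i}=\pi_{V_i}(p_{V_{i+1}})$ says nothing about which side of $\mc{W}_\om$ contains $\mf{R}(V_{i+1})$. The mechanism you propose for the induction is also unjustified as stated. Nothing prevents a wall crossing $\mf{R}(V_i)$ from meeting $\mc{W}_\om$: take $\mc{W}_u$ for some $u\in(V\cap V_1)\cap(V_1\cap V_2)$ commuting with $\om$. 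That only puts $\om$ in an elliptic set with $u$, not with $V_i$, so it contradicts nothing.

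What is missing is a reason why the walls of everything past the first neighbour lie on a single side of $\mc{W}_\om$. The paper gets this with no chain at all. Let $W$ be the neighbour with $V'\sq{\rm Tail}_{\Om}(V;W)$.
\begin{itemize}
\item By \Cref{lem:separation}(1), ${\rm Tail}_{\Om}(V;W)\setminus(V\cap W)$ is connected in $\wh S_{\mscr{C}}$.
\item None of its elements is $\wh S_{\mscr{C}}$--adjacent to $\om$, so their walls miss $\mc{W}_\om$.
\item Walls of $\wh S_{\mscr{C}}$--adjacent elements are never separated by a wall of $\A_R$; this is the (M1)$_{\mscr{C}}$ argument.
\end{itemize}
Hence $\mc{W}_{\om'}$ lies on the same side of $\mc{W}_\om$ as $\mc{W}_\psi$ for any $\psi\in W\setminus V$. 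Your one-step projection argument then places $\mc{W}_\psi$ in $\mc{H}_\om$.

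Alternatively, your induction can be repaired by transporting through a specific wall. Let $E_j:=V_{j-1}\cap V_j$ be the successive edge sets along the geodesic $[v,v']$.
\begin{itemize}
\item The sets $E_j$ are pairwise distinct, since a geodesic meets each blob in a single arc. Distinct sets in $\mc{E}_{\Om}$ are conjugate and hence not nested, so some $x\in E_{i+1}\setminus E_1$ exists.
\item Since $x\notin E_1$, its fixed subtree stays inside ${\rm tail}_{\Om}(V;V_1)$. That tail misses the fixed subtree of $\om$.
\item So $\{\om,x\}$ is not elliptic, hence $\om x$ has infinite order and $\mc{W}_x\cap\mc{W}_\om=\emptyset$.
\end{itemize}
As $\mc{W}_x$ crosses both $\mf{R}(V_i)$ and $\mf{R}(V_{i+1})$, it forces them onto the same side of $\mc{W}_\om$.
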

\begin{proof}
    For each set $V\in\mc{V}_{\Om}$ and element $\om\in V$, let $\mc{H}_{\om}^V\sq\A_R$ be the side of the wall $\mc{W}_{\om}$ that contains the $V$--geometric point $p_V\in\mf{R}(V)$. 

    We claim that, for each $\om\in\Om$, the halfspace $\mc{H}_{\om}^V$ is independent of the choice of the set $V\in\mc{V}_{\Om}$ containing $\om$. For this, it suffices to show that, if $\om\in V\cap W$ for some $V\in\mc{V}_{\Om}$ and $W\in{\rm Nbr}_{\Om}(V)$, then $\mc{H}_{\om}^V=\mc{H}_{\om}^W$. In turn, the latter equality follows from the fact that we have $p_V=\pi_V(p_W)$.
    
    By the claim, we obtain a choice of halfspaces $\{\mc{H}_{\om}\}_{\om\in\Om}$ in $\A_R$. Since $\Om$ is contained in $S$, it is universal. Thus, by \Cref{lem:1.6}, it suffices to check that this halfspace choice is $2$--geometric in $\A_R$. For this, consider a pair of distinct elements $\om,\om'\in\Om$.

    Let $v,v'\in\mc{F}_{\Om}$ be the closest pair of vertices that are fixed by $\om$ and $\om'$, respectively. If $v=v'$, then $\{\om,\om'\}$ is contained in some set $V\in\mc{V}_{\Om}$, and the fact that the point $p_V$ is $V$--geometric implies that the halfspace pair $\{\mc{H}_{\om},\mc{H}_{\om'}\}$ is geometric. 
    
    Suppose instead that $v\neq v'$. In this case, we have $\{\om,\om'\}\not\in\mscr{C}$ and hence the walls $\mc{W}_{\om}$ and $\mc{W}_{\om'}$ are disjoint. We will complete the proof by showing that the wall $\mc{W}_{\om'}$ is contained in the halfspace $\mc{H}_{\om}$. (A symmetric argument shows that $\mc{W}_{\om}$ is contained in $\mc{H}_{\om'}$ as well.)
    
    We have $v,v'\in\mc{V}_{\Om}^*$ by \Cref{rmk:blob_intersections}, and so the sets $V:=\Om_v$ and $V'=\Om_{v'}$ lie in $\mc{V}_{\Om}$. Let $W\in{\rm Nbr}_{\Om}(V)$ be the neighbour for which $V'\sq{\rm Tail}_{\Om}(V;W)$. By \Cref{lem:separation}(1), there is a path $\gamma\sq \wh S_{\mscr{C}}\setminus (V\cap W)$ connecting $\om'$ to a point $\psi\in W\setminus V$. Since $\om\in V\setminus W$, the walls $\mc{W}_{\phi}$ with $\phi\in\gamma$ are all disjoint from $\mc{W}_{\om}$ and hence they are all on the same side of $\mc{W}_{\om}$ (for instance, because this corresponds to a repeated application of move (M1)$_{\mscr{C}}$ from \Cref{sub:moves}). Let $\mc{C}\sq\mf{R}(W)$ be the $\langle V\cap W\rangle$--standard subcomplex that contains the point $p_W$. The projection $\pi_V(\mc{C})$ is a $\langle V\cap W\rangle$--standard subcomplex of $\mf{R}(V)$, which is disjoint from $\mc{W}_{\om}$ since $\om\in V\setminus W$. Since $p_V=\pi_V(p_W)\in \pi_V(\mc{C})$, the halfspace $\mc{H}_{\om}=\mc{H}_{\om}^V$ contains the subcomplexes $\pi_V(\mc{C})$ and $\mc{C}$. The wall $\mc{W}_{\psi}$ is disjoint from $\mc{W}_{\om}$ and crosses an edge of $\mf{R}(W)$ incident to $p_W\in\mc{C}$. This shows that $\mc{W}_{\psi}\sq\mc{H}_{\om}$ and hence $\mc{W}_{\om'}\sq\mc{H}_{\om}$ as desired, concluding the proof of the lemma.
\end{proof}

\subsubsection{Separating and non-separating sets}

Given $V\in\mc{V}_{\Om}$ and $W\in{\rm Nbr}_{\Om}(V)$, we define:
\begin{align*}
    \mc{N}(V,W)&:=\{J\sq S \text{ irreducible} \mid \text{$V\cup W$ intersects $\leq 1$ components of $\wh S_{\mscr{C}}\setminus (J\cup J^{\perp})$} \} , \\
    \mc{S}(V,W)&:=\{J\sq S \text{ irreducible} \mid \text{$V\cup W$ intersects $2$ components of $\wh S_{\mscr{C}}\setminus (J\cup J^{\perp})$} \} .
\end{align*}
We refer to the elements of $\mc{N}(V,W)$ and $\mc{S}(V,W)$ as \emph{non-separating} and \emph{separating} sets, respectively. We denote by $\mc{N}_{\rm sph}(V,W)$ and $\mc{S}_{\rm sph}(V,W)$ the subsets of spherical elements, so that:
\[ \mc{I}_S=\mc{N}_{\rm sph}(V,W)\sqcup\mc{S}_{\rm sph}(V,W) .\]
For $J\in\mc{S}(V,W)$, we also define:
\[ {\rm Comp}_{\Om}(V;W,J):=\{U\in{\rm Nbr}_{\Om}(V)\mid U\not\sq J\cup J^{\perp} \text{ and } J\in\mc{N}(W,U) \} . \]
In other words, the sets $U\in{\rm Comp}_{\Om}(V;W,J)$ are the neighbours of $V$ that intersect the same component of $\wh S_{\mscr{C}}\setminus (J\cup J^{\perp})$ as $W$. Note that each set in $\mc{V}_{\Om}$ spans a clique in the graph $\wh S_{\mscr{C}}$ and hence intersects at most one component of $\wh S_{\mscr{C}}\setminus (J\cup J^{\perp})$. Also note that we have $J\in\mc{S}(V,U)$ for every $U\in{\rm Comp}_{\Om}(V;W,J)$.

As mentioned, the proof of \Cref{thm:step_two_repeated} will require modifying $\Om$ and $S$ by elementary twists. We now make a few preliminary observations about this twisting procedure.

\begin{rmk}[Performing a twist]\label{rmk:performing_twist}
    Consider a set $V\in\mc{V}_{\Om}$, a neighbour $W\in{\rm Nbr}_{\Om}(V)$, and a set $K\in\mc{S}_{\rm sph}(V,W)$ with $K\sq V$. We can then modify $S$ and $\Om$ by an elementary twist with respect to $K$ relative to $\mscr{C}$. More precisely, there exists a generating set $S'\in\mscr{T}_S$ containing the set 
    \[ \Om':=V\cup\bigcup_{X\in\mc{X}}{\rm Tail}(V;X)\cup\bigcup_{Y\in\mc{Y}} w_K\big({\rm Tail}(V;Y)\big)w_K ,\]
    where $\mc{X}:={\rm Nbr}_{\Om}(V)\setminus{\rm Comp}_{\Om}(V;W,K)$ and $\mc{Y}:={\rm Comp}_{\Om}(V;W,K)$. In particular, we have $w_KWw_K\sq\Om'$. (Here the fact that $\Om'$ is a union of tails is a consequence of \Cref{lem:separation}.) For future reference, we say that $S'$ and $\Om'$ are obtained by a \emph{$(V;W,K)$--twist}.
\end{rmk}

A fundamental observation is that the set $\Om'$ constructed in \Cref{rmk:performing_twist} still fits in the framework of this subsection. This is the content of the next lemma and it is precisely why we are forced to work with semivisual splittings instead of visual ones.

\begin{lem}\label{lem:semivisual_stays}
    Consider $V\in\mc{V}_{\Om}$, $W\in{\rm Nbr}_{\Om}(V)$, and $K\in\mc{S}_{\rm sph}(V,W)$ with $K\sq V$. If $\Om'$ is obtained by a $(V;W,K)$--twist, then we have $\langle\Om\rangle=\langle\Om'\rangle$ and the splitting $H\acts\mc{T}$ is $\Om'$--semivisual. 
\end{lem}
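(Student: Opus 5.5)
The plan is to verify the two assertions separately, working throughout with the $\Om$--core $\mc{F}_{\Om}\sq\mc{T}$, the vertex $v\in\mc{V}_{\Om}^*$ fixed by $V$, and the description of $\Om'$ as a union of $V$, of tails ${\rm Tail}_{\Om}(V;X)$ for $X\in\mc{X}$, and of conjugated tails $w_K{\rm Tail}_{\Om}(V;Y)w_K$ for $Y\in\mc{Y}$. Since $K\sq V$, we have $w_K\in\langle V\rangle\le H$, so $\langle\Om'\rangle\le H$. Conversely, every element of $\Om$ lies in $V$ or in some tail, and $w_K$ conjugates the conjugated tails back, so $\Om\sq\langle\Om'\rangle$. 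This gives $\langle\Om\rangle=\langle\Om'\rangle$.

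For semivisuality, the key observation is that $w_K$ fixes $v$, because $K\sq V=\Om_v$. Hence $w_K\,{\rm tail}_{\Om}(V;Y)$ is again a subtree of $\mc{T}$ that meets the $v$--star. The subsets of $\Om'$ fixing its points are then read off as follows. Points of ${\rm tail}_{\Om}(V;X)$ for $X\in\mc{X}$ carry the same sets as before. Points $w_Kx$ with $x\in{\rm tail}_{\Om}(V;Y)$ carry the set $w_K\Om_xw_K$. The vertex $v$ and the blobs through $v$ require some care, and I will use \Cref{lem:separation}(2)--(4) there.

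The structural claim I will prove is that the $\Om'$--core $\mc{F}_{\Om'}$ is the union of ${\rm blob}$-pieces at $v$ for the $\mc{X}$--directions, the tails ${\rm tail}_{\Om}(V;X)$, and the translates $w_K\,{\rm tail}_{\Om}(V;Y)$ together with $w_K$ applied to the corresponding blob arcs. Each of these pieces is $\Om'$--equivariantly a copy of a piece of $\mc{F}_{\Om}$. From this, the edge-stabiliser condition $H_e=\langle\Om'_e\rangle$ follows by transport: for an edge $e$ of $\mc{F}_{\Om}$ we have $H_{w_Ke}=w_KH_ew_K=\langle w_K\Om_ew_K\rangle$. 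Likewise the set $\mc{V}^*_{\Om'}$ contains $v$ and the translates of $\mc{V}^*_{\Om}$, with stabilisers generated by the translated sets.

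The main obstacle is Item~(2) of semivisuality. It requires that every $\mc{T}$--elliptic subset of $\Om'$ lie in a single $\Om'_u$ with $u\in\mc{V}^*_{\Om'}$; one must in particular rule out elliptic subsets that mix elements of $X$--tails with conjugated $Y$--tail elements. An elliptic subset of $\Om'$ generates a subgroup fixing some vertex, and ellipticity is preserved under conjugation by $w_K$. The point is then that elements of $w_K{\rm Tail}(V;Y)\setminus V$ and of ${\rm Tail}(V;X)\setminus V$ have fixed sets in distinct branches at $v$, namely the images of branches through different blobs or through different components of $\wh S_{\mscr{C}}\setminus(K\cup K^{\perp})$. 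Helly's lemma then forces any common fixed point to be $v$ itself, or to lie in a blob through $v$ that meets both sides. I expect the delicate case to be a blob ${\rm blob}_{\Om}(E)$ at $v$ containing neighbours from both $\mc{X}$ and $\mc{Y}$. Here \Cref{lem:separation}(2)--(3) gives $K\sq E\cup E^{\perp}$-type information: the separation by $K$ together with $K\sq V$ forces $E\sq K\cup K^{\perp}$. I therefore expect $w_K$ to normalise $\langle E\rangle$ and, by \Cref{lem:separation}(2), to preserve the relevant blob. This is the step where I would check carefully that the translated and untranslated pieces glue along $\langle E\rangle$--standard edges, so that elliptic sets still land in a single vertex set. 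Once that is done, strong reducedness of $\mc{T}$ and \Cref{rmk:maximal_V*} identify the maximal elliptic subsets of $\Om'$ with the $\Om'_u$, $u\in\mc{V}^*_{\Om'}$.
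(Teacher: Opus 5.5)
Your proof of $\langle\Om'\rangle=\langle\Om\rangle$ is fine, and so is your transport of edge-stabilisers by $w_K$ (which fixes $v$); both agree with the paper. The gap is Item~(2) of semivisuality. You correctly single it out as the crux, but you do not prove it, and the route you sketch does not work as stated.

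\emph{First}, your expectation that $w_K$ normalises $\langle E\rangle$ once $E\sq K\cup K^{\perp}$ is false in general. Conjugation by $w_K$ acts on $K$ by the opposition involution. So if $E\cap K$ is not invariant under it (e.g.\ $K=\{k_1,k_2\}$ of type $A_2$ and $E\cap K=\{k_1\}$), then $w_KEw_K\neq E$. Take $X\in\mc{X}$ and $Y\in\mc{Y}$ in the direction of $E$, with fixed vertices $x,y$. The edges at $v$ pointing to $x$ and to $w_Ky$ then have the different stabilisers $\langle E\rangle$ and $\langle w_KEw_K\rangle$, so the twist splits the blob rather than preserving it. \Cref{lem:separation}(2) with $J=K$ does not help: it only yields $K\sq{\rm Blob}_{\Om}(E)$, which is automatic from $K\sq V$.

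\emph{Second}, the ``distinct branches at $v$'' claim is not justified by the tree alone. The relevant branches are those through the first edge $e_X$ of $[v,x]$ and through $w_Ke_Y$. These two edges have the same stabiliser as soon as $V\cap X=w_K(V\cap Y)w_K$. Since $\mc{F}_{\Om}$ need not be a fundamental domain, nothing you say rules out $e_X=w_Ke_Y$.

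\emph{Third}, you do not address elliptic sets that mix elements of $V$ (notably of $K$, which $w_K$ permutes) with elements of a conjugated tail.

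The missing idea, which is how the paper handles Item~(2), is to argue through $\mscr{C}$ rather than through the tree.
\begin{itemize}
\item Since $\mc{T}$ is relative to $\mscr{C}$ and its vertex-stabilisers are generated by sets in $\mscr{C}$, a subset of $\Om'\sq S'$ is $\mc{T}$--elliptic if and only if it lies in $\mscr{C}$.
\item Write $S=X_0\sqcup K\sqcup K^{\perp}\sqcup Y_0$ for the partition realising the twist. Because the twist is relative to $\mscr{C}$, every subset of $S'$ lying in $\mscr{C}$ is contained in $X_0\cup K\cup K^{\perp}$ or in $K\cup K^{\perp}\cup w_KY_0w_K$.
\item From this one gets a trichotomy: an elliptic $Z\sq\Om'$ lies in $V$, or in some ${\rm Tail}_{\Om}(V;X)$ with $X\in\mc{X}$, or in some $w_K{\rm Tail}_{\Om}(V;Y)w_K$ with $Y\in\mc{Y}$.
\item In each case $Z$ or $w_KZw_K$ is an elliptic subset of $\Om$, so it lies in $\Om_u$ for some $u\in\mc{V}^*_{\Om}$ in the corresponding piece. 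Hence $Z$ lies in $\Om'_u$ or $\Om'_{w_Ku}$. Here $\mc{V}^*_{\Om'}$ consists of $v$, the points of $\mc{V}^*_{\Om}\cap\tau_X$ for $X\in\mc{X}$, and the points of $w_K(\mc{V}^*_{\Om}\cap\tau_Y)$ for $Y\in\mc{Y}$, where $\tau_U:={\rm tail}_{\Om}(V;U)$.
\end{itemize}
The only point needing care in the trichotomy is a set $Z\sq K\cup K^{\perp}$ containing some $t\in K^{\perp}\setminus V$. In that case $Z\cup K$ spans a clique of $\wh S_{\mscr{C}}$, so it lies in $\mscr{C}$ and is contained in $\Om_u$ for some $u\in\mc{V}^*_{\Om}$ in the tail containing $t$. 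Since $K\sq\Om_u$, both $Z$ and $w_KZw_K$ lie in $\Om_u$. This argument bypasses entirely the question of how $w_K$ moves blobs.
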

\begin{proof}
    The fact that $\langle\Om\rangle=\langle\Om'\rangle$ is clear, since $K\sq\Om$. To check that $\mc{T}$ is still $\Om'$--semivisual, define $\mc{X}$ and $\mc{Y}$ as in \Cref{rmk:performing_twist}. For each neighbour $U\in{\rm Nbr}_{\Om}(V)$, write $\tau_U:={\rm tail}_{\Om}(V;U)\sq\mc{F}_{\Om}$ for simplicity. Recall that $\mc{F}_{\Om}$ is covered by the blobs containing $v$ together with the pairwise-disjoint subtrees $\tau_U$ with $U\in{\rm Nbr}_{\Om}(V)$. Now, define $\mc{V}_{\Om'}^*$ as the set of points $u\in\mc{T}$ such that:
    \begin{itemize}
        \item either $u=v$;
        \item or $u\in\mc{V}_{\Om}^*\cap\tau_X$ for some $X\in\mc{X}$;
        \item or $u=w_Ky$ for some $y\in\mc{V}_{\Om}^*\cap\tau_Y$ with $Y\in\mc{Y}$.
    \end{itemize}
    For each $u\in\mc{V}_{\Om'}^*$, define $\Om_u':=\Om_u$ in the first two cases and $\Om_u':=w_K\Om_yw_K$ in the third. 
    
    It is clear that we have $\Om_u'\sq\Om'$ and $H_u=\langle\Om'_u\rangle$ for each $u\in\mc{V}_{\Om'}^*$. Note that a subset of $\Om'$ is elliptic in $\mc{T}$ if and only if it lies in $\mscr{C}$, in which case it must be contained either in $V$, or in some ${\rm Tail}(V;X)$ with $X\in\mc{X}$, or again in some $w_K\big({\rm Tail}(V;Y)\big)w_K$ with $Y\in\mc{Y}$. Thus, every elliptic subset of $\Om'$ is contained in $\Om_u'$ for some $u\in\mc{V}_{\Om'}^*$, and in particular the sets $\Om_u'$ cover $\Om'$.
    
    Let $\mc{F}_{\Om'}$ be the convex hull of $\mc{V}_{\Om'}^*$ in $\mc{T}$, and observe that this is the $\Om'$--core of $\mc{T}$. We are left to check that the $H$--stabiliser of every edge of $\mc{F}_{\Om'}$ is generated by a subset of $\Om'$. For an edge $e\sq\mc{F}_{\Om'}$, we have the following four cases to check. 
    \begin{itemize}
        \item If $e\sq\tau_X$ for some $X\in\mc{X}$, then $e\sq\mc{F}_{\Om}$ and $H_e=\langle\Om_e\rangle$, where $\Om_e\sq{\rm Tail}_{\Om}(V;X)\sq\Om'$.
        \item If $e\sq w_K\tau_Y$ for some $Y\in\mc{Y}$, then $f:=w_Ke\sq\mc{F}_{\Om}$ and $H_e=\langle w_K\Om_fw_K\rangle$, where we have $w_K\Om_fw_K\sq w_K\big({\rm Tail}_{\Om}(V;Y)\big)w_K\sq\Om'$.
        \item If $e$ separates $v$ from some $\tau_X$ with $X\in\mc{X}$, then $e\sq\Sigma\sq\mc{F}_{\Om}$ and $H_e=\langle\Om_e\rangle$ with $\Om_e\sq V$.
        \item Finally, if $e$ separates $v$ from some $w_K\tau_Y$ with $Y\in\mc{Y}$, then $f:=w_Ke$ is contained in $\Sigma\sq\mc{F}_{\Om}$ and separates $v$ from $\tau_Y$ (note that $w_K$ fixes $v$). In particular, we have $H_e=\langle w_K\Om_fw_K\rangle$ and again $w_K\Om_fw_K\sq w_K\big({\rm Tail}_{\Om}(V;Y)\big)w_K\sq\Om'$.
    \end{itemize}
    This concludes the proof of the lemma.
\end{proof}

\begin{rmk}\label{rmk:neighbour-preserving}
    If $\Om'$ is obtained from $\Om$ by a $(V;W,K)$--twist, the proof of \Cref{lem:semivisual_stays} yields a bijection $\phi\colon\mc{V}_{\Om}\ra\mc{V}_{\Om'}$ with $\phi(V)=V$. If $U_1,U_2\in\mc{V}_{\Om}$ are neighbours, then so are $\phi(U_1)$ and $\phi(U_2)$. 
\end{rmk}

\subsubsection{The shortening theorem}

We are finally ready to state the shortening theorem, whose proof will occupy the next subsection. This is the main step in the proof of \Cref{thm:step_two_repeated}.

Recall that, given a reference system $\mf{R}$ and a set $V\in\mc{V}_{\Om}$, we denote by $\pi_V\colon\A_R\ra\mf{R}(V)$ the nearest-point projection. Also recall that a factor of a subset $A\sq S$ is any subset $A_0\sq A$ such that $A\sq A_0\cup A_0^{\perp}$; in particular, $A_0$ may be empty or reducible. 

\begin{thm}\label{thm:shortening}
    Let $\Om$, $H$ and $\mc{T}$ be as in \Cref{setup_two}. Let $\mf{R}$ be a reference system for $\Om$. Consider a set $V\in\mc{V}_{\Om}$ and some $W\in{\rm Nbr}_{\Om}(V)$. Then there exist $S'\in\mscr{T}_S$ and $\Om'\sq S'$ satisfying the following.
    \begin{enumerate}
        \setlength\itemsep{.25em}
        \item We have $\langle\Om'\rangle=\langle\Om\rangle$ and the splitting $\mc{T}$ is $\Om'$--semivisual.
        \item There is a neighbour-preserving bijection $\phi\colon\mc{V}_{\Om}\ra\mc{V}_{\Om'}$ with $\phi(V)=V$. For each $N\in{\rm Nbr}_{\Om}(V)$, there is $g_N\in\langle V\rangle$ such that $\phi(U)=g_NUg_N^{-1}$ for all $U\in\mc{V}_{\Om}$ with $U\sq{\rm Tail}_{\Om}(V;N)$.
        \item If $W$ is not an $\mf{R}$--good neighbour of $V$, then there is a $V$--geometric point $q\in\mf{R}(V)$ with
        \[ d(q,g_W\pi_V(\mf{R}(W)))<d(p_V,\pi_V(\mf{R}(W))) ,\]
        and such that, for every $\mf{R}$--good neighbour $X\in{\rm Nbr}_{\Om}(V)$, we have $q\in g_X\pi_V(\mf{R}(X))$.
        \item Suppose that all pairs of neighbouring sets in $\mc{V}_{\Om}$ are $\mf{R}$--good. If $W$ is not an $\mf{R}$--excellent neighbour of $V$, then we have 
        \[ d(p_V,g_W\pi_V(p_W))<d(p_V,\pi_V(p_W)) .\]
        In addition, for every $\mf{R}$--excellent neighbour $X\in{\rm Nbr}_{\Om}(V)$, we have $g_X=1$.
    \end{enumerate}
\end{thm}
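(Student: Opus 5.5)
The plan is to obtain $S'$ and $\Om'$ by composing finitely many $(V;X,K)$--twists in the sense of \Cref{rmk:performing_twist}, always centred at the same set $V$ and always using irreducible spherical sets $K\sq V$. Items~(1) and~(2) are then bookkeeping. \Cref{lem:semivisual_stays} shows that each such twist preserves $\langle\Om\rangle$ and $\Om$--semivisuality of $\mc{T}$, and \Cref{rmk:neighbour-preserving} gives a neighbour-preserving bijection fixing $V$. Since a $(V;X,K)$--twist conjugates each whole tail ${\rm Tail}_{\Om}(V;N)$ either by $1$ or by $w_K\in\langle V\rangle$, composing the twists gives, for each $N\in{\rm Nbr}_{\Om}(V)$, an element $g_N\in\langle V\rangle$ as required. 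Here $g_N$ is the product of those $w_K$ for which $N$ lay in the twisted component. We must also check that the new $\Om'$ still satisfies \Cref{setup_two}. Tuckedness and inflexibility are only needed as hypotheses when performing twists, and all twists are computed with respect to the current generating set. I would therefore perform each twist in the current set, checking that $V$ and the relevant elements of $\mc{S}_{\rm sph}$ are unchanged via \Cref{lem:separation}.

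For Item~(3), work inside $\mf{R}(V)$, identified with $\A_V$ so that $p_V$ corresponds to a $V$--geometric vertex. Set $\mc{C}:=\pi_V(\mf{R}(W))$, a $\langle V\cap W\rangle$--standard subcomplex not containing $p_V$. Pick a wall $\mc{W}_\rho$ of $\mf{R}(V)$ that separates $p_V$ from $\mc{C}$ and is adjacent to $p_V$, so $\rho$ is conjugate to some $\om\in V$. I would then use \Cref{lem:ww_2} and \Cref{lem:producing_bases_2} to find an irreducible spherical $K\sq V$ whose longest element carries $p_V$ to a $V$--geometric point $q=w_Kp_V$ strictly closer to $\mc{C}$ after the twist, that is, with $d(q,w_K\mc{C})<d(p_V,\mc{C})$.

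The key claim is that this $K$ lies in $\mc{S}_{\rm sph}(V,W)$. Only then is the twist legal, and only then are the good neighbours $X$, whose projections already contain $p_V$, on the untwisted side. To prove the claim I would argue by contradiction. If $K\in\mc{N}(V,W)$, there is a chordless path $\gamma$ in $\wh S_{\mscr{C}}\setminus(K\cup K^{\perp})$ from a point of $V\setminus W$ to a point of $W\setminus V$. After shortening via \Cref{lem:separation}(3)--(5), this pair $(K,\gamma)$ is strongly inseparable, using that $\Om$ is tucked and inflexible. Then \Cref{prop:markings_new}, applied to a base $(s,w)$ with support in $V$ containing $K$, shows that the side of $\mc{W}_s$ determined by $p_V$ agrees with the side containing $w\mc{W}_b$ for $b\in W\setminus V$. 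This is obtained by comparing with \Cref{lem:auxiliary}, and it forces $p_V$ to be on the $\mc{C}$--side of $\mc{W}_\rho$, a contradiction.

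The good neighbours $X$ must additionally satisfy $q\in g_X\pi_V(\mf{R}(X))$. If $X$ is in the same $K$--component as $W$, the same marking argument shows $X$ would also be bad, which is a contradiction. If $X$ is in the other component, then $g_X=1$, and one must check $q=w_Kp_V\in\pi_V(\mf{R}(X))$. This holds because $K\sq V\cap X$ is forced when $X$ is separated from $W$ by $K$, by \Cref{lem:separation}(2). Item~(4) is the same argument at the level of points. Assuming all pairs are good, $\pi_V(p_W)$ and $p_V$ are both $V$--geometric points of $\pi_V(\mf{R}(W))$, and a single reflection wall separating them yields $K\sq V\cap W$ with $K\in\mc{S}_{\rm sph}(V,W)$. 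Twisting by $K$ moves $\pi_V(p_W)$ closer to $p_V$, while the excellent neighbours lie in other components, so that $g_X=1$.

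The main obstacle is the key claim, namely producing, from a non-good (or non-excellent) neighbour, a \emph{spherical separating} $K\sq V$ through the marking machinery. Several steps are delicate: building the strongly inseparable pair $(K,\gamma)$, in particular controlling $\mc{J}$--sets with $J\setminus K$ spherical via tuckedness; ensuring the relevant markings are complete so that \Cref{prop:markings_new} applies; and the fact that one twist may not suffice, so that $K$ may have to be chosen iteratively along a gallery in $\A_V$ from $p_V$ towards $\mc{C}$. I would first try choosing $K$ as the irreducible component, containing $\om$, of the maximal spherical subset of $V$ whose walls at $p_V$ separate $p_V$ from $\mc{C}$.
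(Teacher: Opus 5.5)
Your Items~(1) and~(2) match the paper. Item~(4) is broadly in line with it: the paper takes an irreducible factor $K$ of $V\cap W$ with $\pi_K(p_W)=w_Kp_V$ (note that $\pi_V(p_W)$ itself need not be $V$--geometric). It then obtains both $K\in\mc{S}_{\rm sph}(V,W)$ and $g_X=1$ for excellent $X$ from the marking argument of \Cref{lem:sep_excellent}.

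The genuine gap is Item~(3), and it has two parts.

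First, the mechanism cannot work as written. If you twist over $K$ (so $g_W=w_K$) and also set $q=w_Kp_V$, then $d(q,w_K\mc{C})=d(p_V,\mc{C})$, because $w_K$ acts by isometries. The paper never combines the two operations. Either it twists and keeps $q=p_V$. Or, when the relevant set is the whole factor $A=V_0$ and $A\cup A^{\perp}$ does not disconnect $\wh S_{\mscr{C}}$, it does not twist over $A$ and instead moves to the antipodal point $q=w_Ap_A$.

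Second, your key claim that the geometrically chosen $K$ is separating is false, and no better choice repairs it. Take the following configuration:
\begin{itemize}
\item $S=\{a,t_1,t_2,b\}$ with $o(at_1)=o(t_1t_2)=o(t_2b)=3$, $o(at_2)=o(t_1b)=2$ and $o(ab)=\infty$;
\item $R$ the twist of $S$ over $\{t_1,t_2\}$, so that $\{a,b\}\notin\mscr{C}$ by \Cref{lem:hard} applied to $R$;
\item $\Om=S$, $V=\{a,t_1,t_2\}$, $W=\{t_1,t_2,b\}$;
\item $\mf{R}(W)$ the $\langle W\rangle$--standard subcomplex through $1_R$, and $p_V:=w_V$.
\end{itemize}
Then $\mc{C}$ is the $\langle t_1,t_2\rangle$--standard subcomplex through $1_R$. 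The only wall at $p_V$ separating $p_V$ from $\mc{C}$ is $\mc{W}_a$, so your recipe gives $K=\{a\}$. But $\{a\}\in\mc{N}(V,W)$, since the clique $\{t_1,b\}$ survives the removal of $\{a\}\cup a^\perp$. In fact the only separating spherical subset of $V$ is $\{t_1,t_2\}$, whose longest element stabilises $\mc{C}$, so no $(V;W,K)$--twists can decrease the distance. Moreover, $aw_V$ is not even $V$--geometric. The correct step is to set $q:=w_Vp_V=1_R$ with no twist; this is the paper's case $T=A$, $\wh L=\emptyset$.

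The missing idea is the notion of an \emph{atom}: a minimal irreducible $T\sq A$ meeting $\Delta(\mc{C})$ with $T\in\mc{S}\cup\{A\}$. This builds separation into the choice rather than deducing it from the geometry at $p_V$. The real work is in \Cref{lem:atoms_1}, \Cref{lem:atoms_5} and \Cref{prop:atoms_2}, via $\delta_A$, progenies and \Cref{lem:C-admissible_equivalence}. It shows that $T$ is spherical, and that the irreducible factors of $\wh L$, where $L=T\setminus(C\cup\Delta(\mc{C}))$, are all separating. It also shows that $p_A\in\pi_T(w_Tw_{\wh L}\mc{C})$ with $d(p_A,w_Tw_{\wh L}\mc{C})<d(p_A,\mc{C})$. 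One then twists over the factors of $\wh L$ and then over $T$, keeping $q=p_V$. If instead $T=A$ is non-separating, one twists only over the factors of $\wh L$ and sets $q=w_Ap_A$. Your marking-contradiction argument is aimed at a false statement, so it cannot substitute for this.

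Your treatment of good neighbours is also reversed.
\begin{itemize}
\item A good $X$ in the same component as $W$ is not excluded. \Cref{lem:sep_good} shows only that $K$ is then an irreducible factor of $V\cap X$, so $g_X\in\langle V\cap X\rangle$ fixes $\pi_V(\mf{R}(X))$.
\item A good $X$ on the untwisted side need not contain $K$; \Cref{lem:separation}(2) says nothing of the sort.
\end{itemize}
This is exactly why $q$ must stay equal to $p_V$ whenever a twist is performed. It may only move to $w_Ap_A$ in the non-separating case, where \Cref{lem:sep_good} forces $A\sq X$ for every good $X$.
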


The sets $S'$ and $\Om'$ in \Cref{thm:shortening} are obtained by performing twists with respect to subsets in $\mc{I}_V$, but we emphasise that \emph{finitely many} such twists are usually required (not a single one).

\subsection{Proof of the shortening theorem}\label{sub:proof_shortening}

We now start working towards the proof of \Cref{thm:shortening}. Let $\Om$ and $\mc{T}$ be as in the statement of the theorem. 

The plan for this subsection is as follows. First, we obtain some tools allowing us to apply the results on marking equivalences from \Cref{sect:markings} (from \Cref{defn:delta} to \Cref{lem:C-admissible_equivalence}). Second, we introduce the concept of an \emph{atom} and study it (from \Cref{defn:atom} to \Cref{prop:atoms_2}). Atoms are important because they will make it clear which sequences of elementary twists we need to perform in order bring $\Om$ closer to being $R$--geometric or, more precisely, to bring a reference system closer to being perfect. Third, we obtain two lemmas showing that these twists do not spoil any good or excellent portions of our reference systems (Lemmas~\ref{lem:sep_good} and~\ref{lem:sep_excellent}). Finally, we prove the shortening theorem and, at the very end, also \Cref{thm:step_two_repeated}.

As in the previous subsection, the reader may wish to keep in mind the simplified setting in which $\mc{F}_{\Om}$ is a single edge. This will not make much of a difference in the beginning, but it will remove some technicalities and notational complexity from the later discussion (in particular, Lemmas~\ref{lem:sep_good} and~\ref{lem:sep_excellent} can be ignored in this case). 

Throughout, we fix a reference system $\mf{R}$ for $\Om$, a set $V\in\mc{V}_{\Om}$, and a neighbour $W\in{\rm Nbr}_{\Om}(V)$. For every subset $M\sq V$, we denote by $\mf{R}(M)$ the $\langle M\rangle$--standard subcomplex of $\mf{R}(V)$ containing the point $p_V$, and by $\pi_M\colon\A_R\ra\mf{R}(M)$ the nearest-point projection. We also fix an irreducible factor $V_0\sq V$ and write $p_{V_0}:=p_V$, since $p_V\in\mf{R}(V_0)$ by definition.

One should imagine that $p_{V_0}\not\in\pi_{V_0}(\mf{R}(W))$ and that our goal is to perform twists with respect to sets in $\mc{I}_{V_0}$ so as to reduce the distance $d(p_{V_0},\pi_{V_0}(\mf{R}(W)))$. However, we do not make any assumptions on the relative position of $p_{V_0}$ and $\pi_{V_0}(\mf{R}(W))$ for the moment, since important parts of \Cref{thm:shortening} are not about this situation.

To streamline notation, it is convenient to rename the above objects as $A:=V_0$ and $B:=W$ in the following discussion. We set $C:=A\cap B$ and $\mc{C}:=\pi_A(\mf{R}(B))$, which will be the most important subcomplex of $\A_R$ for our argument. Note that $\mc{C}$ is a $\langle C\rangle$--standard subcomplex of $\A_R$. The set $A$ is irreducible, while $B$ and $C$ may not be. We also have an $A$--geometric point $p_A\in\mf{R}(A)$ chosen by the reference system. We simply write $\mc{N}:=\mc{N}(A,B)$ and $\mc{S}:=\mc{S}(A,B)$, and similarly define $\mc{N}_{\rm sph}$ and $\mc{S}_{\rm sph}$.
    
For each irreducible subset $J\sq A$, we denote by $\wh J$ 
the union of $J$ with all irreducible factors of $C$ that intersect $A\setminus J^{\perp}$.
Note that $\wh J$ is still an irreducible subset of $A$. We always have $C\sq \wh J\cup\wh J^{\perp}$, and we have $\wh J=J$ if and only if $C\sq J\cup J^{\perp}$. In particular, recalling that $A$ and $B$ span cliques in $\wh S_{\mscr{C}}$, we can have $J\in\mc{S}$ only if $\wh J=J$.

Another fundamental object is the following.

\begin{defn}\label{defn:delta}
    For a subset $J\sq S$, denote by $\delta_A J\sq A$ the set of points that occur as the \emph{unique} intersection point with $A$ of some geodesic in $\wh S_{\mscr{C}}\setminus (J\cup J^{\perp})$ that intersects $B$. 
\end{defn}

The word ``unique'' in \Cref{defn:delta} is essential, as it is what will allow us to exploit inflexibility and tuckedness of $\Om$ to place ourselves in a situation where the hypotheses of \Cref{prop:markings_new} are satisfied (see \Cref{lem:C-admissible_equivalence} below). 

\begin{rmk}\label{rmk:nonempty_delta}
    For every $J\in\mc{N}$ with $A\not\sq J\cup J^{\perp}$ and $B\not\sq J\cup J^{\perp}$, we have $\delta_A J\neq\emptyset$. Indeed, $\delta_A J$ contains any point of $A\setminus (J\cup J^{\perp})$ that is nearest to $B$ within $\wh S_{\mscr{C}}\setminus (J\cup J^{\perp})$.
\end{rmk} 

If $J\sq A$ and $\wh J=J$, we necessarily have $\delta_A J\sq A\setminus C$. We also have the following key observation.

\begin{lem}\label{lem:delta_works_2}
    Consider two irreducible subsets $N\sq J$ such that $J\sq A$ and $N\not\sq C$. Suppose $N\in\mc{N}$ and $J\in\mc{S}$. Then we have $J\cap\delta_A N\neq\emptyset$.
\end{lem}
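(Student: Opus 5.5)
Since $J\in\mc{S}$, there are elements $x\in A\setminus(J\cup J^{\perp})$ and $y\in B\setminus(J\cup J^{\perp})$ lying in distinct components of $\wh S_{\mscr{C}}\setminus(J\cup J^{\perp})$. Here we use that $A$ and $B$ each span cliques, so each meets at most one component. Since $N\sq J$, every element of $J^{\perp}$ commutes with $N$, so $N\cup N^{\perp}\sq J\cup J^{\perp}$ up to the elements of $J\setminus N$. More precisely, $\wh S_{\mscr{C}}\setminus(N\cup N^{\perp})$ contains $\wh S_{\mscr{C}}\setminus(J\cup J^{\perp})$. Since $N\in\mc{N}$, the set $A\cup B$ meets at most one component of $\wh S_{\mscr{C}}\setminus(N\cup N^{\perp})$. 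I would first check that both $A$ and $B$ actually meet this graph. For $A$, the point $x$ lies outside $J\cup J^{\perp}\supseteq N\cup(N^{\perp}\cap J^{\perp})$; if $x\in N^{\perp}$ then $x\in J\setminus N$, which is impossible since $x\notin J$, so $x\notin N\cup N^{\perp}$. The same argument applies to $y$.

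So $x$ and $y$ lie in the same component of $\wh S_{\mscr{C}}\setminus(N\cup N^{\perp})$. Choose a geodesic $\gamma$ in this graph from $B$ to $A$ of minimal length, starting at a point of $B$ and ending at a point of $A$, with only its last point in $A$ and only its first point in $B$. This is possible because $A$ and $B$ are cliques: truncate to a shortest path between the two sets. Its endpoint $z\in A$ is then the unique intersection point with $A$ of a geodesic in $\wh S_{\mscr{C}}\setminus(N\cup N^{\perp})$ meeting $B$, so $z\in\delta_A N$. The remaining task is to arrange that $z\in J$.

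The key point is that $\gamma$ must pass through $J\cup J^{\perp}$, because $x$ and $y$ are separated there. I would instead choose $\gamma$ as a shortest path from $y$'s component to $A$, with more care. Consider the component $\mf{D}$ of $\wh S_{\mscr{C}}\setminus(J\cup J^{\perp})$ containing $y$, which contains $B\setminus(J\cup J^{\perp})$. A shortest path $\gamma$ in $\wh S_{\mscr{C}}\setminus(N\cup N^{\perp})$ from $B$ to $A$ first leaves $B$. Any point of $A$ that is reachable from $\mf{D}$ in one step must be in $J\cup J^{\perp}$, since $A$ is not in $\mf{D}$: indeed $x\in A$ is outside $\mf{D}$ and $A$ is a clique. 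Points of $A\cap J^{\perp}$ adjacent to the path are problematic because they need not be excluded by $N\cup N^{\perp}$. So the main obstacle is to exclude the case where the endpoint $z$ lies in $(A\cap J^{\perp})\setminus N^{\perp}$, or in $J\setminus A$ (which is impossible, since $J\sq A$). I would handle it as follows. Elements of $J^{\perp}$ commute with $N$, so they lie in $N^{\perp}$ unless they lie in $N$. But $N\sq J$ is disjoint from $J^{\perp}$, so $J^{\perp}\sq N^{\perp}$ and $J^{\perp}$ is removed from the graph.

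Hence the last point of $\gamma$ before entering $A$ either lies in $\mf{D}$ or the path has previously crossed $J\setminus N$. In the first case the entry point $z$ is adjacent to $\mf{D}$, lies in $A\setminus(N\cup N^{\perp})$, and is outside $\mf{D}$, so $z\in J\setminus N$. In the second case the first point of $\gamma$ in $J\setminus N$ is adjacent to all of $A$ if $J\sq A$ is joined to $A$ by cliques: it lies in $A$ itself, since $J\sq A$, and minimality forces it to be the endpoint $z$. In both cases $z\in J\cap\delta_A N$. The hypothesis $N\not\sq C$ is not used in this route; if the above fails, I would expect it to be needed to ensure that $A\setminus(N\cup N^{\perp})$ is nonempty, and I would double-check that step.
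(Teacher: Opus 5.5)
There is a genuine gap in your first step, where you check that $A$ and $B$ both meet $\wh S_{\mscr{C}}\setminus(N\cup N^{\perp})$. You claim $\wh S_{\mscr{C}}\setminus(N\cup N^{\perp})\supseteq\wh S_{\mscr{C}}\setminus(J\cup J^{\perp})$. This is equivalent to $N^{\perp}\sq J\cup J^{\perp}$, which is false: an element can commute with $N$ without commuting with $J\setminus N$. Only the reverse containment $J^{\perp}\sq N^{\perp}$ holds, and you use it correctly later. Consequently ``if $x\in N^{\perp}$ then $x\in J\setminus N$'' does not follow, and neither does ``the same argument applies to $y$''.

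A warning sign is that your argument never uses $N\not\sq C$, and this hypothesis is indispensable. Take $S=\{n,j,x,y\}$ with $o(nj)=o(jx)=3$, $o(nx)=o(ny)=2$, and all other pairs of infinite order and not in $\mscr{C}$. Let $A=\{n,j,x\}$, $B=\{n,y\}$, $N=\{n\}$, $J=\{n,j\}$.
\begin{itemize}
\item $J\cup J^{\perp}=\{n,j\}$ separates $x$ from $y$, so $J\in\mc{S}$.
\item $N\cup N^{\perp}=\{n,x,y\}$ leaves only $j$, so $N\in\mc{N}$.
\item But $B\sq N\cup N^{\perp}$, so $\delta_AN=\emptyset$.
\end{itemize}
In this example both of your points $x$ and $y$ lie in $N^{\perp}$.

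The repair is as follows.
\begin{itemize}
\item \emph{For $A$:} since $\mc{N}\cap\mc{S}=\emptyset$, we have $N\subsetneq J$. Irreducibility of $J$ then gives $J\not\sq N\cup N^{\perp}$, so some point of $J\sq A$ survives in the graph; use it in place of your $x$.
\item \emph{For $B$:} this is exactly where $N\not\sq C$ enters, so your guess that it is needed on the $A$ side is off. Your $y$ cannot lie in $V$, since $V$ spans a clique and $y$ would then be adjacent to $x$. So $y\in W\setminus V$. Pick $n\in N\setminus C\sq V\setminus W$. Elements of $V\setminus W$ and $W\setminus V$ are never adjacent in $\wh S_{\mscr{C}}$, so $y$ does not commute with $n$, and hence $y\notin N\cup N^{\perp}$. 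The paper gets the same conclusion via an irreducible factor of $B$ not contained in $C$.
\end{itemize}

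With this in place, the rest of your argument works and is a mild variant of the paper's.
\begin{itemize}
\item You take a shortest path from $B$ to $A$ in $\wh S_{\mscr{C}}\setminus(N\cup N^{\perp})$, so its endpoint lies in $\delta_AN$ automatically. You then use $J\in\mc{S}$ and $J^{\perp}\sq N^{\perp}$ to force that endpoint into $J$.
\item The paper instead takes a shortest path from a point of $B$ to $J\setminus(N\cup N^{\perp})$, and uses $J\in\mc{S}$ to show that its penultimate vertex lies outside $A$.
\end{itemize}
You should also handle the degenerate one-vertex path, consisting of a single point of $C$. If that point were not in $J$, it would lie in $(A\cap B)\setminus(J\cup J^{\perp})$, contradicting $J\in\mc{S}$.
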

\begin{proof}
    To begin with, observe that $B\not\sq N\cup N^{\perp}$. Indeed, since $B\setminus C\neq\emptyset$, there exists an irreducible factor $B_0\sq C$ with $B_0\setminus C\neq\emptyset$ and hence $B_0\not\sq N$ (since $N\sq A$). We also have $B_0\not\sq N^{\perp}$, since no point of $B_0\setminus C$ is adjacent to a point of $N\setminus C\sq A\setminus C$ within the graph $\wh S_{\mscr{C}}$. In conclusion, since $B_0$ is irreducible, it follows that $B_0\not\sq N\cup N^{\perp}$ as desired.
    
    Now, choose an element $b\in B\setminus (N\cup N^{\perp})$. Since $J$ is irreducible and $J\neq N$, we have $J\not\sq N\cup N^{\perp}$. Since $J\sq A$ and $N\in\mc{N}$, there is a path $\gamma\sq\wh S_{\mscr{C}}\setminus (N\cup N^{\perp})$ from $b$ to $J\setminus (N\cup N^{\perp})$. Choose $\gamma$ so that it is a shortest such path; in particular, $\gamma$ is a geodesic. Let $a\in J$ be the terminal endpoint of $\gamma$, and let $u$ be the point of $\gamma$ immediately preceding $a$. Since $A\in\mscr{C}$, the set $A$ spans a clique in $\wh S_{\mscr{C}}$. The fact that $\gamma$ is shortest from $b$ to $J$ then implies that $\gamma\cap A\sq\{a,u\}$. If $u\not\in A$, then $a\in J\cap\delta_A N$ and we are done. 
    
    If instead $u\in A$, we reach a contradiction. Indeed, we have $u\in A\setminus J$ because $\gamma$ is shortest. In this case, the path $\gamma\setminus\{a\}$ connects $b$ to $u\in A$ while avoiding the union $J\cup J^{\perp}$, since $J^{\perp}\sq N^{\perp}$ and $J\sq A\setminus\{u\}$. This contradicts the fact that $J\in\mc{S}$, proving the lemma. 
\end{proof}

Recall that $\mc{W}_s$ denotes the $s$--fixed wall in $\A_R$, for each $s\in S$. For $a\in A$, we denote by $\mc{H}_a$ the side of $\mc{W}_a$ that contains the $A$--geometric point $p_A\in\mf{R}(A)$, and by $\mc{H}_a^*$ the other side of $\mc{W}_a$. Note that the intersection of all halfspaces $\mc{H}_a$ with $a\in A$ is the singleton $\{p_A\}$. The intersection of all halfspaces $\mc{H}_a^*$ is empty if $A$ is non-spherical, while it equals the singleton $\{w_Ap_A\}$ if $A$ is spherical, where $w_A$ denotes as usual the longest element of $\langle A\rangle$. We also define the set
\[ \Delta(\mc{C}):=\{a\in A\setminus C\mid \mc{C}\sq\mc{H}_a^* \}. \] 

\begin{rmk}
    An element $a\in A\setminus C$ lies in $\Delta(\mc{C})$ if and only if we have $\mc{W}_b\sq\mc{H}_a^*$ for some (or equivalently, all) elements $b\in B\setminus C$. Indeed, we have $a\not\in\langle B\rangle$ (since $A\cup B$ is contained in a Coxeter generating set) and so the wall $\mc{W}_a$ is disjoint from $\mf{R}(B)$ and from its projection $\mc{C}\sq\mf{R}(A)$. The walls $\mc{W}_b$ all cross edges incident to the $B$--geometric point $p_B\in\mf{R}(B)$. Therefore, the point $\pi_A(p_B)$ is on the same side of $\mc{W}_a$ as the walls $\mc{W}_b$ and, since $\pi_A(p_B)\in\mc{C}$, the subcomplex $\mc{C}$ is on that side as well.
\end{rmk}

\begin{rmk}\label{rmk:empty_F}
    If $\Delta(\mc{C})=\emptyset$, then $p_A\in\mc{C}$. Indeed, we have $\mc{C}\sq\mc{H}_a$ for all $a\in A\setminus C$. Since $\mc{C}$ is a $\langle C\rangle$--standard subcomplex and $C\in\mscr{C}$, there exists a point $p\in\mc{C}$ with $p\in\mc{H}_c$ for all $c\in C$. In conclusion, we have $p\in\mc{H}_a$ for all $a\in A$, which implies that $p=p_A$.
\end{rmk}

We say that a base $(a,w)$ with $a\in A$ and $w\in\langle A\rangle$ is \emph{$\mc{C}$--independent} if $\supp(a,w)\not\sq C$. Equivalently, we have $w^{-1}aw\not\in\langle C\rangle$, 
that is, the wall $w^{-1}\mc{W}_a\sq\A_R$ does not intersect the subcomplex $\mc{C}$. Admissibility implies that we have $w^{-1}\mc{W}_a\cap\mc{W}_b=\emptyset$ for all $b\in B\setminus C$,
and the union $\mc{C}\cup\mc{W}_b$ is contained in a single halfspace bounded by $w^{-1}\mc{W}_a$. Given two $\mc{C}$--independent bases $(a,w),(a,w')$ with the same core, we write 
\[ (a,w)\equiv_{\mc{C}}(a,w') \] 
if the complexes $w\mc{C}$ and $w'\mc{C}$ are on the same side of the wall $\mc{W}_a$; equivalently, $w\mc{W}_b$ and $w'\mc{W}_b$ are on the same side of $\mc{W}_a$ for some/all $b\in B\setminus C$. The results of \Cref{sect:markings} now yield the following:

\begin{lem}\label{lem:C-admissible_equivalence}
    Let $(a,w)$ be a $\mc{C}$--independent base with support $\Sigma\sq A$. Consider an irreducible subset $K\sq\Sigma$ with $K\not\sq C$ and $\wh K=K$. Then, for every element $x\in\delta_A K$, the following holds. 
    \begin{enumerate}
        \item If $(a,wx)$ is a $\mc{C}$--independent base, 
        then $(a,w)\equiv_{\mc{C}} (a,wx)$.
        \item If the subgroup $\langle a,wxw^{-1}\rangle$ is infinite, then $\mc{C}\sq w^{-1}\mc{H}_a$.
    \end{enumerate}
\end{lem}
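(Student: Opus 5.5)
The plan is to translate both statements into an application of \Cref{prop:markings_new}, with the core $a$, the base $(a,w)$ (support $\Sigma\supseteq K$), and a geodesic $\gamma$ provided by the definition of $\delta_A K$.

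\textbf{Choosing the geodesic.} Since $x\in\delta_A K$, there is a geodesic in $\wh S_{\mscr{C}}\setminus(K\cup K^{\perp})$ that meets $B$ and whose unique point in $A$ is $x$. I will pass to the subarc $\gamma$ from $x$ to the first point $b\in B$ encountered; a subarc of a geodesic is still a geodesic. Because $\wh K=K$, we have $\delta_A K\sq A\setminus C$, so $x\in A\setminus C$. Since $\gamma\cap A=\{x\}$, we also get $b\in B\setminus C$. Then $\{x,b\}\not\in\mscr{C}$: otherwise $\{x,b\}$ would be elliptic in $\mc{T}$ and would lie in a maximal elliptic set, contradicting the fact that $x\in V\setminus W$ and $b\in W\setminus V$ are only jointly contained in different vertex sets. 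This is the same blob reasoning as in \Cref{lem:separation}(2).

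\textbf{Strong inseparability of $(K,\gamma)$.} Let $J\in\mc{J}_S$ with $K\sq J$, $J\cap\gamma\neq\emptyset$ and $J\setminus K$ spherical. Suppose $\{x,b\}$ met two components of $\wh S_{\mscr{C}}\setminus(J\cup J^{\perp})$. Then $A\cup B=V_0\cup W$ would meet two components, and $J\setminus\Om\sq J\setminus K$ is spherical. \Cref{lem:separation}(3) then gives $J\sq{\rm Blob}_{\Om}(V\cap W)$. Since $J\supseteq K\not\sq C$, the set $J$ meets $V\setminus W$, so \Cref{lem:separation}(4) gives $J\sq V_0=A$. Hence $J\cap\gamma\sq A\cap\gamma=\{x\}$, so $x\in J$. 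But then $x$ lies in no component of the complement, contradicting the assumption. This is exactly where the word ``unique'' in \Cref{defn:delta} is used, and I expect this verification, together with checking the $\{x,b\}\not\in\mscr{C}$ hypothesis, to be the main point needing care.

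\textbf{Item (1).} I will apply \Cref{prop:markings_new}(2) to $\mu=((a,wx),b)$ and $\mu'=((a,w),b)$. If instead $|wx|<|w|$, I swap the roles, writing $w=(wx)x$ and using the base $(a,wx)$, whose support still contains $K$ (a support issue to double-check). Both markings are complete, because $\mc{C}$--independence makes $w^{-1}\mc{W}_a$ and $(wx)^{-1}\mc{W}_a$ disjoint from $\mc{W}_b$. The conclusion $\mu\equiv_R\mu'$ says $wx\mc{W}_b$ and $w\mc{W}_b$ lie on the same side of $\mc{W}_a$, which is precisely $(a,w)\equiv_{\mc{C}}(a,wx)$.

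\textbf{Item (2).} I will apply \Cref{prop:markings_new}(1) to $\mu=((a,w),x)$ and $\mu'=((a,w),b)$. The marking $\mu$ is complete by hypothesis, and $\mu'$ is complete by $\mc{C}$--independence. Translating the conclusion by $w^{-1}$, the walls $\mc{W}_x$ and $\mc{W}_b$, and hence $\mc{C}$, lie on a common side of $w^{-1}\mc{W}_a$. It remains to identify this side with $w^{-1}\mc{H}_a$. For this I will argue as in \Cref{lem:auxiliary}: $\mc{W}_x$ crosses an edge at $p_A$ and is disjoint from $w^{-1}\mc{W}_a$, so $p_A$ is on the same side as $\mc{W}_x$. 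Since $(a,w)$ is a base and $p_A$ is $A$--geometric, $wp_A$ and $p_A$ are on the same side of $\mc{W}_a$. This last identification is the step I am least sure about and would verify carefully.
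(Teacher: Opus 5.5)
Your proposal is correct and follows the paper's proof. You take the same geodesic from the definition of $\delta_A K$, run the same strong-inseparability check via \Cref{lem:separation}(4) using $\gamma\cap A=\{x\}$, and make the same applications of Items~(2) and~(1) of \Cref{prop:markings_new}. The extra points you flag are sound and are left implicit in the paper: if $|wx|<|w|$ then $x\in\supp(w)\setminus K$, so a reduced word for $w$ ending in $x$ gives $K\sq\supp(a,wx)$ and your swap works; and your identification of the relevant side of $w^{-1}\mc{W}_a$ in Item~(2) via $p_A$ is exactly the argument of \Cref{lem:auxiliary}.
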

\begin{proof}
    Since $x\in\delta_A K$, there exists a geodesic $\gamma\sq\wh S_{\mscr{C}}\setminus (K\cup K^{\perp})$ from $x$ to a point $b\in B$ with $\gamma\cap A=\{x\}$. Since $\wh K=K$, we have $b\in B\setminus C$ and $x\in A\setminus C$, and hence $\{x,b\}\not\in\mscr{C}$.

    We would like to apply \Cref{prop:markings_new} to the geodesic $\gamma$, for which we need to check that the pair $(K,\gamma)$ is strongly inseparable. Thus, suppose that we have $K\sq J$ for some $J\in\mc{J}_S$ such that $J\cap\gamma\neq\emptyset$ and $J\setminus K$ is spherical. Observe that $J$ intersects the set $A\setminus C$, because $K$ does. 
    Thus, if $x$ and $b$ were to lie in distinct components of $\wh S_{\mscr{C}}\setminus (J\cup J^{\perp})$, then \Cref{lem:separation}(4) would imply that $J\sq A$. However, this would contradict the fact that $\gamma\cap A=\{x\}$ and $x\not\in J$.

    Now, in the notation of \Cref{sect:markings}, the two items of the lemma can be rephrased as claiming that
    \[ ((a,w),b)\equiv_R ((a,wx),b) \qquad\text{and}\qquad ((a,w),b)\equiv_R ((a,w),x),\]
    respectively. These markings are complete under the respective assumptions of the two items of the lemma, and so these equivalences follow from Items~(2) and~(1) of \Cref{prop:markings_new}, respectively.
\end{proof}

We now come to the most important concept of this entire subsection, namely that of an \emph{atom}. As we will see, atoms will yield some ``obvious'' elementary twists that we need to perform.

\begin{defn}\label{defn:atom}
    An \emph{atom} is a minimal irreducible subset $T\sq A$ such that $T\cap \Delta(\mc{C})\neq\emptyset$ and $T\in\mc{S}\cup\{A\}$.
\end{defn}

Note that every atom $T$ satisfies $\wh T=T$. In order to study atoms, we are in turn led to the concept of a ``progeny'', which we now discuss.

Consider an irreducible subset $U_0\sq A$ with $U_0\not\sq C$. A \emph{progeny} of $U_0$ is a (possibly trivial) sequence $U_0,\dots,U_k\sq A$ of irreducible sets satisfying the following conditions for $0\leq i\leq k-1$:
\begin{enumerate}
    \item we have $U_{i+1}=U_i\cup\{u_{i+1}\}$ for an element $u_{i+1}\in A\setminus (U_i\cup U_i^{\perp})$;
    \item if $\wh U_i\neq U_i$, then $u_{i+1}\in C$;
    \item if $\wh U_i = U_i$, then $u_{i+1}\in\delta_A U_i$.
\end{enumerate}
A progeny is \emph{$A$--maximal} (or simply \emph{maximal}) if it cannot be extended, that is, if either $U_k=A$, or $\wh U_k=U_k$ and $\delta_AU_k=\emptyset$. The latter condition is equivalent to the fact that $U_k\in\mc{S}$, by \Cref{rmk:nonempty_delta}. Note that every irreducible subset $U_0\sq A$ with $U_0\not\sq C$ is the starting point of at least one maximal progeny (possibly having $k=0$).

If $T$ is an atom, we speak of \emph{$T$--progenies} when referring to progenies consisting of subsets of $T$. A $T$--progeny is \emph{$T$--maximal} if it cannot be extended to a longer $T$--progeny. 

In order to state the next results, we need one last general piece of notation.
If $U\sq T$ are tree--$2$--spherical subsets of $S$, we define the element $g_{T,U}\in\langle T\setminus U\rangle$ as the product $x_1\dots x_k$ where $x_1,\dots,x_k$ is any ordering of the elements of $T\setminus U$ with the property that we have $i<j$ whenever $x_i$ separates $x_j$ from $U$ in the Dynkin diagram of $T$. Since the Dynkin diagram of $T$ is a tree and $U$ is a subtree, the definition of $g_{T,U}$ is independent of the particular choice of an ordering on $T\setminus U$. Given three irreducible sets $V\sq U\sq T$, we have $g_{T,V}=g_{U,V}g_{T,U}$. When $U=\{u\}$, we write $g_{T,u}$ in place of $g_{T,\{u\}}$.

The next three results seek to understand the structure of atoms, and they will culminate in \Cref{prop:atoms_2}, which describes them rather precisely. We will see in \Cref{prop:atoms_2} that all atoms are spherical, but for now we need to content ourselves with partial results in this direction.

\begin{lem}\label{lem:atoms_1}
    If $T$ is an atom, then all the following hold.
    \begin{enumerate}
        \item For every $T$--maximal $T$--progeny $U_0,\dots,U_k$ with $U_0\cap \Delta(\mc{C})\neq\emptyset$, we have $U_k=T$.
        \item The set $T$ is tree--$2$--spherical.
        \item For every $\mc{C}$--independent base $(a,w)$ with $\Sigma:=\supp(s,w)$ contained in $T$ and meeting $\Delta(\mc{C})$, we have $(a,w)\equiv_{\mc{C}}(a,wg_{Q,\Sigma})$ for all irreducible sets $Q$ with $\Sigma\sq Q\sq T$.
        \item We have $(f,w)\equiv_{\mc{C}}(f,w')$ for every pair of $\mc{C}$--independent bases with $f\in T\cap \Delta(\mc{C})$ and $w,w'\in\langle T\setminus\{f\}\rangle$.
        \item For every element $f\in T\cap\Delta(\mc{C})$, the difference $T\setminus\{f\}$ is spherical.
    \end{enumerate}
\end{lem}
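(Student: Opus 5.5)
I would prove the five items roughly in order; each later item uses the earlier ones, and the whole argument runs on two inputs: minimality of the atom $T$, and \Cref{lem:C-admissible_equivalence}.

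For Item~(1), take a $T$--maximal $T$--progeny $U_0,\dots,U_k$ with $U_0\cap\Delta(\mc{C})\neq\emptyset$ and suppose $U_k\neq T$. Since $U_k$ cannot be extended inside $T$, and since $U_k\subsetneq T$ with $T$ irreducible, there is an element of $T\setminus U_k$ adjacent to $U_k$. This leaves two possibilities.
\begin{itemize}
\item If $\wh U_k\neq U_k$, then some element of $C\cap(A\setminus U_k^{\perp})$ can be added. If it lies in $T$ the progeny extends, so such elements lie outside $T$; we then use $\wh T=T$ to get a contradiction.
\item If $\wh U_k=U_k$, then $\delta_AU_k\cap T=\emptyset$. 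Suppose first that $U_k\in\mc{N}$. Since $T\in\mc{S}\cup\{A\}$, \Cref{lem:delta_works_2} (with $N=U_k$, $J=T$; in the case $T=A$ via \Cref{rmk:nonempty_delta}) gives $T\cap\delta_AU_k\neq\emptyset$, a contradiction. Otherwise $U_k\in\mc{S}$, and $U_k$ is a proper irreducible subset of $T$ meeting $\Delta(\mc{C})$, contradicting minimality of $T$.
\end{itemize}

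Item~(2) should follow by building $T$ as the top of such a progeny. Starting from a single element $f\in T\cap\Delta(\mc{C})$, each step adds one vertex $u_{i+1}$. I would show that each $U_i$ stays tree--$2$--spherical: the set $\{f\}$ gives a base $(f,1)$, and \Cref{rmk:tree-2-spherical_support} extends bases along progenies. A $\delta_A$--vertex attaches to $U_i$ through a single edge, since it is the unique $A$--point of a geodesic avoiding $U_i\cup U_i^{\perp}$ and hence adjacent to $U_i$ only through $A$--cliques. The case of a $C$--vertex needs a separate check. This step is fiddly and I am not confident about it; it may need Item~(3) applied inductively.

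For Item~(3), I would induct along a progeny from $\Sigma$ to $Q$, using Item~(1) to reach $T$ and restricting to the part that goes through $Q$. At each step $u_{i+1}\in\delta_AU_i$, \Cref{lem:C-admissible_equivalence}(1) with $K=U_i$ (irreducible, $\wh U_i=U_i$, not contained in $C$) gives
\[(a,wg_{U_i,\Sigma})\equiv_{\mc{C}}(a,wg_{U_{i+1},\Sigma}),\]
where the bases involved are supplied by \Cref{rmk:tree-2-spherical_support}. At steps with $u_{i+1}\in C$, the element lies in $\langle C\rangle$, so it fixes $\mc{C}$ and the equivalence is automatic.

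Item~(4) then follows from Item~(3): connect both $w$ and $w'$ to the canonical element $g_{T,\{f\}}$, or to a normal form built from it, through chains of bases, since both are in $\langle T\setminus\{f\}\rangle$.

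Item~(5) is the main obstacle. Suppose $T\setminus\{f\}$ is non-spherical. Then there are arbitrarily long $w\in\langle T\setminus\{f\}\rangle$, and long bases $(f,w)$ cannot exist by \Cref{rmk:PW}. Instead I would use \Cref{lem:ww_2}, comparing $w^{-1}\mc{H}_f$ with $\mc{H}_f^*$ to produce an element $x\in\delta_AK$ such that $\langle f,wxw^{-1}\rangle$ is infinite. \Cref{lem:C-admissible_equivalence}(2) then gives $\mc{C}\sq w^{-1}\mc{H}_f$. Combining this with Item~(4) yields $\mc{C}\sq\mc{H}_f$, contradicting $f\in\Delta(\mc{C})$.
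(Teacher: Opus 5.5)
Item~(1) is the paper's argument. Item~(3) is nearly right: a step by $u_{i+1}\in C$ is trivial because $\mc{C}$ is $\langle C\rangle$--invariant. However, a progeny of $\Sigma$ need not pass through $Q$. Instead, apply the $T$--case to both $(a,w)$ and $(a,wg_{Q,\Sigma})$ and use $g_{T,\Sigma}=g_{Q,\Sigma}g_{T,Q}$.

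The genuine gap is Item~(2), on which Items~(3)--(5) depend. Your mechanism cannot work. $A$ is a clique of $\wh S_{\mscr{C}}$ and is only known to lie in $\mscr{C}$, not to be $2$--spherical. So the fact that $u_{i+1}$ is the unique $A$--point of a geodesic in $\wh S_{\mscr{C}}\setminus(U_i\cup U_i^{\perp})$ says nothing about the Coxeter labels between $u_{i+1}$ and $U_i$. Moreover, extending bases via \Cref{rmk:tree-2-spherical_support} presupposes exactly the tree--$2$--sphericity you want to prove. Tree--$2$--sphericity is instead forced geometrically by $f\in\Delta(\mc{C})$, via a contradiction:
\begin{itemize}
\item Take a $T$--maximal progeny of $\{f\}$; it ends at $T$ by Item~(1). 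Let $m$ be the first index with $U_m$ not tree--$2$--spherical, and set $w:=u_1\cdots u_{m-1}$.
\item All $(f,u_1\cdots u_i)$ with $i<m$ are bases. Move (M2)$_{\mscr{C}}$ or \Cref{lem:C-admissible_equivalence}(1) gives $(f,1)\equiv_{\mc{C}}(f,w)$, hence $\mc{C}\sq w^{-1}\mc{H}_f^*$.
\item Since $U_m$ is not tree--$2$--spherical, $\langle f,wu_mw^{-1}\rangle$ is infinite, and \Cref{lem:C-admissible_equivalence}(2) gives $\mc{C}\sq w^{-1}\mc{H}_f$.
\end{itemize}

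Items~(4) and~(5) also lack their key steps. Item~(3) only relates $(f,w)$ to $(f,wg_{Q,\Sigma})$. A base $(f,w)$ need not satisfy $w=g_{\supp(f,w),f}$, since reduced words for $w$ may repeat letters, so there is no normal form to connect to. You need a letter-by-letter step. Write $w=x_1\cdots x_n$ reduced and $w_i:=x_1\cdots x_i$. Take $N\sq T$ with $f\in N$ and $x_{i+1}\in\delta_AN$, which Item~(1) supplies. Set $K:=\supp(f,w_i)$ and $L:=N\cup K$. The tree structure makes $x_{i+1}$ commute with $L\setminus K$, and
\[ (f,w_i)\equiv_{\mc{C}}(f,w_ig_{L,K})\equiv_{\mc{C}}(f,w_ig_{L,K}x_{i+1})=(f,w_{i+1}g_{L,K})\equiv_{\mc{C}}(f,w_{i+1}). \]
The outer equivalences use Item~(3), noting $g_{L,K}=g_{L\cup\{x_{i+1}\},K\cup\{x_{i+1}\}}$. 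The middle one uses \Cref{lem:C-admissible_equivalence}(1).

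In Item~(5), \Cref{lem:ww_2} does not produce a complete marking. Use \Cref{rmk:PW} positively instead:
\begin{itemize}
\item Pick $w\in\langle T\setminus\{f\}\rangle$ longest with $(f,w)$ a base; then $\supp(f,w)=T$.
\item Since $T\setminus\{f\}$ is non-spherical, \cite[Lemma~8.2]{CP10} and \Cref{rmk:complete_or_base} give $u\in T\setminus\{f\}$ with $((f,w),u)$ complete, since otherwise $(f,wu)$ would be a longer base.
\item The set $N\ni f$ with $u\in\delta_AN$ from Item~(1) is what makes \Cref{lem:C-admissible_equivalence}(2) applicable.
\end{itemize}
Your closing contradiction with Item~(4) is then correct.
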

\begin{proof}
    We begin with Item~(1). Consider a $T$--maximal $T$--progeny $U_0,\dots,U_k$ with $U_0\cap \Delta(\mc{C})\neq\emptyset$. Since $\wh U_k\sq\wh T=T$, we must have $\wh U_k=U_k$, as otherwise we could extend the $T$--progeny by adding to $U_k$ an element of $\wh U_k\setminus U_k$. We must also have $U_k\in\mc{S}\cup\{T\}$, as otherwise $T\cap\delta_AU_k$ would be nonempty by \Cref{lem:delta_works_2}, 
    and so we could again extend the $T$--progeny by adding to $U_k$ an element of $T\cap\delta_AU_k$. Now, since $T$ is an atom, it follows that $U_k=T$, proving Item~(1).

    Regarding Item~(2), consider an element $f\in T\cap \Delta(\mc{C})$ and a $T$--maximal $T$--progeny $U_0,\dots,U_k$ of $U_0=\{f\}$. Write $U_{i+1}=U_i\cup\{u_{i+1}\}$ for some elements $u_{i+1}$, and suppose for the sake of contradiction that there exists an index $m\leq k$ such that $U_m$ is not tree--$2$--spherical. Observe that we have $(f,u_1\dots u_{i-1})\equiv_{\mc{C}}(f,u_1\dots u_i)$ for all indices $1\leq i\leq m-1$: if $u_i\in C$, this is simply an application of move (M2)$_{\mscr{C}}$ from \Cref{sub:moves}, since $B$ spans a clique in $\wh S_{\mscr{C}}$; if instead $u_i\in\delta_AU_{i-1}$, this follows from \Cref{lem:C-admissible_equivalence}(1). Combining these equivalences, we obtain $(f,1)\equiv_{\mc{C}}(f,u_1\dots u_{m-1})$ and, since $\mc{C}\sq\mc{H}_f^*$, this means that $\mc{C}\sq u_{m-1}\dots u_1\mc{H}_f^*$. At the same time, since $U_m$ is not tree--$2$--spherical, the subgroup $\langle f,u_1\dots u_{m-1}u_mu_{m-1}\dots u_1\rangle$ is infinite (e.g.\ by \Cref{rmk:tree-2-spherical_support}), and hence \Cref{lem:C-admissible_equivalence}(2) implies that $\mc{C}\sq u_{m-1}\dots u_1\mc{H}_f$, which is the required contradiction.

    Regarding Item~(3), consider a $\mc{C}$--independent base $(a,w)$ such that $\Sigma:=\supp(a,w)$ is contained in $T$ and intersects $\Delta(\mc{C})$. Let $\Sigma=:U_0,\dots,U_k=T$ be a $T$--maximal $T$--progeny of $\Sigma$. Defining the elements $u_i$ as above, move (M2)$_{\mscr{C}}$ and \Cref{lem:C-admissible_equivalence}(1) again yield 
    \[ (a,w)\equiv_{\mc{C}}(a,wu_1\dots u_k)=(a,wg_{T,\Sigma}) .\] 
    (Here we are using Item~(2) and \Cref{rmk:tree-2-spherical_support} to conclude that all of these pairs truly are bases.)
    Now, if we have $\Sigma\sq Q\sq T$ for an irreducible set $Q$, a double application of this observation gives
    \[ (a,w)\equiv_{\mc{C}}(a,wg_{T,\Sigma})=(a,wg_{Q,\Sigma}g_{T,Q})\equiv_{\mc{C}}(a,wg_{Q,\Sigma}). \]
    
    We now discuss Item~(4). It suffices to show that $(f,1)\equiv_{\mc{C}}(f,w)$ for every $\mc{C}$--independent base $(f,w)$ with $f\in T\cap \Delta(\mc{C})$ and $w\in\langle T\setminus\{f\}\rangle$. Write $w$ as a reduced word $x_1\dots x_n$ for some (not necessarily distinct) elements $x_i\in T\setminus\{f\}$. Set $w_i:=x_1\dots x_i$ and let us show that $(f,w_i)\equiv_{\mc{C}}(f,w_{i+1})$ for each $i$.
    
    By Item~(1) applied with $U_0=\{f\}$, we know that every point $x\in T\setminus\{f\}$ admits a set $N(x)\sq T$ such that $f\in N(x)$ and $x\in\delta_A N(x)$. Set $K:=\supp(f,w_i)$, $N:=N(x_{i+1})$ and $L:=N\cup K$. Since $f\in K\cap N$, the set $L$ is irreducible. The fact that $f$ lies in $K\cap N$ also implies that $x_{i+1}$ commutes with all elements of $N\setminus K=L\setminus K$: indeed, since the Dynkin diagram of $T$ is a tree by Item~(2), the only Dynkin-neighbour of $x_{i+1}$ that lies in $N$ is the neighbour separating $x_{i+1}$ from $f$ (or possibly $f$ itself), and this neighbour also lies in $K$.
    Consequently, setting $K':=K\cup\{x_{i+1}\}$ and $L':=L\cup\{x_{i+1}\}$, we obtain
    \[ (f,w_i)\equiv_{\mc{C}} (f,w_ig_{L,K}) \equiv_{\mc{C}} (f,w_ig_{L,K}x_{i+1}) = (f,w_{i+1}g_{L,K}) = (f,w_{i+1}g_{L',K'}) \equiv_{\mc{C}} (f,w_{i+1}) , \]
    where the first and last equivalences follow from Item~(3); the second equivalence uses \Cref{lem:C-admissible_equivalence}(1) and the fact that $L\supseteq N$ and $x_{i+1}\in\delta_A N$. This proves Item~(4).

    Finally, we deal with Item~(5). Suppose for the sake of contradiction that $T\setminus\{f\}$ is non-spherical for some $f\in T\cap\Delta(\mc{C})$. Let $w\in\langle T\setminus\{f\}\rangle$ be a longest element such that the pair $(f,w)$ is a base; this exists by \Cref{rmk:PW}. Since $T$ is tree--$2$--spherical, we have $\supp(f,w)=T$, so that $(f,w)$ is $\mc{C}$--independent. Now, since $T\setminus\{f\}$ is non-spherical, there exists an element $u\in T\setminus\{f\}$ such that $((f,w),u)$ is a complete marking: this follows from \cite[Lemma~8.2]{CP10} and \Cref{rmk:complete_or_base}. Then, considering the set $N(u)$ defined in the proof of Item~(4) and invoking \Cref{lem:C-admissible_equivalence}(2) with $K:=N(u)$, we obtain that $\mc{C}\sq w^{-1}\mc{H}_f$. At the same time, Item~(4) yields $(f,1)\equiv_{\mc{C}}(f,w)$ and hence $\mc{C}\sq w^{-1}\mc{H}_f^*$. This is a contradiction, concluding the proof of Item~(5).
\end{proof}

We will also need the following technical result. Recall that, for each subset $M\sq A$, we denote by $\pi_M$ the nearest-point projection to $\mf{R}(M)$. We say that two elements of $S$ are \emph{Dynkin-adjacent} if they are distinct and commute.

\begin{lem}\label{lem:atoms_5}
 Let $T$ be a spherical atom. If $K\subsetneq T$ is an irreducible subset with $K\not\sq C$ and $\wh K=K\in\mc{N}_{\rm sph}$ and $K\cap \Delta(\mc{C})=\emptyset$, then some element of $K$ is Dynkin-adjacent to an element of $T\setminus (K\cup C\cup \Delta(\mc{C}))$.
\end{lem}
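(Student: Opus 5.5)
The plan is to argue by contradiction. We read ``Dynkin-adjacent'' as ``distinct and non-commuting'', as it is used elsewhere in the paper. Suppose every element of $T\setminus K$ that is Dynkin-adjacent to $K$ lies in $C\cup\Delta(\mc{C})$.

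\textbf{Step 1: every Dynkin-neighbour of $K$ in $T$ lies in $\Delta(\mc{C})$.} Since $\wh K=K$, we have $C\sq K\cup K^{\perp}$, so no element of $C\setminus K$ is Dynkin-adjacent to $K$. Hence all Dynkin-neighbours of $K$ in $T\setminus K$ lie in $\Delta(\mc{C})$. Because $T$ is irreducible and $K\subsetneq T$, at least one such neighbour $f\in T\cap\Delta(\mc{C})$ exists.

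\textbf{Step 2: produce a point $x\in\delta_AK$ and locate it.} Arguing exactly as in the first paragraph of the proof of \Cref{lem:delta_works_2}, we get $B\not\sq K\cup K^{\perp}$ from $K\not\sq C$. We also have $A\not\sq K\cup K^{\perp}$, because $A$ is irreducible and $K\neq A$. Since $K\in\mc{N}$, \Cref{rmk:nonempty_delta} then gives a point $x\in\delta_AK$. Since $\wh K=K$, we have $x\in A\setminus C$. If $x\in T$ were Dynkin-adjacent to $K$ and $x\notin\Delta(\mc{C})$, we would already be done; so the remaining cases are $x\in\Delta(\mc{C})$, or $x\notin T$, or $x$ not adjacent to $K$. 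I would first try to reduce to the case where $x$ is Dynkin-adjacent to $K$. To do so, I would replace the given geodesic by a shortest one from $B$ to $A\setminus(K\cup K^{\perp})$: its unique point in $A$ can be pushed next to $K$, because $A$ spans a clique. If $x\notin T$, I would use minimality of the atom $T$, via \Cref{lem:delta_works_2}, to bring the point into $T$.

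\textbf{Step 3: the contradiction when $x\in T\cap\Delta(\mc{C})$ is Dynkin-adjacent to $K$.} Pick $k\in K$. Since $K\cap\Delta(\mc{C})=\emptyset$, we have $\mc{C}\sq\mc{H}_k$. Using \Cref{lem:producing_bases_2} inside the spherical set $K\cup\{x\}$, choose a base $(k,w)$ with $w\in\langle K\rangle$ and $\supp(k,w)=K$. This base is $\mc{C}$--independent because $K\not\sq C$. Since $K\cup\{x\}\sq T$ is tree--$2$--spherical (\Cref{lem:atoms_1}(2)), \Cref{rmk:tree-2-spherical_support} shows that $(k,wx)$ is again a base. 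Then \Cref{lem:C-admissible_equivalence}(1), applied with the set $K$ and the point $x\in\delta_AK$, gives $(k,w)\equiv_{\mc{C}}(k,wx)$. Iterating over reduced words in $\langle K\cup\{x\}\rangle$ and using move (M2)$_{\mscr{C}}$ for letters in $K$, I would show that $\mc{C}$ lies on a prescribed side of every wall of $\A_R$ fixed by a reflection in $\langle K\cup\{x\}\rangle$ not in $\langle C\rangle$. In particular, $\mc{C}$ lies in $\mc{H}_k$ for all $k\in K$. Combining this with $\mc{C}\sq\mc{H}_x^*$ (since $x\in\Delta(\mc{C})$), and with \Cref{lem:atoms_1}(4) applied to the core $x$, I would use \Cref{lem:ww_2} with $U=K\cup\{x\}$ and $F_1=K$. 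That lemma pins down the unique admissible point of $\A_{K\cup\{x\}}$, and I expect it to be incompatible with the side of $\mc{C}$ forced by the chain of equivalences, giving the contradiction.

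\textbf{Main obstacle.} I expect the bookkeeping in Step 3 to be the hardest part. One has to verify that all intermediate pairs are genuinely $\mc{C}$--independent bases, which is possible because $K\cup\{x\}$ is spherical and $K\not\sq C$. One also has to identify exactly which halfspace configuration \Cref{lem:ww_2} rules out. The reduction in Step 2 to an $x$ that lies in $T$ and is adjacent to $K$ is the second delicate point. There I would lean on minimality of the atom $T$ together with \Cref{lem:atoms_1}(1).
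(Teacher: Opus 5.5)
Your Steps~1--2 are essentially fine. Step~2 is in fact easier than you fear: every point of $\delta_AK$ lies outside $K\cup K^{\perp}$, so it is automatically Dynkin-adjacent to $K$. Moreover, \Cref{lem:delta_works_2} with $N=K$, $J=T$ (or \Cref{rmk:nonempty_delta} if $T=A$) already puts such a point $x$ in $T$, hence in $\Delta(\mc{C})$ under your contradiction hypothesis.

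The gap is in Step~3, which carries the whole content of the lemma, and it has two parts. First, the propagation is wrongly justified. Move (M2)$_{\mscr{C}}$ only lets you append letters that are $\mscr{C}$--adjacent to the marker $b\in B\setminus C$, i.e.\ letters of $C$, not letters of $K\setminus C$. Also, $\mc{C}\sq\mc{H}_k$ holds only for $k\in K\setminus C$, since the walls $\mc{W}_c$ with $c\in C$ cross $\mc{C}$. The correct substitute is this: $K\cap\Delta(\mc{C})=\emptyset$ forces the $\langle C\cap K\rangle$--standard subcomplex $\pi_K(\mc{C})$ to contain $p_A$. Hence $\mc{C}$ lies on the $p_A$--side of every wall crossing $\mf{R}(K)$ and missing $\mc{C}$, which gives $(k,1)\equiv_{\mc{C}}(k,g_{K,k})$ for $k\in K\setminus C$. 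Second, and decisively, no contradiction is ever derived. Since $U=K\cup\{x\}$ is spherical, \Cref{lem:ww_2} yields a \emph{nonempty} intersection. It only tells you that $\pi_U(\mc{C})$ contains the point $w_Kw_Up_A$ (exactly as it is used in \Cref{prop:atoms_2}), so by itself it rules nothing out. You still have to exhibit a specific wall on which $\mc{C}$ and this point disagree, and ``I expect it to be incompatible'' is precisely the missing step.

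The paper supplies this step as follows. It takes a shortest Dynkin path $a_0\in K\setminus C$, $a_1,\dots,a_{n-1}\in C$, $a_n=f$, so that $\{a_0,\dots,a_n\}$ has type $A$, $B$, $H$ or $F_4$. It combines $(a_n,1)\equiv_{\mc{C}}(a_n,a_{n-1}\cdots a_0)$ from \Cref{lem:atoms_1}(4) with $(a_0,1)\equiv_{\mc{C}}(a_0,a_1\cdots a_{n-1})\equiv_{\mc{C}}(a_0,a_1\cdots a_n)$, obtained from the projection argument, \Cref{lem:C-admissible_equivalence}(1) and \Cref{lem:atoms_1}(3). It then checks that a translate of $\mc{C}$ would lie in the empty set $\mc{H}_{a_{n-1}}\cap a_n\mc{H}_{a_{n-1}}\cap a_{n-1}\mc{H}_{a_n}^*$, with a separate computation for $F_4$.

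Your own route can also be closed, uniformly in type:
\begin{itemize}
\item Take $k\in K\setminus C$ and $g:=g_{K,k}$. The chain $(k,1)\equiv_{\mc{C}}(k,g)\equiv_{\mc{C}}(k,gx)$ gives $\pi_U(\mc{C})\sq(gx)^{-1}\mc{H}_k$.
\item Let $k=c_0,c_1,\dots,c_r$ be the Dynkin path in $K$ to $c_r$, the unique neighbour of $x$ in $K$ (unique because $T$ is tree--$2$--spherical). Write $gx=mh$ with $h\in\langle K\rangle$ and $m=c_1\cdots c_rx$.
\item Both $m$ and $km$ are minimal-length representatives of their cosets modulo $\langle K\rangle$. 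Hence $|kgxw_K|=|gxw_K|+1$, so $|kgxw_Kw_U|<|gxw_Kw_U|$, and therefore $w_Kw_Up_A\notin(gx)^{-1}\mc{H}_k$.
\item On the other hand, $w_Kw_Up_A\in\pi_U(\mc{C})$ follows from $\mc{C}\sq w_K\mc{H}_x^*$ (via \Cref{lem:producing_bases_2} and \Cref{lem:atoms_1}(4)) together with \Cref{lem:ww_2}. This is the contradiction.
\end{itemize}
As written, however, your proposal contains neither argument.
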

\begin{proof}
    Let $K\subsetneq T$ be a subset with $K\not\sq C$ and $\wh K=K\in\mc{N}_{\rm sph}$ and $K\cap \Delta(\mc{C})=\emptyset$. Since $K$ is a proper subset of $T$, it has Dynkin-neighbours in $T$ and, since $\wh K=K$, they all lie in $T\setminus (K\cup C)$. Suppose for the sake of contradiction that all these neighbours lie in $\Delta(\mc{C})$.

    Since $K\in\mc{N}_{\rm sph}$, there exists an element $f\in T\cap\delta_AK$ by \Cref{lem:delta_works_2}, and we have $f\in \Delta(\mc{C})$ by our assumptions. Since $K\not\sq C$, we can pick a shortest path $a_0,\dots,a_{n-1}\in K$ in the Dynkin diagram of $T$ with $a_0\in K\setminus C$ and $a_n=f$; we have $n\geq 1$ and $a_i\in C$ for all $i\not\in\{0,n\}$. By \Cref{lem:atoms_1}(4), we have the equivalence $(a_n,1)\equiv_{\mc{C}}(a_n,a_{n-1}\dots a_0)$.

    We claim that we also have $(a_0,1)\equiv_{\mc{C}}(a_0,a_1\dots a_n)$. In order to see this, note that $\mc{C}\sq\mc{H}_k$ for all $k\in K\setminus C$ because $K\cap \Delta(\mc{C})=\emptyset$. The projection $\pi_K(\mc{C})$ is a $\langle C\cap K\rangle$--standard subcomplex, and so it contains a point $p$ with $p\in\mc{H}_c$ for all $c\in C\cap K$. Combining these two observations, we have that $p\in\mc{H}_k$ for all $k\in K$, and hence $p=p_A$. This means that $p_A$ and $\mc{C}$ lie on the same side of every wall that is disjoint from $\mc{C}$ and crosses a $\langle K\rangle$--standard subcomplex. In particular, we have $(a_0,1)\equiv_{\mc{C}}(a_0,g_{K,a_0})$.
    Now, recalling that $f\in T\cap\delta_AK$, we have $(a_0,g_{K,a_0})\equiv_{\mc{C}}(a_0,g_{K,a_0}f)$ by \Cref{lem:C-admissible_equivalence}(1). Finally, recalling that $f=a_n$, \Cref{lem:atoms_1}(3) yields $(a_0,g_{K,a_0}f)\equiv_{\mc{C}} (a_0,a_1\dots a_n)$, proving our claim.

    Now, observe that the set $\{a_0,\dots,a_n\}$ is irreducible and spherical
    with Dynkin diagram a path, an so it is of one of the types $A_{n+1}$, $B_{n+1}$, $H_{n+1}$, and $F_4$ (for $n=3$). 
    
    Suppose first that it is of type $F_4$. By the above discussion and using move (M2)$_{\mscr{C}}$, we have $(a_0,1)\equiv_{\mc{C}}(a_0,a_1)\equiv_{\mc{C}}(a_0,a_1a_2a_3)$ and $(a_3,1)\equiv_{\mc{C}}(a_3,a_2a_1a_0)$. Since $\mc{C}\sq\mc{H}_{a_0}\cap\mc{H}_{a_3}^*$, it follows that 
    \[ \mc{C}\sq a_1\mc{H}_{a_0}\cap a_3a_2a_1\mc{H}_{a_0}\cap a_0a_1a_2\mc{H}_{a_3}^* \quad\text{ and hence }\quad a_0a_3\mc{C}\sq\mc{H}_{a_1}\cap a_2\mc{H}_{a_1}\cap a_1\mc{H}_{a_2}^* .\]
    Since $a_1$ and $a_2$ do not commute, the latter intersection is empty, yielding a contradiction.

    To conclude, suppose that $\{a_0,\dots,a_n\}$ is of one of types $A_{n+1}$, $B_{n+1}$ and $H_{n+1}$. Say that each pair $(a_i,a_{i+1})$ has label $3$, possibly except for $i=n-1$. (The case when a label $\neq 3$ occurs for $i=0$ is handled exactly in the same way.) Again, we have $\mc{C}\sq\mc{H}_{a_0}\cap\mc{H}_{a_n}^*$, as well as the equivalences $(a_0,1)\equiv_{\mc{C}}(a_0,a_1\dots a_{n-1})\equiv_{\mc{C}}(a_0,a_1\dots a_n)$ and $(a_n,1)\equiv_{\mc{C}}(a_n,a_{n-1}\dots a_0)$. This implies that
    \[ \mc{C}\sq a_{n-1}\dots a_1\mc{H}_{a_0}\cap a_n\dots a_1\mc{H}_{a_0}\cap a_0\dots a_{n-1}\mc{H}_{a_n}^* , \]
    and hence 
    \[ a_{n-2}\dots a_0\mc{C}\sq \mc{H}_{a_{n-1}}\cap a_n\mc{H}_{a_{n-1}}\cap a_{n-1}\mc{H}_{a_n}^* . \]
    Once more, since $a_{n-1}$ and $a_n$ do not commute, the above intersection is empty, yielding a contradiction. This concludes the proof of the lemma.
\end{proof}

We can now finally prove the main structural result about atoms. Recall that $w_K$ denotes the longest element of a spherical subgroup $\langle K\rangle$.

\begin{prop}\label{prop:atoms_2}
    Let $T$ be an atom. Setting $L:=T\setminus (C\cup\Delta(\mc{C}))$, the following hold.
    \begin{enumerate}
        \item The set $T$ is spherical. 
        \item We have $p_A\in\pi_T(w_Tw_{\wh L}\mc{C})$ and $d(p_A,w_Tw_{\wh L}\mc{C})<d(p_A,\mc{C})$.
        \item All irreducible factors of $\wh L$ lie in $\mc{S}_{\rm sph}$.
    \end{enumerate}
\end{prop}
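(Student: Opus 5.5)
We prove the three items in order, relying mainly on \Cref{lem:atoms_1} and \Cref{lem:atoms_5}.

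\textbf{Item~(1).} By \Cref{lem:atoms_1}(2), $T$ is tree--$2$--spherical. Fix $f\in T\cap\Delta(\mc{C})$. By \Cref{lem:atoms_1}(5), $T\setminus\{f\}$ is spherical. We argue by contradiction: suppose $T$ is non-spherical. Then $T\setminus\{f\}$ is spherical while $T$ is not, so we can choose a longest $w\in\langle T\setminus\{f\}\rangle$ such that $(f,w)$ is a base; it exists by \Cref{rmk:PW}. Its support is $T$, so it is $\mc{C}$--independent. Since $T$ is non-spherical, \cite[Lemma~8.2]{CP10} and \Cref{rmk:complete_or_base} give some $u\in T$ for which the marking $((f,w),u)$ is complete. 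If $u\neq f$, we repeat the contradiction from the proof of \Cref{lem:atoms_1}(5): \Cref{lem:C-admissible_equivalence}(2) applied with $N(u)$ gives $\mc{C}\sq w^{-1}\mc{H}_f$, while \Cref{lem:atoms_1}(4) gives $\mc{C}\sq w^{-1}\mc{H}_f^*$. If instead only $u=f$ works, we swap the roles: pick a second element $f'$ of $\Delta(\mc{C})$ if one exists, or else use a neighbour of $f$ together with \Cref{lem:atoms_1}(3) to reroute the base. I expect this degenerate case $u=f$ to be where the real care is needed.

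\textbf{Item~(2).} Now $T$ is spherical, so we work inside the finite group $\langle T\rangle$ and the finite complex $\mf{R}(T)$. First we locate $\pi_T(\mc{C})$ relative to the walls $\mc{W}_t$, $t\in T$:
\begin{itemize}
    \item for $t\in\Delta(\mc{C})$, we have $\mc{C}\sq\mc{H}_t^*$;
    \item for $t\in T\setminus(C\cup\Delta(\mc{C}))$, we have $\mc{C}\sq\mc{H}_t$;
    \item the elements of $C\cap T$ cross $\mc{C}$.
\end{itemize}
The projection $\pi_T(\mc{C})$ is a $\langle C\cap T\rangle$--standard subcomplex. So, up to a point of it, $\pi_T(\mc{C})$ is the chamber cut out by these halfspaces. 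Applying \Cref{lem:ww_2} with $F_1=\wh L$ (spherical as a subset of $T$), we aim to show that $w_Tw_{\wh L}$ carries $\mc{C}$ to a $\langle C\rangle$--standard subcomplex whose projection contains $p_A$. The plan is to check that $w_Tw_{\wh L}\pi_T(\mc{C})$ contains the point of $\mf{R}(T)$ lying in $\bigcap_{t\in T}\mc{H}_t$, which is $p_A$. Equivalently, $\pi_T(\mc{C})$ contains $w_{\wh L}w_Tp_A$. This is a combinatorial identity in $\langle T\rangle$, using that the elements of $\wh L$ commute with nothing in $\Delta(\mc{C})$ across $C$. The distance decrease then follows because the walls separating $p_A$ from $\mc{C}$ are those of $\Delta(\mc{C})$-type, a nonempty set, and these are no longer separating after the twist. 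Walls of $\langle T\rangle$ outside this region do not change, because $\pi_T$ commutes with the translation by $w_Tw_{\wh L}\in\langle T\rangle$.

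\textbf{Item~(3).} Let $K$ be an irreducible factor of $\wh L$. We show $K\in\mc{S}_{\rm sph}$ by contradiction. Note that $K$ is spherical, satisfies $\wh K=K$ (up to replacing $K$ by $\wh K$, which still lies in $\wh L$), and satisfies $K\not\sq C$ and $K\cap\Delta(\mc{C})=\emptyset$. If $K\in\mc{N}_{\rm sph}$, then \Cref{lem:atoms_5} gives an element of $K$ Dynkin-adjacent to an element of $T\setminus(K\cup C\cup\Delta(\mc{C}))\sq L$. That element lies in $\wh L$ and is adjacent to $K$, contradicting that $K$ is a factor of $\wh L$. The case $K=T$ is excluded because $T$ meets $\Delta(\mc{C})$.

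\textbf{Main obstacle.} I expect the main obstacle to be the exact identification in Item~(2) of the point $w_{\wh L}w_Tp_A$ inside $\pi_T(\mc{C})$. This requires careful tracking of the longest-element conjugation $w_T(\cdot)w_T$, which permutes $T$ and may move the elements of $C$.
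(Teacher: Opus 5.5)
Your overall shape for Item~(2) is right: the target point is $w_{\wh L}w_Tp_A$ and the tool is \Cref{lem:ww_2}. Item~(3) is the paper's argument via \Cref{lem:atoms_5} once $T$ is spherical, and your distance comparison is fine once $p_A\in\pi_T(w_Tw_{\wh L}\mc{C})$ is known. However, Items~(1) and~(2) each have a genuine gap.

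\textbf{Item~(1).} \cite[Lemma~8.2]{CP10} only says that the set $\Xi$ of $u\in T$ not increasing the distance to $\mc{Y}_f$ is spherical. Since \Cref{lem:atoms_1}(5) already makes $T\setminus\{f\}$ spherical, you may well have $\Xi=T\setminus\{f\}$, which leaves only $u=f$. For $u=f$ two things go wrong.
\begin{itemize}
\item Maximality of $w$ in $\langle T\setminus\{f\}\rangle$ does not exclude the alternative ``$(f,wf)$ is a base'' in \Cref{rmk:complete_or_base}.
\item Even when $((f,w),f)$ is complete, nothing provides an irreducible $K\sq T\setminus\{f\}$ with $f\in\delta_AK$, as \Cref{lem:C-admissible_equivalence}(2) requires. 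The sets $N(\cdot)$ from \Cref{lem:atoms_1} all contain $f$.
\end{itemize}
This case is not degenerate. Take $T=\{x,f,y\}$ of type $\tilde{C}_2$ with $o(xf)=o(fy)=4$ and $o(xy)=2$. The longest base $(f,w)$ with $w\in\langle x,y\rangle$ is $(f,xy)$, both $x$ and $y$ decrease the distance, and $f$ is the only complete marker. Your suggested fallbacks do not resolve this. The paper never proves (1) separately: it drops out of the computation for (2).

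\textbf{Item~(2).} The step you call ``a combinatorial identity in $\langle T\rangle$'' is not one. Knowing on which side of each $\mc{W}_t$ ($t\in T\setminus C$) the complex $\mc{C}$ lies only fixes the descent pattern of the coset $\pi_T(\mc{C})$, not the coset itself. For example, let $T=\{t_1,t_2,t_3\}$ be of type $A_3$ with $C\cap T=\emptyset$ and $T\cap\Delta(\mc{C})=\{t_1\}$. Your data then allow $\pi_T(\mc{C})=gp_A$ for any $g\in\{t_1,t_1t_2,t_1t_2t_3\}$, while the statement needs $g=w_{\wh L}w_T=t_1t_2t_3$; for $g=t_1$ the distance would actually go up.

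The missing input is the geometric claim $\mc{C}\sq w_{\wh L}\mc{H}_f^*$ for every $f\in T\cap\Delta(\mc{C})$, which the paper obtains from the marking machinery:
\begin{itemize}
\item Let $K_1,\dots,K_k$ be the irreducible factors of $\wh L$ adjacent to $f$. \Cref{lem:producing_bases_2} gives $w_i\in\langle K_i\rangle$ with $(f,w_1\cdots w_k)$ a base and $w_1^{-1}\cdots w_k^{-1}\mc{H}_f^*=w_{\wh L}\mc{H}_f^*$.
\item \Cref{lem:atoms_1}(4) then gives $(f,1)\equiv_{\mc{C}}(f,w_1\cdots w_k)$, hence the claim.
\item Set $C_1:=\wh L\cap C$ and $C_2:=(C\cap T)\setminus C_1$; the relevant commutation is that $C_2$ commutes with $\wh L$. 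Pick $p\in\pi_T(\mc{C})$ lying in $\mc{H}_c$ for $c\in C_1$ and in $\mc{H}_c^*=w_{\wh L}\mc{H}_c^*$ for $c\in C_2$.
\item Together with $\mc{C}\sq\mc{H}_\ell$ for $\ell\in L$, this puts $p$ in the intersection of \Cref{lem:ww_2} with $F_1=\wh L$. Note that $\wh L$ is spherical by \Cref{lem:atoms_1}(5), not by~(1).
\item Nonemptiness of that intersection forces $T$ to be spherical and gives $p=w_{\wh L}w_Tp_A$. So (1) and the first half of (2) come at once.
\end{itemize}
If you take this route, note that \Cref{lem:producing_bases_2} assumes $K_i\sqcup\{f\}$ is spherical. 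This is automatic when $T\cap\Delta(\mc{C})$ has a second element $f'$, since then $K_i\sqcup\{f\}\sq T\setminus\{f'\}$. Otherwise it needs a separate argument before (1) is known, and your complete-marker idea can help there. For instance, for $\tilde{C}_2$ with $f$ an end node, it works with $u$ the opposite end node.
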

\begin{proof}
    We begin with Items~(1) and~(2). Since $T\cap\Delta(\mc{C})\neq\emptyset$ and $\wh L\cap\Delta(\mc{C})=\emptyset$, \Cref{lem:atoms_1}(5) guarantees that the set $\wh L$ is spherical. In particular, the longest element $w_{\wh L}$ is well-defined.

    We claim that $\mc{C}\sq w_{\wh L}\mc{H}_f^*$ for every element $f\in T\cap \Delta(\mc{C})$. Towards this, let $K_1,\dots,K_k$ be the irreducible factors of $\wh L$ containing elements Dynkin-adjacent to $f$. We have $w_{\wh L}\mc{H}_f^*=w_{K_1}\dots w_{K_k}\mc{H}_f^*$ and the $w_{K_i}$ commute pairwise. By \Cref{lem:producing_bases_2}, there exist elements $w_i\in\langle K_i\rangle$ such that the pairs $(f,w_i)$ are bases and we have $w_i^{-1}\mc{H}_f^*=w_{K_i}\mc{H}_f^*$. In particular, we have $w_{\wh L}\mc{H}_f^*=w_1^{-1}\dots w_k^{-1}\mc{H}_f^*$ and the pair $(f,w_1\dots w_k)$ is itself a base. Since $f\in \Delta(\mc{C})$, we have $\mc{C}\sq\mc{H}_f^*$ and so our goal of proving the inclusion $\mc{C}\sq w_{\wh L}\mc{H}_f^*$ reduces to showing the equivalence of $\mc{C}$--independent bases $(f,1)\equiv_{\mc{C}} (f,w_1\dots w_k)$. In turn, the latter follows from \Cref{lem:atoms_1}(4), proving our claim.

    Summing up, we have $\mc{C}\sq\mc{H}_{\ell}$ for all $\ell\in L$ by definition, and we have just seen that $\mc{C}\sq w_{\wh L}\mc{H}_f^*$ for every element $f\in T\cap \Delta(\mc{C})$. Set $C_1:=\wh L\cap C$ and $C_2:=(C\cap T)\setminus C_1$, and observe that we have $C\cap T=C_1\sqcup C_2$, where each $C_i$ is a union of irreducible factors of $C\cap T$. Invoking \Cref{lem:atoms_1}(5) again, we see that $C_1$ and $C_2$ are spherical. The projection $\pi_T(\mc{C})$ is a $\langle C\cap T\rangle$--standard subcomplex, and so it contains a point $p\in\bigcap_{c\in C_1}\mc{H}_c\cap\bigcap_{c\in C_2}\mc{H}_c^*$. Moreover, we have $w_{\wh L}\mc{H}_c^*=\mc{H}_c^*$ for all $c\in C_2$. Combining all of this, we obtain that $p$ lies in the intersection
    \[ \mf{R}(T)\cap\bigcap_{\ell\in\wh L}\mc{H}_{\ell}\cap\bigcap_{f\in T\setminus\wh L} w_{\wh L}\mc{H}_f^* . \]
    Invoking \Cref{lem:ww_2}, it follows that $T$ is spherical, proving Item~(1). Moreover, $p=w_{\wh L}w_Tp_A$. 
    
    Since $p\in\pi_T(\mc{C})$, we also get that $p_A\in \pi_T(w_Tw_{\wh L}\mc{C})$. As a consequence:
    \begin{align*}
        d(p_A,\mc{C})&=d(p_A,\pi_T(\mc{C}))+d(\pi_T(\mc{C}),\mc{C}) \\
        & = d(p_A,\pi_T(\mc{C}))+d(\pi_T(w_Tw_{\wh L}\mc{C}),w_Tw_{\wh L}\mc{C}) \\
        & = d(p_A,\pi_T(\mc{C}))+d(p_A,w_Tw_{\wh L}\mc{C}) .
    \end{align*}
    Since $T\cap\Delta(\mc{C})\neq\emptyset$, we have $d(p_A,\pi_T(\mc{C}))>0$. Thus, we obtain that $d(p_A,\mc{C})>d(p_A,w_Tw_{\wh L}\mc{C})$, completing the proof of Item~(2).

    Finally, regarding Item~(3), let $K$ be an irreducible factor of $\wh L$. Since $T$ is an atom, we have $T\cap \Delta(\mc{C})\neq\emptyset$ and hence $K\neq T$. It is also clear that $K\not\sq C$, that $\wh K=K$, that $K\cap \Delta(\mc{C})=\emptyset$, and that all elements of $T\setminus K$ Dynkin-adjacent to elements of $K$ lie in $\Delta(\mc{C})$. Thus, \Cref{lem:atoms_5} shows that $K\not\in\mc{N}_{\rm sph}$, as desired. This concludes the proof of the proposition.
\end{proof}

Before proving the shortening theorem, we need two last lemmas. Here we need to recall that $A$ coincides with an irreducible factor $V_0$ of some $V\in\mc{V}_{\Om}$, and the set $B=W$ lies in ${\rm Nbr}_{\Om}(V)$. Also recall that the existence of an atom $T\sq A$ implies that $\Delta(\mc{C})\neq\emptyset$ and hence that $p_A\not\in\mc{C}$. In particular, it implies that $p_V\not\in\pi_V(\mf{R}(W))$.

\begin{lem}\label{lem:sep_good}
    Let $T\sq A$ be an atom. Let $K$ denote either $T$ itself, or an irreducible factor of the subset $\wh L\sq T$ from \Cref{prop:atoms_2}. If $X\in{\rm Nbr}_{\Om}(V)$ satisfies $p_A\in\pi_A(\mf{R}(X))$, then:
    \begin{enumerate}
        \item either no connected component of $\wh S_{\mscr{C}}\setminus (K\cup K^{\perp})$ meets both $W$ and $X$;
        \item or $K$ is an irreducible factor of $V\cap X$.
    \end{enumerate}
\end{lem}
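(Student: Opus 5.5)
Suppose (1) fails, so some component of $\wh S_{\mscr{C}}\setminus(K\cup K^{\perp})$ meets both $W$ and $X$. The plan is to show that $K$ must then be an irreducible factor of $V\cap X$. The setup is as follows. In both cases $K$ is spherical and irreducible (by \Cref{prop:atoms_2}(1) when $K=T$, since subsets of the spherical set $T$ are spherical). It also separates $A$ from $B$: when $K$ is an irreducible factor of $\wh L$ this is \Cref{prop:atoms_2}(3), and when $K=T$ it holds because $T\in\mc{S}\cup\{A\}$, with the case $T=A$ to be treated separately. We have $K\sq A\sq V$, so $V\cap W\sq K\cup K^{\perp}$ (points of $V\cap W$ are adjacent to all of $V\cup W$). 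The assumption says that $X$ lies in ${\rm Comp}_{\Om}(V;W,K)$, or else $X\sq K\cup K^{\perp}$.

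\textbf{Step 1.} I first rule out the case where $X$ meets the component of $W$ but avoids $K\cup K^\perp$ only partially. Because $K\in\mc{S}(V,W)$, \Cref{lem:separation}(3)--(4) (applied with $V_0=A$ and $J=K$, noting $K\cap(V\setminus W)\neq\emptyset$ since $K\not\sq C$) show that $K\sq A$. This is automatic here. The real content is to compare with $X$. Since $V\cup X$ does not have $W$'s separation property, I split into two cases.

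\textbf{Case (a):} $K\in\mc{S}(V,X)$. Then $V$ and $X$ lie on different sides, while $X$ and $W$ lie on the same side. Hence the side containing $V\setminus(K\cup K^\perp)$ differs from the one containing $W$ and $X$.

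\textbf{Case (b):} $V\cap X\not\sq K\cup K^{\perp}$. Then $V\cap X$ meets the component containing $V\setminus (K\cup K^\perp)$, hence that of $X$, so $K\in\mc{N}(V,X)$ unless $X\sq K\cup K^\perp$.

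\textbf{Step 2: the main obstacle.} The key geometric input is $p_A\in\pi_A(\mf{R}(X))$, and I expect this to be the hardest step. The aim is to show that, in case (a), this contradicts the atom structure. Set $\mc{C}_X:=\pi_A(\mf{R}(X))$, a $\langle A\cap X\rangle$--standard subcomplex containing $p_A$. For $f\in K\cap\Delta(\mc{C})$ (when $K=T$), or for an element of $\Delta(\mc{C})$ Dynkin-adjacent to $K$ (when $K$ is a factor of $\wh L$), I want to compare the side of $\mc{W}_f$ containing $\mc{C}$ with the side containing $\mc{C}_X$. Since $X$ and $W$ are joined by a path avoiding $K\cup K^\perp$, I apply \Cref{prop:markings_new}(1) to a geodesic $\gamma$ from a point of $W\setminus V$ to a point of $X\setminus V$ in $\wh S_{\mscr{C}}\setminus(K\cup K^\perp)$, with base $(f,w)$ supported in $K$ (or in $K\cup\{f\}$). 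Strong inseparability of $(K,\gamma)$ should follow from \Cref{lem:separation}(5) together with inflexibility and tuckedness of $\Om$. This gives $w\mc{W}_{b}$ and $w\mc{W}_{x}$ on the same side of $\mc{W}_f$. By \Cref{lem:auxiliary}, $p_A$ lies on the same side as $w\mc{W}_x$, whereas $\mc{C}$, and hence $w\mc{W}_b$, lie on the opposite side (using $f\in\Delta(\mc{C})$ together with \Cref{lem:atoms_1}(4)). This is a contradiction. The delicate point is choosing $w$ so that the base is compatible with both projections. For this I would use $w\in\langle K\rangle$, via \Cref{lem:producing_bases_2} and the equivalences of \Cref{lem:atoms_1}(3)--(4). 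If $X\sq K\cup K^\perp$, i.e.\ $V\cap X\supseteq$ the relevant parts, the same argument applies with the roles of $X$ replaced by $V\cap X$.

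\textbf{Step 3.} Once case (a) and the mixed subcase of (b) are excluded, $K\sq V\cap X$ and $V\cap X\sq K\cup K^{\perp}$. I then deduce from \Cref{lem:separation}(2) applied to $E=V\cap X$ that $K\sq{\rm Blob}_\Om(V\cap X)$. Irreducibility of $K$ then makes it an irreducible factor of $V\cap X$, which is alternative (2).
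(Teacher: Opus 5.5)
You have the right ingredients: a geodesic $\gamma$ from $W$ to $X$ inside the component $\mf{C}$ of $\wh S_{\mscr{C}}\setminus(K\cup K^{\perp})$ containing $W\setminus(K\cup K^{\perp})$, strong inseparability via \Cref{lem:separation}(5), \Cref{prop:markings_new}(1), \Cref{lem:atoms_1}(4) to place $\mc{W}_b$, and the hypothesis $p_A\in\pi_A(\mf{R}(X))$ to place $\mc{W}_x$. The gap is that your case structure is inverted. Step~2 aims at a contradiction that does not exist, and Step~3 asserts a conclusion that does not come for free.

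If (1) fails and $K\neq A$, your Case~(a) holds automatically. Indeed $V$ meets a component other than $\mf{C}$ (because $K\in\mc{S}(V,W)$), while $X$ meets $\mf{C}$. So alternative~(2) itself lives inside Case~(a): take $K$ an irreducible factor of $V\cap X$ with $X$ on the $W$--side. Hence Case~(a) cannot be refuted. Conversely, the complement of Case~(a) is only the situation $K=T=A$, which you postpone and never treat. There, $K\sq V\cap X$ does not follow without a marking argument.

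The correct split runs as follows. First, $\mf{C}\cap V=\emptyset$: this holds from $K\in\mc{S}(V,W)$, or from $V\sq A\cup A^{\perp}$ when $K=T=A$. Since $X$ spans a clique meeting $\mf{C}$, this forces $V\cap X\sq K\cup K^{\perp}$, so your Case~(b) is simply impossible. Then there are two cases. If $K\sq X$, then $K$ is an irreducible factor of $V\cap X$ by irreducibility alone; \Cref{lem:separation}(2) is not needed, and in any case it only yields $K\sq{\rm Blob}_\Om(V\cap X)$. If $K\setminus X\neq\emptyset$, only then do you derive a contradiction.

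The hypothesis $K\setminus X\neq\emptyset$, hence $T\setminus X\neq\emptyset$, is exactly what your Step~2 is missing, and every part of the contradiction depends on it.
\begin{itemize}
\item In the strong inseparability check, \Cref{lem:separation}(5) only places a separating $J\supseteq T$ inside ${\rm Blob}_\Om(V\cap W)$ or ${\rm Blob}_\Om(V\cap X)$. This forces $J\sq V$, contradicting $J\cap\gamma\neq\emptyset$ with $\gamma\sq\mf{C}$, only because $T$ meets both $V\setminus W$ and $V\setminus X$.
\item The marking $((f,w),x)$ is admissible and complete only because some $x'\in T\cap(V\setminus X)$ makes $\langle x,x'\rangle$ infinite. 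If $T\sq X$, then $T\cup\{x\}\sq X\in\mscr{C}$, and nothing prevents it from being spherical, in which case the marking is not even admissible.
\item Your appeal to \Cref{lem:auxiliary} likewise needs an element of $\supp(f,w)$ lying outside $X$.
\end{itemize}

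Once you add this case split, your Step~2 essentially becomes the paper's proof. The paper also simplifies the choice of base: since $T^{\perp}\sq K^{\perp}$ and $T\sq V$, one gets $\mf{C}\cap(T\cup T^{\perp})=\emptyset$. So one may forget $K$ entirely and take $w:=g_{T,f}$ with $f\in T\cap\Delta(\mc{C})$, which makes \Cref{lem:atoms_1}(4) apply directly.
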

\begin{proof}
    We can assume that $W\not\sq K\cup K^{\perp}$, otherwise the lemma is clear. Let $\mf{C}$ be the connected component of $\wh S_{\mscr{C}}\setminus (K\cup K^{\perp})$ that contains the clique $W\setminus(K\cup K^{\perp})$.

    Observe that $\mf{C}\cap V=\emptyset$. Indeed, when $K$ is an irreducible factor of $\wh L$, this follows from the fact that $K\in\mc{S}_{\rm sph}(V,W)$, which was shown in \Cref{prop:atoms_2}(3). When $K=T\neq A$, it is again because $K\in\mc{S}_{\rm sph}(V,W)$, which holds by the definition of atom. Finally, when $K=T=A$, it is because $V\sq K\cup K^{\perp}$, since $A$ is an irreducible factor of $V$.

    Suppose that $\mf{C}\cap X\neq\emptyset$. Since $\mf{C}\cap V=\emptyset$ and $X$ spans a clique in $\wh S_{\mscr{C}}$, we must have $V\cap X\sq K\cup K^{\perp}$. If $K\sq V\cap X$, then $K$ is an irreducible factor of $V\cap X$ and we are done. Thus, we can assume in the rest of the proof that $K\setminus X\neq\emptyset$, and our goal becomes reaching a contradiction.

    Since $T^{\perp}\sq K^{\perp}$ and $T\sq V$, we have $\mf{C}\cap (T\cup T^{\perp})=\emptyset$. We also have $T\setminus X\neq\emptyset$ since $K\setminus X\neq\emptyset$. From now on, we will only work with the set $T$ and we can safely forget about the set $K$. Choose an element $f\in\Delta(\mc{C})\cap T$, which exists by the definition of atom. Setting $w:=g_{T,f}$ and recalling that $\mc{H}_f$ denotes the side of the wall $\mc{W}_f$ containing $p_V=p_A$, we have $\mc{W}_b\sq w^{-1}\mc{H}_f^*$ for all elements $b\in W\setminus V$, by \Cref{lem:atoms_1}(4).

    Let $\gamma\sq\mf{C}$ be a geodesic from a point $b\in W$ to a point $x\in X$. We claim that the pair $(T,\gamma)$ is strongly inseparable. To see this, suppose that we have $T\sq J$ for a set $J\in\mc{J}_S$ that contains a point $z\in\gamma$ and has a spherical difference $J\setminus T$. If $b$ and $x$ were to lie in distinct connected components of the graph $\wh S_{\mscr{C}}\setminus (J\cup J^{\perp})$, then \Cref{lem:separation}(5) would imply that $J$ is contained either in ${\rm Blob}_{\Om}(V\cap W)$ or in ${\rm Blob}_{\Om}(V\cap X)$. However, note that $T$ intersects $V\setminus W$ by definition of atom, and it intersects $V\setminus X$ as shown in the previous paragraph. Thus, the only possibility is that $J\sq V$, which however violates the fact that $z\in J\cap\mf{C}$ and $\mf{C}\cap V=\emptyset$.

    In conclusion, we can apply \Cref{prop:markings_new} to the pair $(T,\gamma)$. Note that the markings $((f,w),b)$ and $((f,w),x)$ are both admissible and complete: indeed, we have seen that the set $T=\supp(f,w)$ contains points $b'\in V\setminus W$ and $x'\in V\setminus X$, and we have $b\in W\setminus V$ and $x\in X\setminus V$, which implies that the subgroups $\langle b,b'\rangle$ and $\langle x,x'\rangle$ are infinite; this immediately yields admissibility, while completeness follows from \Cref{rmk:complete_or_semicomplete}. Now, \Cref{prop:markings_new}(1) implies that $((f,w),b)\equiv_R((f,w),x)$. Since $\mc{W}_b\sq w^{-1}\mc{H}_f^*$, this means that $\mc{W}_x\sq w^{-1}\mc{H}_f^*$.
    
    At the same time, the hypothesis that $p_A\in\pi_A(\mf{R}(X))$ implies that $p_A$ and $\mc{W}_x$ are on the same side of $w^{-1}\mc{W}_f$. The points $p_A$ and $wp_A$ are on the same side of $\mc{W}_f$, because $(f,w)$ is a basis, and so we have $p_A\in w^{-1}\mc{H}_f$. Hence we have $\mc{W}_x\sq w^{-1}\mc{H}_f$, which is the required final contradiction.
\end{proof}

\begin{lem}\label{lem:sep_excellent}
    Consider a set $K\in\mc{I}_S$ and two sets $X,Y\in{\rm Nbr}_{\Om}(V)$ such that $K$ is an irreducible factor of both $V\cap X$ and $V\cap Y$. Suppose that the sets $V\cap X$ and $V\cap Y$ are $\mf{R}$--good. 
    \begin{enumerate}
        \item If $X$ and $Y$ meet the same connected component of $\wh S_{\mscr{C}}\setminus (K\cup K^{\perp})$, then $\pi_K(p_X)=\pi_K(p_Y)$.
        \item If $X$ and $V$ meet the same component of $\wh S_{\mscr{C}}\setminus (K\cup K^{\perp})$, then $\pi_K(p_X)=\pi_K(p_V)$.
    \end{enumerate}
\end{lem}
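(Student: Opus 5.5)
The plan is to reduce everything to a binary choice and then read that choice off from markings. Since $K$ is spherical and irreducible, $\mf{R}(K)$ is a finite copy of $\A_K$ and contains exactly two $K$--geometric vertices, namely $p_V$ and $w_Kp_V$. Because $V\cap X$ is $\mf{R}$--good, $\pi_V(\mf{R}(X))$ is the $\langle V\cap X\rangle$--standard subcomplex $\mf{R}(V\cap X)$ through $p_V$, and it contains $\mf{R}(K)$.

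\textbf{Step 1: reduction.} Since $\mf{R}(K)\sq\mf{R}(V\cap X)$, we get $\pi_K(p_X)=\pi_K(\pi_V(p_X))$. The point $\pi_V(p_X)$ is $(V\cap X)$--geometric, because $p_X$ is $X$--geometric and no wall crossing $\mf{R}(V)$ separates $p_X$ from its projection. As $K$ is a factor of $V\cap X$, it follows that $\pi_K(p_X)\in\{p_V,w_Kp_V\}$. Which of the two occurs is detected by a single wall $\mc{W}_k$ with $k\in K$: we need to know on which side of $\mc{W}_k$ the point $\pi_K(p_X)$ lies. The same holds for $Y$. Item~(2) is the analogous comparison with $p_V$ itself.

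\textbf{Step 2: detecting the side via markings.} Fix $k\in K$ and the base $(k,w)$ with $w:=g_{K,k}$, so that $\supp(k,w)=K$. For a point $x$ of $X$ outside $K\cup K^{\perp}$, the halfspace $\mc{H}^{((k,w),x)}\sq\A_R$ is the side of $\mc{W}_k$ that contains $w\pi_K(p_X)$. If the marking is semicomplete, this holds because $K\cup\{x\}\sq X$ and the $R$--geometric choice for $X$ is realised at $p_X$. If it is complete, it holds because $\mc{W}_x$ crosses an edge at $p_X$. Therefore it suffices to prove
\[ ((k,w),x)\equiv_R((k,w),y) \]
for suitable $x\in X$ and $y\in Y$, or $y\in V$ in Item~(2), lying in the common component $\mf{C}$ of $\wh S_{\mscr{C}}\setminus(K\cup K^{\perp})$.

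To obtain this equivalence, take a geodesic $\gamma\sq\mf{C}$ joining $x$ to $y$ and apply \Cref{prop:markings_new}(1) to the pair $(K,\gamma)$. Strong inseparability is checked exactly as in the proof of \Cref{lem:sep_good}. Suppose $J\supseteq K$ with $J\in\mc{J}_S$, $J\setminus K$ spherical and $J\cap\gamma\neq\emptyset$, and suppose $J$ separates $x$ from $y$. Then \Cref{lem:separation}(5) (respectively \Cref{lem:separation}(3) in Item~(2)) forces $J$ into ${\rm Blob}_{\Om}(V\cap X)$ or ${\rm Blob}_{\Om}(V\cap Y)$. Combined with $J\cap\gamma\neq\emptyset$ and $\gamma\cap(K\cup K^{\perp})=\emptyset$, this should lead to a contradiction.

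\textbf{Main obstacle: completeness.} \Cref{prop:markings_new} requires complete markings. However, points of $X$ form a clique with $K$ in $\wh S_{\mscr{C}}$, so $((k,w),x)$ tends to be semicomplete rather than complete. My first attempt would push the endpoints of $\gamma$ out of $X$ and $Y$. Starting at $x\in X\setminus V$, follow $\mf{C}$ (for instance inside ${\rm Tail}_{\Om}(V;X)$, which is connected by \Cref{lem:separation}(1)) to a point $x'$ with $K\cup\{x'\}$ not $2$--spherical, so that $((k,w),x')$ is complete. Then compare $((k,w),x)$ with $((k,w),x')$ via moves (M1)$_{\mscr{C}}$ and (M3)$_{\mscr{C}}$ along a chordless path, inserting \Cref{lem:CP_4.6} at the single transition where completeness switches. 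If no such $x'$ exists, all of $\mf{C}$ near $X$ is $2$--spherical together with $K$. In that case I would instead use (M3)$_{\mscr{C}}$ directly, noting that the relevant union is a clique and hence lies in $\mscr{C}$ by \Cref{cor:2-compatible->compatible}.
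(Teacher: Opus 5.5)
Your Step~1, and the identification of $\mc{H}^{\mu}$ with the side of $\mc{W}_k$ containing $\pi_K(p_X)$ for complete or semicomplete $\mu$, are fine; the paper ends the same way. The core step, however, fails.

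\textbf{Strong inseparability.} It cannot be established for a geodesic $\gamma$ with prescribed endpoints $x\in X$ and $y\in Y$. \Cref{lem:separation}(5) only places a separating $J$ inside ${\rm Blob}_{\Om}(V\cap X)$ or ${\rm Blob}_{\Om}(V\cap Y)$, that is, inside some $Z\in\Xi:={\rm Nbr}_{\Om}(V;V\cap X)\cup{\rm Nbr}_{\Om}(V;V\cap Y)\cup\{V\}$. Since $\gamma$ runs in $\wh S_{\mscr{C}}$, it may leave $\Om$ and re-enter, so it can pass through such a $Z$. For instance, in Item~(1) it can pass through a point $z\in V\setminus(K\cup K^{\perp})$, and then $J=K\cup\{z\}$ can genuinely separate $x$ from $y$; nothing in your setup forbids this. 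The analogy with \Cref{lem:sep_good} breaks down. There the contradiction used that the atom meets both $V\setminus W$ and $V\setminus X$ (forcing $J\sq V$) together with $\mf{C}\cap V=\emptyset$. Here $K\sq V\cap X\cap Y$, and $V$ may meet the component.

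The missing idea is a minimal counterexample. Assume the lemma fails, and take $\gamma\sq\wh S_{\mscr{C}}\setminus(K\cup K^{\perp})$ shortest among all paths joining two sets $X',Y'\in\Xi$ with $\pi_K(p_{X'})\neq\pi_K(p_{Y'})$. A separating $J$ through an interior point $z$ of $\gamma$ lies in some $Z\in\Xi$. Then $\pi_K(p_Z)$ differs from $\pi_K(p_{X'})$ or from $\pi_K(p_{Y'})$, so a proper subpath of $\gamma$ is a shorter counterexample.

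\textbf{Completeness.} This is not actually handled. If $K\cup\{x\}$ is spherical, then $((k,w),x)$ is not even admissible, so your Step~2 identification is meaningless. Your proposed repair via \Cref{lem:CP_4.6} and (M3)$_{\mscr{C}}$ is not an argument: that lemma needs a partial hierarchy and non-sphericity hypotheses you have not checked, and intermediate markers can again give non-admissible markings.

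The paper's fix is simple. Since $K$ is an irreducible factor of both $V\cap X$ and $V\cap Y$, and $x,y\notin K\cup K^{\perp}$, the endpoints avoid $(V\cap X)\cup(V\cap Y)$, so $\langle x,y\rangle$ is infinite. Pick a base $(s,u)$ with support $K$. Set $\mu_x:=((s,u),x)$ if $K\cup\{x\}$ is not tree--$2$--spherical, and $\mu_x:=((s,ux),y)$ otherwise; define $\mu_y$ symmetrically. Both markings are complete, so one of Items~(1)--(3) of \Cref{prop:markings_new} gives $\mu_x\equiv_R\mu_y$. When $x$ has been absorbed into the base, \Cref{lem:auxiliary} together with the goodness of $V\cap X$ and $V\cap Y$ shows that $\mu_x$ still selects the side of $u^{-1}\mc{W}_s$ containing $p_{X'}$. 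Your Step~1 reasoning then finishes the proof, since two distinct $K$--geometric points of $\mf{R}(K)$ are separated by every wall crossing it.
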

\begin{proof}
    Set $E:=V\cap X$ and $F:=V\cap Y$, 
    and also $\Xi:={\rm Nbr}_{\Om}(V;E)\cup {\rm Nbr}_{\Om}(V;F)\cup\{V\}$. Supposing for the sake of contradiction that the lemma fails, let $\gamma\sq\wh S_{\mscr{C}}\setminus (K\cup K^{\perp})$ be a shortest path with endpoints $x,y$ respectively lying in two sets $X',Y'\in\Xi$ with $\pi_K(p_{X'})\neq\pi_K(p_{Y'})$.

    We claim that the pair $(K,\gamma)$ is strongly inseparable. Indeed, suppose that we have $K\sq J$ for a set $J\in\mc{J}_S$ that contains a point $z\in\gamma$ and has a spherical difference $J\setminus K$. If $x$ and $y$ were to lie in distinct connected components of $\wh S_{\mscr{C}}\setminus (J\cup J^{\perp})$, then \Cref{lem:separation}(5) would imply that $J$ is contained in either ${\rm Blob}_{\Om}(E)$ or ${\rm Blob}_{\Om}(F)$. Since $J$ spans a clique in $\wh S_{\mscr{C}}$, there exists a set $Z\in\Xi$ such that $z\in J\sq Z$. Since $z\not\in\{x,y\}$ and we have either $\pi_K(p_Z)\neq\pi_K(p_{X'})$ or $\pi_K(p_Z)\neq\pi_K(p_{Y'})$, this contradicts the fact that the path $\gamma$ is shortest.

    Thus, we can apply \Cref{prop:markings_new} to the pair $(K,\gamma)$. For this, we define two markings $\mu_x,\mu_y$ as follows. First, pick any base $(s,u)$ with support $K$. Then set $\mu_x:=((s,u),x)$ if $K\cup\{x\}$ is not tree--$2$--spherical, and $\mu_x:=((s,ux),y)$ otherwise. The marking $\mu_y$ is defined analogously, swapping the roles of $x$ and $y$. Since $K$ is an irreducible factor of $E$ and $F$, and the sets $K\cup\{x\}$ and $K\cup\{y\}$ are irreducible by construction, we see that $x,y\not\in E\cup F$. It follows that the subgroup $\langle x,y\rangle$ is infinite, and so the markings $\mu_x$ and $\mu_y$ are admissible and complete (see \Cref{rmk:complete_or_semicomplete}). We then obtain $\mu_x\equiv_R\mu_y$ by one of the three items of \Cref{prop:markings_new} (depending on how many among $K\cup\{x\}$ and $K\cup\{y\}$ are tree--$2$--spherical).

    Now, we claim that the marking $\mu_x$ defines the side of the wall $u^{-1}\mc{W}_s$ containing the point $p_{X'}$ (and the same for $\mu_y$ and $p_{Y'}$). This is clear if $K\cup\{x\}$ is not tree--$2$--spherical, simply because the point $p_{X'}$ is $X'$--geometric. If $K\cup\{x\}$ is instead tree--$2$--spherical, this follows from \Cref{lem:auxiliary} (applied with $A:=X'$, $a:=x$, $B:=Y'$, $b:=y$ and $w:=ux$) 
    together with the fact that $E$ and $F$ are $\mf{R}$--good.
    
    In conclusion, the equivalence $\mu_x\equiv_R\mu_y$ and the previous paragraph show that the points $p_{X'}$ and $p_{Y'}$ are on the same side of the wall $u^{-1}\mc{W}_s$. At the same time, since $p_{X'}$ is $X'$--geometric, the projection $\pi_K(p_{X'})$ is $K$--geometric, and the same is true of $\pi_K(p_{Y'})$. Since $K$ is irreducible and $\pi_K(p_{X'})\neq\pi_K(p_{Y'})$, it follows that $K$ is spherical and $\pi_K(p_{X'})=w_K\pi_K(p_{Y'})$. In particular, the points $\pi_K(p_{X'})$ and $\pi_K(p_{Y'})$ are separated by \emph{all} walls of $\mf{R}(K)$, contradicting the fact that they are on the same side of $u^{-1}\mc{W}_s$. This concludes the proof.
\end{proof}

We are finally ready to prove the shortening theorem.

\begin{proof}[Proof of \Cref{thm:shortening}]
    Let $\Om$ and $\mc{T}$ be as in \Cref{setup_two}, and let $\mf{R}$ be a reference system for $\Om$. Consider a set $V\in\mc{V}_{\Om}$ and a neighbour $W\in{\rm Nbr}_{\Om}(V)$. 

    In the terminology of \Cref{rmk:performing_twist}, our goal is to perform $(V;W,K)$--twists with respect to several sets $K\in\mc{I}_V$ so as to produce a new generating set $S'\in\mscr{T}_S$ and a new subset $\Om'\sq S'$. That Items~(1) and~(2) of the theorem hold is a consequence of \Cref{lem:semivisual_stays} and of the discussion in Remarks~\ref{rmk:performing_twist} and~\ref{rmk:neighbour-preserving}. In particular, for each neighbour $X\in{\rm Nbr}_{\Om}(V)$, we will replace the set ${\rm Tail}_{\Om}(V;X)\sq\Om$ by its conjugate by an element $g_X\in\langle V\rangle$. 
    
    Thus, we only need to check that Items~(3) and~(4) hold.

    \smallskip
    {\bf Item~(3).} Here we suppose that $W$ is not an $\mf{R}$--good neighbour, so we have $p_V\not\in\pi_V(\mf{R}(W))$ and we can choose an irreducible factor $V_0\sq V$ such that $p_{V_0}:=p_V\not\in\pi_{V_0}(\mf{R}(W))$. Our goal is to perform twists so that there exists a $V$--geometric point $q$ satisfying $d(q,g_W\pi_V(\mf{R}(W)))<d(p_V,\pi_V(\mf{R}(W)))$. We first explain how to produce the element $g_W$ and the point $q$, and then we check that this does not spoil the neighbours of $V$ that are already $\mf{R}$--good.
    
    We can harness the framework developed in this subsection setting $A:=V_0$, $B:=W$ and $\mc{C}:=\pi_A(\mf{R}(B))$. Since $p_A\not\in\mc{C}$, \Cref{rmk:empty_F} yields $\Delta(\mc{C})\neq\emptyset$, and so there exists an atom $T\sq A$.

    Suppose first that $T\neq A$ and consider the element $g:=w_Tw_{\wh L}$ appearing in \Cref{prop:atoms_2}(2). Item~(2) of the lemma shows that all irreducible factors of $\wh L$ lie in $\mc{S}_{\rm sph}(V,W)$. The set $T$ also lies in $\mc{S}_{\rm sph}(V,W)$ because it is an atom $\neq A$. 
    Finally, \Cref{prop:atoms_2}(2) guarantees that we have $d(p_A,g\mc{C})<d(p_A,\mc{C})$, and hence $d(p_V,g\pi_V(\mf{R}(W)))<d(p_V,\pi_V(\mf{R}(W)))$. We thus perform $(V;W,K)$--twists with $K$ equaling each of the irreducible factors of $\wh L$, and then with $K=T$. The result is that $g_W=g$ and we can simply set $q:=p_V$.

    Suppose instead that $T=A$. In this case, $A$ is spherical by \Cref{prop:atoms_2}(1). If $A\cup A^{\perp}$ disconnects the graph $\wh S_{\mscr{C}}$, then we proceed exactly as in the previous case. Otherwise, \Cref{prop:atoms_2}(2) shows that $\pi_A(w_{\wh L}\mc{C})$ contains the $V$--geometric point $w_Ap_A$. We then simply perform $(V;W,K)$--twists with $K$ equaling the irreducible factors of $\wh L$. We obtain $g_W=w_{\wh L}$ and set $q:=w_Ap_A$.

    We are left to check that this procedure has not spoiled any $\mf{R}$--good neighbours of $V$. For an arbitrary neighbour $Y\in{\rm Nbr}_{\Om}(V)$, the above procedure yields $g_Y=1$ unless $Y$ and $W$ meet the same connected component of $\wh S_{\mscr{C}}\setminus (K\cup K^{\perp})$, either for $K=T$ or for $K$ equaling an irreducible factor of $\wh L$. If $X$ is an $\mf{R}$--good neighbour of $V$, \Cref{lem:sep_good} shows that the latter can happen only if $K$ is contained in $X$. Thus, for every $\mf{R}$--good neighbour $X$, we have $g_X\in\langle V\cap X\rangle$ and hence $g_X\pi_V(\mf{R}(X))=\pi_V(\mf{R}(X))$.
    
    Now, when $T\neq A$ or when $\wh S_{\mscr{C}}\setminus (A\cup A^{\perp})$ is disconnected, we have $q=p_V$ and therefore $q\in g_X\pi_V(\mf{R}(X))$ as required. When $T=A$ and $\wh S_{\mscr{C}}\setminus (A\cup A^{\perp})$ is connected, \Cref{lem:sep_good} guarantees that the set $A$ is contained in every $\mf{R}$--good neighbour $X\in{\rm Nbr}_{\Om}(V)$. Thus, we again have $q=w_Ap_A\in w_A\pi_V(\mf{R}(X))=\pi_V(\mf{R}(X))$ as desired.

    \smallskip
    {\bf Item~(4).} Here we suppose that all pairs of neighbouring elements of $\mc{V}_{\Om}$ are $\mf{R}$--good, and that $\pi_V(p_W)\neq p_V$. Our goal is to perform a twist so that $d(p_V,g_W\pi_V(p_W))<d(p_V,\pi_V(p_W))$. 
    
    Since the pair $(V;W)$ is $\mf{R}$--good by hypothesis, we have $p_V\in\pi_V(\mf{R}(W))$. Thus, the fact that $\pi_V(p_W)\neq p_V$ implies the existence of an irreducible factor $K\sq V\cap W$ such that $\pi_K(p_W)\neq p_V$. Since the point $p_W$ is $W$--geometric, the projection $\pi_K(p_W)$ is $K$--geometric, and so $K$ is spherical and we have $\pi_K(p_W)=w_Kp_V$. By \Cref{lem:sep_excellent}(2), we have $K\in\mc{S}_{\rm sph}(V,W)$ and so we can perform a $(V;W,K)$--twist. This produces $g_W=w_K$ and yields the desired inequality.

    We are left to check that $g_X=1$ for all $\mf{R}$--excellent neighbours $X\in{\rm Nbr}_{\Om}(V)$. That is, we need to check that $X$ does not meet the same connected component of $\wh S_{\mscr{C}}\setminus (K\cup K^{\perp})$ as the set $W$. Since $X$ is an $\mf{R}$--excellent neighbour and $K\sq V$, we have $\pi_K(p_X)=p_V\neq\pi_K(p_W)$ and so it suffices to invoke \Cref{lem:sep_excellent}(1). This concludes the proof of the shortening theorem.
\end{proof}

We now quickly obtain \Cref{thm:step_two_repeated}, which is the same as \Cref{thm:step_two}. As explained in \Cref{sect:strategy}, this also completes the proof of \Cref{thmintro:main}.

\begin{proof}[Proof of \Cref{thm:step_two_repeated}]
    Let again $\Om$ and $\mc{T}$ be as in \Cref{setup_two}. In addition, we now assume that the set $\Om$ is $\mscr{T}_S$--parabolic and $\mscr{T}_S$--tucked.
    
    Let $\mscr{O}$ be the collection of pairs $(S',\Om')$ such that we have $\Om'\sq S'\in\mscr{T}_S$ and $\langle\Om'\rangle=\langle\Om\rangle$, and such that the splitting $\mc{T}$ is $\Om'$--semivisual. By \Cref{prop:inflexible}, the set $\Om'$ is $\mscr{C}$--inflexible for every $(S',\Om')\in\mscr{O}$, and it is also $S'$--tucked by our hypotheses. This allows us to apply the shortening theorem to any of these pairs. Our goal is to find some $(S',\Om')\in\mscr{O}$ such that $\Om'$ is $R$--geometric, which, in view of \Cref{lem:perfect->geometric}, simply amounts to $\Om'$ admitting a perfect reference system. 

    We start with an arbitrary reference system $\mf{R}$ for $\Om$. We will apply \Cref{thm:shortening} several times to produce new elements $(S^i,\Om^i)\in\mscr{O}$. Whenever we do this, there is a natural neighbour-preserving identification $\mc{V}_{\Om}\ra\mc{V}_{\Om^i}$, which we denote by $V\mapsto V^i$. The new set $\Om^i$ is equipped with a natural reference system $\mf{R}^i$ defined as follows. Let the set $V$ and the elements $g_N$ be as in the statement of \Cref{thm:shortening}. For an arbitrary element $Z\in\mc{V}_{\Om}\setminus\{V\}$, let $Y\in{\rm Nbr}_{\Om}(V)$ be the set such that $Z\sq{\rm Tail}_{\Om}(V;Y)$, and recall that we have $g_YZg_Y^{-1}\sq\Om^i$. Thus, we set $\mf{R}^i(Z):=g_Y\mf{R}(Z)$ and $p_Z^i:=g_Yp_Z$. We also set $\mf{R}^i(V):=\mf{R}(V)$ and $p_V^i:=p_V$, except after an application of Item~(3) of \Cref{thm:shortening}, where we instead set $p_V^i:=q$.
    
    After this preliminary discussion, we now finally get to the proof of the theorem. We start by picking an arbitrary element $V\in\mc{V}_{\Om}$. If there is neighbour $W\in{\rm Nbr}_{\Om}(V)$ that is not $\mf{R}$--good, we repeatedly apply \Cref{thm:shortening}(3) to it until we obtain an element $(S^1,\Om^1)\in\mscr{O}$ such that $V^1=V$ and the pair $(V^1;W^1)$ is $\mf{R}^1$--good. If $X,Y\in\mc{V}_{\Om}$ was an arbitrary pair of neighbours and $(X;Y)$ was $\mf{R}$--good, then $(X^1,Y^1)$ remains $\mf{R}^1$--good: if $X=V$, then this is part of the statement of \Cref{thm:shortening}(3); if $Y=V$, then this is due to the fact that $X^1$ is $\langle V\rangle$--conjugate to $X$; finally, if $V\not\in\{X,Y\}$, then this is because $X\cup Y$ is contained in ${\rm Tail}_{\Om}(V;U)$ for some $U\in{\rm Nbr}_{\Om}(V)$, and so there exists an element $h\in\langle V\rangle$ such that $X^1=hXh^{-1}$ and $Y^1=hYh^{-1}$.

    In conclusion, there are strictly more pairs of $\mf{R}^1$--good neighbours in $\mc{V}_{\Om^1}$ than there are pairs of $\mf{R}$--good neighbours in $\mc{V}_{\Om}$. Repeating this procedure finitely many times, we eventually obtain an element $(S^2,\Om^2)\in\mscr{O}$ such that all pairs of neighbours in $\mc{V}_{\Om^2}$ are $\mf{R}^2$--good. Finally, repeating this whole procedure again using Item~(4) of \Cref{thm:shortening} in place of Item~(3), we obtain an element $(S^3,\Om^3)\in\mscr{O}$ such that all pairs of neighbours in $\mc{V}_{\Om^3}$ are $\mf{R}^3$--excellent. That is, the reference system $\mf{R}^3$ is perfect, proving the theorem.
\end{proof}

\appendix

\section{Isomorphisms between Coxeter groups}\label{app:automorphisms}

This appendix briefly summarises the results from the literature needed to deduce from the Twist Conjecture a solution to the Isomorphism Problem for Coxeter groups (\Cref{corintro:iso_problem} in the Introduction). We also deduce that (relative) automorphism groups of Coxeter groups are finitely generated (\Cref{corintro:automorphisms}). Large parts of our treatment are based on the surveys \cite{Muehlherr-survey,SRS}.

\subsection{Saturated Coxeter systems}

Let $(W,S)$ be a Coxeter system.

\begin{defn}[\cite{Muehlherr-survey}]
An element $\tau\in S$ is a \emph{pseudo-transposition} if there exists a (necessarily unique) element $t\in S$ such that the following hold:
\begin{enumerate}
    \item $o(\tau t)=2(2k+1)$ for some $1\leq k<\infty$;
    \item $o(\tau s)\in\{2,\infty\}$ for all $s\in S\setminus\{\tau,t\}$;
    \item $\tau^{\perp}\sq t^{\perp}$.
\end{enumerate}
The Coxeter system $(W,S)$ is \emph{saturated}\footnote{This terminology comes from \cite{SRS}. The term \emph{reduced} was used in \cite{Howlett-Muehlherr,Muehlherr-survey}, and \emph{expanded} in \cite{Ratcliffe-Tschantz}.} if it contains no pseudo-transpositions.
\end{defn}

If $\tau\in S$ is a pseudo-transposition, we obtain a new Coxeter system $(W,S')$ by considering $S':=S\setminus\{\tau\}\cup\{\tau t\tau,\rho\}$, where $\rho$ is is the longest element of $\langle\tau, t\rangle$. This procedure is known as a \emph{blow-up}. The following was shown in \cite[Proposition~6]{Howlett-Muehlherr}; see also the treatment in \cite{MRT07}.

\begin{prop}[\cite{Howlett-Muehlherr}]\label{prop:saturated}
    For each Coxeter system $(W,S)$, any sequence of blow-ups terminates in less than $|S|$ steps, resulting in a saturated Coxeter generating set $S'\sq W$.
\end{prop}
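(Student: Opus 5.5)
Since the paper attributes this to \cite[Proposition~6]{Howlett-Muehlherr}, the plan is to reproduce the two ingredients behind it: first, that a blow-up really produces a Coxeter generating set; second, that a suitable integer invariant of $S$ strictly increases under each blow-up and is bounded above by $|S|$.

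\textbf{Step 1: the blow-up is a Coxeter generating set.} Let $\tau$ be a pseudo-transposition with partner $t$, where $o(\tau t)=2(2k+1)$. The dihedral group $\langle\tau,t\rangle$ has order $4(2k+1)$. Its longest element $\rho$ is central and is a reflection in $W$. Moreover $\langle\tau,t\rangle\cong\langle t,\tau t\tau\rangle\times\langle\rho\rangle$, where $o(t\cdot\tau t\tau)=2k+1$ is odd.
\begin{itemize}
\item Conditions (2) and (3) control how $\tau$ interacts with the other generators: every $s\neq\tau,t$ either commutes with $\tau$, and then also with $t$, or has infinite order product with $\tau$.
\item Using this, write down the Coxeter presentation on $S'=S\setminus\{\tau\}\cup\{\tau t\tau,\rho\}$. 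Compute the orders $o(\rho s)$ and $o(\tau t\tau\, s)$ for $s\in S\setminus\{\tau,t\}$.
\item Check that the resulting Coxeter group maps isomorphically onto $W$. Do this by building mutually inverse homomorphisms, verifying each defining relation (von Dyck's theorem). Alternatively, view it as a diagram twist on the spherical parabolic $\langle\tau,t\rangle$ in an amalgam decomposition.
\end{itemize}
This is a routine verification, and I would follow \cite{MRT07}.

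\textbf{Step 2: the monovariant.} I would measure $S$ by $N(S)$, the number of $W$-conjugacy classes of reflections in $S$; equivalently, the number of components of the odd-labelled Coxeter graph. Two facts are needed.
\begin{itemize}
\item \emph{$N$ strictly increases.} The element $\rho$ is not conjugate to any reflection in $S\setminus\{\tau\}$. This follows from the parity of $k$ and the invariance of the odd-graph components under conjugacy.
\item \emph{$N$ is bounded.} $N(S)$ is at most the number of conjugacy classes of reflections of $W$. That number is bounded by the size of any Coxeter generating set, and $|S'|=|S|+1$. So this bound alone is not enough.
\end{itemize}
To get a bound in terms of the original $|S|$, I would use the conjugacy-class count differently. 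Each blow-up replaces one generator by two, and each newly created odd component is a class of reflections of $W$. The total number of classes of $W$-reflections is a fixed invariant $c(W)$, with $c(W)\le$ the number of odd components of the original $S$ plus the number of blow-ups. Each blow-up adds exactly one new class and the total is at most $|S|$, because the classes of reflections in $W$ correspond to components of the odd graph of the initial $S$ together with newly exposed central involutions of dihedral factors. Conclude that the number of blow-ups is less than $|S|$, and that when the process stops the set is saturated by definition.

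\textbf{Main obstacle.} The hardest step is the bound. It requires showing that the reflections $\rho$ created by the successive blow-ups lie in pairwise distinct conjugacy classes, which are also distinct from the original classes, and that the total number of such classes is less than $|S|$. For this I would use the classification of reflection conjugacy classes via odd edges, together with the fact that the reflections of $W$ are independent of the Coxeter generating set only up to angle deformations. The key point is that after a blow-up, the old class of $\tau$ splits off the class of $\rho$. I would try to track how the partner $t$ with $o(\tau t)\equiv2\bmod4$ merges and splits classes, and bound the number of splittings by the number of even labels $\equiv 2\bmod 4$ of the original diagram incident to such $\tau$, which is less than $|S|$.
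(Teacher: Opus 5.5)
\textbf{Step~1 is fine; Step~2 fails, because your monovariant is an invariant of the group and cannot increase.}

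For Step~1, the visual amalgam $W=\langle S\setminus\{\tau\}\rangle\ast_{\langle\{t\}\cup\tau^{\perp}\rangle}\langle\{\tau,t\}\cup\tau^{\perp}\rangle$ is valid by condition~(2). Together with $\langle\{\tau,t\}\cup\tau^\perp\rangle=\langle t,\tau t\tau\rangle\times\langle\rho\rangle\times\langle\tau^\perp\rangle$ (which uses~(3)), it shows that $S'$ is a Coxeter generating set.

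The problem is $N(S)$. Since $W/[W,W]\cong(\mathbb{Z}/2\mathbb{Z})^{N(S)}$, the number of odd components is the same for every Coxeter generating set of $W$. Concretely:
\begin{enumerate}
\item $\tau$ has no odd labels, so $\{\tau\}$ is an entire odd component of $S$.
\item In $S'$ that class disappears. Indeed $\tau=\rho\cdot\rho\tau$ is a product of two distinct commuting $S'$--reflections, so it is not an $S'$--reflection.
\item $\tau t\tau$ joins the component of $t$ through the odd label $2k+1$, and $\rho$ forms a new singleton component.
\end{enumerate}
The net change is zero: your justification counts the new class of $\rho$ but forgets the lost class of $\tau$.

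Your bounding paragraph has a related flaw. It treats ``the reflections of $W$'' as independent of the generating set, but blow-ups are exactly the operation that changes them: $\tau\in S^W\setminus S'^W$ and $\rho\in S'^W\setminus S^W$. In particular $\rho$ is not an $S$--reflection, contrary to Step~1, since it is the central rotation of the standard dihedral parabolic $\langle\tau,t\rangle$. So $c(W)$ is not well defined, the inequalities you state for it are asserted rather than proved, and the ``main obstacle'' paragraph gives no argument.

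\textbf{The missing idea is to count pseudo-transpositions instead.}
\begin{enumerate}
\item Labels among elements of $S\setminus\{\tau\}$ do not change. For $s\in S\setminus\{\tau,t\}$, the labels of $s$ with $\tau t\tau$ and with $\rho$ both equal $o(\tau s)\in\{2,\infty\}$. Hence the labels of $s$ outside $\{2,\infty\}$ are unchanged.
\item Suppose $s\in\tau^\perp$ has a partner $t_s$. Then $t_s\neq t$, because $s\in\tau^\perp\subseteq t^\perp$. Moreover $t_s$ commutes with both $\tau t\tau$ and $\rho$ if and only if $t_s\in\tau^\perp$. So condition~(3) for $s$ holds in $S'$ if and only if it held in $S$. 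The case $s\notin\tau^\perp$ is easier, since then $s^\perp$ does not change.
\item Thus $s$ is a pseudo-transposition of $S'$ if and only if it was one of $S$, with the same partner.
\item Meanwhile $t$ and $\tau t\tau$ now share the odd label $2k+1$, and $\rho$ has only labels $2$ and $\infty$. So none of these three is a pseudo-transposition of $S'$.
\end{enumerate}
Hence the set $P$ of pseudo-transpositions loses $\tau$, and also $t$ if $t\in P$ (then its partner is necessarily $\tau$), and gains nothing.

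Since partners are unique, $P$ splits into singletons and mutually partnered pairs, and each blow-up removes exactly one such piece. So every sequence of blow-ups has length equal to the number of pieces. That number is at most $|P|\le|S|-1$ if $P\neq S$, and equals $|S|/2$ if $P=S$. This gives termination in fewer than $|S|$ steps, and the final set is saturated by definition.
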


In fact, the maximal-cardinality Coxeter generating sets of $W$ are precisely those that are saturated; see \cite[Theorem~9.1]{MRT07}.

\subsection{From saturation to reflection-compatibility}

Two Coxeter generating sets $S,R\sq W$ are \emph{reflection-compatible} if $S^W=R^W$; see \cite[Corollary~A.2]{CP10} for the equivalence with other common formulations of this notion. For an element $w\in W$, the \emph{finite continuation} ${\rm FC}(w)$ is the intersection of all maximal finite subgroups of $W$ containing $w$. The subgroup ${\rm FC}(w)$ is $S$--parabolic for all Coxeter generating sets $S\sq W$; moreover, it can be computed from the Coxeter matrix of $S$, see \cite[Theorem~7]{FHM}.

The following is \cite[Theorem~1]{Howlett-Muehlherr} and \cite[Theorem~4.3]{Muehlherr-survey}.

\begin{thm}[\cite{Howlett-Muehlherr}]\label{appthm:HM}
    Let $(W,S)$ be a saturated Coxeter system. There exists an explicit finite subgroup $\Sigma_S\leq\Aut(W)$ with the following properties:
    \begin{enumerate}
        \item for every saturated Coxeter generating set $R\sq W$, there exists $\s\in\Sigma_S$ such that $\s(S)$ and $R$ are reflection-compatible;
        \item for every $s\in S$, the subgroup ${\rm FC}(s)\leq W$ is $\Sigma_S$--invariant.
    \end{enumerate}
\end{thm}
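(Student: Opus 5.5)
The plan is to reconstruct the argument of \cite{Howlett-Muehlherr} in three stages. First, describe the $R$--reflections in terms of $S$ using finite continuations. Second, build $\Sigma_S$ from finitely many explicit automorphisms that are supported on finite parabolic subgroups. Third, check that these automorphisms preserve each ${\rm FC}(s)$.

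\textbf{Locating $R$--reflections.} Let $R$ be another Coxeter generating set of $W$ and $r\in R$. The subgroup ${\rm FC}(r)$ is both $R$-- and $S$--parabolic, and it is finite because $r$ is an involution. Up to conjugation we may therefore write ${\rm FC}(r)=\langle J\rangle$ for a spherical set $J\sq S$. Inside the finite Coxeter group $\langle J\rangle$, the element $r$ is an involution whose finite continuation is the whole of $\langle J\rangle$. Using the classification of finite Coxeter groups, I expect the following dichotomy.
\begin{itemize}
    \item Either $r$ is already an $S$--reflection.
    \item Or $J$ has a very restricted shape, and $r$ is one of finitely many explicit involutions. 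Typical examples are $r=sw_J$ with $w_J$ central, or $r$ coming from a factor of type $I_2(2(2k+1))$ or $B_{2k+1}$ inside $J$.
\end{itemize}
Saturation is used at exactly this point. A pseudo-transposition in $S$ would be the configuration in which a non-reflection of $R$ could arise without any automorphism of $W$ accounting for it, and such configurations are excluded by the standing hypothesis that $(W,S)$ is saturated. The outcome of this stage is a finite list of pairs $(J,r_J)$, up to conjugacy, recording the possible ``fake reflections''.

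\textbf{Defining $\Sigma_S$.} For each pair $(J,r_J)$ on the list, I would write down an automorphism $\s_J$ of $W$. It sends the relevant element $s\in J$ to $r_J$, fixes every element of $S$ outside $J\cup J^{\perp}$, and is twisted by $w_J$ wherever needed. Checking that $\s_J$ is a well-defined automorphism is a verification on Coxeter relations, which is where the centrality of $w_J$ and saturation both enter. The group $\Sigma_S$ is then generated by these $\s_J$ together with the diagram automorphisms of $S$. The step I expect to be the main obstacle is showing that $\Sigma_S$ is finite: one has to show that the $\s_J$ essentially commute, or at least generate a finite group. Each $\s_J$ acts only by multiplying generators by central longest elements of finite parabolics, so $\Sigma_S$ should embed in a finite group of maps $S\to W$ of the form $s\mapsto s\cdot z_s$ with each $z_s$ central in ${\rm FC}(s)$.

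\textbf{Item (1).} Given a saturated $R$, I would proceed orbit by orbit over the conjugacy classes of $R$--reflections. For each class consisting of fake reflections, the previous analysis provides a $\s_J$ that moves the corresponding $S$--reflections onto it. Composing these choices yields $\s\in\Sigma_S$ with $\s(S)^W\supseteq R^W$. For the reverse inclusion I would compare cardinalities, using that saturated generating sets have maximal size \cite{MRT07}, together with symmetry between $S$ and $R$.

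\textbf{Item (2).} Each generator of $\Sigma_S$ maps $s$ to an element of ${\rm FC}(s)$ and preserves the set of maximal finite subgroups. It therefore preserves ${\rm FC}(s)$, which is the intersection of the maximal finite subgroups containing $s$.
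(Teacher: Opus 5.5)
The paper does not prove this statement. It quotes \cite[Theorem~1]{Howlett-Muehlherr} (see also \cite[Theorem~4.3]{Muehlherr-survey}), where $\Sigma_S$ is generated by $s$--transvections and $J$--local automorphisms read off from the Coxeter matrix of $S$. Measured against that, your proposal is an outline with genuine gaps, the main ones being the step you pass over quickly and the way you construct the generators.

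\textbf{Item~(1) needs a single $\sigma$, and your argument never uses that $R$ is saturated.} The decisive step is producing one $\sigma$ with $R\sq\sigma(S)^W$. Your orbit-by-orbit argument for it is not valid as written. Here is an example.
\begin{itemize}
\item Let $W=I_2(6)=\langle a,b\rangle$, $z:=(ab)^3$, and $S:=\{b,aba,z\}$, which is saturated of type $A_2\times A_1$. Let $R:=\{a,b\}$, which is not saturated since $a$ is a pseudo-transposition.
\item The class of $b$ already consists of $S$--reflections.
\item The fake reflection $a$ is handled by the automorphism that fixes $z$ and multiplies $b$ and $aba$ by $z$, since $bz=ababa$ is conjugate to $a$.
\item No automorphism handles both classes. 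Every automorphism fixes the central element $z$ and sends $b,aba$ to conjugate non-central involutions. Hence $\sigma(S)^W$ always contains exactly one of the two classes of reflections of $I_2(6)$.
\end{itemize}
Your orbit-by-orbit step runs verbatim in this example. Fixing one class of fake reflections can expel a class that was already fine, and nothing in the proposal prevents this. The simultaneous choice of $\sigma$ is where the substance lies. It is also where saturation of $R$ must enter, not only saturation of $S$ as your first stage suggests.

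\textbf{The automorphisms $\sigma_J$ of $W$ are not shown to exist.} An automorphism of the finite group $\langle J\rangle$, extended by the identity outside $J\cup J^{\perp}$, is in general not a homomorphism of $W$. For example, let $S=\{t,s_1,s_2,s_3\}$ be of type $B_4$ with $o(ts_1)=o(s_1s_2)=3$, $o(s_2s_3)=4$ and all other pairs commuting. Let $J=\{s_1,s_2,s_3\}$, so that $w_J$ is central in $\langle J\rangle$. The map $s_1\mapsto s_1w_J$, $s_2\mapsto s_2w_J$, $s_3\mapsto s_3$ is an automorphism of $\langle J\rangle$. Fixing $t$, however, violates $(s_2t)^2=1$: one has $(s_2w_Jt)^2=(w_Jt)^2\neq 1$ because $t$ does not commute with $w_J$. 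So the generators must carry hypotheses on how $J$ is attached to the rest of $S$. You then have to show that a fake $R$--reflection with finite continuation $\langle J\rangle$ forces those hypotheses. Calling this ``a verification on Coxeter relations'' skips exactly that part.

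\textbf{Smaller points.}
\begin{itemize}
\item Do not put diagram automorphisms into $\Sigma_S$. They are useless, since permuting $S$ does not change $S^W$, and they break~(2). For instance, swapping two generators of $\Z/2\Z\ast\Z/2\Z\ast\Z/2\Z$ moves ${\rm FC}(x)=\langle x\rangle$.
\item Finiteness is not the real obstacle. Once~(2) holds, $\sigma\mapsto(\sigma(s))_{s\in S}$ embeds $\Sigma_S$ into $\prod_{s\in S}{\rm FC}(s)$, as the paper remarks after the statement.
\item The reverse inclusion needs no cardinality count. Suppose $R\sq T^W$ for a Coxeter generating set $T$ with $\langle R\rangle=W$. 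Then $R$ meets every $W$--class $C$ of $T$--reflections. Otherwise $R$ would lie in the kernel of the homomorphism $W\to\Z/2\Z$ sending $t\in T$ to $1$ exactly when $t\in C$; this is well defined because generators joined by an odd label are conjugate. Hence $R^W=T^W$.
\end{itemize}
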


In particular, Item~(2) guarantees that the cardinality $|\Sigma_S|$ is at most the product of the cardinalities of the subgroups ${\rm FC}(s)$ with $s\in S$ (better bounds are also easy to obtain).

The subgroup $\Sigma_S$ has a generating set that can be explicitly described in terms of any Coxeter matrix for $(W,S)$: it consists of the elements known as \emph{$s$--transvections}\footnote{These are unrelated to the automorphisms of right-angled Coxeter and Artin groups known commonly known as \emph{transvections} \cite{Servatius,GPR}.} and \emph{$J$--local automorphisms}, for certain elements $s\in S$ and subsets $J\sq S$. The definition of these two types of automorphisms can be found in \cite[Section~4]{Muehlherr-survey} in most cases, and in \cite[Definition~7]{Howlett-Muehlherr} in full generality.

\subsection{From reflection-compatibility to angle-compatibility}

Let $(W,S)$ be any Coxeter system. An \emph{edge} of $S$ is a subset $\{s,t\}\sq S$ where $s,t$ are distinct and $o(st)$ is finite. We denote by ${\rm Edges}(S)$ the set of all edges of $S$.

\begin{defn}[\cite{Marquis-Muehlherr}]
    Let $J=\{u,v\}\sq S$ be an edge. A \emph{$J$--deformation} of $S$ is an injection $\delta\colon S\hookrightarrow S^W$ with the following properties:
    \begin{enumerate}
        \item $\delta(S)$ is a Coxeter generating set $W$;
        \item $\delta(u)=u$ and $\delta(v)$ is an $\langle u,v\rangle$--conjugate of $v$ (or vice versa);
        \item there is a bijection $\Delta\colon{\rm Edges}(S)\ra{\rm Edges}(\delta(S))$ such that $\Delta(J)=\delta(J)$ and $\Delta(E)$ is $W$--conjugate to $E$ for all $E\in{\rm Edges}(S)\setminus\{J\}$.
    \end{enumerate}
    An \emph{angle-deformation} of $S$ is a $J$--deformation for some edge $J\sq S$.
\end{defn}

Note that not all angle-deformations extend to automorphisms of $W$ and, a priori, there is no control on the length of the elements of $W$ needed to conjugate each $s\in S$ to $\delta(s)$.

\begin{thm}[\cite{Marquis-Muehlherr}]\label{appthm:MM}
    Given two reflection-compatible Coxeter generating sets $S,R\sq W$, there is an algorithmically computable sequence of Coxeter generating sets $S=:S_0,\dots,S_k$ such that:
    \begin{enumerate}
        \item for each $i$, there exists a subset $J_i\sq S_i$ such that $S_{i+1}$ is a $J_i$--deformation of $S_i$;
        \item $S_k$ and $R$ are angle-compatible.
    \end{enumerate}
\end{thm}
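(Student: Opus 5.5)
The plan is an induction on the number of \emph{bad} edges of the current generating set, that is, edges $J\in{\rm Edges}(S_i)$ such that $J$ has no $W$--conjugate contained in $R$. If there are no bad edges, then $S_i$ and $R$ are angle-compatible and we stop; this is item~(2). Otherwise we fix a bad edge $J=\{u,v\}$ and construct a $J$--deformation $S_{i+1}$ of $S_i$ in which the image of $J$ is good and every other edge keeps its conjugacy class. Then $S_{i+1}$ has strictly fewer bad edges, and the process stops after at most $|{\rm Edges}(S)|$ steps.

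\textbf{Step 1: locating $J$ in $R$.} Let $m:=o(uv)<\infty$. Since $S_i^W=S^W=R^W$, the finite group $D:=\langle u,v\rangle$ is a dihedral reflection subgroup of $W$ with respect to $R$. It therefore lies in a finite $R$--parabolic subgroup, namely the parabolic closure of $D$, which can be computed from the Coxeter matrix of $R$. The reflection subgroups of a finite Coxeter group have explicit canonical generators, in the sense of Deodhar and Dyer. Applied to $D$, this gives a pair $\{u,v'\}$ with $v'\in D$ an $\langle u,v\rangle$--conjugate of $v$ (or, symmetrically, a pair with $v$ fixed and $u$ replaced), which is $R$--compatible. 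The reason $J$ is bad is that $\{u,v\}$ makes the wrong angle, $k\pi/m$ with $k\neq 1$, in $\A_R$, while $\{u,v'\}$ makes the angle $\pi/m$ there. If instead no such $R$--compatible pair exists, we derive a contradiction from the failure of the parabolic closure of $D$ to be dihedral of order $2m$. This step is a finite computation inside a finite group.

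\textbf{Step 2: extending the change to all of $S_i$ (main obstacle).} Define $\delta(v):=v'$, $\delta(u):=u$, and define $\delta$ on the remaining elements $s\in S_i\setminus J$ as follows. If $s$ commutes with $v$, or is not adjacent to $J$ at all, set $\delta(s):=s$. Otherwise conjugate $s$ by a suitable element of $D$ whose choice is dictated by the side of the wall $\mc{Y}_s$ relative to the walls of $D$. We must then show that $\delta(S_i)$ is a Coxeter generating set. To do this we would use the Deodhar--Dyer criterion in the form of \cite[Theorem~A.1]{CP10}: a set of reflections generating $W$ is a Coxeter generating set as soon as it is $2$--geometric in $\A_{S_i}$, i.e.\ each pair of reflections bounds a fundamental sector. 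One checks this pair by pair. Pairs inside $J$ are handled by Step~1. Pairs not meeting $J$ are unchanged. Mixed pairs $\{\delta(s),\delta(v)\}$ are the delicate case. Here we use that no edge other than $J$ may change its conjugacy class, and we impose this through the choice of $\delta(s)$. We expect this to require a case analysis according to whether $s$ lies in a spherical residue with $J$. We also expect it to require the hypothesis that some element of $D$ realising the conjugation preserves all the other relevant angles; this hypothesis is exactly what fails in the counterexample of \cite[Section~10]{Ratcliffe-Tschantz}.

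\textbf{Step 3: edge bijection and termination.} Once $\delta(S_i)$ is known to be a Coxeter generating set, we build the bijection $\Delta$ edge by edge. We set $\Delta(J):=\delta(J)$, and for every other edge $E$ we set $\Delta(E):=\delta(E)$, which is $W$--conjugate to $E$ by construction; this gives item~(1). Reflection-compatibility with $R$ is preserved, since $\delta(s)\in S^W$ for all $s$. Good edges stay good, and $J$ becomes good, so the number of bad edges strictly decreases. Algorithmic computability holds because each step only involves the finite group $D$, its parabolic closure, and word problems solved via Tits' solution. The genuinely hard part is Step~2, namely proving that the deformed set is again a Coxeter generating set with all other edge classes unchanged.
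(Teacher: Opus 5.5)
The paper gives no proof of this statement. It quotes it from \cite{Marquis-Muehlherr}, and adds only the remark that, when $S$ has no spherical subset of type $H_3$, the deformations can be taken to be conjugations by elements of $\langle J_i\rangle$.

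\textbf{What works.} Your bookkeeping is sound. Since $\Delta$ is a bijection, the edges of a $J$--deformation are $\delta(J)$ together with conjugates of the other edges of $S_i$, so the number of bad edges drops once $\delta(J)$ is good. Step~1 is also true, but it needs an argument that $\langle u,v\rangle$ is $R$--parabolic. For instance: its $R$--parabolic closure $P$ is finite of rank $2$, and the $S$--parabolic closure of $P$ is then a rank--$2$ $S$--parabolic subgroup containing $\langle u,v\rangle$, hence equal to it.

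\textbf{The gap is Step~2.} This step is the entire content of the theorem, and you only describe it as an expectation. Worse, the mechanism you propose (moving the other generators by $\langle J\rangle$--conjugation) provably fails. Let $W=W(H_3)$ with $R=\{a,b,c\}$ standard, so $o(ab)=5$, $o(bc)=3$, $o(ac)=2$. Let $S=\{x,y,z\}$ be the image of $R$ under an outer automorphism of $W\cong\Z/2\Z\times A_5$.
\begin{itemize}
\item $S$ is reflection-compatible with $R$ and has the same Coxeter matrix.
\item Its only bad edge is $J=\{x,y\}$, whose walls meet at angle $2\pi/5$ in $\A_R$.
\item The reflections of $D=\langle x,y\rangle$ are $x,\,y,\,xyx,\,yxy,\,xyxyx$. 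Their products with $z$ have orders $2,3,3,5,5$ respectively. For example, $yxyz$ is conjugate to $xzyz$, hence to $xy$.
\end{itemize}
In a $J$--deformation, the two edges other than $\delta(J)$ both pass through $\delta(z)$, one of order $2$ and one of order $3$. Suppose $\delta(z)=eze^{-1}$ with $e\in D$. Conjugating $\delta(J)$ by $e^{-1}$ gives two reflections of $D$ whose walls meet at angle $\pi/5$ and which have orders $2$ and $3$ against $z$. These would have to be $x$ and one of $y,xyx$, but both of those walls meet $\mc{W}_x$ at angle $2\pi/5$. So no deformation of your form exists, whichever element of $J$ you fix.

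The deformation that does exist sends $z$ to $gcg^{-1}$, where $gag^{-1}=x$ and $gbg^{-1}=\delta(y)$. By the computation above, this is not a $D$--conjugate of $z$. This is exactly the $H_3$ phenomenon the paper singles out after the theorem; inside larger groups such deformations need not even extend to automorphisms of $W$. Handling it requires a different idea. Even in the $H_3$--free case, you have not said how to choose the conjugating elements consistently.

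\textbf{Two further problems.}
\begin{itemize}
\item \emph{Your verification criterion is insufficient.} A generating set of reflections admitting a $2$--geometric choice of halfspaces need not be a Coxeter generating set. The three reflections of $W=I_2(3)$ generate $W$. In the hexagon $\A_S$ one can choose their halfspaces so that any two meet in a single vertex (the total intersection is then empty). Yet the Coxeter matrix with all entries $3$ defines an infinite group. What you need is a common sector, i.e.\ geometricity. \Cref{lem:1.6} upgrades $2$--geometricity to geometricity only under a universality hypothesis, and universality is what you are trying to prove.
\item \emph{Your remark on \cite[Section~10]{Ratcliffe-Tschantz} cuts against you.} By the paper's account, that example is one where angle-deformations are genuinely needed. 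If your key hypothesis fails there, that marks a case of the theorem your construction does not cover, not an explanation of it.
\end{itemize}
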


If $S$ does not have any spherical subsets of type $H_3$, then all angle-deformations needed in the previous theorem extend to automorphisms of $W$, and they are all $J_i$--deformations $\delta_i$ admitting elements $w_{s,i}\in\langle J_i\rangle$ such that $\delta(s)=w_{s,i}sw_{s,i}^{-1}$ for all $s\in S$ (which gives a bound on $|w_{s,i}|$).

The combination of \Cref{prop:saturated} with Theorems~\ref{appthm:HM} and~\ref{appthm:MM} reduces the resolution of the Isomorphism Problem for Coxeter groups to the resolution of the Twist Conjecture. Thus, together with \Cref{thmintro:main}, this yields \Cref{corintro:iso_problem} from the Introduction.

Let $\Aut_{\rm AC}(W;S)\leq\Aut(W)$ be the subgroup of automorphisms $\varphi\in\Aut(W)$ such that $\varphi(S)$ is angle-compatible with $S$. Observe that $\Aut_{\rm AC}(W;S)$ has finite index in $\Aut(W)$, since $W$ has only finitely many conjugacy classes of finite subgroups. As a consequence of Theorems~\ref{appthm:HM} and~\ref{appthm:MM}, we also obtain an algorithm to compute a relative set of generators:

\begin{cor}\label{cor:from_Aut_to_Aut_AC}
    For every saturated Coxeter system $(W,S)$, there is an algorithmically computable finite subset $F\sq\Aut(W)$ such that $\Aut(W)$ is generated by $F\cup\Aut_{\rm AC}(W;S)$.
\end{cor}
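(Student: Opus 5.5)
The plan is to build $F$ in two stages, following the reduction chain already described: first from arbitrary automorphisms to reflection-compatible ones via \Cref{appthm:HM}, then from reflection-compatible to angle-compatible via \Cref{appthm:MM}. Fix $\varphi\in\Aut(W)$ and set $R:=\varphi(S)$. By \Cref{prop:saturated} and the characterisation of saturated generating sets as those of maximal cardinality, $R$ is saturated as well, since it is the image of $S$ under an automorphism. By \Cref{appthm:HM}(1), there is $\s\in\Sigma_S$ such that $\s(S)$ and $R$ are reflection-compatible. Hence $\s^{-1}\varphi(S)=\s^{-1}(R)$ is reflection-compatible with $S$. We put the explicit finite group $\Sigma_S$ into $F$; it is computable from the Coxeter matrix through its generators, the $s$--transvections and $J$--local automorphisms. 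This reduces the problem to automorphisms $\psi$ with $\psi(S)$ reflection-compatible with $S$.

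For the second stage I would avoid running \Cref{appthm:MM} separately for each $\psi$, since infinitely many $\psi$ could a priori produce infinitely many deformation sequences. Instead I would use finiteness. Reflection-compatibility with $S$ is determined by where $\psi$ sends each $s\in S$ up to conjugacy, and each $\psi(s)$ is conjugate to some element of $S$. Angle-compatibility likewise depends only on the conjugacy classes of the images of the spherical pairs $\{s,t\}$, of which there are finitely many. So the set of possible \emph{angle types} of $\psi(S)$ relative to $S$ is finite. Concretely, the type records the Coxeter matrix of $\psi(S)$ (equal to that of $S$) together with the conjugacy classes of the pair subgroups $\langle\psi(s),\psi(t)\rangle$ for edges $\{s,t\}$, among the finitely many conjugacy classes of finite subgroups. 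Two automorphisms $\psi,\psi'$ of the same type satisfy $\psi'^{-1}\psi\in\Aut_{\rm AC}(W;S)$, because $\psi'^{-1}\psi(S)$ sends every edge of $S$ to a conjugate of itself.

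It therefore suffices to enumerate, algorithmically, one representative automorphism for each type that is realised, and put these finitely many representatives into $F$. For realisability and computation I would proceed as follows. For each candidate type, \Cref{appthm:MM} applied to $S$ and a reflection-compatible $R$ yields angle-deformations $S=S_0,\dots,S_k$. Any automorphism of a given type can be recognised by searching over tuples $(\psi(s))_{s\in S}$ of reflections and checking the Coxeter relations together with generation; both checks are decidable via the solution to the word problem and parabolic-closure computations. This enumeration terminates once a representative of every realisable type has been found.

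The main obstacle is deciding \emph{which} types are realised, so that the search can stop. To handle this, I would use the isomorphism problem already solved as \Cref{corintro:iso_problem}, applied to the marked Coxeter matrices. Alternatively, I would bound the conjugators through the explicit form of angle-deformations remarked after \Cref{appthm:MM}, with elements $w_{s,i}\in\langle J_i\rangle$, which makes the search finite.
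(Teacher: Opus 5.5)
Your first stage is fine. $\varphi(S)$ has the same Coxeter matrix as $S$, so it is saturated, and \Cref{appthm:HM} puts every $\varphi$ in $\Sigma_S$ times the group of automorphisms $\psi$ with $\psi(S)$ reflection-compatible with $S$. The trouble is the second stage, which carries all the algorithmic content; the mere existence of some finite $F$ is the finite-index observation made just before the statement.

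First, a fixable error. The ``type'' you write down records conjugacy classes of the \emph{subgroups} $\langle\psi(s),\psi(t)\rangle$, and this is too coarse. Take $W$ dihedral of order $10$ with $S=\{s,t\}$. Then $s\mapsto s$, $t\mapsto tst$ is an automorphism with $\langle s,tst\rangle=\langle s,t\rangle$. Yet $\{s,tst\}$ is not conjugate to $\{s,t\}$, since that would make $(st)^{\pm 2}$ conjugate to $st$, so this automorphism is not in $\Aut_{\rm AC}(W;S)$. Switching between generating pairs of the same dihedral subgroup is exactly what angle-deformations do. You must therefore record the conjugacy classes of the pairs $\{\psi(s),\psi(t)\}$ themselves; there are still finitely many.

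The genuine gap is termination. Enumerating tuples of reflections is only a semi-decision procedure. It eventually finds a representative of each realised type, but you never learn which of the finitely many types are realised, so you never know you may stop, and $F$ is not computed. Neither proposed fix closes this.

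\Cref{corintro:iso_problem} decides whether two Coxeter matrices define isomorphic groups. It says nothing about whether some automorphism carries the edges of $S$ into prescribed conjugacy classes of pairs, and ``marked Coxeter matrices'' are not within its scope.

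The bounded form $\delta(s)=w_{s,i}sw_{s,i}^{-1}$ with $w_{s,i}\in\langle J_i\rangle$ is, as the paper stresses, only available when $S$ has no spherical subset of type $H_3$. In general angle-deformations need not extend to automorphisms and the conjugators are uncontrolled. Even in the $H_3$-free case, you do not explain how the sets $S_0,\dots,S_k$ of \Cref{appthm:MM} become a finite set of automorphisms fixed in advance; those sets are computed from the unknown target $\psi(S)$.

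The way to use the deformations is structural, and it makes the type search unnecessary. Let $\alpha_{i+1}$ be the automorphism extending the deformation $S_i\to S_{i+1}$, and set $\beta_i:=\alpha_i\cdots\alpha_1$. Then $\gamma_{i+1}:=\beta_i^{-1}\alpha_{i+1}\beta_i$ is an automorphism of the form $s\mapsto w_s s w_s^{-1}$ with all $w_s\in\langle J\rangle$ for the single edge $J=\beta_i^{-1}(J_i)\sq S$. There are only finitely many such automorphisms, and they are explicitly listable. Moreover $\beta_k=\gamma_1\cdots\gamma_k$. Since $\beta_k(S)=S_k$ is angle-compatible with $\psi(S)$, you get $\psi\in\beta_k\Aut_{\rm AC}(W;S)$. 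So the generators of $\Sigma_S$ together with these finitely many deformation automorphisms of $S$ itself serve as $F$.

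The paper derives the corollary directly from \Cref{appthm:HM} and \Cref{appthm:MM} in this spirit, relying on the algorithmic content of the latter when $H_3$ subsets are present, rather than by searching over realised types.
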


\subsection{Automorphism groups}

Finally, it is not hard to deduce from \Cref{thmintro:main} that the group $\Aut_{\rm AC}(W;S)$ is finitely generated.

\begin{cor}\label{cor:fg_Aut_AC}
    For every Coxeter system $(W,S)$, the group $\Aut_{\rm AC}(W;S)$ admits an algorithmically computable finite set of generators.
\end{cor}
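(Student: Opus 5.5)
The plan is to realise $\Aut_{\rm AC}(W;S)$ as generated by three explicit families of automorphisms: inner automorphisms, a finite group of ``graph automorphisms'' permuting $S$, and finitely many automorphisms arising from elementary twists (or short chains of them) that happen to extend to automorphisms of $W$. Fix $\varphi\in\Aut_{\rm AC}(W;S)$ and set $R:=\varphi(S)$, which is angle-compatible with $S$ by definition. By \Cref{thmintro:main}, $S$ and $R$ are twist-equivalent, so there is a chain $S=S_0,S_1,\dots,S_n$ with each $S_{i+1}$ obtained from $S_i$ by an elementary twist, and $S_n=gRg^{-1}$ for some $g\in W$. Composing $\varphi$ with the inner automorphism $\iota_g$, we may assume $S_n=R$.

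The key step is to organise the twist chain by the \emph{Coxeter matrix} of each $S_i$. Let $\mc{M}$ be the finite set of isomorphism types of labelled Coxeter systems (Coxeter matrices on $|S|$ letters up to relabelling) occurring among generating sets twist-equivalent to $S$; this set is finite since labels of $S_i$ are labels of $S$ (twists preserve the multiset of edge-labels up to conjugacy of edges, by angle-compatibility), so there are finitely many matrices on a fixed finite set with entries from a fixed finite set. For each matrix $M\in\mc{M}$ choose once and for all a reference generating set $S_M$ twist-equivalent to $S$ realising $M$, together with a bijection $\beta_M\colon S\to S_M$ chosen whenever $M$ is the matrix of $S$ (these give automorphisms of $W$). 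Next consider the finite ``twist graph'' with vertex set $\mc{M}$, joining $M,M'$ when some elementary twist turns a set of type $M$ into a set of type $M'$; since a twist is determined combinatorially by the data $(X,K,K^{\perp},Y)$ of the partition, each twist applied to $S_M$ yields a set whose type is determined, and for every such combinatorial twist datum on $S_M$ and resulting type $M'$ we get an isomorphism $W\to W$ sending $S_M$'s twisted set to $S_{M'}$ via matching generators, i.e.\ an automorphism $\theta$ with $\theta(\text{twist}(S_M))=S_{M'}$ relabelled appropriately. The finite generating set $F$ then consists of: a finite generating set of $W$ viewed via inner automorphisms; the automorphisms of $W$ induced by symmetries of the Coxeter matrix of $S$; and, for each type $M$ and each of the finitely many twist data on $S_M$, the automorphism $\alpha_{M,\text{datum}}$ mapping $S_M$ onto the twisted set composed with the identification to $S_{M'}$ (transported back via fixed automorphisms identifying generating sets with the same matrix).

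Then I will argue by induction on $n$: given a twist $S_i\to S_{i+1}$, the automorphism $\psi_i$ carrying $S_i$ to its reference $S_{M_i}$ (which exists since they share a Coxeter matrix) conjugates the twist into one of the finitely many twists on $S_{M_i}$; hence $S_{i+1}$ is carried to $S_{M_{i+1}}$ by an automorphism in the group generated by $\psi_i$ and $F$. Iterating, $\varphi$ (which maps $S$ onto $R=S_n$ with the same matrix up to a permutation) differs from a product of elements of $F$ by an automorphism fixing $S$ setwise and respecting its matrix, which lies in $F$. Algorithmicity follows because all twist data, matrices and reference sets are enumerable from the Coxeter matrix, and twisted sets are given by explicit words. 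The main obstacle I expect is the bookkeeping ensuring finiteness of $\mc{M}$ and that a twist on $S_i$ corresponds, under an automorphism $S_i\to S_{M_i}$, to a twist on $S_{M_i}$ with the same combinatorial datum—this needs that elementary twists are defined purely in terms of the Coxeter matrix (the condition on $X,Y$ and $K^\perp$ is matrix-theoretic), which I believe is immediate, and no relativity to $\mscr{C}$ is needed here.
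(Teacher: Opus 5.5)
Your proof is correct and is essentially the paper's argument. The paper lets $\Aut_{\rm AC}(W;S)$ act on the graph of $W$--conjugacy classes of generating sets joined by elementary twists, and uses three facts: connectivity from \Cref{thmintro:main}, a finite quotient because only finitely many Coxeter matrices of the given size and entries occur, and vertex-stabilisers generated by inner automorphisms and permutations of the generating set. Your reference sets $S_M$, with one transport automorphism per twist datum, are an explicit unrolling of that standard argument.

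One point of wording to fix. Elementary twists need not extend to automorphisms of $W$, so the generator attached to $(M,D)$ must be the automorphism carrying the twisted set onto $S_{M'}$, not one sending $S_M$ onto the twisted set; this is the version you state earlier and actually use in the induction. You should also note that these generators lie in $\Aut_{\rm AC}(W;S)$, which holds because twist-equivalent generating sets are angle-compatible.
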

\begin{proof}
    Let $\mc{G}$ be the set of $W$--conjugacy classes of Coxeter generating sets of $W$ in the angle-compatibility class of $S$. We make $\mc{G}$ into a graph by joining by an edge any pair of classes represented by generating sets differing by an elementary twist. The group $\Aut_{\rm AC}(W;S)$ acts on $\mc{G}$ preserving its structure of a graph. Every vertex-stabiliser is finitely generated: it is an extension of the group of inner automorphisms of $W$ by a finite subgroup, namely the group of permutations of the corresponding generating set that extend to automorphisms of $W$.

    The quotient $\overline{\mc{G}}:=\mc{G}/\Aut_{\rm AC}(W;S)$ inherits the structure of a graph. 
    Since $W$ is Hopfian (e.g.\ because it is residually finite), the vertices of $\overline{\mc{G}}$ are naturally in bijection with the possible Coxeter matrices of Coxeter generating sets of $W$ angle-compatible with $S$, up to conjugation by permutation matrices. All of these Coxeter matrices have the same size 
    and the same finite set of possible entries, which shows that $\overline{\mc{G}}$ has finitely many vertices. Also note that the graph $\mc{G}$ is locally finite, because only finitely many different elementary twists can be applied on any given Coxeter generating set. As a consequence, the graph $\overline{\mc{G}}$ also has finitely many edges. 

    Now, \Cref{thmintro:main} can be rephrased as stating that the graph $\mc{G}$ is connected. Combined with the fact that the action $\Aut_{\rm AC}(W;S)\acts\mc{G}$ has finitely generated point-stabilisers and the quotient graph $\overline{\mc{G}}$ is finite, this shows that $\Aut_{\rm AC}(W;S)$ is finitely generated.

    The graph $\overline{\mc{G}}$ can be constructed algorithmically: first, we list all Coxeter matrices with the same size and the same possible entries as the Coxeter matrix of $(W,S)$, up to conjugation by permutation matrices; then we add edges corresponding to all possible elementary twists that can be performed on any of these Coxeter matrices; finally, we select the connected component of the resulting graph that contains the Coxeter matrix of $S$. Therefore, a finite generating set for $\Aut_{\rm AC}(W;S)$ is algorithmically computable: it is given by conjugations by the elements of $S\sq W$, by the permutations of $S$ that extend to automorphisms of $W$ and, finally, by a finite set of elements of $\Aut(W)$ obtained by lifting a finite set of generators for the fundamental group $\pi_1(\overline{\mc{G}},\overline S)$, where $\overline S\in\overline{\mc{G}}$ denotes the projection of $S\in\mc{G}$.
\end{proof}

Combined with \Cref{cor:from_Aut_to_Aut_AC}, \Cref{cor:fg_Aut_AC} proves \Cref{corintro:automorphisms} from the Introduction. Since the twists provided by \Cref{thmintro:main} are relative to the family of compatible sets, the argument used for \Cref{cor:fg_Aut_AC} also immediately yields finite generation of \emph{relative} automorphism groups:

\begin{cor}
    Let $(W,S)$ be a Coxeter system and let $\mscr{P}$ be any collection of $S$--parabolic subgroups of $W$. Let $\Aut(W;\mscr{P})\leq\Aut(W)$ be the group of automorphisms that coincide with an inner automorphism of $W$ on each element of $\mscr{P}$. Then $\Aut(W;\mscr{P})$ is finitely generated.
\end{cor}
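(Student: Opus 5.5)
The plan is to follow the same scheme as the proof of \Cref{cor:fg_Aut_AC}, replacing the angle-compatibility class of $S$ by the class of generating sets twist-equivalent to $S$ relative to $\mscr{P}$. First we reduce to the case where $\mscr{P}$ is conjugacy-closed and consists of standard parabolics. Indeed, replacing each $P\in\mscr{P}$ by $\langle U\rangle$ with $P=g\langle U\rangle g^{-1}$ does not change $\Aut(W;\mscr{P})$, since being inner on $P$ is equivalent to being inner on any conjugate of $P$. Since $S$ has finitely many subsets, we may then assume that $\mscr{P}$ is finite, namely the family of all $W$--conjugates of a finite family $\mscr{U}$ of subsets of $S$. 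Let $\mscr{C}$ denote this conjugacy-closed family of subsets.

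Next we build the graph $\mc{G}$ whose vertices are the $W$--conjugacy classes of Coxeter generating sets $R$ that are twist-equivalent to $S$ relative to $\mscr{C}$. Two vertices are joined by an edge when their representatives differ by a single elementary twist relative to $\mscr{C}$. The group $\Aut(W;\mscr{P})$ acts on $\mc{G}$. To see this, take $\varphi\in\Aut(W;\mscr{P})$. It agrees with an inner automorphism on each $\langle U\rangle$ with $U\in\mscr{U}$, so it sends sets of $\mscr{C}$ to sets of $\mscr{C}$. It therefore maps elementary twists relative to $\mscr{C}$ to elementary twists relative to $\mscr{C}$, and $\varphi(S)$ lies in the same class as before.

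The first key step is to show that this action is transitive on vertices up to a finite ambiguity. For this we use \Cref{thmintro:main}. Let $\psi\in\Aut(W)$ satisfy $\psi(S)=R$ for some vertex $R$. Both $S$ and $R$ contain conjugates of every $U\in\mscr{U}$, because twists relative to $\mscr{C}$ preserve compatibility by \Cref{rmk:relative_preserves}. Hence the sets of $\mscr{C}$ are $\{S,R\}$--compatible. The converse direction is the subtle point: an element of $\Aut(W;\mscr{P})$ must act by an inner automorphism on each $U$, not merely send $U$ to some conjugate set. We handle this by recording, for each vertex, which $W$--conjugate of each $U$ it contains and the induced bijection. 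There are finitely many such decorations up to the finite groups of permutations of the $U$, so we pass to a finite-index subgroup of $\Aut(W;\mscr{P})$ together with a decorated graph.

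It remains to check finiteness and connectivity. Vertex stabilisers are extensions of $\mathrm{Inn}(W)\cap\Aut(W;\mscr{P})=\mathrm{Inn}(W)$ by finite permutation groups, so they are finitely generated. The quotient graph is finite, since Coxeter matrices of fixed size with fixed entries, together with finitely many decorations, form a finite set, and $\mc{G}$ is locally finite. Connectivity holds by definition of the vertex set. The standard argument for group actions on connected graphs with finite quotient and finitely generated stabilisers then gives finite generation, and we conclude by the same lifting of $\pi_1$ of the quotient as in \Cref{cor:fg_Aut_AC}.

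The main obstacle I expect is the realisation step. One must show that every vertex $R$ whose decorated Coxeter data matches that of $S$ is the image of $S$ under some element of $\Aut(W;\mscr{P})$, and not merely of some automorphism. The approach I would try first is to compose along a path of twists. Each elementary twist relative to $\mscr{C}$ is induced by a map that fixes $X\cup K\cup K^{\perp}$ pointwise and conjugates $Y$ by $w_K$. Since every $U$ has a conjugate in one of the two sides, this map is inner on each $\langle U\rangle$, so the "twist map" sending a standard presentation to the twisted one respects the $\mscr{P}$--condition.
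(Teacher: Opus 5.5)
Your overall scheme (graph of twist classes, finite quotient, finitely generated stabilisers) matches the paper's. The paper just says the argument of \Cref{cor:fg_Aut_AC} carries over because the twists in \Cref{thmintro:main} are relative to the compatible sets. As written, however, there is a genuine gap exactly where \Cref{thmintro:main} should enter.

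\textbf{The action is not well defined.} You claim every $\varphi\in\Aut(W;\mscr{P})$ acts on $\mc{G}$ and that ``$\varphi(S)$ lies in the same class as before''. Sending twists relative to $\mscr{C}$ to twists relative to $\mscr{C}$ only shows that $\varphi$ carries the class of $S$ onto the class of $\varphi(S)$. It does not show these classes coincide, and in general they do not. Take $\mscr{P}=\emptyset$, $W=\langle u\rangle\times\langle v\rangle\cong(\Z/2\Z)^2$, $S=\{u,v\}$, and $\varphi(u)=uv$, $\varphi(v)=v$. Then $\varphi(S)=\{uv,v\}$ is not even reflection-compatible with $S$, whereas twist-equivalent sets are angle-compatible (\Cref{rmk:relative_preserves}). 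So $\Aut(W;\mscr{P})$ does not act on your graph.

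Meanwhile, the one place you do invoke \Cref{thmintro:main} is vacuous. For a vertex $R$, the $\{S,R\}$--compatibility of $\mscr{C}$ already holds by definition of the vertex set, and connectivity is also by definition. So the main theorem is never actually used.

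The fix is to pass to $G:=\Aut(W;\mscr{P})\cap\Aut_{\rm AC}(W;S)$. This has finite index in $\Aut(W;\mscr{P})$ because $\Aut_{\rm AC}(W;S)$ has finite index in $\Aut(W)$. For $\varphi\in G$:
\begin{enumerate}
\item $S$ and $\varphi(S)$ are angle-compatible.
\item Each $U\in\mscr{U}$ is $\varphi(S)$--compatible, since $\varphi(U)$ is a conjugate of $U$ contained in $\varphi(S)$.
\item Hence \Cref{thmintro:main} makes $S$ and $\varphi(S)$ twist-equivalent relative to all $\{S,\varphi(S)\}$--compatible sets, and so relative to the smaller family $\mscr{C}$.
\end{enumerate}
Only then does $G$ act on $\mc{G}$, using also $\varphi(\mscr{C})=\mscr{C}$. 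Finite generation of $G$ then gives finite generation of $\Aut(W;\mscr{P})$.

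\textbf{The proposed realisation argument fails, but the step is easy.} Your route through ``twist maps'' does not work. An elementary twist need not induce an automorphism of $W$: the bijection $S\to S'$ that fixes $X\cup K\cup K^{\perp}$ and conjugates $Y$ by $w_K$ can change the Coxeter matrix (see the footnote on elementary twists in \Cref{sect:intro} and \Cref{fig:1}). Composing such maps along a path therefore does not produce elements of $\Aut(W)$.

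Your decorations already settle the step directly. Suppose $\sigma\colon R\to R'$ preserves Coxeter matrices and satisfies $\sigma(g_Uug_U^{-1})=g'_Uu{g'_U}^{-1}$ for all $u\in U$, where $g_UUg_U^{-1}\sq R$ and $g'_UU{g'_U}^{-1}\sq R'$. Then the automorphism $\psi$ extending $\sigma$ restricts to conjugation by $\psi(g_U)^{-1}g'_U$ on $\langle U\rangle$. Moreover $\psi\in\Aut_{\rm AC}(W;S)$, since $\psi(S)$ is angle-compatible with $\psi(R)=R'$, which is angle-compatible with $S$. Hence $G$--orbits of vertices are determined by finitely many decorated Coxeter matrices, which gives the finite quotient you need.
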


\bibliography{./mybib}
\bibliographystyle{alpha}

\end{document}